\newtheorem{theorem}{Theorem}[section]
\newtheorem{lemma}{Lemma}[section]
\newtheorem{corollary}{Corollary}[section]
\newtheorem{proposition}{Proposition}[section]
\newtheorem{definition}{Definition}[section]
\newtheorem{remark}{Remark}[section]
\newcommand{\be}{\begin{equation}}
\newcommand{\ee}{\end{equation}}
\newcommand{\ba}{\begin{align}}
\newcommand{\ea}{\end{align}}
\numberwithin{equation}{section}
\begin{document}

\baselineskip =1.4\baselineskip
\subjclass[2010]{35Q55; 35Q40; 35Q51}
 \keywords{Schr\"{o}dinger-Bopp-Podolsky system; Normalized solution; Mountain-pass geometry; Mass concentration; Uniqueness; Instability.}
\thanks{The project is supported by the National Natural Science Foundation of China
(Grant no.12171343) and Sichuan Science and Technology Program(no.2022ZYD0009 and no.2022JDTD0019).}

\author{Juan Huang} \address[Juan Huang]
{School of Mathematical Science and V.C.\& V.R. Key Lab of Sichuan Province\\
 Sichuan Normal University   \\
 Chengdu, Sichuan 610066, P.R. China}
   \email[]{hjmath@163.com}

\author{Sheng Wang} \address[Sheng Wang]
{School of Mathematical Science and V.C.\& V.R. Key Lab of Sichuan Province\\
 Sichuan Normal University   \\
 Chengdu, Sichuan 610066, P.R. China}
   \email[]{wangsmath@163.com}

\title[Normalized ground states for the mass supercritical Schr\"{o}dinger-Bopp-Podolsky system: existence, uniqueness, limit behavior, strong instability] {Normalized ground states for the mass supercritical Schr\"{o}dinger-Bopp-Podolsky system: existence, uniqueness, limit behavior, strong instability}

\pagestyle{plain}

\begin{abstract}
{This paper concerns the normalized ground states for the nonlinear Schr\"{o}dinger equation in the Bopp-Podolsky electrodynamics. This equation has a nonlocal nonlinearity and a mass supercritical power nonlinearity, both of which have deep impact on the geometry of the corresponding functional, and thus on the existence, limit behavior and stability of the normalized ground states. In the present study, the existence of critical points is obtained by a mountain-pass argument developed on the $L^2$-spheres. To be specific, we show that normalized ground states exist, provided that spherical radius of the $L^2$-spheres is sufficiently small. Then, by discussing the relation between the normalized ground states of the Schr\"{o}dinger-Bopp-Podolsky system and the classical Schr\"{o}dinger equation, we show a precise description of the asymptotic behavior of the normalized ground states as the mass vanishes or tends to infinity. Moreover, we discuss the radial symmetry and uniqueness of the normalized ground states. Finally, the strong instability of standing waves at the mountain-pass energy level is studied by constructing an equivalent minimizing problem. Also, as a byproduct, we prove that the mountain-pass energy level gives a threshold for global existence based on this equivalent minimizing problem.}
\end{abstract}

\maketitle

\renewcommand{\theequation}
{\thesection.\arabic{equation}}
\setcounter{equation}{0}
\section{Introduction} \noindent

When solving the alleged infinity problem in classical Maxwell theory, one would find it leads to the Bopp-Podolsky electromagnetic theory, which was developed independently by Bopp \cite{Bopp1940} and then by Podolsky \cite{Podolsky1942}. From the point of view of the electromagnetic field, the Bopp-Podolsky theory can be regarded as effective theory for short distances, and it is experimentally indistinguishable from the Maxwell theory for large distance, see in \cite{Aberqi2022, Farid2023, Frenkel1996}. In the Bopp-Podolsky electromagnetic theory, for the purely electrostatic case, a Schr\"{o}dinger field coupled with its electromagnetic field is usually described by the following system(see in \cite{dAvenia2019})
\begin{numcases}{ }
 i\hbar\partial_t\psi+\frac{\hbar^2}{2m}\Delta\psi-q^2\phi\psi+|\psi|^{p-2}\psi=0, \quad t\in\mathbb{R}, ~x\in\mathbb{R}^3,\label{main01}\\
 -\Delta\phi+a^2\Delta^2\phi=4\pi|\psi|^2,\label{main02}
\end{numcases}
where $\hbar$ is the Planck constant, $m$ is the meson mass, $q>0$ is the charge, $2<p<6$, and $a>0$ is the Bopp-Podolsky parameter that has the dimension of the inverse of mass. For more details of this model in physics, please refer to \cite{Bertin2017, Farid2023} and the references therein.

An important fact concerning system (\ref{main01})-(\ref{main02}) is that it can be transformed into a Schr\"{o}dinger equation with a nonlocal term (see in \cite{dAvenia2019}). To be specific, if we denote the space $\mathcal{D}$ as the completion of $C^{\infty}_c(\mathbb{R}^3)$ with respect to the norm ${(||\nabla\phi||^2_{L^2(\mathbb{R}^3)}+a^2||\Delta\phi||^2_{L^2(\mathbb{R}^3)})}^{1\over2}$(see in Section 2 for more details), then by the Riesz Theorem, for every fixed $\psi(x)\in H^1(\mathbb{R}^3)$, there exists a unique solution of Eq.(\ref{main02}) in $\mathcal{D}$, namely $\phi=\mathcal{K}\ast|\psi|^2$. Here $\ast$ represents the convolution in $\mathbb{R}^3$ and $\mathcal{K}(x)=\frac{1-e^{-\frac{|x|}{a}}}{|x|}$. Inserting the solution into Eq.(\ref{main01}), one get
\begin{equation}\label{main10}
i\hbar\partial_t\psi+\frac{\hbar^2}{2m}\Delta\psi-q^2(\mathcal{K}\ast|\psi|^2)\psi+|\psi|^{p-2}\psi=0.
\end{equation}
For simplicity, in this paper, we treat $a$ as a parameter and consider the following dimensionless form of Eq.(\ref{main10}) (see in \cite{Zheng2022}),
\begin{equation}\label{main6}
i\partial_t\psi+\Delta\psi-(\mathcal{K}\ast|\psi|^2)\psi+|\psi|^{p-2}\psi=0,\quad t\in\mathbb{R},~~ x\in\mathbb{R}^3.
\end{equation}
Obviously, this equation has a defocusing nonlocal term and a focusing local term, which makes it interesting both in mathematics and physics.

To find normalized ground states, one need to plug the standing waves of the form $e^{i\omega t}u(x)~(\omega\in\mathbb{R})$ into Eq.(\ref{main6}). Then the function $u(x)$ satisfies the stationary equation
\begin{equation}\label{main5}
-\Delta u+\omega u+(\mathcal{K}\ast|u|^2) u=|u|^{p-2}u.
\end{equation}
From this point of view, $\omega\in\mathbb{R}$ is considered as a known fixed frequency. However, a normalized ground state is a solution of Eq.(\ref{main5}) with $||u||_{L^2(\mathbb{R}^3)} = m>0$, which can be obtained as a critical point of the functional
$$E(u):=\frac12||\nabla u||^2_{L^2(\mathbb{R}^3)}+\frac14\int_{\mathbb{R}^3}(\mathcal{K}\ast|u|^2)|u|^2dx-\frac1p||u||^p_{L^p(\mathbb{R}^3)},$$
under the constraint condition
$$S(m):=\{u\in H^1(\mathbb{R}^3):~ ||u||_{L^2(\mathbb{R}^3)}=m\}.$$
In other words, we can consider the following minimization problem:
\begin{align}\label{min-pro1}
\mathcal{E}(m):=\inf\limits_{u\in S(m)}E(u).
\end{align}
Indeed, $\omega\in\mathbb{R}$ in this approach are unknown and can be interpreted as a Lagrange multiplier. It is clear the minimizers of $\mathcal{E}(m)$ are critical points of $E(u)$ restricted to $S(m)$, and thus solutions of Eq.(\ref{main5}).

In the last few years, the existence of the nontrivial solutions for the following equation,
\begin{equation}\label{main51}
-\Delta u+\omega u+b^2(\mathcal{K}\ast|u|^2) u=|u|^{p-2}u,
\end{equation}
has been studied extensively by variational methods. As mentioned before, $\omega$ can be regarded as a fixed frequency or an unknown Lagrange multiplier. For the first case, if one further wants to find the ground states, the general method is to minimize the action functional over the nontrivial solutions, which is called least the action solutions. Applying this method, by treating ``$a$" and ``$b$" as parameters, d'Avenia and Siciliano\cite{dAvenia2019} showed that Eq.(\ref{main51}) possesses a nontrivial solution when $p\in(2, 6)$ and $|b|$ small enough or $p\in(3, 6)$ and $b\neq0$. Meanwhile, they obtained that Eq.(\ref{main51}) does not admit any nontrivial solution for $p\geq6$ via Pohozaev identity. Moreover, in the radial case, they proved the solutions tend to the solutions of the classical Schr\"{o}dinger-Poisson system as $a\rightarrow0$. After that, Silva and Siciliano \cite{Silva2020} supplemented and improved some results in \cite{dAvenia2019} by the so called ``fibering approach". More precisely, they showed that when $p\in(2, 3]$, Eq.(\ref{main51}) has no solution for $b$ large enough and has two radial solutions for $b$ small enough. The other case is that $\omega\in\mathbb{R}$ is not fixed and appears as a Lagrange multiplier. In fact, this is from the sight of the conservation law of mass, and hence attracted many physicists attention on the ``normalized ground states". In \cite{LiuC2022}, Liu proved the existence of normalized solutions by minimizing method for Eq.(\ref{main51}) with $p\in(2, \frac83]$. And then,  He, Li and Chen \cite{He2022} applied standard scaling arguments and some basic properties of energy functional to extend the range of $p$ to $(2, \frac{10}3)$. When $|u|^{p-2}u$ is replaced by $\mu|u|^{p-2}u+|u|^4u$, inspired by the idea in \cite{Jeanjean2022}, Li, Chang and Feng \cite{Li2023} established the existence of normalized solutions for Eq.(\ref{main51}) with $\mu>0$, $b^2=1$ and $p\in(2, \frac83)$ by developing a constraint minimizing approach. Moreover, under the same conditions in \cite{Li2023}, Li and Zhang \cite{Li-Zhang2023} extended the same result to the range of $p$ to $(2, 6)$ for $\mu>0$ large enough by applying a different method, namely the mountain-pass lemma. Of course, there are still some studies on the existence of the normalized ground states for Eq.(\ref{main51}) with other situations, such as where the nonlocal term is focusing and $p=6$ (see in \cite{Peng2024}) or when the equation with Neumann boundary conditions on a connected, bounded, smooth open set (see in \cite{Afonso2021}).

What calls for special attention is that when $p<\frac{10}3$, the energy functional of Eq.(\ref{main51}) is bounded from below on the $L^2$-spheres. Hence, the authors of \cite{Afonso2021, LiuC2022, He2022} can apply the constrained variational problems to show the existence of the normalized ground states by searching the critical points of the energy functional on the $L^2$-spheres. But, when $p\geq\frac{10}3$, the energy functional is unbounded from below on the $L^2$-spheres, which leads to the inapplicability of the method for mass subcritical (i.e., $2<p<{3\over10}$). So we also apply the mountain-pass lemma to prove the existence of normalized solutions. However, different from the equation in \cite{Li-Zhang2023}, Eq.(\ref{main5}) does not have the extra focusing critical nonlinear local term $|u|^4u$, which contributes to the difficulty in our study. Specifically, the energy critical term combining with the mass supercritical term makes the focusing effect stronger than the defocusing effect, and consequently impacts on the variational structure of the equation. Thus, it makes the compactness of the Palais-Smale sequence clearer. Unfortunately, for Eq.(\ref{main5}), due to the absence of focusing critical nonlinear local term, the effect of the defocusing nonlocal term $(\mathcal{K}\ast|u|^2)u$ is more evident. Therefore, the interplay between the defocusing nonlocal term and the focusing mass supercritical local term makes the compactness of the Palais-Smale sequence more subtle. This is the main motivation of considering the normalized solutions for Eq.(\ref{main5}) with $p\in(\frac{10}3, 6)$ in the current study.

Firstly, we shall show that $E(u)$ has a mountain-pass geometry on $S(m)$, defined as follows:
\begin{definition}
Given $m>0$, we say that $E(u)$ has a mountain-pass geometry on $S(m)$, if there exists a constant $K_m>0$, such that
$$\gamma(m)=\inf_{g\in\Gamma_m}\max_{t\in[0, 1]}E(g(t))>\max\{\max_{g\in \Gamma_m}E(g(0)),~~~ \max_{g\in \Gamma_m}E(g(1))\}$$
holds in the set
$$\Gamma_m=\{g\in C([0, 1], S(m)):~ g(0)\in A_{K_m}, E(g(1))<0\},$$
where $A_{K_m}=\{u\in S(m):~ ||\nabla u||_{L^2(\mathbb{R}^3)}\leq K_m\}$.
\end{definition}

Our main result concerning the existence of the normalized solutions for Eq.(\ref{main5}) is given by the following theorem.

\begin{theorem}\label{critical-point}
Suppose $p\in(\frac{10}3, 6)$ and $m>0$, then $E(u)$ has a mountain-pass geometry on $S(m)$. Moreover, there exists $m_0>0$ such that for any $m\in(0, m_0)$, Eq.(\ref{main5}) has a couple of weak solution $(u_m, \omega_m)\in H^1(\mathbb{R}^3)\times\mathbb{R}^+$ with $||u_m||_{L^2(\mathbb{R}^3)}=m$ and $E(u_m)=\gamma(m)$. In particular, the solution $u_m$ is positive for all $x\in\mathbb{R}^3$, and there exist constants $C_1, C_2>0$ and $R>0$ such that
$$|u_m(x)|\leq C_1|x|^{-\frac34}e^{-C_2\sqrt{|x|}}, \quad \forall~ |x|\geq R.$$
\end{theorem}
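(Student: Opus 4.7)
The plan is to prove the four conclusions in sequence, using the $L^2$-preserving dilation $u_t(x)=t^{3/2}u(tx)$ as the principal bookkeeping tool. By the Gagliardo--Nirenberg inequality
\begin{equation*}
\|u\|_p^p\le C_{\mathrm{GN}}\|\nabla u\|_2^{3(p-2)/2}\|u\|_2^{(6-p)/2},
\end{equation*}
and the fact that $(\mathcal K\ast|u|^2)|u|^2\ge 0$, one has $E(u)\ge\tfrac12\|\nabla u\|_2^2-Cm^{(6-p)/2}\|\nabla u\|_2^{3(p-2)/2}$. Since $3(p-2)/2>2$ for $p>10/3$, this lower bound has a strictly positive local maximum at some $K_m>0$ once $m$ is sufficiently small, so $E$ is bounded below by a positive constant on $A_{K_m}$. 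On the other hand $\|\nabla u_t\|_2^2=t^2\|\nabla u\|_2^2$, $\|u_t\|_p^p=t^{3(p-2)/2}\|u\|_p^p$, and the nonlocal term in $E(u_t)$ stays bounded in $t$ (the kernel $\mathcal K(x)=(1-e^{-|x|/a})/|x|$ is bounded at the origin and integrable against $|u|^2\otimes|u|^2$), so $E(u_t)\to-\infty$ as $t\to\infty$ and $t\mapsto u_t$ joins $A_{K_m}$ to $\{E<0\}$ inside $S(m)$. This yields the mountain-pass geometry with $\gamma(m)>0$.

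Next I would construct a bounded Palais--Smale sequence at level $\gamma(m)$ via the Jeanjean device, working with the auxiliary functional $\widetilde E(u,s):=E(u_s)$ on $S(m)\times\R$. Its min-max value coincides with $\gamma(m)$ and its PS sequences project to PS sequences $(u_n)\subset S(m)$ for $E|_{S(m)}$ that in addition satisfy the asymptotic Pohozaev identity $P(u_n)\to 0$, where $P(u):=\tfrac{d}{dt}|_{t=1}E(u_t)$ combines the kinetic, local and (dilated) nonlocal pieces. A linear combination of $E(u_n)\to\gamma(m)$ and $P(u_n)\to 0$ cancels the $\|u\|_p^p$ contribution and forces $\|\nabla u_n\|_2$ to be bounded. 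Testing the equation against $u_n$ produces Lagrange multipliers $\omega_n$; using the Pohozaev relation and the positivity of $\gamma(m)$, one checks that the limit $\omega_m$ is strictly positive.

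The compactness step is what I expect to be the main obstacle. Writing $u_n\rightharpoonup u$ weakly in $H^1$, vanishing is ruled out because it would send $\|u_n\|_p\to 0$ and hence $E(u_n)\ge\tfrac12\|\nabla u_n\|_2^2+o(1)$, which together with $P(u_n)\to 0$ is incompatible with $\gamma(m)>0$. After a translation, $u\not\equiv 0$. The truly delicate point is to rule out dichotomy: the defocusing nonlocal term does not interact nicely with Brezis--Lieb splitting. I would resolve this by establishing strict subadditivity $\gamma(m)<\gamma(m')+\gamma(m-m')$ for every $m'\in(0,m)$ in the admissible range, by testing on dilations of a candidate profile; any nontrivial loss of $L^2$-mass at infinity would then contradict this inequality. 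Consequently $\|u\|_2=m$ and $u_n\to u$ in $L^2$, hence in $H^1$ via the equation, yielding a critical point $(u_m,\omega_m)$ with $E(u_m)=\gamma(m)$.

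Finally, replacing $u_m$ by $|u_m|$ preserves every term in $E$ except the kinetic one, which it can only decrease, so one may assume $u_m\ge 0$; the strong maximum principle applied to $-\Delta u_m+\omega_m u_m=|u_m|^{p-2}u_m-(\mathcal K\ast|u_m|^2)u_m$ then gives $u_m>0$. For the pointwise decay, elliptic bootstrapping yields $u_m\in C^2\cap L^\infty$ with $u_m(x)\to 0$; since $|u_m|^{p-2}\to 0$ and $(\mathcal K\ast|u_m|^2)\ge 0$, there is $R$ so that $-\Delta u_m+\tfrac{\omega_m}{2}u_m\le 0$ on $\{|x|\ge R\}$. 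A direct calculation in spherical coordinates shows that the radial function $W(x)=|x|^{-3/4}e^{-C_2\sqrt{|x|}}$ satisfies $-\Delta W+\tfrac{\omega_m}{2}W\ge 0$ for $C_2>0$ small and $|x|$ large, and after choosing $C_1$ so that $u_m\le C_1 W$ on $\{|x|=R\}$, the comparison principle gives the claimed bound throughout $\{|x|\ge R\}$.
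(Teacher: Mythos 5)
Your overall architecture matches the paper's: Gagliardo--Nirenberg for the geometry, the Jeanjean auxiliary functional $\widetilde E(u,s)=E(u_s)$ to produce a Palais--Smale sequence with $P(u_n)\to0$, a linear combination of $E$ and $P$ for boundedness, exclusion of vanishing via $\gamma(m)>0$, and regularity plus a comparison function for the decay (your comparison with $|x|^{-3/4}e^{-C_2\sqrt{|x|}}$ as a supersolution of $-\Delta+\tfrac{\omega_m}{2}$ is in fact simpler than the paper's, which extracts the Coulomb tail $\phi_{u_m}\ge C_0/|x|$ and compares with the radial solution of $-\Delta w+\tfrac{\sigma}{|x|}w=0$). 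However, there is a genuine gap at the dichotomy step. You propose strict subadditivity $\gamma(m)<\gamma(m')+\gamma(m-m')$, but this is the wrong tool for a mountain-pass level in the mass-supercritical regime, for two reasons. First, even granting the inequality, it does not exclude dichotomy: if $v_n\rightharpoonup\tilde v$ with $0<\|\tilde v\|_{L^2}=m_1<m$, the escaping piece $w_n=v_n-\tilde v$ satisfies only $P(w_n)=o_n(1)$ and $E(w_n)=\gamma(m)-E(\tilde v)+o_n(1)$; there is no reason why $E(w_n)\ge\gamma(\|w_n\|_{L^2})+o_n(1)$ (only $\max_\theta E(\kappa(w_n,\theta))$ dominates $\gamma$), and indeed the dangerous scenario is precisely $E(w_n)\to0$ with positive escaping mass, which subadditivity cannot rule out. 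Second, you give no mechanism for proving subadditivity; the scaling tricks that work for the bounded-below minimization $\mathcal E(m)$ do not transfer to the min-max level. What the paper actually proves and uses is the \emph{strict monotonicity} $\gamma(m_1)>\gamma(m)$ for $m_1<m$ (for $m$ below a threshold $m_0$): combined with $P(\tilde v)=0$, hence $E(\tilde v)\ge\gamma(m_1)$, and with $E(w_n)\ge o_n(1)$ (which follows from the identity $E-\tfrac{2}{3(p-2)}P\ge0$ applied to $w_n$), one gets $\gamma(m_1)\le\gamma(m)$, a contradiction. Establishing this monotonicity occupies all of Section~5 of the paper (continuity of $\gamma$, nonincreasing via a truncation-and-far-apart-bumps construction, strict decrease via the sign of the Lagrange multiplier), and it is also where the restriction $m<m_0$ enters: $\omega_m>0$ is only guaranteed for small mass (the paper's Lemma on null solutions), and your sketch does not explain the origin of $m_0$ at all.

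Two smaller points. Your claim that the nonlocal term in $E(u_t)$ ``stays bounded in $t$'' is false: it grows like $t\iint|x-y|^{-1}|u(x)|^2|u(y)|^2\,dx\,dy$ as $t\to\infty$; the conclusion $E(u_t)\to-\infty$ survives only because $\tfrac{3(p-2)}{2}>2>1$. And the positivity step ``replace $u_m$ by $|u_m|$'' is not quite enough as stated: $|u_m|$ need not remain on the Pohozaev manifold $V(m)$, so you must rescale by $\kappa(|u_m|,\theta(|u_m|))$, verify that the rescaled function still attains $\gamma(m)=\inf_{V(m)}E$ (using $\max_\theta E(\kappa(|u|,\theta))\le\max_\theta E(\kappa(u,\theta))$), and invoke the natural-constraint property that critical points of $E|_{V(m)}$ are critical points of $E|_{S(m)}$; this produces \emph{a} positive minimizer rather than showing the one you constructed is signed, which is what the paper's fibering argument with $h^\pm$ accomplishes.
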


In fact, there are some obstacles in studying the existence of critical
points for functional $E(u)$ on $S(m)$. First, the nonlocal term and the mass supercritical nonlinear term increase the difficulty in getting the boundedness of the Palais-Smale sequence. For this reason, we introduce the Pohozaev functional
\begin{align}\nonumber
P(u):=&||\nabla u||^2_{L^2(\mathbb{R}^3)}+\frac14\int_{\mathbb{R}^3}(\mathcal{K}\ast|u|^2) |u|^2dx-\frac{3(p-2)}{2p}||u||^p_{L^p(\mathbb{R}^3)}\\ \label{Pohozaev-identity}
&-\frac1{4a}\iint_{\mathbb{R}^3\times\mathbb{R}^3}
e^{-\frac{|x-y|}{a}}|u(x)|^2|u(y)|^2dxdy,
\end{align}
 and the set
$$V(m):=\{u\in S(m):~ P(u)=0\}.$$
Then, we show that
\begin{align}\label{Pohozaev-manifold}
\gamma(m)=\inf_{u\in V(m)}E(u).
\end{align}
Hence, we prove that each critical point of $E(u$) obtained by mountain-pass procedure must lie in $V(m)$. Furthermore, we can find some suitable sequence of paths $\{g_n\}_{n=1}^{\infty}\subset\Gamma_m$ such that
$$\max_{t\in[0, 1]}E(g_n(t))\rightarrow\gamma(m), \quad\hbox{as}\,\ n\rightarrow\infty.$$
Thus, a special Palais-Smale sequence $\{v_n\}_{n=1}^{\infty}\subset H^1(\mathbb{R}^3)$ at the level $\gamma(m)$, which converges to a point in $V(m)$, has been constructed. The Palais-Smale sequence obtained in this way ensures its boundedness. Next, we face the lack of compactness when we look for solutions with a prescribed $L^2$-norm. Owing to the working space in general does not embed compactly into any space $L^p(\mathbb{R}^3)$, recovering the compactness of the sequence $\{v_n\}_{n=1}^{\infty}$ may be troublesome. And it does not seem possible to apply the celebrated concentrate compactness principle of Lions in \cite{Lions1984-1, Lions1984-2} to exclude the ``dichotomy" case, and thus obtain the compactness. Motivated by \cite{Bellazzini2013}, we show that the function $\gamma(m)$ is strictly decreasing. Combining the fact $P(v_n) = o_n(1)$, which was ensured by the construction of the Palais-Smale sequence, the compactness of the Palais-Smale sequence follows.

Until now, the existence of the normalized ground states of Eq.(\ref{main5}) are obtained for $p\in(2,3)\cup(3,{10\over3})$ (see in \cite{He2022, LiuC2022}) and $p\in({10\over3},6)$ (see Theorem \ref{critical-point}). It is thus of interest to discuss the property of the normalized ground states. For the Schr\"{o}dinger equation, there have been some works about the concentration behavior of normalized ground states, see in \cite{Cazenave2003, Guo2014, Maeda2010}. What about Eq.(\ref{main5})? In order to answer this question, we start by recalling the sharp Gagliardo-Nirenberg inequality \cite{Weinstein1983}:
\begin{align}\label{G-N-ineq}
||u||^p_{L^p(\mathbb{R}^3)}\leq\frac{p}{2||Q||^{p-2}_{L^2(\mathbb{R}^3)}}||u||^{\frac{6-p}2}_{L^2(\mathbb{R}^3)}
||\nabla u||^{\frac{3(p-2)}2}_{L^2(\mathbb{R}^3)}, \quad\forall~u\in H^1(\mathbb{R}^3),
\end{align}
where $p\in[2, 6)$. The equality in (\ref{G-N-ineq}) holds for $u=Q$, where $Q$ is, up to translations, a unique positive radially symmetric solution of
\begin{align}\label{elliptic}
-\frac{3(p-2)}4\Delta Q+\frac{6-p}4Q=|Q|^{p-2}Q, \quad~x\in\mathbb{R}^3,
\end{align}
see in \cite{Kwong1989}. By Eq.(\ref{elliptic}) and the related Pohozaev identity, we see that $Q$ satisfies
\begin{align}\label{elliptic1}
||\nabla Q||^2_{L^2(\mathbb{R}^3)}=||Q||^2_{L^2(\mathbb{R}^3)}=\frac2{p}||Q||^p_{L^p(\mathbb{R}^3)}.
\end{align}

To state our another main result, as mentioned above, suppose that $u_m\in S(m)$ is either a minimizer of $\mathcal{E}(m)$ when $p\in(2, 3)\cup(3, \frac{10}3)$ or a minimizer of $\gamma(m)$ when $p\in(\frac{10}3, 6)$, then there exists $\omega_m\in\mathbb{R}$ such that $(u_m, \omega_m)$ is a couple of solution to Eq.(\ref{main5}).
Define
\begin{equation}\label{solution-set}
\Lambda:=\left\{(m, u_m, \omega_m):~ \hbox{for each}~ m,
\begin{array}{lll}
&E(u_m)=\mathcal{E}(m)~\hbox{when}~p\in(2, 3)\cup(3, \frac{10}3);\\
&E(u_m)=\gamma(m)~\hbox{when}~p\in(\frac{10}3, 6)
\end{array}
\right\}.
\end{equation}
Obviously, $\Lambda\neq\emptyset$.
\begin{theorem}\label{concentration-behavior}
For any sequence $\{(m_n, u_{m_n}, \omega_{m_n})\}^{\infty}_{n=1}\subset\Lambda$ with
$\{m_n\}^{\infty}_{n=1}$ satisfying one of the following conditions:
\begin{itemize}
\item[(a)] if $p\in(2, 3)$ or $p\in(\frac{10}3, 6)$, $m_n\rightarrow0^+$ as $n\rightarrow+\infty$;
\item[(b)] if $p\in(3, \frac{10}3)$, $m_n\rightarrow+\infty$ as $n\rightarrow+\infty$.
\end{itemize}
Then, there exists a sequence $\{z_n\}^{\infty}_{n=1}\subset\mathbb{R}^3$ such that
\begin{align}\label{converges-Q00}
\left[\frac4{3(p-2)}\left(\frac{||Q||_{L^2(\mathbb{R}^3)}}{m_n}\right)^{\frac43}\right]^{\frac3{10-3p}}
u_{m_n}\left(\left[\frac4{3(p-2)}\left(\frac{||Q||_{L^2(\mathbb{R}^3)}}{m_n}\right)^{p-2}\right]
^{\frac2{10-3p}}x+z_n\right)\rightarrow Q(x)
\end{align}
in $H^1(\mathbb{R}^3)$ as $n\rightarrow+\infty$. Moreover,
$$\lim_{n\rightarrow+\infty}\left(\frac{||Q||_{L^2(\mathbb{R}^3)}}{m_n}\right)^{\frac{4(p-2)}{10-3p}}\omega_{m_n}=\frac{6-p}{3(p-2)}\left(\frac{3(p-2)}4\right)^{\frac4{10-3p}}.$$
\end{theorem}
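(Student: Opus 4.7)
The plan is to rescale each normalized ground state $u_{m_n}$ so that the resulting sequence $\{v_n\}$ has fixed $L^2$-mass equal to $\|Q\|_{L^2(\mathbb{R}^3)}$, prove that up to translation $\{v_n\}$ converges strongly in $H^1(\mathbb{R}^3)$ to the unique radial positive solution $Q$ of (\ref{elliptic}), and read off the asymptotics of $\omega_{m_n}$ afterwards. The correct scaling is forced by balancing the kinetic term against the $L^p$ term via the Pohozaev identity and the sharp Gagliardo--Nirenberg inequality (\ref{G-N-ineq}).

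Concretely, set
\begin{equation*}
\alpha_n := \left[\frac{4}{3(p-2)}\left(\frac{\|Q\|_{L^2(\mathbb{R}^3)}}{m_n}\right)^{4/3}\right]^{3/(10-3p)},\qquad \beta_n := \left[\frac{4}{3(p-2)}\left(\frac{\|Q\|_{L^2(\mathbb{R}^3)}}{m_n}\right)^{p-2}\right]^{2/(10-3p)},
\end{equation*}
and $v_n(x):=\alpha_n u_{m_n}(\beta_n x+z_n)$ for translations $z_n$ to be chosen. A direct verification gives $\|v_n\|_{L^2(\mathbb{R}^3)}=\|Q\|_{L^2(\mathbb{R}^3)}$ and $\alpha_n^{-(p-2)}\beta_n^2 = \frac{4}{3(p-2)}$, so Eq.(\ref{main5}) for $u_{m_n}$ rescales to
\begin{equation*}
-\Delta v_n + \beta_n^2\omega_{m_n}\, v_n + \beta_n^2(\mathcal{K}\ast|u_{m_n}|^2)(\beta_n\cdot+z_n)\, v_n = \frac{4}{3(p-2)}|v_n|^{p-2}v_n.
\end{equation*}
I would first obtain matching two-sided asymptotics for the energy level by testing with the ansatz $w_\lambda(x):=\lambda^{3/2}(m_n/\|Q\|_{L^2(\mathbb{R}^3)})Q(\lambda x)\in S(m_n)$ and choosing $\lambda=1/\beta_n$, the critical point of the local part of $E(w_\lambda)$; the nonlocal contribution of $w_\lambda$ is uniformly $O(m_n^4/a)$ and is lower order in every subregime. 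The matching lower bound uses (\ref{G-N-ineq}) and the Pohozaev identity (\ref{Pohozaev-identity}), together with the mountain-pass characterization $\gamma(m)=\inf_{V(m)}E$ from (\ref{Pohozaev-manifold}) in the mass-supercritical case $p\in(10/3,6)$. These bounds pin down $\|\nabla v_n\|_{L^2(\mathbb{R}^3)}$ uniformly bounded and bounded away from zero.

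The next step is to verify that the rescaled nonlocal potential vanishes on compact sets as $n\to\infty$. The substitution $z=\beta_n w+z_n$ yields
\begin{equation*}
\beta_n^2(\mathcal{K}\ast|u_{m_n}|^2)(\beta_n x+z_n) = \alpha_n^{-2}\beta_n^5\int_{\mathbb{R}^3}\mathcal{K}(\beta_n(x-w))\,|v_n(w)|^2\,dw,
\end{equation*}
and the uniform bounds $0\le\mathcal{K}(y)\le\min(1/a,1/|y|)$ combined with the $H^1$-boundedness of $\{v_n\}$ reduce the vanishing to an algebraic check on the scaling exponents: in the subregimes where $\beta_n\to 0$ (namely $p\in(3,10/3)$ with $m_n\to\infty$ and $p\in(10/3,6)$ with $m_n\to 0^+$) the crude bound $\mathcal{K}\le 1/a$ leads to a prefactor $\alpha_n^{-2}\beta_n^5 m_n^2$ that tends to $0$; in the remaining case $p\in(2,3)$, $m_n\to 0^+$, one has $\beta_n\to\infty$ and the kernel itself supplies an extra factor $1/\beta_n$ via the $1/|y|$ bound, yielding a prefactor of order $\alpha_n^{-2}\beta_n^4$ which again vanishes. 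A Lions-type non-vanishing lemma then produces translations $z_n$ and a non-zero weak limit $v^*\in H^1(\mathbb{R}^3)$ of $v_n(\cdot+z_n)$; passing to the limit in the rescaled equation (along a subsequence on which $\beta_n^2\omega_{m_n}\to c$), $v^*$ satisfies $-\Delta v^*+c v^*=\frac{4}{3(p-2)}|v^*|^{p-2}v^*$. Positivity of $v^*$, Kwong's uniqueness result \cite{Kwong1989}, and the constraint $\|v^*\|_{L^2(\mathbb{R}^3)}=\|Q\|_{L^2(\mathbb{R}^3)}$ (read off from the sharp energy asymptotic) together force $c=\frac{6-p}{3(p-2)}$ and $v^*=Q$. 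Norm convergence then upgrades the weak convergence to strong convergence in $H^1$, giving (\ref{converges-Q00}), and the stated asymptotic for $\omega_{m_n}$ follows by unwinding $\beta_n^2\omega_{m_n}\to\frac{6-p}{3(p-2)}$.

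The principal technical obstacle is uniformly controlling the rescaled nonlocal potential through regimes where $\beta_n$ either blows up or collapses, since the kernel $\mathcal{K}$ has distinct asymptotics at $0$ and at $\infty$ and different subregimes require different pointwise bounds. A secondary difficulty is the bookkeeping of exponents across the three subregimes (with different signs of $(10-3p)$ and different limits of $m_n$) that must ultimately be consistent with the single rescaling (\ref{converges-Q00}); once the sharp energy asymptotics and the vanishing of the nonlocal perturbation are in hand, the remaining steps (non-vanishing of $v_n$, passage to the limit, identification via uniqueness of $Q$, and the upgrade to strong convergence) proceed by now-classical arguments.
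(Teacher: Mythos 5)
Your overall strategy coincides with the paper's: rescale the ground states to a fixed-mass problem in which the nonlocal term appears with a vanishing prefactor, show that the limiting problem is the pure NLS minimization solved by $Q$ (resp.\ $W$), and recover the $\omega$-asymptotics afterwards. The paper normalizes to mass $1$ and works with $W$ and the rescaled functionals $E_m$, $J_m$, $K_m$; you normalize to mass $\|Q\|_{L^2}$, which is the same up to a fixed dilation. The genuine difference is in how the limit is identified: the paper shows $J_{m_n}\to J_0$ (resp.\ $K_{m_n}\to K_0$, via the auxiliary projections $\theta^*_{m_n}$ of Lemma \ref{converges0}) so that $v_n$ becomes a minimizing sequence for the limit problem, and then invokes the classical compactness results of \cite{Cazenave1982, Kwong1989}; you instead pass to the limit in the rescaled Euler--Lagrange equation and identify $v^*$ through Kwong's uniqueness plus the mass constraint, reading off $\beta_n^2\omega_{m_n}\to\frac{6-p}{3(p-2)}$ directly rather than through the separate energy-ratio computation of Lemma \ref{limits2}. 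Both routes work; yours is slightly more PDE-flavored and packages the frequency asymptotics into the main convergence, while the paper's keeps the compactness step entirely at the level of cited variational results. Note that your PDE route still needs the ``no mass loss'' step ($\|v^*\|_{L^2}=\|Q\|_{L^2}$), which is exactly where the paper's variational machinery (strict monotonicity of the level, Claims 1--3 in Lemma \ref{continuous1}-type arguments) does its work; your appeal to ``now-classical arguments'' is acceptable but hides most of the actual labor in the supercritical case.

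One concrete error: your claim that the nonlocal contribution of the test function $w_\lambda$ is ``uniformly $O(m_n^4/a)$ and is lower order in every subregime'' fails for $p\in(\frac{14}5,3)$. There the leading energy scale is $m_n^2 s_{m_n}^2$ with $s_{m_n}\sim m_n^{2(p-2)/(10-3p)}\to 0$, and $m_n^4/(m_n^2 s_{m_n}^2)\sim m_n^{(28-10p)/(10-3p)}\to+\infty$ since $28-10p<0<10-3p$; so the crude bound $\mathcal{K}\le 1/a$ does not make the nonlocal term lower order. You must instead use $\mathcal{K}(z)\le 1/|z|$, which after the dilation by $\lambda=s_{m_n}$ yields the bound $O(m_n^4 s_{m_n})$ and the ratio $m_n^2/s_{m_n}\to 0$ (this is exactly the estimate in the paper's Lemma \ref{limits1}, and is consistent with the $1/|y|$ bound you already deploy for the potential-vanishing step in the $p\in(2,3)$ regime). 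The fix is immediate, but as written the upper energy bound is wrong on that subinterval.
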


Theorem \ref{concentration-behavior} is proved in Section 6. Here, we give the main idea of the proof. We recall that
$$E(u)=I(u)+\frac14\int_{\mathbb{R}^3}(\mathcal{K}\ast|u|^2) u^2dx,$$
where $I(u)=\frac12||\nabla u||^2_{L^2(\mathbb{R}^3)}-\frac1{p}||u||^p_{L^p(\mathbb{R}^3)}$ is the energy functional related to the classical nonlinear Schr\"{o}dinger equation $-\Delta u+\omega u=|u|^{p-2}u$. It is known that for each $m>0$, $I(u)$ has a minimizer $v_m\in S(m)$ with
\begin{align}\label{min-pro2}
I(v_m)=\widetilde{\mathcal{E}}(m):=\inf_{u\in S(m)}I(u), \quad\hbox{when}~p\in(2, \frac{10}3)
\end{align}
and
\begin{align}\label{min-pro3}
I(v_m)=\beta(m):=\inf_{u\in \widetilde{V}(m)}I(u) ~\hbox{and}~v_m\in\widetilde{V}(m), \quad\hbox{when}~p\in(\frac{10}3, 6),
\end{align}
see in \cite{Cazenave2003, Jeanjean1997}. Here
$$\widetilde{V}(m)=\left\{u\in S(m):~ G(u):=||\nabla u||^2_{L^2(\mathbb{R}^3)}-\frac{3(p-2)}{2p}||u||^p_{L^p(\mathbb{R}^3)}=0\right\}.$$
To show the concentration behavior of the normalized ground states, we need to know more about $v_m$ and the related energy $\widetilde{\mathcal{E}}(m)$ and $\beta(m)$. Indeed, by scaling, we find that, up to translations,
$$v_m(x)=\frac{m}{||Q||_{L^2(\mathbb{R}^3)}}\left[\frac{3(p-2)}4\left(\frac{m}{||Q||_{L^2(\mathbb{R}^3)}}\right)^{p-2}\right]^{\frac3{10-3p}}
Q\left(\left[\frac{3(p-2)}4\left(\frac{m}{||Q||_{L^2(\mathbb{R}^3)}}\right)^{p-2}\right]^{\frac2{10-3p}}x\right),$$
(see Lemma \ref{relationship} below). For the sequence $\{(m_n, u_{m_n}, \omega_{m_n})\}^{\infty}_{n=1}$
given in Theorem \ref{concentration-behavior}, by using the accurate expression of $v_{m_n}$, we deduce that $\frac{I(u_{m_n})}{\widetilde{\mathcal{E}}(m_n)}\rightarrow1$ (or $\frac{I(u_{m_n})}{\beta(m_n)}\rightarrow1$), which implies that $u_{m_n}$ behaves like $v_{m_n}$.

It is known that $Q$ is the unique positive radially symmetric solution of Eq.(\ref{elliptic}). According to Theorem \ref{concentration-behavior}, a natural question arises: is the solution to Eq.(\ref{main5}) unique? To our knowledge, in $\mathbb{R}^3$, the uniqueness heavily depends on the radial symmetry property of the solution. However, both the moving plane method (see in \cite{Gidas1979, Lieb1997, Lopes2008}) and Schwartz rearrangement (see in \cite{Lieb2001}), which are the two main tools used to prove radial symmetry, do not work in our case since the potential energy in Eq.(\ref{main5}) is defocusing on the nonlocal term and focusing on the local term. The first breakthrough in this area was made by Georgiev \cite{Georgiev2012}, who constructed a implicit function framework to obtain radial symmetry of normalized ground states for the Schr\"{o}dinger-Poisson system with a mass subcritical power nonlinearity (i.e. $p\in(2, 3)$). Inspired by Georgiev \cite{Georgiev2012}, in this paper, we show the radiality and uniqueness of the normalized ground states for Eq.(\ref{main5}) for the almost all range of $p$ in the energy space (i.e. $p\in(2,3)\cup(3,\frac{10}3)\cup(\frac{10}3, 6)$).

\begin{theorem}\label{unique-normalized-solution}
For Eq.(\ref{main5}) with $2<p<6$, we have
\begin{itemize}
 \item[(i)] if $p\in(2, 3)$, then there exists a sufficiently small constant $m_1>0$ such that for all $0<m<m_1$, every minimizer to (\ref{min-pro1}) is positive, radially symmetric up to translations, and unique up to translations;
 \item[(ii)] if $p\in(3, \frac{10}3)$, then there exists a sufficiently large constant $m_2>0$ such that for all $m>m_2$, every minimizer to (\ref{min-pro1}) is positive, radially symmetric up to translations, and unique up to translations;
 \item[(iii)] if $p\in(\frac{10}3, 6)$, then there exists a sufficiently small constant $0<m_3\leq m_0$ such that for all $0<m<m_3$, every minimizer to (\ref{Pohozaev-manifold}) is positive, radially symmetric up to translations, and unique up to translations.
\end{itemize}
\end{theorem}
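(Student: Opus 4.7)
The plan is to use the concentration description of Theorem \ref{concentration-behavior} to reduce the uniqueness question to a small perturbation of the ground-state problem (\ref{elliptic}) for $Q$, and then to run a Georgiev-style implicit function argument that exploits the full non-degeneracy of $Q$ modulo translations from \cite{Kwong1989}. In each of the three regimes listed, I would rescale a normalized ground state $u_m$ by the factors prescribed in Theorem \ref{concentration-behavior}, producing a function $\tilde u_m \in S(\|Q\|_{L^2(\mathbb{R}^3)})$ which converges in $H^1(\mathbb{R}^3)$ to the unique positive radial solution $Q$ of (\ref{elliptic}). Under the same scaling, Eq.(\ref{main5}) becomes
$$-\tfrac{3(p-2)}{4}\Delta \tilde u_m + \tilde\omega_m \tilde u_m + \varepsilon_m (\mathcal{K}_{a_m} \ast |\tilde u_m|^2)\tilde u_m = |\tilde u_m|^{p-2}\tilde u_m,$$
where $\varepsilon_m \to 0^+$, $\mathcal{K}_{a_m}$ is the Bopp-Podolsky kernel with a rescaled parameter, and $\tilde\omega_m \to (6-p)/4$ by Theorem \ref{concentration-behavior}; at $\varepsilon_m = 0$ this is exactly (\ref{elliptic}).

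Next, I would set up the implicit function theorem on $H^1(\mathbb{R}^3)$ with enough constraints to kill the translation degeneracy of $Q$. Define
$$F : H^1(\mathbb{R}^3)\times \mathbb{R}\times [0,\varepsilon_0) \to H^{-1}(\mathbb{R}^3)\times \mathbb{R}^4$$
by placing the rescaled PDE in the first slot and the scalar constraints $\|u\|^2_{L^2(\mathbb{R}^3)} - \|Q\|^2_{L^2(\mathbb{R}^3)}$ together with $\langle u - Q, \partial_{x_i} Q\rangle_{L^2(\mathbb{R}^3)}$ for $i=1,2,3$ in the second. At $(Q, (6-p)/4, 0)$ the derivative in $(u,\omega)$ pairs the linearization
$$L_+ = -\tfrac{3(p-2)}{4}\Delta + \tfrac{6-p}{4} - (p-1)Q^{p-2}$$
with the evaluation against $(2Q, \partial_{x_1}Q, \partial_{x_2}Q, \partial_{x_3}Q)$. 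Since $\ker L_+ = \mathrm{span}\{\partial_{x_i}Q\}_{i=1}^3$ by \cite{Kwong1989} and $Q \perp \partial_{x_i} Q$ in $L^2(\mathbb{R}^3)$, this linearized system is invertible. The IFT then yields a unique branch $(u(\varepsilon),\omega(\varepsilon))$ solving the perturbed equation with $u(\varepsilon)$ in a fixed $H^1$-neighborhood of $Q$ and satisfying the constraints, and by the rotational invariance of the perturbed PDE combined with uniqueness of the branch, $u(\varepsilon)$ must itself be radially symmetric.

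Finally, Theorem \ref{concentration-behavior} guarantees that for $m$ sufficiently small (respectively large) every rescaled ground state $\tilde u_m$, after an appropriate translation chosen to enforce the three orthogonality constraints, lies in the IFT neighborhood of $Q$, hence coincides with the unique branch solution $u(\varepsilon_m)$. Unwinding the rescaling and translation shows that $u_m$ is radially symmetric about some point $z_m \in \mathbb{R}^3$ and unique modulo translations. Positivity follows because $E(|u_m|) \leq E(u_m)$ by the diamagnetic inequality (the nonlocal and $L^p$ terms are invariant under taking the modulus), so $|u_m|$ is again a ground state; by uniqueness it is a translate of the branch solution, which is strictly positive by continuity from $Q>0$ and the strong maximum principle.

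The main obstacle is twofold. First, the inverse of the linearization at $(Q,(6-p)/4,0)$ must be controlled uniformly so that the IFT neighborhood has size independent of $m$; combined with the precise $H^1$-convergence rate extracted from Theorem \ref{concentration-behavior}, this quantitative control is what determines the threshold values $m_1, m_2, m_3$. Second, the rescaled nonlocal perturbation $\varepsilon_m(\mathcal{K}_{a_m}\ast|\cdot|^2)\cdot$ must be shown to be a $C^1$ map whose Fréchet derivatives vanish in operator norm as $\varepsilon_m\to 0$, despite the $m$-dependence of the parameter $a_m$ inside the kernel; this will require Hardy-Littlewood-Sobolev and Young inequalities together with a careful exponent count, which differs across the three $p$-regimes and therefore must be carried out separately in each case.
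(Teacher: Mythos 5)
Your overall strategy --- rescale via Theorem \ref{concentration-behavior} so that the problem becomes a small perturbation of the limit equation (\ref{elliptic}), then exploit the non-degeneracy of $Q$ through an implicit function argument in the spirit of Georgiev --- is exactly the route the paper takes (Section 7, Lemmas \ref{implicit-function1}--\ref{implicit-function2}). However, the specific IFT set-up you propose contains a genuine gap: the linearized map is not invertible. You send $(h,\mu)\in H^1(\mathbb{R}^3)\times\mathbb{R}$ to $\bigl(L_+h+\mu Q,\; 2\langle Q,h\rangle,\;\langle h,\partial_{x_1}Q\rangle,\;\langle h,\partial_{x_2}Q\rangle,\;\langle h,\partial_{x_3}Q\rangle\bigr)\in H^{-1}(\mathbb{R}^3)\times\mathbb{R}^4$. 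Since $L_+$ is self-adjoint and Fredholm of index $0$ with $\dim\ker L_+=3$, its range is the codimension-$3$ subspace $\{f:\langle f,\partial_{x_i}Q\rangle=0\}$, and $Q$ already lies in that range (because $\langle Q,\partial_{x_i}Q\rangle=0$). Hence the first component of your linearization can never reach an $f$ with $\langle f,\partial_{x_i}Q\rangle\neq0$: the augmented map has index $0+1-4=-3$ and, being injective, has a $3$-dimensional cokernel. The implicit function theorem therefore does not apply as stated. The standard repair is a Lyapunov--Schmidt reduction: replace the full equation by its projection onto $(\ker L_+)^{\perp}$ (so the index count becomes $0+1-4+3=0$), and then argue separately that the components of the gradient along $\ker L_+$ vanish --- automatic for actual minimizers, and obtained for the branch either by symmetry or, as the paper does, by first constructing the branch in $H^1_r(\mathbb{R}^3)$ where $\ker L_+$ is trivial (Lemma \ref{implicit-function1}) and then proving a separate uniqueness statement for the projected equation coupled with the modulation map $\mathcal{P}$ (Proposition \ref{implicit-function0} and Lemma \ref{implicit-function2}).

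Two smaller points. First, even injectivity of your linearization uses more than $\ker L_+=\mathrm{span}\{\partial_{x_i}Q\}$ and $Q\perp\partial_{x_i}Q$: from $L_+h=-\mu Q$ with $h\perp\ker L_+$ one gets $h=-\mu L_+^{-1}Q$, and the mass constraint kills $\mu$ only if $\langle Q,L_+^{-1}Q\rangle\neq0$; this holds here precisely because $p\neq\frac{10}{3}$, but it must be invoked. Second, your positivity argument via $E(|u_m|)\le E(u_m)$ works directly for the global minimization problems in cases (i)--(ii), but in case (iii) the minimization is over $V(m)$ and you must additionally check that $|u_m|$ can be placed back on the Pohozaev manifold without lowering the energy below $\gamma(m)$ (a fibering-map argument); the paper instead proves positivity by showing the minimizer cannot change sign (Lemma \ref{positive-solution0}) followed by Harnack's inequality.
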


With the existence and the uniqueness of the normalized solution in hand, we now turn to the evolution equation (\ref{main6}) with initial condition $\psi(0, x)=\psi_0\in H^1(\mathbb{R}^3)$. Zheng \cite{Zheng2022} proved the local well-posedness in $H^1(\mathbb{R}^3)$ when $p\in(2, 6)$ via the fixed point argument with Strichartz's estimate, and the finite time blow up result for the solutions with negative energy when $p\in(\frac{10}3, 6)$ by the Principles of Uncertainty. Based on the above results, when $p\in(\frac{10}3, 6)$, the sharp conditions for the global existence and blow up below the energy threshold was obtained by Gao \cite{Gao2023}. Moreover, for $p\in(2, \frac83)\cup(\frac83, 6)$, she applied the result of Murphy and Nakanishi \cite{Murphy2021} to get that without adding any decay assumption, the solution of Eq.(\ref{main6}) does not scatter in $H^1(\mathbb{R}^3)$. As far as the stability of the normalized solutions are concerned, when $p\in(2, \frac{10}3)$, He, Li and Chen \cite{He2022} proved the orbitally stable by using the approach of Cazenave-Lions \cite{Cazenave1982}. Is the normalized solution stable for $p\in(\frac{10}3,6)$? This is the problem we will focus on in the rest of this paper. In fact, we prove that the standing waves corresponding to elements of $\mathcal{M}_m:=\left\{u_m\in V(m):~ E(u_m)=\inf\limits_{u\in V(m)}E(u)\right\}$ are unstable.
\begin{theorem}\label{unstable}
Let $p\in(\frac{10}3, 6)$ and $m>0$. For each $u_m\in\mathcal{M}_m$, the standing wave $e^{i\omega_m t}u_m$ of Eq.(\ref{main6}), where $\omega_m\in\mathbb{R}$ is the Lagrange multiplier, is strongly unstable in the following sense: for any $\varepsilon>0$, there exists $\psi_0\in H^1(\mathbb{R}^3)$ which is positive and exponentially decaying (as $|x|\rightarrow\infty$), such that $||\psi_0-u_m||_{H^1(\mathbb{R}^3)}<\varepsilon$, and the solution $\psi$ of Eq.(\ref{main6}) with initial data $\psi_0$ blows up in finite time.
\end{theorem}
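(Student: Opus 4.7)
My plan is the classical two-step scheme for strong instability at a mountain-pass energy level. Consider the $L^2$-preserving dilation $u^\lambda(x):=\lambda^{3/2}u(\lambda x)$; a direct computation gives $P(u)=\frac{d}{d\lambda}E(u^\lambda)|_{\lambda=1}$, with the extra term $-\frac{1}{4a}\iint e^{-|x-y|/a}|u(x)|^2|u(y)|^2\,dxdy$ in \eqref{Pohozaev-identity} arising precisely from differentiating the Bopp--Podolsky kernel under dilation. Since $u_m\in\mathcal{M}_m$ realizes the mountain-pass level on $S(m)$ and satisfies $P(u_m)=0$, while $E(u_m^\lambda)\to -\infty$ as $\lambda\to\infty$ (because $p>\frac{10}{3}$ forces $\lambda^{3(p-2)/2}$ to dominate both $\lambda^2$ and the bounded nonlocal contribution), the one-dimensional map $\lambda\mapsto E(u_m^\lambda)$ has a strict local maximum equal to $\gamma(m)$ at $\lambda=1$. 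Hence for each $\lambda>1$ sufficiently close to $1$ one has $E(u_m^\lambda)<\gamma(m)$ and $P(u_m^\lambda)<0$. Continuity of $\lambda\mapsto u_m^\lambda$ in $H^1(\mathbb{R}^3)$, together with the positivity and exponential decay of $u_m$ from Theorem \ref{critical-point}, ensures that $\psi_0:=u_m^\lambda$ is positive, exponentially decaying, lies in $\Sigma:=H^1(\mathbb{R}^3)\cap L^2(|x|^2\,dx)$, and satisfies $\|\psi_0-u_m\|_{H^1}<\varepsilon$.

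\textbf{Invariant set and quantitative gap.} Define the target set $\mathcal{B}:=\{v\in S(m):E(v)<\gamma(m),\ P(v)<0\}$, so that $\psi_0\in\mathcal{B}$. Mass and energy are conserved by the local $H^1$-flow of \eqref{main6} built in \cite{Zheng2022}, so $E(\psi(t))<\gamma(m)$ and $\|\psi(t)\|_{L^2}=m$ on the maximal interval. If $P(\psi(t_0))=0$ at some $t_0$, then $\psi(t_0)\in V(m)$, and the Pohozaev characterization \eqref{Pohozaev-manifold} forces $E(\psi(t_0))\geq\gamma(m)$, contradicting conservation; continuity of $t\mapsto P(\psi(t))$ therefore keeps $\mathcal{B}$ invariant. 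To upgrade this to a quantitative gap, for each $t$ apply the dilation $\mu\mapsto\psi(t)^\mu$: since $P(\psi(t))<0$ and $P(\psi(t)^\mu)>0$ for $\mu$ sufficiently small (as the linear-in-$\mu$ defocusing part dominates near zero), the intermediate value theorem produces $\mu^*(t)\in(0,1)$ with $\psi(t)^{\mu^*(t)}\in V(m)$, so $E(\psi(t)^{\mu^*(t)})\geq\gamma(m)$. Combining this with the identity $E(\psi(t)^{\mu^*(t)})-E(\psi(t))=-\int_{\mu^*(t)}^{1}\mu^{-1}P(\psi(t)^\mu)\,d\mu$ and the monotone structure of $\mu\mapsto P(\psi(t)^\mu)$ in a neighborhood of $\mu=1$ (coming from the explicit $\lambda$-dependence of $E(u^\lambda)$ and the fact $p>\frac{10}{3}$) yields a constant $\delta=\delta(\gamma(m)-E(\psi_0))>0$ with $P(\psi(t))\leq-\delta$ for all $t$. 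This step is the quantitative incarnation of the ``equivalent minimizing problem'' alluded to in the abstract, essentially the identity $\gamma(m)=\inf\{E(v):v\in S(m),\ P(v)\leq 0\}$.

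\textbf{Virial blow-up and main obstacle.} The class $\Sigma$ is propagated by the $H^1$-flow, and $\psi$ satisfies the variance identity $\frac{d^2}{dt^2}\int_{\mathbb{R}^3}|x|^2|\psi(t,x)|^2\,dx=8P(\psi(t))$; the nonlocal part of the right-hand side matches $P$ because $(x-y)\cdot\nabla\mathcal{K}(x-y)=-\mathcal{K}(x-y)+a^{-1}e^{-|x-y|/a}$, which is precisely the combination appearing in \eqref{Pohozaev-identity}. Combining with the uniform gap gives $\frac{d^2}{dt^2}\int|x|^2|\psi|^2\leq-8\delta$, so the variance becomes negative in finite time $T$, and the standard uncertainty-principle argument forces $\|\nabla\psi(t)\|_{L^2}\to\infty$ as $t\to T^-$, i.e., finite-time blow-up of $\psi$ with initial datum $\psi_0$. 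I expect the main technical obstacle to be the uniform lower bound $P(\psi(t))\leq-\delta$ rather than the mere negativity of $P$; pure continuity arguments give only the sign, so one must exploit both the variational characterization \eqref{Pohozaev-manifold} and the explicit structure of $E$ along dilations to extract a $\delta>0$ depending only on the initial energy defect $\gamma(m)-E(\psi_0)$, which then drives the virial inequality globally in time.
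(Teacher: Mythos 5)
Your overall architecture --- dilated initial data $\psi_0=\kappa(u_m,\theta)$ with $\theta>0$, the invariant set $\mathcal{B}=\{v\in S(m):\ E(v)<\gamma(m),\ P(v)<0\}$, a uniform bound $P(\psi(t))\leq-\delta$, and the virial identity $\frac{d^2}{dt^2}\|x\psi(t)\|_{L^2}^2=8P(\psi(t))$ --- is exactly the paper's proof. The gap sits in the one step you yourself flag as the main obstacle: the uniform negative bound on $P$. First, the ``equivalent minimizing problem'' you invoke, $\gamma(m)=\inf\{E(v):v\in S(m),\ P(v)\leq 0\}$, is false as stated: your own initial datum $\psi_0$ satisfies $P(\psi_0)<0$ and $E(\psi_0)<\gamma(m)$, so that infimum lies strictly below $\gamma(m)$. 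The correct statement, and the one the paper proves (Proposition \ref{mp3} together with (\ref{mp2})), replaces $E$ by the truncated functional $\bar{E}(u)=E(u)-\frac{2}{3(p-2)}P(u)$, which by (\ref{EP}) is a positive combination of $\|\nabla u\|_{L^2}^2$ and the two nonlocal integrals; one shows $\inf\{\bar{E}(u):u\in S(m),\ P(u)\leq 0\}=\gamma(m)$, and then for $P(\psi(t))<0$ a single line gives $\gamma(m)\leq E(\psi(t))-\frac{2}{3(p-2)}P(\psi(t))$, hence $P(\psi(t))\leq\frac{3(p-2)}{2}\left(E(\psi_0)-\gamma(m)\right)=:-\delta$ uniformly in $t$.

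Second, your substitute route --- the identity $E(\psi^{\mu^*})-E(\psi)=-\int_{\mu^*}^{1}\mu^{-1}P(\psi^\mu)\,d\mu$ combined with ``the monotone structure of $\mu\mapsto P(\psi^\mu)$ near $\mu=1$'' --- does not close as written. The integral inequality only bounds an average of $P(\psi^\mu)$ over $[\mu^*(t),1]$, not the endpoint value $P(\psi(t))$; $\mu\mapsto P(\psi^\mu)$ is not monotone in general (Lemma \ref{unique} yields only a single sign change), and $\mu^*(t)$ could a priori drift toward $1$ along the flow, destroying any uniform $\delta$. If you want to bypass the $\bar{E}$ machinery, the clean fix is Lemma \ref{unique1}: applying (\ref{unique2}) with $\theta=\theta(\psi(t))<0$ chosen so that $\kappa(\psi(t),\theta)\in V(m)$ gives $E(\psi(t))\geq\gamma(m)+\frac{2\left[1-e^{\frac{3(p-2)}{2}\theta}\right]}{3(p-2)}P(\psi(t))$ with a coefficient lying in $\left(0,\frac{2}{3(p-2)}\right)$, which produces the same $\delta$. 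Everything else in your proposal --- the choice and admissibility of $\psi_0$ (positivity and exponential decay via Lemmas \ref{positive-solution0} and \ref{exponential-decay}, hence finite variance), the invariance of $\mathcal{B}$ under the flow, and the kernel computation matching the nonlocal part of $P$ in the virial identity --- agrees with the paper.
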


Finally, as a byproduct, the global existence of certain solutions for Eq.(\ref{main6}) with $p\in(\frac{10}3, 6)$ is obtained in this paper.

\begin{theorem}\label{global}
Let $p\in(\frac{10}3, 6)$ and initial data $\psi_0\in H^1(\mathbb{R}^3)$ satisfy
$||\psi_0||_{L^2}=m$. If $$P(\psi_0)>0 \quad\hbox{and}\quad E(\psi_0)<\gamma(m),$$
then the solution of Eq.(\ref{main6}) associated with initial data $\psi_0$ exists globally in time.
\end{theorem}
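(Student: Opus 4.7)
The plan is to combine the local $H^1$-well-posedness of Zheng \cite{Zheng2022} with conservation of mass and energy, reducing global existence to a uniform-in-time bound on $\|\nabla\psi(t)\|_{L^2(\mathbb{R}^3)}$ via the blow-up alternative. Mass conservation already gives $\|\psi(t)\|_{L^2(\mathbb{R}^3)} \equiv m$, so only the kinetic energy needs to be controlled. The strategy is to show invariance of the set
\begin{equation*}
\Omega^+ := \bigl\{ u \in S(m) : P(u) > 0, \ E(u) < \gamma(m) \bigr\}
\end{equation*}
under the flow of \eqref{main6}, and then to extract the kinetic bound from a convex-combination argument involving $E$ and $P$.

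For the invariance step, energy conservation gives $E(\psi(t)) \equiv E(\psi_0) < \gamma(m)$ on the maximal interval $[0,T_{\max})$, and the continuity of $t \mapsto P(\psi(t))$ (inherited from $\psi \in C([0,T_{\max}),H^1(\mathbb{R}^3))$) allows me to argue by contradiction: if $P(\psi(t_1)) \le 0$ for some $t_1$, then since $P(\psi_0) > 0$ there is a first $t_0 \in (0, t_1]$ with $P(\psi(t_0)) = 0$, and mass conservation places $\psi(t_0) \in V(m)$; the variational identity $\gamma(m) = \inf_{u \in V(m)} E(u)$ from \eqref{Pohozaev-manifold}, established in the proof of Theorem \ref{critical-point}, then forces $E(\psi(t_0)) \ge \gamma(m)$, contradicting energy conservation. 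Hence $P(\psi(t)) > 0$ on $[0,T_{\max})$ and $\psi(t) \in \Omega^+$ throughout.

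To convert invariance into an a priori bound I form the combination $E - \lambda P$ with $\lambda := \tfrac{2}{3(p-2)}$, chosen to annihilate the $\|u\|_{L^p}^p$ term. The hypothesis $p > \tfrac{10}{3}$ is exactly equivalent to $\lambda < \tfrac{1}{2}$, so the coefficient of $\|\nabla u\|_{L^2}^2$ is strictly positive; moreover, since $\mathcal{K}\ge 0$ and $\lambda < 1$, the remaining nonlocal convolution term and the residual exponential double integral $\tfrac{\lambda}{4a}\iint e^{-|x-y|/a}|u(x)|^2|u(y)|^2\,dx\,dy$ are both nonnegative. Using $P(\psi(t)) > 0$ and $E(\psi(t)) < \gamma(m)$ along the flow, this yields
\begin{equation*}
\bigl(\tfrac{1}{2} - \lambda\bigr)\|\nabla \psi(t)\|_{L^2(\mathbb{R}^3)}^{2} \;\le\; E(\psi(t)) - \lambda P(\psi(t)) \;<\; \gamma(m),
\end{equation*}
which, combined with the mass constraint, gives a uniform $H^1$ bound, so the blow-up alternative forces $T_{\max} = +\infty$.

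The main obstacle is the invariance step: without the dual characterization $\gamma(m) = \inf_{u \in V(m)} E(u)$, nothing prevents $P(\psi(t))$ from crossing zero and the whole argument collapses. Fortunately this identity is exactly \eqref{Pohozaev-manifold}, supplied by the mountain-pass analysis underlying Theorem \ref{critical-point}. The coefficient $\lambda = \tfrac{2}{3(p-2)}$ is forced by the need to eliminate the mass-supercritical term, and its compatibility with coercivity in $\|\nabla u\|_{L^2}$ is precisely the reason the theorem is formulated in the range $p \in (\tfrac{10}{3}, 6)$.
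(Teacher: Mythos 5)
Your proposal is correct and follows essentially the same route as the paper: both establish invariance of the set $\{u\in S(m): P(u)>0,\ E(u)<\gamma(m)\}$ via the conservation laws, the continuity of $t\mapsto P(\psi(t))$, and the characterization $\gamma(m)=\inf_{u\in V(m)}E(u)$, and both then exploit the identity (\ref{EP}) for $E-\frac{2}{3(p-2)}P$ to control $\|\nabla\psi(t)\|_{L^2}$. The only cosmetic difference is that the paper phrases the final step as a contradiction through the blow-up alternative (unbounded gradient would force $P\to-\infty$ and hence a zero of $P$), whereas you extract the uniform kinetic bound directly from $P>0$ and $E<\gamma(m)$; the underlying computation is identical.
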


This paper is organized as follows. In Section 2, we introduce some lemmas such as the local well-posedness, the Hardy-Littlewood-Sobolev inequality. In Section 3, we establish the mountain-pass geometry of $E(u)$ on $S(m)$. In Section 4, we prove the convergence of the Palais-Smale sequence and we conclude the proof of Theorem \ref{critical-point}. In Section 5, we give the behaviour of the function $m\mapsto\gamma(m)$. In Section 6, we discuss the mass concentration behavior of the normalized ground states and prove Theorem \ref{concentration-behavior}. In Section 7, we show the radial symmetry and uniqueness of the normalized ground states for Eq.(\ref{main5}). In Section 8, we show the strong instability of standing waves and the global existence of Eq.(\ref{main6}).

Throughout the paper we make use of the following notations:
\begin{itemize}
 \item $||\cdot||_{L^r}$ denotes the usual norm of $L^r(\mathbb{R}^3)$ for $r\in [1, \infty)$, $||\cdot||_{L^\infty}$ denotes the norm of $L^{\infty}(\mathbb{R}^3)$, and $||\cdot||_{H^1}$ denotes the usual norm of $H^1(\mathbb{R}^3)$;
 \item we use the symbol $o_n(1)$ for a vanishing sequence in the specified space;
 \item if not specified, the domain of the integrals is $\mathbb{R}^3$;
 \item we use $C, C_1, C_2, \cdots$ to denote suitable positive constants whose value may also change
     from line to line;
 \item $B_R(x_0)$ denotes the ball centered at $x_0$ with radius $R>0$.
\end{itemize}

\renewcommand{\theequation}
{\thesection.\arabic{equation}}
\setcounter{equation}{0}
\section{Preliminaries}

In this section, we will give some preliminary results. Let us begin with recalling the local well-posedness for Eq.(\ref{main6}) obtained by Zheng \cite{Zheng2022}.

\begin{proposition}[Local well-posedness, \cite{Zheng2022}]\label{local}
Let $p\in(2, 6)$, $\psi_0\in H^1(\mathbb{R}^3)$. There exists a $T$ only depends on $||\psi_0||_{H^1} $, such that Eq.(\ref{main6}) has a unique solution
$$\psi\in\left\{\psi\in C([0,T], H^1(\mathbb{R}^3)):~ \psi, \nabla\psi\in L_t^{\infty}L_x^2\cap L_t^{\frac{4p}{3(p-2)}}L^p_x([0, T]\times\mathbb{R}^3)\right\}.$$
Let $[0, T^*)$ be the maximal time interval on which the solution $\psi$ is well-defined, if $T^*<\infty$, then $$||\nabla\psi||_{L^2}\rightarrow\infty~~ \hbox{as}~~ t\rightarrow T^*.$$
Moreover, the solution $\psi(t)$ enjoys conservation of mass and energy, i.e., $M(\psi(t))=||\psi(t)||^2_{L^2}=||\psi_0||^2_{L^2}$ and $E(\psi(t))=E(\psi_0)$ for all $t\in[0, T^*)$.
\end{proposition}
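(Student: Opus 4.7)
The plan is to prove local well-posedness by the standard Kato/Ginibre--Velo contraction scheme, treating the equation in its Duhamel form
$$\psi(t)=e^{it\Delta}\psi_0+i\int_0^t e^{i(t-s)\Delta}\bigl[(\mathcal{K}\ast|\psi|^2)\psi-|\psi|^{p-2}\psi\bigr]\,ds=:\Phi(\psi)(t),$$
and showing that $\Phi$ is a contraction on a closed ball of a suitable Strichartz space. The first observation is that the exponent pair $(q,r):=(\tfrac{4p}{3(p-2)},p)$ is Schr\"odinger-admissible in three dimensions since $\tfrac{2}{q}+\tfrac{3}{r}=\tfrac{3}{2}$; its dual is $(q',r')=(\tfrac{4p}{p+6},\tfrac{p}{p-1})$. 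I would therefore work in
$$X_T:=\Bigl\{\psi\in C([0,T];H^1):\ \|\psi\|_{L^\infty_t H^1_x}+\|\psi\|_{L^q_t W^{1,r}_x}\le 2C_S\|\psi_0\|_{H^1}\Bigr\},$$
equipped with the weaker metric $d(\psi,\varphi)=\|\psi-\varphi\|_{L^\infty_t L^2_x\cap L^q_t L^r_x}$, in which it is complete. Applying the inhomogeneous Strichartz estimate to $\Phi$ reduces everything to controlling the nonlinearity in $L^{q'}_t W^{1,r'}_x$.

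The next step is the nonlinear estimates. For the power term, the H\"older bound $\||\psi|^{p-2}\psi\|_{W^{1,r'}}\lesssim \|\psi\|_{L^r}^{p-2}\|\psi\|_{W^{1,r}}$ combined with H\"older in time yields
$$\bigl\|\,|\psi|^{p-2}\psi\bigr\|_{L^{q'}_tW^{1,r'}_x}\lesssim T^{\theta}\,\|\psi\|_{L^q_tW^{1,r}_x}^{p-1},\qquad \theta>0,$$
the positive power of $T$ being the decisive gain. For the nonlocal term one exploits the kernel bound $\mathcal{K}(x)\le\min(1/a,1/|x|)$, so that $\mathcal{K}\in L^\infty\cap L^{3+\epsilon}$; by Young's inequality together with $H^1\hookrightarrow L^2\cap L^6$ one obtains $\|\mathcal{K}\ast|\psi|^2\|_{L^\infty}\lesssim\|\psi\|_{H^1}^2$, and a similar bound for its gradient via $\nabla\mathcal{K}\in L^{3/2+\epsilon}$ (or via $\nabla(\mathcal{K}\ast|\psi|^2)=\mathcal{K}\ast\nabla|\psi|^2$). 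The Leibniz rule then gives
$$\bigl\|(\mathcal{K}\ast|\psi|^2)\psi\bigr\|_{W^{1,r'}}\lesssim \|\psi\|_{H^1}^2\,\|\psi\|_{W^{1,r'}}\lesssim T^{\theta'}\|\psi\|_{H^1}^2\,\|\psi\|_{W^{1,r}},$$
after a further H\"older in time. Choosing $T=T(\|\psi_0\|_{H^1})$ small enough so that these contribute a factor smaller than $1/2$ makes $\Phi$ map $X_T$ into itself and contract in $d$. Banach's theorem produces the unique fixed point.

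With a local solution in hand, the blow-up alternative follows by the standard continuation argument: the length of the maximal interval depends only on $\|\psi(t)\|_{H^1}$, so if $\|\nabla\psi(t)\|_{L^2}$ stayed bounded up to $T^\ast<\infty$ (mass is conserved for free by the $L^2$-isometry of the free propagator), one could reopen the contraction at times close to $T^\ast$ and extend beyond it. Conservation of mass and energy are obtained by testing the equation formally against $\bar\psi$ and $\partial_t\bar\psi$ respectively and using the self-adjointness of $u\mapsto(\mathcal{K}\ast|u|^2)u$, which follows from the symmetry $\mathcal{K}(x-y)=\mathcal{K}(y-x)$; the formal computation is justified by regularising $\psi_0$ to $H^3$ data (where the computation is legal by persistence of regularity) and passing to the limit using continuous dependence.

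The main obstacle I expect is the nonlocal Hartree-type term. Unlike the purely local power, it is cubic in $\psi$ and couples all three factors through the kernel $\mathcal{K}$, so one must simultaneously exploit (i) the mixed $L^\infty\cap L^q$ integrability of $\mathcal{K}$ and $\nabla\mathcal{K}$ near the origin versus at infinity, (ii) Sobolev embedding $H^1\hookrightarrow L^{2}\cap L^{6}$ to close the $\|\mathcal{K}\ast|\psi|^2\|_{L^\infty}$ bound, and (iii) a distribution of the gradient via Leibniz that still fits the Strichartz pair $(q',r')$. Organising these estimates so that a strict positive power of $T$ remains in front of every nonlinear term is the technical heart of the argument.
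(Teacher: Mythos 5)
The paper itself offers no proof of this proposition: it is quoted verbatim from Zheng \cite{Zheng2022}, where it is established by exactly the scheme you outline (Duhamel formula, Strichartz estimates, contraction mapping). So your overall strategy is the right one, and your admissibility computation for $(q,r)=(\tfrac{4p}{3(p-2)},p)$, your kernel bounds $\mathcal{K}\in L^{3+\epsilon}\cap L^\infty$, $\nabla\mathcal{K}\in L^{3/2+\epsilon}\cap L^\infty$ (the boundedness of $\nabla\mathcal{K}$ near the origin being the Bopp--Podolsky regularization at work), and your treatment of the conservation laws and the blow-up alternative are all sound. However, two of the nonlinear estimates you state---which you yourself call the technical heart---fail as written. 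For the power term, the claimed bound $\||\psi|^{p-2}\psi\|_{L^{q'}_tW^{1,r'}_x}\lesssim T^{\theta}\|\psi\|_{L^q_tW^{1,r}_x}^{p-1}$ with $\theta>0$ cannot hold when $p>\tfrac{10}{3}$: testing it on functions constant in time forces $\theta\le\tfrac{1}{q'}-\tfrac{p-1}{q}=1-\tfrac{p}{q}=\tfrac{10-3p}{4}$, which is negative precisely in the mass-supercritical range this paper is devoted to. The standard repair is to distribute the factors differently: bound $p-2$ of them in $L^\infty_t L^p_x$ via the Sobolev embedding $H^1\hookrightarrow L^p$ ($p<6$), keep a single factor in $L^q_tW^{1,r}_x$, and apply H\"older in time between $L^{q'}_t$ and $L^q_t$; this gives the factor $T^{1-2/q}=T^{(6-p)/(2p)}>0$ for all $p\in(2,6)$. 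Your space $X_T$ already carries the $L^\infty_tH^1$ norm, so the fix is available to you, but the step as stated is false.

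The gap in the Hartree term is more fundamental. In the chain $\|(\mathcal{K}\ast|\psi|^2)\psi\|_{W^{1,r'}}\lesssim\|\psi\|_{H^1}^2\|\psi\|_{W^{1,r'}}\lesssim T^{\theta'}\|\psi\|_{H^1}^2\|\psi\|_{W^{1,r}}$, the second inequality implicitly uses an embedding of $W^{1,r}(\mathbb{R}^3)$ into $W^{1,r'}(\mathbb{R}^3)$ with $r'=\tfrac{p}{p-1}<2<p=r$. No H\"older in time can produce this: H\"older improves integrability only in the time variable, and on the infinite-measure space $\mathbb{R}^3$ a lower Lebesgue exponent is never controlled by a higher one. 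The clean repair is to estimate this term against a different dual admissible pair, namely $(\infty,2)$: your own kernel bounds give $\|\mathcal{K}\ast|\psi|^2\|_{L^\infty}+\|\nabla(\mathcal{K}\ast|\psi|^2)\|_{L^\infty}\lesssim\|\psi\|_{H^1}^2$, hence $\|(\mathcal{K}\ast|\psi|^2)\psi\|_{L^1_tH^1_x}\le T\,\|\psi\|_{L^\infty_tH^1}^3$, and the inhomogeneous Strichartz estimate accepts this $L^1_tL^2_x$-type norm, yielding a clean factor of $T$. (Alternatively, stay with $(q',r')$ but use H\"older in space, pairing $\mathcal{K}\ast|\psi|^2\in L^{2p/(p-2)}_x$---note $\tfrac{2p}{p-2}>3$ exactly when $p<6$, so this is covered by the mapping properties of $\mathcal{K}\ast$---against $\psi\in L^2_x$, then H\"older in time with the crude exponent $T^{1/q'}$.) With these two corrections your contraction closes and the remainder of your argument goes through as the standard one.
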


\begin{lemma}[Hardy-Littlewood-Sobolev inequality, \cite{Lieb2001}]\label{HLS}
Let $s, r>1$ with $\frac1s+\frac1r+\frac13=2$. For $u\in L^s(\mathbb{R}^3)$, $v\in L^r(\mathbb{R}^3)$, there exists a constant $C(s, r)$ such that
$$\iint\frac{|u(x)v(y)|}{|x-y|}dxdy\leq C(s, r)||u||_{L^s}||v||_{L^r}.$$
\end{lemma}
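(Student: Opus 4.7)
The plan is to reduce the symmetric bilinear inequality to a one-sided mapping property of the convolution operator $T_K\colon v\mapsto K*v$ with kernel $K(x):=|x|^{-1}$, then establish that mapping property by a truncation argument followed by Marcinkiewicz interpolation. By Fubini (justified first for nonnegative $L^\infty$ functions of compact support, and then extended via monotone convergence), the double integral equals $\int u(x)(K*v)(x)\,dx$. H\"older's inequality with exponents $s,s'$ reduces the claim to showing
\begin{equation*}
\|K*v\|_{L^{s'}(\mathbb{R}^3)} \leq C\|v\|_{L^r(\mathbb{R}^3)}.
\end{equation*}
The exponent relation $1/s+1/r+1/3=2$ rewrites as $1/s'=1/r-2/3$. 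The assumption $s,r>1$, together with $1/s+1/r=5/3$, then forces $r,s\in(1,3/2)$, so in particular $r'>3$ and $s'\in(3,\infty)$.

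To prove the convolution bound, I would split $K=K\chi_{B_R(0)}+K\chi_{B_R(0)^c}$ for a parameter $R>0$ to be chosen depending on a level $\lambda$. The inner piece satisfies $\|K\chi_{B_R(0)}\|_{L^1}\leq CR^2$, and the outer piece satisfies $\|K\chi_{B_R(0)^c}\|_{L^{r'}}\leq CR^{3/r'-1}$ (convergence at infinity uses $r'>3$). Young's inequality $L^1*L^r\hookrightarrow L^r$ then gives $\|(K\chi_{B_R(0)})*v\|_{L^r}\leq CR^2\|v\|_{L^r}$, while H\"older applied pointwise gives $\|(K\chi_{B_R(0)^c})*v\|_{L^\infty}\leq CR^{3/r'-1}\|v\|_{L^r}$. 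Choosing $R=R(\lambda)$ so that the latter bound equals $\lambda/2$ and applying Chebyshev to the former produces the weak-type inequality
\begin{equation*}
|\{x\in\mathbb{R}^3:|(K*v)(x)|>\lambda\}|\leq C\left(\frac{\|v\|_{L^r}}{\lambda}\right)^{s'},
\end{equation*}
where the exponent $s'=3r/(3-2r)$ emerges algebraically from the optimal choice of $R$ and matches $1/s'=1/r-2/3$.

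To upgrade this weak-type estimate to a strong one, Marcinkiewicz interpolation is applied: perturbing $r$ slightly on either side yields two weak-type endpoints $(r_0,s_0')$ and $(r_1,s_1')$, both inside the admissible open range, and interpolation at the original $r$ delivers the strong-type bound $\|K*v\|_{L^{s'}}\leq C\|v\|_{L^r}$. Composing with the initial H\"older step produces the bilinear inequality with some constant $C(s,r)$.

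The main obstacle is keeping both endpoints inside the admissible open interval throughout the argument: the entire scheme breaks down at $s=1$ or $r=1$, because at these endpoints the weak-$L^1$ bound cannot be upgraded to a strong $L^1$ bound, and this is precisely the reason for the strict inequalities in the hypothesis. As a cleaner alternative that I would mention as a fallback, one can follow Lieb and Loss and use the Riesz rearrangement inequality to reduce to radially symmetric decreasing $u$ and $v$; the resulting quasi-one-dimensional integral can then be controlled by a dyadic decomposition of $\mathbb{R}^3$ together with Schur's test, giving the inequality without any appeal to interpolation.
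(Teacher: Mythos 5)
Your proposal is correct, but it cannot be compared against a proof in the paper because the paper gives none: Lemma \ref{HLS} is quoted as a known result from Lieb--Loss \cite{Lieb2001} and used as a black box. Your argument is the classical Stein-style proof of the fractional integration theorem, and the key computations check out: with $\frac1s+\frac1r=\frac53$ and $s,r>1$ one indeed gets $r,s\in(1,\frac32)$, hence $r'>3$, so $\|K\chi_{B_R^c}\|_{L^{r'}}\leq CR^{3/r'-1}$ converges, the optimal choice of $R(\lambda)$ yields the weak-type exponent $3r/(3-2r)=s'$, and since the relation $\frac1{s'}=\frac1r-\frac23$ is affine in $\frac1r$, the Marcinkiewicz interpolation between two perturbed exponents $r_0<r<r_1$ in the open interval $(1,\frac32)$ lands exactly on the pair $(r,s')$ (the off-diagonal hypotheses $r_i<s_i'$ and $s_0'\neq s_1'$ are satisfied). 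One small simplification: the Fubini step needs no truncation-and-limit argument, since the integrand $|u(x)v(y)|/|x-y|$ is nonnegative and Tonelli applies directly. The comparison with the cited source is worth noting: Lieb and Loss prove the inequality by rearrangement methods (Riesz rearrangement reduces to symmetric decreasing functions, and in the conformally invariant case they obtain the sharp constant), whereas your truncation-plus-interpolation route is more elementary in its ingredients and produces a non-sharp constant $C(s,r)$ that degenerates as $s\to1$ or $r\to1$ --- which is entirely sufficient here, since the paper only ever invokes the inequality qualitatively (e.g.\ to bound $\int(\mathcal{K}\ast|u|^2)|u|^2dx$ by $C\|u\|^4_{L^{12/5}}$), and your remark that the strict inequalities $s,r>1$ are exactly what keeps both interpolation endpoints admissible correctly identifies where the hypothesis enters.
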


\begin{corollary}
It is clear that
$$\int(\mathcal{K}\ast|u|^2)|u|^2dx\leq\int(\frac1{|x|}\ast|u|^2)|u|^2dx\leq C||u||^4_{L^{\frac{12}5}}.$$
\end{corollary}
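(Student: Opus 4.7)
The statement is a chained estimate with two inequalities, and I would prove each separately, both by entirely elementary means once the preceding Hardy--Littlewood--Sobolev lemma is available.

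For the first inequality, the plan is to establish a pointwise comparison between the kernels. Since $\mathcal{K}(x) = \frac{1 - e^{-|x|/a}}{|x|}$ and the exponential factor $e^{-|x|/a}$ is strictly positive for every $x \in \mathbb{R}^3$, one has $1 - e^{-|x|/a} \in [0,1)$, hence $0 \le \mathcal{K}(x) \le \frac{1}{|x|}$ pointwise almost everywhere. Multiplying by the nonnegative density $|u(y)|^2$ and integrating in $y$ yields $(\mathcal{K} \ast |u|^2)(x) \le (\tfrac{1}{|\cdot|} \ast |u|^2)(x)$ pointwise; then multiplying by $|u(x)|^2 \ge 0$ and integrating in $x$ gives the first inequality of the corollary.

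For the second inequality, the plan is to rewrite the convolution as a double integral and invoke Lemma \ref{HLS} with the single function $f := |u|^2$ playing the role of both $u$ and $v$. Specifically,
\[
\int \Bigl(\tfrac{1}{|x|} \ast |u|^2\Bigr)|u|^2\, dx = \iint \frac{|u(x)|^2\,|u(y)|^2}{|x-y|}\, dx\, dy.
\]
To apply the lemma I need exponents $s,r>1$ with $\tfrac{1}{s}+\tfrac{1}{r}+\tfrac{1}{3}=2$. Choosing the symmetric value $s=r=\tfrac{6}{5}$ verifies the constraint ($\tfrac{5}{6}+\tfrac{5}{6}+\tfrac{1}{3}=2$), and so Lemma \ref{HLS} bounds the double integral by $C\, \||u|^2\|_{L^{6/5}}^{2}$. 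Finally, the identity $\||u|^2\|_{L^{6/5}} = \|u\|_{L^{12/5}}^{2}$ (direct from the definition of $L^p$-norms) converts this into $C\,\|u\|_{L^{12/5}}^{4}$, which is the stated bound.

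There is no genuine obstacle here; the only point requiring any care is the bookkeeping of exponents in the Hardy--Littlewood--Sobolev step, which is why $s=r=6/5$ is the natural symmetric choice. The proof is essentially two lines and explains the author's phrase \emph{``It is clear that.''}
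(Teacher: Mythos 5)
Your proof is correct and is precisely the argument the paper intends: the paper states this corollary without proof immediately after Lemma \ref{HLS}, relying on exactly your two steps --- the pointwise kernel bound $0\le\mathcal{K}(x)\le\frac1{|x|}$ (since $0\le 1-e^{-|x|/a}<1$) and the Hardy--Littlewood--Sobolev inequality with $s=r=\frac65$ together with $\||u|^2\|_{L^{6/5}}=\|u\|_{L^{12/5}}^2$. Nothing is missing; your write-up simply makes explicit what the authors deem ``clear.''
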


 As we know, the homogeneous Sobolev spcae $\dot{H}^1(\mathbb{R}^3)$ is continuously embedded into $L^6(\mathbb{R}^3)$, and there exists the best constant $\mathcal{S}>0$ such that
\begin{align}\label{best-constant}
\mathcal{S}=\inf_{u\in \dot{H}^1(\mathbb{R}^3)\setminus\{0\}}\frac{\int|\nabla u|^2dx}{(\int|u|^6dx)^{\frac13}}.
\end{align}
Let $\mathcal{D}$ be the completion of $C^{\infty}_c$ with respect to the norm $||\cdot||_{\mathcal{D}}$ induced by the scalar product
$$\langle\varphi, \psi\rangle_{\mathcal{D}}:=\int\nabla\varphi\cdot\nabla\psi dx+a^2\int\Delta\varphi\Delta\psi dx.$$
Then $\mathcal{D}$ is an Hilbert space continuously embedded into $\dot{H}^1(\mathbb{R}^3)$ and consequently in $L^6(\mathbb{R}^3)$.

\begin{lemma}[\cite{dAvenia2019}]\label{continuously-embedded}
The Hilbert space $\mathcal{D}$ is continuously embedded in $L^{\infty}(\mathbb{R}^3)$.
\end{lemma}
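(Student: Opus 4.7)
The plan is to follow a direct Fourier-analytic route, exploiting that we are in dimension three and that the weight $|\xi|^2+a^2|\xi|^4$ grows strongly enough at infinity while vanishing only like $|\xi|^2$ at the origin, which is integrable against $d\xi$ in $\mathbb{R}^3$.

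First, I would work with $\varphi\in C_c^{\infty}(\mathbb{R}^3)$, the dense subset used to define $\mathcal{D}$. By Fourier inversion,
\begin{equation*}
\varphi(x)=(2\pi)^{-3/2}\int_{\mathbb{R}^3}e^{ix\cdot\xi}\,\hat{\varphi}(\xi)\,d\xi,
\end{equation*}
so $\|\varphi\|_{L^{\infty}}\le (2\pi)^{-3/2}\|\hat{\varphi}\|_{L^1}$. Inserting and removing the weight $w(\xi):=\sqrt{|\xi|^2+a^2|\xi|^4}$ and applying Cauchy--Schwarz yields
\begin{equation*}
\|\hat{\varphi}\|_{L^1}\le\left(\int_{\mathbb{R}^3}\frac{d\xi}{|\xi|^2+a^2|\xi|^4}\right)^{1/2}\left(\int_{\mathbb{R}^3}(|\xi|^2+a^2|\xi|^4)\,|\hat{\varphi}(\xi)|^2\,d\xi\right)^{1/2}.
\end{equation*}
By Plancherel the second factor equals $\|\varphi\|_{\mathcal{D}}$, since $\|\nabla\varphi\|_{L^2}^2+a^2\|\Delta\varphi\|_{L^2}^2$ translates on the Fourier side to $\int(|\xi|^2+a^2|\xi|^4)|\hat{\varphi}|^2d\xi$.

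Next I would verify the finiteness of the constant
\begin{equation*}
C_a^2:=\int_{\mathbb{R}^3}\frac{d\xi}{|\xi|^2(1+a^2|\xi|^2)}=4\pi\int_0^{\infty}\frac{dr}{1+a^2r^2}=\frac{2\pi^2}{a},
\end{equation*}
where the cancellation of the factor $r^2$ in spherical coordinates with the $|\xi|^2$ in the denominator is exactly the point where dimension three enters. Combining the two displayed estimates gives $\|\varphi\|_{L^{\infty}}\le (2\pi)^{-3/2}C_a\|\varphi\|_{\mathcal{D}}$ for every $\varphi\in C_c^{\infty}(\mathbb{R}^3)$.

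Finally I would upgrade this inequality to all of $\mathcal{D}$ by density. Any $\varphi\in\mathcal{D}$ is the $\|\cdot\|_{\mathcal{D}}$-limit of a Cauchy sequence $\{\varphi_n\}\subset C_c^{\infty}$, which by the pointwise bound above is also Cauchy in $L^{\infty}(\mathbb{R}^3)$, hence converges uniformly to a bounded continuous representative; identifying $\varphi$ with this representative gives the continuous inclusion $\mathcal{D}\hookrightarrow L^{\infty}(\mathbb{R}^3)$. The only non-routine point is the density step, where one must check that the $L^{\infty}$-limit agrees with the abstract element of $\mathcal{D}$ (for instance via the embedding $\mathcal{D}\hookrightarrow\dot H^1\hookrightarrow L^6$ already established, to identify both limits as the same distribution); once that is in place, the estimate passes to the limit and the lemma follows.
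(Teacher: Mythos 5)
Your proof is correct, and it is essentially the argument given in the cited source \cite{dAvenia2019} (the present paper only quotes the lemma): Fourier inversion, Cauchy--Schwarz against the weight $|\xi|^2+a^2|\xi|^4$, the finiteness of $\int_{\mathbb{R}^3}\bigl(|\xi|^2+a^2|\xi|^4\bigr)^{-1}d\xi$ in dimension three, and a density passage from $C_c^{\infty}(\mathbb{R}^3)$ to $\mathcal{D}$. The care you take in identifying the uniform limit with the abstract element of $\mathcal{D}$ via $\mathcal{D}\hookrightarrow L^6(\mathbb{R}^3)$ is the right way to close the density step.
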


\begin{lemma}[\cite{dAvenia2019}]\label{dense}
The space $C^{\infty}_c(\mathbb{R}^3)$ is dense in $\mathcal{A}$, where
$$\mathcal{A}:=\{\phi\in \dot{H}^1(\mathbb{R}^3):~ \Delta\phi\in L^2(\mathbb{R}^3)\}$$
normed by $\sqrt{\langle\phi, \phi\rangle_{\mathcal{D}}}$ and, consequently $\mathcal{D}=\mathcal{A}$.
\end{lemma}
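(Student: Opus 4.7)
\textbf{Proposal for Lemma \ref{dense}.} My plan is to carry out a standard two-step truncation-plus-mollification argument, with the decay at infinity controlled through the Sobolev embedding $\dot H^1(\mathbb{R}^3)\hookrightarrow L^6(\mathbb{R}^3)$. Fix $\phi\in\mathcal{A}$; I have to produce, for each $\varepsilon>0$, a function $\varphi\in C^\infty_c(\mathbb{R}^3)$ with $\|\phi-\varphi\|_{\mathcal D}<\varepsilon$, where the norm is $(\|\nabla\cdot\|_{L^2}^2+a^2\|\Delta\cdot\|_{L^2}^2)^{1/2}$. The last assertion $\mathcal D=\mathcal A$ then follows from the observation that $\mathcal A$ is itself complete under $\|\cdot\|_{\mathcal D}$: any Cauchy sequence gives Cauchy $\nabla\phi_n$ and $\Delta\phi_n$ in $L^2$, hence $\phi_n$ Cauchy in $L^6$ by Sobolev, with distributional limits matching.

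\emph{Step 1 (truncation).} Pick $\chi\in C^\infty_c(\mathbb{R}^3)$ with $\chi\equiv 1$ on $B_1$, $\chi\equiv 0$ off $B_2$, and set $\chi_R(x):=\chi(x/R)$, so $\|\nabla\chi_R\|_{L^\infty}\leq C/R$ and $\|\Delta\chi_R\|_{L^\infty}\leq C/R^2$, both supported in the annulus $A_R:=B_{2R}\setminus B_R$. Expand
\[
\nabla(\chi_R\phi)=\chi_R\nabla\phi+\phi\nabla\chi_R,\qquad \Delta(\chi_R\phi)=\chi_R\Delta\phi+2\nabla\chi_R\cdot\nabla\phi+\phi\Delta\chi_R.
\]
The "main" pieces $\chi_R\nabla\phi\to\nabla\phi$ and $\chi_R\Delta\phi\to\Delta\phi$ in $L^2$ by dominated convergence. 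The term $\nabla\chi_R\cdot\nabla\phi$ is controlled by $\|\nabla\chi_R\|_{L^\infty}\|\nabla\phi\|_{L^2(A_R)}\leq (C/R)\|\nabla\phi\|_{L^2(A_R)}\to 0$. For the two potentially dangerous terms involving $\phi$ itself, Hölder with $|A_R|^{2/3}\leq CR^2$ and the $\dot H^1\hookrightarrow L^6$ embedding give
\[
\|\phi\nabla\chi_R\|_{L^2}^2\leq \frac{C}{R^2}\,|A_R|^{2/3}\|\phi\|_{L^6(A_R)}^2\leq C\|\phi\|_{L^6(A_R)}^2,
\]
\[
\|\phi\Delta\chi_R\|_{L^2}^2\leq \frac{C}{R^4}\,|A_R|^{2/3}\|\phi\|_{L^6(A_R)}^2\leq \frac{C}{R^2}\|\phi\|_{L^6(A_R)}^2,
\]
and both right-hand sides vanish as $R\to\infty$ because $\phi\in L^6(\mathbb{R}^3)$. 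Therefore $\chi_R\phi\to\phi$ in the $\mathcal D$-norm.

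\emph{Step 2 (mollification).} Fix $R$ large so that $\|\phi-\chi_R\phi\|_{\mathcal D}<\varepsilon/2$. The truncation $\chi_R\phi$ has compact support and, by Step 1, $\nabla(\chi_R\phi),\Delta(\chi_R\phi)\in L^2$; since compact support and $\chi_R\phi\in L^6$ also give $\chi_R\phi\in L^2$, we have $\chi_R\phi\in H^2(\mathbb{R}^3)$ with compactly supported representative. Let $\rho_\delta$ be a standard mollifier and set $\varphi_\delta:=\rho_\delta\ast(\chi_R\phi)\in C^\infty_c(\mathbb{R}^3)$. Since $\partial^\alpha\varphi_\delta=\rho_\delta\ast\partial^\alpha(\chi_R\phi)$ for $|\alpha|\le 2$ and mollifiers converge to identity in $L^2$, we get $\nabla\varphi_\delta\to\nabla(\chi_R\phi)$ and $\Delta\varphi_\delta\to\Delta(\chi_R\phi)$ in $L^2$, hence $\varphi_\delta\to\chi_R\phi$ in $\mathcal D$-norm. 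Choose $\delta$ so small that $\|\chi_R\phi-\varphi_\delta\|_{\mathcal D}<\varepsilon/2$; the triangle inequality finishes the density, and completeness of $\mathcal A$ upgrades it to $\mathcal D=\mathcal A$.

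\emph{Expected main obstacle.} The only nontrivial point is the annular estimate for $\phi\nabla\chi_R$ and $\phi\Delta\chi_R$: since $\phi$ is a priori only in $\dot H^1$ and need not belong to $L^2$, one cannot simply bound $\|\phi\|_{L^2(A_R)}$, and the Hölder trick combined with the fact that the area factor $|A_R|^{2/3}\sim R^2$ exactly compensates $\|\nabla\chi_R\|_{L^\infty}^2\sim R^{-2}$ is what makes the argument work. The decay then comes purely from the absolute continuity of the $L^6$-norm of $\phi$ on the escaping annuli $A_R$.
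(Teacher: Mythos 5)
Your proof is correct, and it follows essentially the same route as the source this paper cites for the lemma: the paper itself gives no proof (Lemma \ref{dense} is quoted from \cite{dAvenia2019}), and the argument there is precisely a cut-off step with annular H\"older estimates exploiting the embedding $\dot H^1(\mathbb{R}^3)\hookrightarrow L^6(\mathbb{R}^3)$, followed by mollification of the compactly supported truncation, with $\mathcal{D}=\mathcal{A}$ deduced from completeness of $\mathcal{A}$ under $\|\cdot\|_{\mathcal D}$. The only cosmetic difference is that the reference bounds the troublesome terms via $\|\phi\nabla\chi_R\|_{L^2}\leq\|\nabla\chi_R\|_{L^3}\|\phi\|_{L^6(A_R)}$ (the $L^3$-norm of $\nabla\chi_R$ being scale-invariant), which is the same computation as your $L^\infty$-bound times $|A_R|^{2/3}$.
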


\begin{lemma}[\cite{dAvenia2019}]\label{property1}
For all $y\in\mathbb{R}^3$, $\mathcal{K}(\cdot-y)$ solves in the sense of distributions
$$-\Delta\phi+a^2\Delta^2\phi=4\pi\delta_y.$$
Moreover,
\begin{itemize}
 \item[(1)] if $f\in L^1_{loc}(\mathbb{R}^3)$ and, for a.e. $x\in\mathbb{R}^3$, the map $\mathbb{R}^3\ni y\mapsto\frac{f(y)}{|x-y|}$ is summable, then $\mathcal{K}\ast f\in L^1_{loc}(\mathbb{R}^3)$;
 \item[(2)] if $f\in L^s(\mathbb{R}^3)$ with $1\leq s<\frac32$, then $\mathcal{K}\ast f\in L^q(\mathbb{R}^3)$ for $q\in(\frac{3s}{3-2s}, +\infty]$.
\end{itemize}
In both cases $\mathcal{K}\ast f$ solves
$$-\Delta\phi+a^2\Delta^2\phi=4\pi f$$
in the sense of distributions, and we have the following distributional derivatives
$$\nabla(\mathcal{K}\ast f)=(\nabla\mathcal{K})\ast f \quad\hbox{and}\quad \Delta(\mathcal{K}\ast f)=(\Delta\mathcal{K})\ast f \quad\hbox{a.e. in}~~ \mathbb{R}^3.$$
\end{lemma}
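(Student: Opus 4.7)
I would split the proof into three stages: (i) prove the distributional identity $(-\Delta + a^2\Delta^2)\mathcal{K}(\cdot - y) = 4\pi\delta_y$; (ii) establish the integrability statements (1) and (2) for $\mathcal{K}*f$; (iii) derive the distributional PDE for $\mathcal{K}*f$ and the convolution identities $\nabla(\mathcal{K}*f)=(\nabla\mathcal{K})*f$ and $\Delta(\mathcal{K}*f)=(\Delta\mathcal{K})*f$. By translation invariance it suffices to do (i) at $y=0$.

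\textbf{Step (i): fundamental solution.} The key is the factorization $-\Delta + a^2\Delta^2 = (-\Delta)(I - a^2\Delta)$ together with the decomposition $\mathcal{K}(x) = \frac{1}{|x|} - \frac{e^{-|x|/a}}{|x|}$. I would invoke the two classical distributional identities in $\mathbb{R}^3$: the Newton kernel $-\Delta\frac{1}{|x|}=4\pi\delta_0$ and the Yukawa (Helmholtz) kernel $(-\Delta+\frac{1}{a^2})\frac{e^{-|x|/a}}{|x|}=4\pi\delta_0$. A direct computation in $\mathcal{D}'(\mathbb{R}^3)$ then gives $(I-a^2\Delta)\frac{1}{|x|} = \frac{1}{|x|} + 4\pi a^2\delta_0$ and $(I-a^2\Delta)\frac{e^{-|x|/a}}{|x|} = 4\pi a^2\delta_0$, whose difference is $(I-a^2\Delta)\mathcal{K} = \frac{1}{|x|}$. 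Applying $-\Delta$ finishes the identity.

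\textbf{Step (ii): integrability of $\mathcal{K}*f$.} The crucial observation is that $\mathcal{K}$ is bounded (by L'H\^{o}pital, $\mathcal{K}(x)\to 1/a$ as $|x|\to 0$) and satisfies $\mathcal{K}(x)\sim 1/|x|$ at infinity, hence $\mathcal{K}\in L^t(\mathbb{R}^3)$ for every $t\in(3,+\infty]$. For (2), Young's convolution inequality with $f\in L^s$ and $\mathcal{K}\in L^t$ produces $\mathcal{K}*f\in L^q$ with $\frac{1}{q}=\frac{1}{s}+\frac{1}{t}-1$ whenever $\frac{1}{s}+\frac{1}{t}\geq 1$, i.e.\ $t\in(3,s']$ where $s'=s/(s-1)$; this interval is nonempty because $s<3/2$ forces $s'>3$, and as $t$ sweeps it, $q$ sweeps precisely $(3s/(3-2s),+\infty]$. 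For (1), the pointwise bound $\mathcal{K}(x-y)\leq |x-y|^{-1}$ and the hypothesis that $y\mapsto f(y)/|x-y|$ is summable for a.e.\ $x$ immediately give that $(\mathcal{K}*f)(x)$ is finite a.e.; local integrability on a compact set $K$ then follows from Tonelli, splitting $f = f\chi_{B_R} + f\chi_{B_R^c}$ for a ball containing $K$: the near-field part is handled by $\mathcal{K}\in L^\infty$ and $f\chi_{B_R}\in L^1$, while the far-field part is controlled by the pointwise hypothesis and the bound $\mathcal{K}(x-y)\leq |x-y|^{-1}$.

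\textbf{Step (iii): distributional PDE and derivative identities.} For $\phi:=\mathcal{K}*f\in L^1_{\mathrm{loc}}$ and $\psi\in C_c^\infty(\mathbb{R}^3)$, Fubini (justified by step (ii) together with the compact support of $\psi$ and of all its derivatives) gives
$$\langle \phi,(-\Delta+a^2\Delta^2)\psi\rangle=\int f(y)\Bigl\langle \mathcal{K}(\cdot-y),(-\Delta+a^2\Delta^2)\psi\Bigr\rangle dy=4\pi\int f(y)\psi(y)\,dy,$$
the last equality coming from step (i). This is $\langle 4\pi f,\psi\rangle$, establishing the PDE in $\mathcal{D}'$. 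For the derivative formulas, I would observe that $\nabla\mathcal{K}$ is locally integrable (direct computation from the explicit expression: the singularity at the origin is of order $|x|^{-1}$, integrable in $\mathbb{R}^3$, with exponential/algebraic decay at infinity) and that the distributional Laplacian $\Delta\mathcal{K}=-e^{-|x|/a}/(a^2|x|)$ (a consequence of $\Delta K_1 = -4\pi\delta_0$ and $\Delta K_2 = -4\pi\delta_0 + (1/a^2)K_2$, whose difference cancels the Dirac masses) is itself a locally integrable function. Standard convolution-differentiation then delivers the two a.e.\ identities.

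\textbf{Main obstacle.} The principal delicacy lies in part (1): the hypothesis is strictly weaker than $f\in L^1$, so Young's inequality is not directly available and one must combine the pointwise summability assumption with the boundedness of $\mathcal{K}$ and $f\in L^1_{\mathrm{loc}}$ via a careful Fubini argument. The distributional identity of step (i), though elementary once the factorization is noted, also requires care to justify the integration by parts because $\mathcal{K}$ and $1/|x|$ are only locally integrable; the cleanest route is to interpret all operations in $\mathcal{D}'$ from the outset and rely on the two classical kernel identities rather than on pointwise computation.
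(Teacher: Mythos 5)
The paper does not prove this lemma at all --- it is quoted directly from \cite{dAvenia2019} as a known preliminary --- so the only meaningful comparison is with the proof in that reference, and your argument is essentially that proof: the decomposition $\mathcal{K}=\frac{1}{|x|}-\frac{e^{-|x|/a}}{|x|}$ into Coulomb and Yukawa kernels with their classical fundamental-solution identities, the observation that $\mathcal{K}$ is bounded and lies in $L^t(\mathbb{R}^3)$ for $t\in(3,+\infty]$ feeding Young's inequality for part (2), and Fubini against $(-\Delta+a^2\Delta^2)\psi$ for the distributional equation. Your outline is correct; the only steps left implicit are routine (for part (1), the far-field estimate needs the uniform comparison $|x-y|^{-1}\leq C|x_0-y|^{-1}$ for $x$ in a fixed compact set and $|y|$ large, where $x_0$ is a point at which the summability hypothesis holds, and the a.e.\ derivative identities require the same near/far Fubini justification using $|\nabla\mathcal{K}(x)|\leq C\min\{1,|x|^{-1}\}$ and $|\Delta\mathcal{K}(x)|=\frac{e^{-|x|/a}}{a^2|x|}\leq\frac{1}{a^2|x|}$), and none of these causes any difficulty.
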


Denote
$$\phi_u:=\mathcal{K}\ast |u|^2=\int\frac{1-e^{-\frac{|x-y|}a}}{|x-y|}|u(y)|^2dy,$$
then we have the following lemma:

\begin{lemma}[\cite{dAvenia2019}]\label{property2}
For every $u\in H^1({\mathbb{R}^3})$, we have
\begin{itemize}
 \item[(1)] for every $y\in\mathbb{R}^3$, $\phi_{u(\cdot+y)}=\phi_u(\cdot+y)$;
 \item[(2)] $\phi_u\geq 0$;
 \item[(3)] for every $s\in(3, +\infty]$, $\phi_u\in L^s(\mathbb{R}^3)\cap C_0(\mathbb{R}^3)$;
 \item[(4)] for every $s\in(\frac32, +\infty]$, $\nabla\phi_u=\nabla\mathcal{K}\ast |u|^2\in L^s(\mathbb{R}^3)\cap C_0(\mathbb{R}^3)$;
 \item[(5)] $\phi_u\in\mathcal{D}$;
 \item[(6)] $||\phi_u||_{L^6}\leq C||u||^2_{H^1}$;
 \item[(7)] $\phi_u$ is the unique minimizer of the functional
  $$F(\phi)=\frac12||\nabla\phi||^2_{L^2}+\frac{a^2}2||\Delta\phi||^2_{L^2}-\int\phi|u|^2dx, \quad \phi\in\mathcal{D}.$$
\end{itemize}
Moreover, if $v_n\rightharpoonup v$ in $H^1(\mathbb{R}^3)$, then $\phi_{v_n}\rightharpoonup\phi_v$ in $\mathcal{D}$.
\end{lemma}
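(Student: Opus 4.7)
The plan is to handle the seven items and the weak-continuity assertion in groups, exploiting throughout the explicit form $\mathcal{K}(x)=\frac{1-e^{-|x|/a}}{|x|}$. Items (1) and (2) are immediate: a change of variables $z\mapsto z-y$ in the defining integral gives $\phi_{u(\cdot+y)}(x)=\int\mathcal{K}(x-z+y)|u(z)|^2\,dz=\phi_u(x+y)$, while $\mathcal{K}\geq 0$ (because $e^{-|x|/a}\leq 1$) together with $|u|^2\geq 0$ yields $\phi_u\geq 0$.

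For (3) and (4), I would first record the pointwise behavior of the kernel. A Taylor expansion at the origin gives $\mathcal{K}(x)=1/a-|x|/(2a^2)+O(|x|^2)$, so $\mathcal{K}$ is bounded near $0$; at infinity $\mathcal{K}(x)\sim|x|^{-1}$, hence $\mathcal{K}\in L^s(\mathbb{R}^3)$ for every $s\in(3,\infty]$. A similar expansion shows that the singular contributions of $\nabla(1/|x|)$ and of $\nabla(e^{-|x|/a}/|x|)$ near the origin cancel, so $\nabla\mathcal{K}$ is bounded there, while $|\nabla\mathcal{K}(x)|\lesssim|x|^{-2}$ at infinity; thus $\nabla\mathcal{K}\in L^s$ for every $s\in(3/2,\infty]$. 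Since $|u|^2\in L^1(\mathbb{R}^3)$, Young's convolution inequality immediately yields the claimed $L^s$-bounds on $\phi_u$ and on $\nabla\phi_u=(\nabla\mathcal{K})\ast|u|^2$ (the interchange of gradient and convolution coming from Lemma \ref{property1}). The $C_0$ conclusion then follows by approximating $u$ in $L^2$ by $C_c^{\infty}$ functions, for which $\mathcal{K}\ast|u|^2$ and its gradient are manifestly continuous and vanish at infinity, and upgrading via the uniform $L^\infty$ estimate just obtained.

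For (5)--(7) I would use the distributional equation $-\Delta\phi_u+a^2\Delta^2\phi_u=4\pi|u|^2$ furnished by Lemma \ref{property1}. Testing against $\phi_u$ itself (justified by density of $C_c^{\infty}$ in $\mathcal{D}$ via Lemma \ref{dense} and a truncation argument) yields
\begin{equation*}
\|\nabla\phi_u\|_{L^2}^2+a^2\|\Delta\phi_u\|_{L^2}^2=4\pi\int|u|^2\phi_u\,dx.
\end{equation*}
Using H\"older, the sharp Sobolev inequality (\ref{best-constant}) giving $\|\phi_u\|_{L^6}\leq\mathcal{S}^{-1/2}\|\nabla\phi_u\|_{L^2}$, and the embedding $H^1(\mathbb{R}^3)\hookrightarrow L^{12/5}(\mathbb{R}^3)$, I bound the right-hand side by $C\|u\|_{H^1}^2\|\phi_u\|_{\mathcal{D}}$, which after absorption delivers $\|\phi_u\|_{\mathcal{D}}\leq C\|u\|_{H^1}^2$, hence both (5) and (6). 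For (7), note $F$ is a strictly convex, coercive quadratic on the Hilbert space $\mathcal{D}$, so it admits a unique minimizer characterized by the Euler--Lagrange identity $\langle\phi,\psi\rangle_{\mathcal{D}}=\int\psi|u|^2\,dx$ for all $\psi\in\mathcal{D}$, which is exactly the weak formulation of the Bopp--Podolsky equation solved by $\phi_u$.

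For the weak continuity: if $v_n\rightharpoonup v$ in $H^1$ then $\{v_n\}$ is bounded in $H^1$, so $\{\phi_{v_n}\}$ is bounded in $\mathcal{D}$ by (6), and along a subsequence $\phi_{v_n}\rightharpoonup\Phi$ for some $\Phi\in\mathcal{D}$. For any $\psi\in C_c^{\infty}(\mathbb{R}^3)$, in the identity $\langle\phi_{v_n},\psi\rangle_{\mathcal{D}}=4\pi\int|v_n|^2\psi\,dx$ the left side converges by weak convergence, and the right side by Rellich's theorem on any ball containing $\mathrm{supp}\,\psi$, so $\Phi$ satisfies $\langle\Phi,\psi\rangle_{\mathcal{D}}=4\pi\int|v|^2\psi\,dx$; uniqueness from (7) identifies $\Phi=\phi_v$, and subsequence-independence promotes convergence to the whole sequence. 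The main obstacle I anticipate is the rigorous justification of ``testing with $\phi_u$'' in the energy identity when $\phi_u$ is a priori only distributional; I plan to sidestep this by first proving all bounds for $u\in C_c^{\infty}(\mathbb{R}^3)$, where $\phi_u$ is classical and computations are elementary, and then extending to all of $H^1$ by continuity of the map $u\mapsto\phi_u$ afforded by the Young/Sobolev estimates.
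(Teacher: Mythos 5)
This lemma is quoted in the paper directly from \cite{dAvenia2019} and is given no proof there, so there is no internal argument to compare against; what you have written is, in substance, a correct self-contained reconstruction of the proof in that reference. Items (1)--(2) are exactly as you say. For (3)--(4), your kernel bounds are right: $\mathcal{K}(x)=\frac1a-\frac{|x|}{2a^2}+O(|x|^2)$ near the origin, $\mathcal{K}(x)\sim|x|^{-1}$ and $|\nabla\mathcal{K}(x)|\lesssim|x|^{-2}$ at infinity (the radial derivative $\frac{(r/a)e^{-r/a}-1+e^{-r/a}}{r^2}$ tends to $-\frac1{2a^2}$ as $r\to0$, confirming the claimed cancellation), so Young's inequality against $|u|^2\in L^1$ gives the $L^s$ bounds in the stated ranges, and the $C_0$ statements follow by the uniform approximation you describe, since $\|\mathcal{K}\ast(|u_n|^2-|u|^2)\|_{L^\infty}\le\|\mathcal{K}\|_{L^\infty}\|u_n-u\|_{L^2}\|u_n+u\|_{L^2}$. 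For (5)--(7), working first with $u\in C_c^\infty(\mathbb{R}^3)$ (where $\phi_u$ is smooth, lies in $\mathcal{A}=\mathcal{D}$ by Lemma \ref{dense}, and integration by parts is classical) and then extending by the linear-in-source estimate $\|\mathcal{K}\ast f\|_{\mathcal{D}}\le C\|f\|_{L^{6/5}}$ applied to $f=|u_n|^2-|u_m|^2$ is a legitimate way to avoid testing a merely distributional solution; this is equivalent to the Riesz-representation argument that the paper itself invokes in its introduction. The weak-continuity argument (testing against $\psi\in C_c^\infty$, Rellich on a ball containing the support of $\psi$, density, uniqueness, and the subsequence trick) is also correct.

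One point does need fixing: the factor $4\pi$. Under the paper's normalization, Lemma \ref{property1} gives $-\Delta\phi_u+a^2\Delta^2\phi_u=4\pi|u|^2$, i.e. $\langle\phi_u,\psi\rangle_{\mathcal{D}}=4\pi\int|u|^2\psi\,dx$ for all $\psi\in\mathcal{D}$. The Euler--Lagrange identity of the functional $F$ as written in item (7) is $\langle\phi,\psi\rangle_{\mathcal{D}}=\int|u|^2\psi\,dx$, whose unique solution is $\frac1{4\pi}\phi_u$, not $\phi_u$; so item (7) as transcribed in this paper is inconsistent with Lemma \ref{property1} unless the functional carries $-4\pi\int\phi|u|^2\,dx$. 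Your proof silently inherits this inconsistency: your energy identity correctly has $4\pi\int|u|^2\phi_u\,dx$ on the right-hand side, yet two lines later you assert that the Euler--Lagrange identity \emph{without} the $4\pi$ ``is exactly the weak formulation'' of the equation solved by $\phi_u$. Both statements cannot hold simultaneously. The mathematics is fine once the constant is carried consistently; you should either prove (7) for the corrected functional $F(\phi)=\frac12\|\phi\|^2_{\mathcal{D}}-4\pi\int\phi|u|^2\,dx$, or note explicitly that with the paper's literal $F$ the unique minimizer is $\phi_u/(4\pi)$.
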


Now, we define the function $\Psi: H^1(\mathbb{R}^3)\rightarrow\mathbb{R}$ by
$$\Psi(u)=\int\phi_u(x)|u(x)|^2 dx.$$
It is clear that for all fixed $u\in H^1(\mathbb{R}^3)$ then $\Psi(u(\cdot+y))=\Psi(u)$ for any $y\in\mathbb{R}^3$ and that $\Psi$ is weakly lower semi-continuous in $H^1(\mathbb{R}^3)$. The next lemma shows that the functional $\Psi$ and its derivative $\Psi^{'}$ have the splitting property, which is similar to the well-known Br\'{e}zis-Lieb lemma.

\begin{lemma}[\cite{dAvenia2019}]\label{BL}
If $u_n\rightharpoonup u$ in $H^1(\mathbb{R}^3)$ and $u_n\rightarrow u$ a.e. in $\mathbb{R}^3$, then
\begin{itemize}
 \item[(1)] $\Psi(u_n-u)=\Psi(u_n)-\Psi(u)+o_n(1)$;
 \item[(2)] $\Psi^{'}(u_n-u)=\Psi^{'}(u_n)-\Psi^{'}(u)+o_n(1).$
\end{itemize}
\end{lemma}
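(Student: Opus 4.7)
The strategy is to view $\Psi$ through the bounded bilinear form
\[
B(f,g):=\iint \mathcal{K}(x-y)\,f(x)\,g(y)\,dx\,dy,
\]
so that $\Psi(u)=B(|u|^2,|u|^2)$. Since $\mathcal{K}(z)\le 1/|z|$, the Hardy--Littlewood--Sobolev inequality (Lemma~\ref{HLS}) makes $B$ continuous on $L^{6/5}\times L^{6/5}$, and the Sobolev embedding $H^1(\mathbb{R}^3)\hookrightarrow L^{12/5}(\mathbb{R}^3)$ guarantees $|u|^2\in L^{6/5}$. Set $w_n:=u_n-u$, so that $w_n\rightharpoonup 0$ in $H^1$ and $w_n\to 0$ a.e.; the Rellich theorem then gives $w_n\to 0$ strongly in $L^p(B_R)$ for every $R>0$ and every $p\in[2,6)$.

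For part~(1), I would first establish the auxiliary strong convergence $w_n u\to 0$ in $L^{6/5}$: outside a large ball, H\"older with $u\in L^{12/5}$ controls the tail uniformly in $n$, while inside the ball local Rellich delivers $\|w_n\|_{L^{12/5}(B_R)}\to 0$. Combined with $|u_n|^2-|w_n|^2=|u|^2+2\mathrm{Re}(w_n\bar u)$, this yields $a_n:=|u_n|^2-|w_n|^2\to |u|^2$ strongly in $L^{6/5}$. Bilinearity rewrites
\[
\Psi(u_n)-\Psi(w_n)=B(a_n,|u_n|^2)+B(|w_n|^2,a_n),
\]
and it remains to observe that $|u_n|^2\rightharpoonup|u|^2$ and $|w_n|^2\rightharpoonup 0$ weakly in $L^{6/5}$ (boundedness plus pointwise convergence in a reflexive space); together with the $L^{6/5}$-strong convergence of $a_n$ to $|u|^2$, these yield $B(a_n,|u_n|^2)\to B(|u|^2,|u|^2)=\Psi(u)$ and $B(|w_n|^2,a_n)\to 0$.

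For part~(2), the identity $\phi_{u_n}=\phi_{w_n}+\phi_u+2\mathcal{K}\ast\mathrm{Re}(w_n\bar u)$ and direct expansion produce the cancellation
\[
\phi_{u_n}u_n-\phi_{w_n}w_n-\phi_u u=\phi_{w_n}u+\phi_u w_n+2\bigl(\mathcal{K}\ast\mathrm{Re}(w_n\bar u)\bigr)(w_n+u),
\]
and the goal is to show that each of the three summands on the right tends to $0$ in $H^{-1}$ when paired with a test function $v\in H^1$ with $\|v\|_{H^1}\le 1$. For $\phi_{w_n}u$, I invoke $\phi_{w_n}\rightharpoonup 0$ in $\mathcal{D}$ (last assertion of Lemma~\ref{property2}) and the uniform bound $\|\phi_{w_n}\|_{L^\infty}\le C\|w_n\|_{H^1}^2$ from Lemma~\ref{continuously-embedded}; Rellich applied via $\mathcal{D}\hookrightarrow\dot H^1$ gives $\phi_{w_n}\to 0$ strongly in $L^3(B_R)$ for every $R$. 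Splitting the integral as $B_R\cup B_R^c$, the contribution on $B_R^c$ is bounded by $\|\phi_{w_n}\|_{L^\infty}\|u\|_{L^2(B_R^c)}\|v\|_{L^2}$ (small because $u\in L^2$), while on $B_R$ it is at most $\|\phi_{w_n}\|_{L^3(B_R)}\|u\|_{L^3}\|v\|_{L^3}\to 0$. The term $\phi_u w_n$ is treated by the same split using $\phi_u\in C_0(\mathbb{R}^3)$ from Lemma~\ref{property2}(3) (which makes $\|\phi_u\|_{L^\infty(B_R^c)}\to 0$) and local Rellich compactness of $w_n$ in $L^2$. For the last summand, the strong $L^{6/5}$-convergence $w_n u\to 0$ established in part~(1) together with HLS gives $\mathcal{K}\ast(w_n u)\to 0$ in $L^6$, so
\[
\Bigl|\int\bigl(\mathcal{K}\ast\mathrm{Re}(w_n\bar u)\bigr)(w_n+u)v\,dx\Bigr|\le\|\mathcal{K}\ast(w_n u)\|_{L^6}\|w_n+u\|_{L^{12/5}}\|v\|_{L^{12/5}}\to 0
\]
uniformly in $\|v\|_{H^1}\le 1$. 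Summing the three vanishing contributions proves (2).

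\textbf{Main obstacle.} The critical difficulty lies in controlling $\phi_{w_n}u$, because $\phi_{w_n}$ itself need \emph{not} vanish globally: a translating bump $w_n(\cdot)=\varphi(\cdot-ne_1)$ already yields $\|\phi_{w_n}\|_{L^6}\not\to 0$. What rescues the argument is the interplay of three facts specific to the Bopp--Podolsky setting, none of which is available for a generic nonlocal term: the $L^\infty$-bound coming from the embedding $\mathcal{D}\hookrightarrow L^\infty$ (Lemma~\ref{continuously-embedded}), the local Rellich compactness on $\mathcal{D}$, and the decay of $u\in L^2$ at infinity; only their combined use through the $B_R/B_R^c$ split closes the estimate.
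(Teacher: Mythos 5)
Your proof is correct. Note first that this paper contains no argument of its own for Lemma \ref{BL}: the statement is quoted from d'Avenia--Siciliano \cite{dAvenia2019}, so there is no internal proof to compare against, and your write-up in effect supplies the missing argument. Your route --- viewing $\Psi$ as the quadratic form of the symmetric bilinear map $B$, which is bounded on $L^{6/5}\times L^{6/5}$ by Lemma \ref{HLS} and the bound $\mathcal{K}(z)\le 1/|z|$; reducing (1) to the strong $L^{6/5}$ convergence $w_n u\to 0$ together with the weak $L^{6/5}$ convergences $|u_n|^2\rightharpoonup|u|^2$, $|w_n|^2\rightharpoonup 0$; and handling (2) through the exact decomposition $\phi_{u_n}u_n-\phi_{w_n}w_n-\phi_u u=\phi_{w_n}u+\phi_u w_n+2\bigl(\mathcal{K}\ast\mathrm{Re}(w_n\bar u)\bigr)(w_n+u)$ --- is the standard Brezis--Lieb mechanism for Hartree-type nonlocal terms and matches the strategy of the cited reference. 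The individual steps check out: the $B_R/B_R^c$ splits are legitimate because $u\in L^2\cap L^{12/5}$ gives tails that are uniform in $n$ while Rellich gives local strong convergence, and the uniformity in $\|v\|_{H^1}\le 1$ required for $H^{-1}$ convergence is genuinely present in your estimates since the radius $R$ is fixed before letting $n\to\infty$. One cosmetic point: the bound $\|\phi_{w_n}\|_{L^\infty}\le C\|w_n\|_{H^1}^2$ is not literally the content of Lemma \ref{continuously-embedded}; it needs the intermediate estimate $\|\phi_{w_n}\|_{\mathcal{D}}\le C\|w_n\|_{H^1}^2$, obtained by testing $-\Delta\phi_{w_n}+a^2\Delta^2\phi_{w_n}=4\pi|w_n|^2$ against $\phi_{w_n}$ and using $\mathcal{D}\hookrightarrow L^6$; alternatively, the weak convergence $\phi_{w_n}\rightharpoonup 0$ in $\mathcal{D}$ (last assertion of Lemma \ref{property2}) already forces $\sup_n\|\phi_{w_n}\|_{\mathcal{D}}<\infty$, which is all your argument actually uses.
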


We end this section by the following lemma.

\begin{lemma}[\cite{Lions1984-2}]\label{vanishing}
Assume that $\{u_n\}_{n=1}^{\infty}$ is bounded sequence in $H^1(\mathbb{R}^3)$ such that
$$\limsup_{n\rightarrow\infty}\int_{B_R(y)}|u_n(x)|^2dx=0$$
for some $R>0$. Then $u_n\rightarrow0$ in $L^r(\mathbb{R}^3)$ for every $r$, with $2<r<6$.
\end{lemma}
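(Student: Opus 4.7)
The plan is a covering argument combined with local interpolation. First I would cover $\mathbb{R}^3$ by a countable family of balls $\{B_R(y_k)\}_{k\geq 1}$ with bounded overlap --- for instance by placing the centres on a cubic lattice of sufficiently small side --- so that $\sum_k \mathbf{1}_{B_R(y_k)}(x) \leq N$ for some $N = N(R)$. Writing $\sigma_n := \sup_{y\in\mathbb{R}^3}\int_{B_R(y)}|u_n|^2\,dx$, the vanishing hypothesis (read in the usual sense with a supremum over $y$) says $\sigma_n \to 0$, while $\|u_n\|_{H^1}$ stays bounded.

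The core step is to establish a local estimate of the form
$$\int_{B_R(y)}|u|^r\,dx \;\leq\; C\,\Bigl(\int_{B_R(y)}|u|^2\,dx\Bigr)^{\alpha}\,\|u\|_{H^1(B_R(y))}^{\beta}$$
with $\alpha>0$ and, crucially, $\beta\geq 2$. Granted such a bound, summation over $k$, combined with the elementary inequality $\sum_k c_k^s \leq \bigl(\sum_k c_k\bigr)^s$ valid for $s\geq 1$ and $c_k\geq 0$ (applied with $s=\beta/2$ and $c_k = \|u_n\|_{H^1(B_R(y_k))}^2$), and with bounded overlap, yields
$$\int_{\mathbb{R}^3}|u_n|^r\,dx \;\leq\; C\,\sigma_n^{\alpha}\,\|u_n\|_{H^1}^{\beta} \;\longrightarrow\; 0.$$

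To produce the local bound I would split the range $r\in(2,6)$ at the $L^2$-critical exponent $10/3$. For $r\in(2,10/3]$, Hölder interpolation between $L^2$ and $L^p$ with $p=4/(4-r)\in[2,6]$ gives $\|u\|_{L^r(B_R)}^{\,r} \leq \|u\|_{L^2(B_R)}^{\,r-2}\,\|u\|_{L^p(B_R)}^{\,2}$, and the Sobolev embedding $\|u\|_{L^p(B_R)} \leq C\,\|u\|_{H^1(B_R)}$ then produces $(\alpha,\beta) = ((r-2)/2,\,2)$. For $r\in[10/3,6)$, Hölder interpolation between $L^2$ and $L^6$ followed by the Sobolev embedding $\|u\|_{L^6(B_R)}\leq C\,\|u\|_{H^1(B_R)}$ gives $(\alpha,\beta) = ((6-r)/4,\,3(r-2)/2)$, and the condition $\beta\geq 2$ is exactly $r\geq 10/3$.

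The subtle point I expect will matter is that no single interpolation pair $(L^2,L^q)$ produces an $H^1$-exponent $\beta\geq 2$ simultaneously on both sides of $10/3$, forcing the two sub-cases above; they meet precisely at the $L^2$-critical Gagliardo--Nirenberg exponent $r=10/3$. Once the exponents are matched so that $\beta\geq 2$ and $\alpha>0$, the rest is routine: bounded overlap, Hölder, the Sobolev embedding on a fixed ball, plus $\sigma_n^{\alpha}\to 0$ and the uniform $H^1$ bound on $\{u_n\}$.
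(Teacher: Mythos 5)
Your proof is correct. Note first that the paper does not prove this lemma at all: it is quoted verbatim (with the $\sup_{y\in\mathbb{R}^3}$ inadvertently dropped from the hypothesis) from Lions' concentration--compactness paper, so there is no ``paper proof'' to compare against. What you have written is essentially the classical argument of Lions/Willem: a bounded-overlap covering of $\mathbb{R}^3$ by balls of radius $R$, a local interpolation--Sobolev estimate $\int_{B_R(y)}|u|^r\leq C(\int_{B_R(y)}|u|^2)^{\alpha}\|u\|_{H^1(B_R(y))}^{\beta}$ with $\beta\geq2$, and summation using $\sum_k c_k^{\beta/2}\leq(\sum_k c_k)^{\beta/2}$ together with $\sum_k\|u\|_{H^1(B_R(y_k))}^2\leq N\|u\|_{H^1}^2$. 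Your exponents check out: for $r\in(2,10/3]$ the pair $(L^2,L^{4/(4-r)})$ gives $(\alpha,\beta)=((r-2)/2,2)$ with $4/(4-r)\in(2,6]$ and $4/(4-r)\geq r$ since $(r-2)^2\geq0$; for $r\in[10/3,6)$ the pair $(L^2,L^6)$ gives $(\alpha,\beta)=((6-r)/4,3(r-2)/2)$ and $\beta\geq2$ exactly when $r\geq10/3$. Two points you handled well and that are genuinely needed: reading the hypothesis with the supremum over $y$ (without it the lemma is false, as a translating bump shows), and the fact that the Sobolev constant on $B_R(y)$ is independent of $y$ by translation invariance. The only cosmetic alternative worth mentioning is Willem's shortcut: prove the conclusion for the single exponent where $\beta=2$ and then deduce all $r\in(2,6)$ by interpolating against the uniform $L^2$ and $L^6$ bounds; your direct two-case treatment achieves the same thing with no loss.
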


\renewcommand{\theequation}
{\thesection.\arabic{equation}}
\setcounter{equation}{0}
\section{The mountain-pass geometry on the constraint} \noindent

In this section, we will discuss the mountain-pass geometry of the functional $E(u)$ on the $L^2$-constraint $S(m)$. We first introduce the scaling, for $u\in S(m)$, we set $\kappa(u, \theta)(x)=e^{\frac32 \theta}u(e^{\theta}x), \theta\in\mathbb{R}$. Then, we establish some lemmas as follows.

\begin{lemma}\label{lemma1}
For $m>0$, suppose $u\in S(m)$ and $p\in(\frac{10}3, 6)$, we have
\begin{itemize}
 \item[(1)] $||\nabla\kappa(u, \theta)||_{L^2}\rightarrow0^{+}$ and $E(\kappa(u, \theta))\rightarrow0^+$ as $\theta\rightarrow-\infty$;
 \item[(2)] $||\nabla\kappa(u, \theta)||_{L^2}\rightarrow+\infty$ and $E(\kappa(u, \theta))\rightarrow-\infty$ as $\theta\rightarrow+\infty$;
 \item[(3)] there exists $C_0>0$ such that $P(u)>0$ if $||\nabla u||_{L^2}\leq C_0$, and $||u||^p_{L^p}\geq C_0$ if $P(u)=0$;
 \item[(4)] if $E(u)<0$, then $P(u)<0$.
\end{itemize}
\end{lemma}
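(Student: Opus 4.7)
The plan is to derive scaling identities for each term of $E$ and $P$, read off (1)--(2) by tracking exponents, then exploit a single pointwise inequality on the Bopp--Podolsky kernel $\mathcal{K}$ to control the sign of the nonlocal combination in $P$, closing (3) with Gagliardo--Nirenberg and (4) with an algebraic identity.

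First I would compute, via the change of variables $x\mapsto e^{-\theta}x$, the identities $||\kappa(u,\theta)||_{L^2}=||u||_{L^2}$, $||\nabla\kappa(u,\theta)||_{L^2}^2 = e^{2\theta}||\nabla u||_{L^2}^2$, $||\kappa(u,\theta)||_{L^p}^p = e^{3(p-2)\theta/2}||u||_{L^p}^p$, together with
$$\int(\mathcal{K}\ast|\kappa(u,\theta)|^2)|\kappa(u,\theta)|^2\,dx = e^\theta\iint\frac{1-e^{-|x-y|/(ae^\theta)}}{|x-y|}|u(x)|^2|u(y)|^2\,dxdy$$
and the analogous expression for the $\iint e^{-|x-y|/a}$ piece of $P$. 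Since $p>10/3$ gives $3(p-2)/2>2>1$, as $\theta\to-\infty$ dominated convergence (with $|x-y|^{-1}$ as the Hardy--Littlewood--Sobolev envelope) yields a nonlocal term equal to $e^\theta K(u)+o(e^\theta)$ with $K(u)>0$, and this positive piece dominates both the kinetic $e^{2\theta}$ and the $L^p$ $e^{3(p-2)\theta/2}$ contributions, so $E(\kappa(u,\theta))\to 0^+$ and $||\nabla\kappa(u,\theta)||_{L^2}\to 0^+$. As $\theta\to+\infty$, the bound $1-e^{-s}\le s$ provides the uniform envelope $1/a$, so the nonlocal term stays bounded (in fact tends to $m^4/a$), while $e^{3(p-2)\theta/2}$ outgrows $e^{2\theta}$, forcing $E(\kappa(u,\theta))\to-\infty$ and $||\nabla\kappa(u,\theta)||_{L^2}\to+\infty$; this yields (1) and (2).

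For (3), the elementary observation that $e^s\geq 1+s$ rearranges to $\mathcal{K}(z)\geq a^{-1}e^{-|z|/a}$, so the two nonlocal terms of $P$ combine into a nonnegative quantity. Hence
$$P(u)\geq ||\nabla u||_{L^2}^2 - \tfrac{3(p-2)}{2p}||u||_{L^p}^p \geq ||\nabla u||_{L^2}^2 - Cm^{(6-p)/2}||\nabla u||_{L^2}^{3(p-2)/2}$$
by Gagliardo--Nirenberg. Since $3(p-2)/2>2$, the right-hand side is positive once $||\nabla u||_{L^2}\le C_0$ for some $C_0=C_0(m)>0$. If instead $P(u)=0$, the same two inequalities give $||u||_{L^p}^p\le Cm^{(6-p)/2}\bigl(\tfrac{3(p-2)}{2p}||u||_{L^p}^p\bigr)^{3(p-2)/4}$, and since the exponent $3(p-2)/4$ exceeds $1$ this inverts to a uniform lower bound $||u||_{L^p}^p\geq C_0$ (shrinking $C_0$ if necessary).

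For (4), a direct algebraic computation gives
$$P(u)-2E(u) = -\tfrac14\int(\mathcal{K}\ast|u|^2)|u|^2\,dx + \tfrac{10-3p}{2p}||u||_{L^p}^p - \tfrac{1}{4a}\iint e^{-|x-y|/a}|u(x)|^2|u(y)|^2\,dxdy,$$
and all three pieces are $\le 0$ (the middle one because $p>10/3$ makes its coefficient negative). Therefore $P(u)\le 2E(u)<0$ whenever $E(u)<0$. The main technical point throughout is recognising the kernel inequality $\mathcal{K}\ge a^{-1}e^{-|\cdot|/a}$; without it, the combined nonlocal contribution to $P$ is sign-indefinite, and both (3) and (4) would require more delicate $m$-dependent estimates that need not close for all $m>0$.
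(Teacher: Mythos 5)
Your proof is correct and follows essentially the same route as the paper: the same scaling identities, the kernel inequality $\frac{1-e^{-r/a}}{r}\geq\frac1a e^{-r/a}$ plus Gagliardo--Nirenberg for (3), and a linear combination of $E$ and $P$ with all terms of one sign for (4). The only cosmetic differences are that you use $P(u)-2E(u)\leq0$ where the paper uses $E(u)-\frac{2}{3(p-2)}P(u)\geq0$, and in the second half of (3) you invert a self-referential inequality in $\|u\|_{L^p}^p$ rather than passing through the lower bound on $\|\nabla u\|_{L^2}$; both are equivalent.
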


\begin{proof}
One can easily get that
\begin{align}\label{scaling1}
||\nabla\kappa(u, \theta)||^2_{L^2}=e^{2\theta}||\nabla u||^2_{L^2},
\end{align}
then
$$||\nabla\kappa(u,\theta)||_{L^2}\rightarrow0^{+} \quad \hbox{as} \quad \theta\rightarrow-\infty.$$

Note that
\begin{align}\nonumber
E(\kappa(u, \theta))=&\frac{e^{2\theta}}2||\nabla u||^2_{L^2}+\frac{e^{\theta}}4\iint\frac{1-e^{-\frac{|x-y|}{ae^\theta}}}{|x-y|}|u(x)|^2|u(y)|^2dxdy\\\label{scaling2}
&-\frac{e^{\frac{3(p-2)}2\theta}}{p}||u||^p_{L^p}.
\end{align}
Since $\frac{3(p-2)}2>2$, we have
$$E(\kappa(u, \theta))\rightarrow0^+ \quad\hbox{as}\quad \theta\rightarrow-\infty.$$
Thus, (1) holds. Similarly, we can easily get that (2) holds.

We note that
\begin{align}\nonumber
E(u)-\frac{2}{3(p-2)}P(u)=&\frac{3p-10}{6(p-2)}||\nabla u||^2_{L^2}+\left(\frac14-\frac1{6(p-2)}\right)\iint\frac{1-e^{-\frac{|x-y|}a}}{|x-y|}|u(x)|^2|u(y)|^2dxdy\\\label{EP}
&+\frac{1}{6a(p-2)}\iint e^{-\frac{|x-y|}a}|u(x)|^2|u(y)|^2dxdy.
\end{align}
Due to $\frac{3p-10}{6(p-2)}>0$, $\frac14-\frac1{6(p-2)}>0$ and $a>0$, we get $E(u)-\frac{2}{3(p-2)}P(u)$ is always positive. Thus, (4) holds.

Moreover, thanks to the fact $\frac{1-e^{-\frac{|x-y|}a}}{|x-y|}-\frac1a e^{-\frac{|x-y|}a}\geq0$ and (\ref{G-N-ineq}), there exists a constant $C(p)>0$ such that
\begin{align*}
P(u)&\geq||\nabla u||^2_{L^2}-\frac{3(p-2)}{2p}||u||^p_{L^p}\\
&\geq||\nabla u||^2_{L^2}-C(p)||\nabla u||^{\frac{3(p-2)}2}_{L^2}||u||^{\frac{6-p}2}_{L^2}.
\end{align*}
The fact that $\frac{3(p-2)}2>2$ ensures that $P(u)>0$ for sufficiently small $||\nabla u||_{L^2}$.\\
Also, if $P(u)=0$, we get
\begin{align*}
||u||^p_{L^p}&=\frac{2p}{3(p-2)}\left[||\nabla u||^2_{L^2}+\frac14\iint\left(\frac{1-e^{-\frac{|x-y|}a}}{|x-y|}-\frac1a e^{-\frac{|x-y|}a}\right)|u(x)|^2|u(y)|^2dxdy\right]\\
&\geq\frac{2p}{3(p-2)}||\nabla u||^2_{L^2},
\end{align*}
and this ends the proof of (3).
\end{proof}

\begin{lemma}\label{unique0}
We denote
\begin{align*}
h_1(\theta)=4e^{\frac{3(p-2)}{2}\theta}-3(p-2)e^{2\theta}+3p-10,
\end{align*}
\begin{align*}
h_2(\theta, r)=\frac{2e^{\frac{3(p-2)}{2}\theta}+3p-8}{3(p-2)}\cdot\frac{1-e^{-\frac{r}{a}}}{r}-\frac{e^{\theta}\left(1-e^{-\frac{r}{ae^{\theta}}}\right)}{r}
-\frac{2\left[e^{\frac{3(p-2)}{2}\theta}-1\right]}{3a(p-2)}e^{-\frac{r}{a}},
\end{align*}
where $p\in(\frac{10}3, 6)$ and $a>0$, $r>0$, $\theta\in\mathbb{R}$. Then,
$$h_1(0)=h_2(0, r)=0, \quad h_1(\theta)>0, \quad h_2(\theta, r)>0, \quad\forall~ \theta\in(-\infty, 0)\cup(0, +\infty),~r>0.$$
\end{lemma}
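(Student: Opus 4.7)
The plan is to separate the two claims and reduce each to a one-variable monotonicity argument. Throughout I set $\alpha:=3(p-2)/2$, which lies in $(2,6)$ for $p\in(10/3,6)$; note that $3(p-2)=2\alpha$ and $3p-8=2\alpha-2$, which simplifies the original expressions.

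For $h_1$, the identity $h_1(0)=4-3(p-2)+3p-10=0$ is immediate. Differentiating gives
$$h_1'(\theta)=6(p-2)\bigl(e^{\alpha\theta}-e^{2\theta}\bigr),$$
and since $\alpha>2$, the sign of $e^{\alpha\theta}-e^{2\theta}$ agrees with the sign of $\theta$. Hence $\theta=0$ is the unique critical point, it is a strict global minimum, and $h_1>0$ on $\mathbb{R}\setminus\{0\}$.

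For $h_2$, I first rewrite it using the identifications above as
$$h_2(\theta,r)=\frac{e^{\alpha\theta}+\alpha-1}{\alpha}\cdot\frac{1-e^{-r/a}}{r}-\frac{e^{\theta}\bigl(1-e^{-r/(ae^{\theta})}\bigr)}{r}-\frac{e^{\alpha\theta}-1}{a\alpha}\,e^{-r/a},$$
from which $h_2(0,r)=0$ is visible by direct cancellation. Setting $u:=e^{\theta}>0$ and $t:=r/a>0$, I clear denominators and reduce the positivity of $h_2$ to that of
$$F(u,t):=a\alpha t\,h_2(\theta,r)=(u^{\alpha}+\alpha-1)(1-e^{-t})-\alpha u\bigl(1-e^{-t/u}\bigr)-(u^{\alpha}-1)\,t\,e^{-t}.$$
The crux is to recognize the integral representation
$$F(u,t)=\int_0^{t}\!\Bigl[(u^{\alpha}-1)\,s\,e^{-s}+\alpha\bigl(e^{-s}-e^{-s/u}\bigr)\Bigr]\,ds,$$
verified using $\int_0^{t}e^{-s}\,ds=1-e^{-t}$, $\int_0^{t}s\,e^{-s}\,ds=1-(1+t)e^{-t}$, and (after substituting $s=u\sigma$) $\int_0^{t}e^{-s/u}\,ds=u(1-e^{-t/u})$. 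Differentiating under the integral sign and using the same substitution in $\int_0^{t}s\,e^{-s/u}\,ds=u^{2}\phi(t/u)$, with $\phi(x):=1-(1+x)e^{-x}$, yields the compact formula
$$\partial_u F(u,t)=\alpha\bigl[u^{\alpha-1}\phi(t)-\phi(t/u)\bigr].$$
Since $\phi(0)=0$ and $\phi'(x)=x\,e^{-x}>0$ on $(0,\infty)$, the function $\phi$ is strictly positive and increasing there. For $u>1$ one has $u^{\alpha-1}>1$ (as $\alpha>2$) and $\phi(t)>\phi(t/u)$, whence $\partial_u F>0$; for $0<u<1$ both inequalities reverse and $\partial_u F<0$. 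Combined with $F(1,t)=0$, this shows $u=1$ is the unique global minimizer of $F(\cdot,t)$ and $F(u,t)>0$ for $u\neq 1$, which translates back to $h_2(\theta,r)>0$ for $\theta\neq 0$ and $r>0$.

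The main obstacle is guessing the integral representation of $F$. A naive approach that differentiates $h_2(\theta,r)$ directly in $\theta$ produces entangled exponentials involving $e^{-r/(ae^{\theta})}$ whose signs are not transparent, and Taylor expansion at $\theta=0$ only yields a local positivity. The integral form, by contrast, collapses the problem to a single scalar monotonicity via $\phi$, and the factorization $\partial_u F=\alpha[u^{\alpha-1}\phi(t)-\phi(t/u)]$ finishes the argument effortlessly.
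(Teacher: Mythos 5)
Your proof is correct and, despite the different packaging, is essentially the paper's argument: your key identity $\partial_u F(u,t)=\alpha\bigl[u^{\alpha-1}\phi(t)-\phi(t/u)\bigr]$ is exactly the paper's formula $\frac{\partial}{\partial\theta}h_2=e^{\theta}\bigl[e^{\frac{3p-8}{2}\theta}h_3(0,r)-h_3(\theta,r)\bigr]$ rewritten in the variables $u=e^{\theta}$, $t=r/a$, since the paper's auxiliary function satisfies $h_3(\theta,r)=\phi\bigl(r/(ae^{\theta})\bigr)/r$. The paper reaches this identity by differentiating $h_2$ in $\theta$ directly rather than via your integral representation, and both arguments then conclude from the positivity and monotonicity of the auxiliary function ($\phi'(x)=xe^{-x}>0$ with $\phi(0)=0$, equivalently $\partial_\theta h_3\le 0$ with $h_3\ge 0$).
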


\begin{proof}
By simple calculations, one has
\begin{align*}
h'_1(\theta)=6(p-2)e^{2\theta}\left(e^{\frac{3p-10}{2}\theta}-1\right),
\end{align*}
\begin{align*}
\frac{\partial}{\partial\theta}h_2(\theta, r)=e^{\theta}\left[e^{\frac{3p-8}{2}\theta}\left(\frac{1-e^{-\frac{r}{a}}}{r}-\frac1{a}e^{-\frac{r}{a}}\right)-
\left(\frac{1-e^{-\frac{r}{ae^{\theta}}}}{r}-\frac1{ae^{\theta}}e^{-\frac{r}{ae^{\theta}}}\right)\right].
\end{align*}
Thus, $h_1(0)=h_2(0, r)=0$ and $h_1(\theta)>0$ for all $\theta\in(-\infty, 0)\cup(0, +\infty)$. Let $h_3(\theta, r)=\frac{1-e^{-\frac{r}{ae^{\theta}}}}{r}-\frac1{ae^{\theta}}e^{-\frac{r}{ae^{\theta}}}$, then
$$\frac{\partial}{\partial\theta}h_3(\theta, r)=-\frac{r}{a^2e^{2\theta}}e^{-\frac{r}{ae^{\theta}}}\leq0, ~~\hbox{for all}~ \theta\in\mathbb{R}~\hbox{and}~r>0.$$
Since $h_3(0)\geq0$, $h_3(\theta, r)\geq0$ for all $\theta\in\mathbb{R}$ and $r>0$, we have
\begin{equation} \nonumber
\frac{\partial}{\partial\theta}h_2(\theta, r)\left\{
\begin{aligned}
&<e^{\theta}h_3(\theta)\left(e^{\frac{3p-8}{2}\theta}-1\right)<0, &\theta\in(-\infty, 0),\\
&>e^{\theta}h_3(0)\left(e^{\frac{3p-8}{2}\theta}-1\right)>0, &\theta\in(0, +\infty),
\end{aligned}
\right.
\end{equation}
which implies that $h_2(\theta, r)\geq h_2(0, r)=0$ for all $\theta\in\mathbb{R}$ and $r>0$.
\end{proof}

\begin{lemma}\label{unique1}
Let $p\in(\frac{10}3, 6)$, then for all $u\in H^1(\mathbb{R}^3)$ and $\theta\in\mathbb{R}$, we have
\begin{align}\label{unique2}
E(u)\geq E(\kappa(u, \theta))+\frac{2\left[1-e^{\frac{3(p-2)}{2}\theta}\right]}{3(p-2)}P(u)+\frac{h_1(\theta)}{6(p-2)}||\nabla u||^2_{L^2},
\end{align}
where $h_1(\theta)$ is defined in Lemma \ref{unique0}.
\end{lemma}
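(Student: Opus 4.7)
My plan is to convert the claimed inequality into an explicit algebraic identity plus a single pointwise nonnegativity statement on the nonlocal kernel, which is already prepared by Lemma \ref{unique0}. Introduce the shorthand
$G=\|\nabla u\|_{L^2}^{2}$, $N=\|u\|_{L^p}^{p}$, $D_1=\iint\tfrac{1-e^{-|x-y|/a}}{|x-y|}|u(x)|^2|u(y)|^2\,dxdy$, and $D_2=\iint e^{-|x-y|/a}|u(x)|^2|u(y)|^2\,dxdy$, so that $E(u)=\tfrac12 G+\tfrac14 D_1-\tfrac1p N$ and $P(u)=G+\tfrac14 D_1-\tfrac{3(p-2)}{2p}N-\tfrac1{4a}D_2$. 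The first step is to compute the scaled quantities: from $\kappa(u,\theta)(x)=e^{3\theta/2}u(e^{\theta}x)$ and the change of variables already used in the proof of Lemma \ref{lemma1}, one obtains
\[
E(\kappa(u,\theta))=\tfrac{e^{2\theta}}{2}G+\tfrac{e^{\theta}}{4}\widetilde D_{1}(\theta)-\tfrac{e^{3(p-2)\theta/2}}{p}N,
\]
where $\widetilde D_{1}(\theta):=\iint\tfrac{1-e^{-|x-y|/(ae^{\theta})}}{|x-y|}|u(x)|^2|u(y)|^2\,dxdy$.

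Next I would carry out the bookkeeping for the three coefficient groups in the target inequality
\[
E(u)-E(\kappa(u,\theta))-\tfrac{2[1-e^{3(p-2)\theta/2}]}{3(p-2)}P(u)-\tfrac{h_1(\theta)}{6(p-2)}G\;\ge\;0.
\]
For the $G$-coefficient, I expect all four contributions to collapse to $0$: the combination $\tfrac12-\tfrac{e^{2\theta}}{2}-\tfrac{2[1-e^{3(p-2)\theta/2}]}{3(p-2)}-\tfrac{h_1(\theta)}{6(p-2)}$ is tailored so that the $e^{3(p-2)\theta/2}$ pieces kill each other and the constant/$e^{2\theta}$ pieces combine to $0$ precisely because $h_1(\theta)=4e^{3(p-2)\theta/2}-3(p-2)e^{2\theta}+3p-10$. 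Similarly, for the $N$-coefficient, $-\tfrac1p+\tfrac{e^{3(p-2)\theta/2}}{p}+\tfrac{2[1-e^{3(p-2)\theta/2}]}{3(p-2)}\cdot\tfrac{3(p-2)}{2p}=0$, so the $L^p$-contribution vanishes identically.

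All that survives is the nonlocal contribution. Collecting the $D_1$, $\widetilde D_{1}(\theta)$ and $D_2$ pieces, the left-hand side of the desired inequality reduces, after multiplying by $12(p-2)>0$, to the double integral
\[
\iint \Bigl[\tfrac{2e^{3(p-2)\theta/2}+3p-8}{3(p-2)}\cdot\tfrac{1-e^{-|x-y|/a}}{|x-y|}-\tfrac{e^{\theta}(1-e^{-|x-y|/(ae^{\theta})})}{|x-y|}-\tfrac{2[e^{3(p-2)\theta/2}-1]}{3a(p-2)}e^{-|x-y|/a}\Bigr]|u(x)|^2|u(y)|^2\,dxdy,
\]
and the bracket is exactly $h_2(\theta,|x-y|)$. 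Invoking Lemma \ref{unique0}, $h_2(\theta,r)\ge 0$ for every $\theta\in\mathbb{R}$ and $r>0$, so the integrand is pointwise nonnegative and the inequality \eqref{unique2} follows.

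The only real obstacle is the algebraic matching: one must check carefully that the coefficient of $G$ collapses to $0$ (this is where the particular form of $h_1$ is forced) and that the residual nonlocal kernel is exactly $h_2$ as defined in Lemma \ref{unique0}. Once these identifications are verified, the inequality is an immediate consequence of Lemma \ref{unique0} together with the pointwise bound $|u(x)|^{2}|u(y)|^{2}\ge 0$.
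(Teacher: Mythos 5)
Your proof is correct and is essentially the paper's own argument: both rewrite the difference $E(u)-E(\kappa(u,\theta))$ in terms of $P(u)$, check that the coefficient of $\|\nabla u\|_{L^2}^{2}$ is exactly $h_1(\theta)/(6(p-2))$ while the $L^p$-contributions cancel, and reduce the remainder to a positive multiple of $\iint h_2(\theta,|x-y|)\,|u(x)|^{2}|u(y)|^{2}\,dxdy$, which is nonnegative by Lemma \ref{unique0}. (One cosmetic slip: the factor that turns the residual bracket into exactly $h_2$ is $4$, not $12(p-2)$, but this has no bearing on the conclusion.)
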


\begin{proof}
By Lemma \ref{unique0}, we have
\begin{align*}
E(\kappa(u, \theta))-E(u)=&\frac{e^{2\theta}-1}2||\nabla u||^2_{L^2}+\frac{e^{\theta}}4\iint\frac{1-e^{-\frac{|x-y|}{ae^{\theta}}}}{|x-y|}|u(x)|^2|u(y)|^2dxdy\\
&-\frac14\iint\frac{1-e^{-\frac{|x-y|}a}}{|x-y|}|u(x)|^2|u(y)|^2dxdy+\frac{1-e^{\frac{3(p-2)}{2}\theta}}{p}||u||^p_{L^p}\\
=&\frac{2\left[e^{\frac{3(p-2)}{2}\theta}-1\right]}{3(p-2)}P(u)-\left\{\frac{2\left[e^{\frac{3(p-2)}{2}\theta}-1\right]}{3(p-2)}-\frac{e^{2\theta}-1}2\right\}||\nabla u||^2_{L^2}\\
&-\frac14\iint\left\{\frac{2e^{\frac{3(p-2)}{2}\theta}+3p-8}{3(p-2)}\cdot\frac{1-e^{-\frac{|x-y|}{a}}}{|x-y|}-\frac{e^{\theta}\left(1-e^{-\frac{|x-y|}{ae^{\theta}}}\right)}{|x-y|}\right.\\
&\left.-\frac{2\left[e^{\frac{3(p-2)}{2}\theta}-1\right]}{3a(p-2)}e^{-\frac{|x-y|}{a}}\right\}|u(x)|^2|u(y)|^2dxdy\\
\leq&\frac{2\left[e^{\frac{3(p-2)}{2}\theta}-1\right]}{3(p-2)}P(u)-\frac{h_1(\theta)}{6(p-2)}||\nabla u||^2_{L^2},\quad\forall~u\in H^1(\mathbb{R}^3), ~\theta\in\mathbb{R}.
\end{align*}
This shows that (\ref{unique2}) holds.
\end{proof}

\begin{lemma}\label{unique}
Let $p\in(\frac{10}3, 6)$ and $u\in S(m)$ be arbitrary but fixed. We have the following statements hold.
\begin{itemize}
 \item[(1)] There exists a unique number $\theta(u)\in\mathbb{R}$ such that the function $f_u: \mathbb{R}\rightarrow\mathbb{R}$ defined by $f_u(\theta)=E(\kappa(u, \theta))$ reaches its maximum. Moreover, $\kappa(u, \theta(u))\in V(m)$.
 \item[(2)] $E(\kappa(u, \theta))<E(\kappa(u, \theta(u)))$ for any $\theta\neq\theta(u)$. In particular, $E(\kappa(u, \theta(u)))>0$.
\end{itemize}
\end{lemma}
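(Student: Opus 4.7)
The plan is to start from the explicit formula (3.2) for $f_u(\theta)=E(\kappa(u,\theta))$ and differentiate. Using that $\|\kappa(u,\theta)\|_{L^2}=\|u\|_{L^2}$, together with the change of variables that produced (3.2), a direct computation yields the clean identity
\begin{equation*}
f_u'(\theta)=P(\kappa(u,\theta)),\qquad\forall\,\theta\in\mathbb{R}.
\end{equation*}
Hence the critical points of the smooth function $f_u$ correspond bijectively to values of $\theta$ for which $\kappa(u,\theta)\in V(m)$, and the lemma reduces to showing that $f_u'$ has exactly one zero, that this zero is a global maximum, and that the maximum value is strictly positive.

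For the existence of a zero of $f_u'$, I will combine parts (1)--(4) of Lemma~\ref{lemma1}. By (1), $\|\nabla\kappa(u,\theta)\|_{L^2}\to 0^+$ as $\theta\to-\infty$, so part (3) forces $P(\kappa(u,\theta))>0$ for all sufficiently negative $\theta$. By (2), $E(\kappa(u,\theta))\to-\infty$ as $\theta\to+\infty$, and part (4) then gives $P(\kappa(u,\theta))<0$ for all sufficiently large $\theta$. Applying the intermediate value theorem to the continuous function $f_u'$ yields a point $\theta(u)\in\mathbb{R}$ with $\kappa(u,\theta(u))\in V(m)$.

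The heart of the argument, and what I expect to be the main obstacle, is the uniqueness of $\theta(u)$: this is where the auxiliary function $h_1$ of Lemma~\ref{unique0} and the refined inequality of Lemma~\ref{unique1} enter decisively. For any $v\in V(m)$, inserting $P(v)=0$ into the inequality of Lemma~\ref{unique1} collapses it to
\begin{equation*}
E(v)\geq E(\kappa(v,\theta))+\frac{h_1(\theta)}{6(p-2)}\|\nabla v\|_{L^2}^2,\qquad\forall\,\theta\in\mathbb{R}.
\end{equation*}
Since $\|\nabla v\|_{L^2}>0$ (as $v\in S(m)\setminus\{0\}$ cannot be constant in $L^2(\mathbb{R}^3)$) and $h_1(\theta)>0$ for all $\theta\neq 0$ by Lemma~\ref{unique0}, this inequality is \emph{strict} whenever $\theta\neq 0$. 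In other words, every $v\in V(m)$ is the unique strict global maximum of $\theta\mapsto E(\kappa(v,\theta))$. Applying this to $v=\kappa(u,\theta(u))$ and using the group property $\kappa(v,\theta)=\kappa(u,\theta(u)+\theta)$ forces $\theta(u)$ to be the unique global strict maximum of $f_u$, which establishes both the uniqueness claim in (1) and the strict inequality in (2). Finally, Lemma~\ref{lemma1}(1) ensures $f_u(\theta)>0$ for all sufficiently negative $\theta$; combined with the fact that $\theta(u)$ is a global maximum, this gives $E(\kappa(u,\theta(u)))=\max_{\theta\in\mathbb{R}}f_u(\theta)>0$, completing the proof plan.
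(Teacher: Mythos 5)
Your proposal is correct and follows essentially the same route as the paper: the identity $f_u'(\theta)=P(\kappa(u,\theta))$, Lemma \ref{lemma1} for the existence of a critical point, and Lemmas \ref{unique0}--\ref{unique1} (with $P(v)=0$ and $h_1(\theta)>0$ for $\theta\neq0$) to show that every element of $V(m)$ is the strict global maximum along its dilation fiber, which is exactly the content of the paper's contradiction argument $E(\kappa(u,\theta_1))>E(\kappa(u,\theta_2))>E(\kappa(u,\theta_1))$. Your phrasing via the group property $\kappa(\kappa(u,\theta_1),\theta_2)=\kappa(u,\theta_1+\theta_2)$ and your explicit intermediate-value step are just slightly more detailed versions of what the paper leaves implicit.
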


\begin{proof}
Since
$$f_u(\theta)=\frac{e^{2\theta}}2||\nabla u||^2_{L^2}+\frac{e^{\theta}}4\iint\frac{1-e^{-\frac{|x-y|}{ae^\theta}}}{|x-y|}|u(x)|^2 |u(y)|^2 dxdy
-\frac{e^{\frac{3(p-2)}2\theta}}{p}||u||^p_{L^p},$$
we have that
\begin{align*}
f'_u(\theta)=&e^{2\theta}||\nabla u||^2_{L^2}-\frac{3(p-2)}{2p}e^{\frac{3(p-2)}2\theta}||u||^p_{L^p}+\frac{e^{\theta}}4\iint\frac{1-e^{-\frac{|x-y|}{ae^\theta}}}{|x-y|}|u(x)|^2|u(y)|^2dxdy\\
&-\frac1{4a}\iint e^{-\frac{|x-y|}{ae^\theta}}|u(x)|^2|u(y)|^2dxdy\\
=&P(\kappa(u, \theta)).
\end{align*}
By Lemma \ref{lemma1}, we know that there exists at least a $\theta(u)\in\mathbb{R}$ such that $f'_u(\theta(u))=0$, which means that $\kappa(u, \theta(u))\in V(m)$.

Next, we claim that $\theta(u)$ is unique for any $u\in S(m)$. Otherwise, for any given $u\in S(m)$, there exists $\theta_1\neq\theta_2$ such that $\kappa(u, \theta_1), \kappa(u, \theta_2)\in V(m)$, that is, $P(\kappa(u, \theta_1))=P(\kappa(u, \theta_2))=0$, then Lemmas \ref{unique0} and \ref{unique1} lead to
\begin{align*}
E(\kappa(u, \theta_1))&>E(\kappa(u, \theta_2))+\frac{2\left[e^{\frac{3(p-2)}{2}\theta_1}-e^{\frac{3(p-2)}{2}\theta_2}\right]}{3(p-2)e^{\frac{3(p-2)}{2}\theta_1}}
P(\kappa(u, \theta_1))=E(\kappa(u, \theta_2))\\
&>E(\kappa(u, \theta_1))+\frac{2\left[e^{\frac{3(p-2)}{2}\theta_2}-e^{\frac{3(p-2)}{2}\theta_1}\right]}{3(p-2)e^{\frac{3(p-2)}{2}\theta_2}}P(\kappa(u, \theta_2))=E(\kappa(u, \theta_1)).
\end{align*}
This contradiction shows that $\theta(u)\in\mathbb{R}$ is unique for any $u\in S(m)$. Then, (2) is a direct consequence of the proof above.
\end{proof}

\begin{lemma}\label{AB}
Let $p\in(\frac{10}3, 6)$ , then there exists $C_1>0$ and $\bar{m}>0$ such that for all $m\in(0, \bar{m})$,
$$0<\sup_{u\in A}E(u)<\inf_{u\in B}E(u),$$
where
$$A=\{u\in S(m):~ ||\nabla u||_{L^2}\leq C_1\}, \quad B=\{u\in S(m):~ ||\nabla u||_{L^2}=2C_1\}.$$
\end{lemma}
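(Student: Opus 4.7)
The plan is to treat $E(u)$ on $S(m)$ as a perturbation of the kinetic term $\tfrac{1}{2}\|\nabla u\|_{L^2}^2$: for $\|\nabla u\|_{L^2}$ of order $C_1$, both the defocusing nonlocal term and the focusing $L^p$ term are negligible as long as the mass $m$ is small. Since the function $t\mapsto \tfrac{1}{2}t^2$ is strictly increasing on $[0,2C_1]$, the kinetic energy already separates $A$ from $B$ by a definite gap, and it remains only to absorb the two perturbative terms into this gap.

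To produce an upper bound on $\sup_{u\in A}E(u)$, I would discard the (negative) $L^p$ term and estimate the nonlocal term by combining the Hardy--Littlewood--Sobolev inequality (Lemma \ref{HLS} together with its corollary) with the Gagliardo--Nirenberg inequality (\ref{G-N-ineq}) at $p=12/5$ (interpolation exponent $\theta=1/4$), which yields $\|u\|_{L^{12/5}}^{4}\le C m^{3}\|\nabla u\|_{L^2}$. On $A$ this gives
$$\sup_{u\in A}E(u)\le \frac{C_1^{2}}{2}+\frac{C}{4}\,m^{3}C_1.$$
To produce a lower bound on $\inf_{u\in B}E(u)$, I would instead discard the (nonnegative) nonlocal term and control the $L^p$ term by (\ref{G-N-ineq}), obtaining
$$\inf_{u\in B}E(u)\ge 2C_1^{2}-\frac{C}{p}\,m^{(6-p)/2}(2C_1)^{3(p-2)/2}.$$

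Next, fix $C_1$ to be any positive constant (say $C_1=1$). Since $p\in(\tfrac{10}{3},6)$, the exponents $3$ and $(6-p)/2$ are strictly positive, so both perturbative terms vanish as $m\to 0^+$. Hence there exists $\bar m>0$ such that for every $m\in(0,\bar m)$ the lower bound on $\inf_B E$ strictly exceeds the upper bound on $\sup_A E$. To upgrade this to $0<\sup_A E(u)<\inf_B E(u)$, I would invoke Lemma \ref{lemma1}(1): for any $u_0\in S(m)$, the dilation $\kappa(u_0,\theta)$ remains in $S(m)$ with $\|\nabla\kappa(u_0,\theta)\|_{L^2}\to 0$ and $E(\kappa(u_0,\theta))\to 0^+$ as $\theta\to-\infty$, so for $\theta$ sufficiently negative the element $\kappa(u_0,\theta)$ lies in $A$ with $E(\kappa(u_0,\theta))>0$.

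The only genuinely delicate step is the estimate of the nonlocal term on $A$: this term is \emph{added} to $E$, so one needs an upper bound on it, whereas on $B$ it may simply be dropped. The HLS+GN route above is natural here because it produces a factor of $m^{3}$, which is exactly what makes the term negligible for small mass and lets the simple monotonicity of $t\mapsto\tfrac{1}{2}t^2$ close the argument.
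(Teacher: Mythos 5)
Your proposal is correct and follows essentially the same route as the paper: drop the nonnegative nonlocal term on $B$ and control $\|u\|_{L^p}^p$ by Gagliardo--Nirenberg, bound the nonlocal term on $A$ by a positive power of $m$, and let the gap $\tfrac12(2C_1)^2-\tfrac12C_1^2$ absorb both errors for small $m$. The only differences are cosmetic — you fix $C_1$ arbitrarily and shrink $\bar m$, whereas the paper chooses $C_1$ explicitly, and you use the HLS--GN bound $Cm^3\|\nabla u\|_{L^2}$ in place of the paper's $Cm^4$ — and your handling of the positivity $0<\sup_A E$ via Lemma \ref{lemma1}(1) is valid.
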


\begin{proof}
From (\ref{G-N-ineq}), for $u\in S(m)$, we have
\begin{align*}
E(\kappa(u, \theta))-E(u)=&\frac12||\nabla\kappa(u, \theta)||^2_{L^2}-\frac12||\nabla u||^2_{L^2}+\frac14\int\phi_{\kappa(u, \theta)}|\kappa(u, \theta)|^2dx-\frac14\int\phi_u |u|^2dx\\
&-\frac1p||\kappa(u, \theta)||^p_{L^p}+\frac1p||u||^p_{L^p}\\
\geq&\frac12||\nabla\kappa(u, \theta)||^2_{L^2}-\frac12||\nabla u||^2_{L^2}-C||u||^4_{L^2}-\frac1p||\kappa(u, \theta)||^p_{L^p}\\
\geq&\frac12||\nabla\kappa(u, \theta)||^2_{L^2}-\frac12||\nabla u||^2_{L^2}-Cm^4-\frac{C_{GN}}pm^{\frac{6-p}2}||\nabla\kappa(u, \theta)||^{\frac{3(p-2)}2}_{L^2}.
\end{align*}
Let $||\nabla u||_{L^2}\leq C_1$ and $||\nabla\kappa(u, \theta)||_{L^2}=2C_1$, where $C_1$ is a positive constant to be chosen later. Then, for $m<\bar{m}:=\left(\frac{C^2_1}{16C}\right)^{\frac14}$, we obtain
\begin{align*}
E(\kappa(u, \theta))-E(u)\geq&\frac32C^2_1-Cm^4-\frac{C_{GN}}p2^{\frac{3(p-2)}2}m^{\frac{6-p}2}C^{\frac{3(p-2)}2}_1\\
\geq&\frac32C^2_1-\frac1{16}C^2_1-\frac{C_{GN}}p2^{\frac{3(p-2)}2}\left(\frac{C^2_1}{16C}\right)^{\frac{6-p}8}C^{\frac{3(p-2)}2}_1.
\end{align*}
Put
$$C_1=\left(\frac{23pC^{\frac{6-p}8}}{2^{\frac{4(p-1)}2}C_{GN}}\right)^{\frac4{5p-14}}>0,$$
we know
$$\frac32 C^2_1-Cm^4-\frac{C_{GN}}p2^{\frac{3(p-2)}2}m^{\frac{6-p}2}C^{\frac{3(p-2)}2}_1>0.$$
Hence, Lemma \ref{AB} is proved.
\end{proof}

From Lemma \ref{AB}, we deduce the following corollary.
\begin{corollary}\label{AB1}
For $C_1>0$ and $\bar{m}>0$ given in Lemma \ref{AB}, let $u\in S(m)$ and $||\nabla u||_{L^2}\leq C_1$, then $E(u)>0$. Moreover,
$$E_{*}:=\inf\left\{E(u): ~u\in S(m), ||\nabla u||_{L^2}=\frac12C_1\right\}>0.$$
\end{corollary}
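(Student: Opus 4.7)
The plan is to derive both assertions from a single Gagliardo--Nirenberg lower bound on $E$ over $A$, exploiting the non-negativity of the nonlocal energy so that only the focusing $L^p$ term competes with the kinetic one.

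First, since $\phi_u\geq 0$ by Lemma \ref{property2}(2), I would simply discard the non-negative nonlocal term and write $E(u)\geq \tfrac12||\nabla u||_{L^2}^2-\tfrac1p||u||^p_{L^p}$. Applying the sharp Gagliardo--Nirenberg inequality \eqref{G-N-ineq} with $||u||_{L^2}=m$ then yields
$$E(u)\geq \frac12||\nabla u||_{L^2}^2\left[1-\frac{m^{(6-p)/2}}{||Q||_{L^2}^{p-2}}||\nabla u||_{L^2}^{(3p-10)/2}\right].$$

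Next, since $p\in(\tfrac{10}{3},6)$ makes the exponent $(3p-10)/2$ strictly positive, the factor $||\nabla u||_{L^2}^{(3p-10)/2}$ is bounded by $C_1^{(3p-10)/2}$ on $A$. Shrinking $\bar{m}$ from Lemma \ref{AB} if necessary (which does not affect its conclusion), I may assume $m^{(6-p)/2}C_1^{(3p-10)/2}/||Q||_{L^2}^{p-2}\leq 1/2$ for all $m\in(0,\bar{m})$. Then $E(u)\geq \tfrac14||\nabla u||_{L^2}^2$, which is strictly positive because any $u\in S(m)$ with $m>0$ necessarily has $||\nabla u||_{L^2}>0$ (the only constant in $L^2(\mathbb{R}^3)$ is the zero function). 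For the second assertion, specializing this estimate to $||\nabla u||_{L^2}=C_1/2$ gives $E(u)\geq C_1^2/16$, so $E_*\geq C_1^2/16>0$.

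The main obstacle is essentially bookkeeping: one must ensure that $\bar{m}$ is small enough for the Gagliardo--Nirenberg remainder in the bracket to be absorbed into $1/2$. Because this only further tightens the existing smallness condition on the mass without disturbing the mountain-pass separation proved in Lemma \ref{AB}, the adjustment is cosmetic rather than a genuine difficulty.
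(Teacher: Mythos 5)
Your argument is correct and takes essentially the same route as the paper: bound $E$ from below by the kinetic term minus the Gagliardo--Nirenberg estimate of the $L^p$ term, using the smallness of $m$ encoded in the choice of $C_1$ and $\bar m$ from Lemma \ref{AB}. Your one refinement --- discarding the non-negative nonlocal term outright rather than carrying along the $-Cm^4$ remainder that the paper keeps --- is actually cleaner (it makes $E(u)>0$ immediate even when $||\nabla u||_{L^2}$ is tiny), and the ``shrink $\bar m$ if necessary'' step is harmless and in fact unneeded, since the inequality defining $C_1$ in Lemma \ref{AB} already forces the bracket in your estimate to be bounded below by a positive constant for all $m\in(0,\bar m)$.
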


\begin{proof}
Analogously to the proof of Lemma \ref{AB}, we can get that
$$E(u)\geq\frac12||\nabla u||^2_{L^2}-Cm^4-\frac{C_{GN}}pm^{\frac{6-p}2}||\nabla u||^{\frac{3(p-2)}2}_{L^2}.$$
By the definition of $C_1$, we conclude that the corollary holds.
\end{proof}

By using Lemma \ref{lemma1}, for $u\in S(m)$, there exists $\theta_1<0$ and $\theta_2>0$ satisfying
$$||\nabla\kappa(u, \theta_1)||_{L^2}\leq\frac12C_1, \quad ||\nabla\kappa(u, \theta_2)||_{L^2}>C_1,$$
and
$$E(\kappa(u, \theta_1))>0, \quad E(\kappa(u, \theta_2))<0.$$

In the following, we will discuss the mountain-pass geometry of the functional $E(u)$ on the $S(m)$. Now, we
introduce an auxiliary functional
$$\tilde{E}: ~S(m)\times\mathbb{R}\rightarrow\mathbb{R}, \quad (u, \theta)\mapsto E(\kappa(u, \theta)),$$
and the set of paths
\begin{align}\label{path1}
\tilde{\Gamma}(m):=\{\tilde{g}\in C([0, 1], S(m)\times\mathbb{R}): ~ \tilde{g}(0)=(u_1, 0), \tilde{g}(1)=(u_2, 0)\},
\end{align}
for a fixed $u_1$ satisfying $u_1\in S(m)$ and $||\nabla u_1||_{L^2}<\frac12 C_1$ ($C_1$ is defined in Lemma \ref{AB}), $u_2(x)=e^{\frac32\theta}u_1(e^{\theta}x)$, $\theta\in\mathbb{R}$ such that $E(u_2)<0$. Define a set of paths
\begin{align}\label{path2}
\Gamma(m):=\{g\in C([0, 1], S(m)):~ g(0)=u_1, g(1)=u_2\}
\end{align}
and a min-max value
\begin{align}\label{min-max}
\gamma(m):=\inf_{g\in\Gamma(m)}\max_{t\in[0, 1]}E(g(t)).
\end{align}
Clearly, $\Gamma(m)\neq\emptyset$, as $g(t)=(1+te^{\theta}-t)^{\frac32}u_1(x+t(e^{\theta}-1)x)\in\Gamma(m)$. By Corollary \ref{AB1}, for any $g\in\Gamma(m)$ and $||\nabla u_2||_{L^2}>C_1$, thus, there exists $t_0\in(0, 1)$ such that $||\nabla g(t_0)||_{L^2}=\frac12C_1$,
$$\max_{t\in[0, 1]}E(g(t))\geq E(g(t_0))\geq E_{*}>0.$$
Namely, $E(u)$ has a mountain pass geometry on $S(m)$.

Defining that
$$\tilde{\gamma}(m):=\inf_{\tilde{g}\in\tilde{\Gamma}(m)}\max_{t\in[0, 1]}\tilde{E}(\tilde{g}(t)),$$
we have
$$\tilde{\gamma}(m)=\gamma(m).$$
Indeed, by the definition of $\tilde{\gamma}(m)$ and $\gamma(m)$, this identity follows immediately from
the fact that the maps
$$\sigma: \Gamma(m)\rightarrow\tilde{\Gamma}(m), ~g\rightarrow\sigma(g):=(g, 0)$$
and
$$\tau: \tilde{\Gamma}(m)\rightarrow\Gamma(m), ~\tilde{g}\rightarrow\tau(\tilde{g}):=\kappa\circ\tilde{g}$$
satisfy
$$\tilde{E}(\sigma(g))=E(g) ~~\hbox{and}~~ E(\tau(\tilde{g}))=\tilde{E}(\tilde{g}).$$

\begin{lemma}\label{Pohozaev-manifold1}
For $p\in(\frac{10}3, 6)$, we have
$$\gamma(m)=\inf_{u\in V(m)}E(u).$$
\end{lemma}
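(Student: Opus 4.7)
My plan is to establish the two inequalities $\gamma(m) \geq \inf_{u \in V(m)} E(u)$ and $\gamma(m) \leq \inf_{u \in V(m)} E(u)$ separately, the second being the delicate one.

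\textbf{Lower bound.} I would show that every admissible path must cross $V(m)$. Given $g \in \Gamma(m)$, the endpoint $u_1$ has $\|\nabla u_1\|_{L^2}<C_1/2$; by shrinking $C_1$ if necessary so that $C_1\le C_0$ from Lemma \ref{lemma1}(3), this forces $P(u_1)>0$. On the other side, $E(u_2)<0$, so Lemma \ref{lemma1}(4) gives $P(u_2)<0$. The map $t\mapsto P(g(t))$ is continuous on $[0,1]$ and changes sign, so there is $t^{*}\in(0,1)$ with $g(t^{*})\in V(m)$. Hence $\max_{t}E(g(t))\ge E(g(t^{*}))\ge \inf_{V(m)}E$, and passing to the infimum over $g$ yields the lower bound.

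\textbf{Upper bound.} Fix $u\in V(m)$. By Lemma \ref{unique}, $\theta\mapsto f_u(\theta)=E(\kappa(u,\theta))$ attains its unique maximum at $\theta=0$ with value $E(u)$, is strictly increasing on $(-\infty,0)$ and strictly decreasing on $(0,+\infty)$, with $f_u(-\infty)=0^{+}$ and $f_u(+\infty)=-\infty$ by Lemma \ref{lemma1}(1)(2). Choose $\theta_-\ll 0$ with $\|\nabla\kappa(u,\theta_-)\|_{L^2}<C_1/2$ and $\theta_+\gg 0$ with $E(\kappa(u,\theta_+))<E(u_2)<0$. I build $g_u\in\Gamma(m)$ in three pieces: (i) a path from $u_1$ to $\kappa(u,\theta_-)$ inside $\{v\in S(m):\|\nabla v\|_{L^2}\le C_1\}$; (ii) the scaling orbit $\theta\in[\theta_-,\theta_+]\mapsto\kappa(u,\theta)$, along which $E$ is maximized exactly at $\theta=0$ with value $E(u)$; (iii) a path from $\kappa(u,\theta_+)$ to $u_2$ inside $\{v\in S(m):E(v)<0\}$. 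Piece (i) has $E$ bounded by $\tfrac12 C_1^{2}+Cm^{4}$ from the Hardy--Littlewood--Sobolev estimate plus $\|\nabla v\|_{L^2}^{2}/2$; using that $u\in V(m)$ has $\|\nabla u\|_{L^2}$ bounded below (from $P(u)=0$ combined with (\ref{G-N-ineq})) and hence $E(u)$ bounded below, one sees that after possibly shrinking $C_1$ and $m$ this upper bound is strictly smaller than $E(u)$. Piece (iii) keeps $E<0<E(u)$. Thus $\max_{t}E(g_u(t))=E(u)$, and taking the infimum over $u\in V(m)$ gives $\gamma(m)\le \inf_{V(m)}E$.

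\textbf{Main obstacle.} The only nontrivial ingredient is the construction of the connecting pieces (i) and (iii) in $S(m)$ while respecting the sublevel constraints. The natural candidate is the normalized convex combination $\eta(t)=m\,w(t)/\|w(t)\|_{L^2}$ with $w(t)=(1-t)v_a+tv_b$, whose gradient is controlled by $(m/\|w(t)\|_{L^2})\bigl((1-t)\|\nabla v_a\|_{L^2}+t\|\nabla v_b\|_{L^2}\bigr)$; the denominator stays bounded away from zero provided $v_a$ and $v_b$ are not too close to being antipodal in $L^2$, which can be ensured for (i) by first translating the chosen representative of $u\in V(m)$ so that $\kappa(u,\theta_-)$ has $L^2$-support essentially disjoint from that of $u_1$ (hence $\langle v_a,v_b\rangle_{L^2}\approx 0$ and $\|w(t)\|_{L^2}\ge m/\sqrt 2$), and analogously for (iii). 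Once this interpolation lemma is in place, concatenating the three pieces produces the required path.
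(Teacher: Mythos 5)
Your lower-bound argument coincides with the paper's: every admissible path starts at a point where $P>0$ (by Lemma \ref{lemma1}(3), once one normalizes $C_1$ in Lemma \ref{AB} to be no larger than the constant $C_0$ there) and ends at a point where $E<0$, hence $P<0$ by Lemma \ref{lemma1}(4), so by continuity it crosses $V(m)$ and $\max_t E(g(t))\geq\inf_{V(m)}E$. That half is fine.

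The gap is in piece (iii) of your upper-bound construction, where you claim a path from $\kappa(u,\theta_+)$ to $u_2$ ``inside $\{v\in S(m):E(v)<0\}$'' obtained by the $L^2$-renormalized segment $\eta(t)=m\,w(t)/\|w(t)\|_{L^2}$, $w(t)=(1-t)v_a+tv_b$. That interpolation controls the \emph{gradient norm} (which is why it works for piece (i)), but it does not control the \emph{energy}, and the sublevel set $\{E<0\}\cap S(m)$ is not preserved along it. Concretely, with $v_a,v_b$ of essentially disjoint supports one has at $t=\tfrac12$ a renormalization factor $\lambda=\sqrt2$, so the kinetic term of $w(1/2)=\tfrac12(v_a+v_b)$ is multiplied by $\lambda^2=2$ while $\|w(1/2)\|_{L^p}^p=2^{-p}(\|v_a\|_{L^p}^p+\|v_b\|_{L^p}^p)$ is multiplied only by $\lambda^p=2^{p/2}$; the focusing term therefore enters with a net factor $2^{-p/2}<\tfrac12$ relative to the kinetic term, and if $E(v_a),E(v_b)$ are only slightly negative (which is all that is guaranteed for $u_2$) while $\|\nabla v_a\|_{L^2},\|\nabla v_b\|_{L^2}$ are large (forced in the mass-supercritical regime for negative-energy states), then $E(\eta(1/2))\gtrsim(\tfrac14-\tfrac12\,2^{-p/2})(\|\nabla v_a\|_{L^2}^2+\|\nabla v_b\|_{L^2}^2)>0$ and can exceed $E(u)$. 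So ``Piece (iii) keeps $E<0$'' is unjustified and in general false, and $\max_tE(g_u(t))=E(u)$ does not follow. (The step ``after possibly shrinking $C_1$ and $m$'' is also problematic: changing $C_1$ changes the admissible class $\Gamma(m)$ and hence the quantity $\gamma(m)$ you are computing, and the lemma is asserted for every $m>0$.) The paper sidesteps all of this by treating $\Gamma(m)$ as in the Introduction's definition, with endpoints constrained only to lie in the low-gradient set $A$ and in $\{E<0\}$ respectively: then for $v\in V(m)$ the dilation path $\theta\mapsto\kappa(v,(2\theta-1)\theta_0)$ is itself admissible, and by Lemma \ref{unique} its maximum is exactly $E(v)$, so no connecting pieces are needed. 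If you insist on the fixed endpoints $u_1,u_2$, you must either prove that $\gamma(m)$ is independent of that choice or establish path-connectedness of $\{E<0\}\cap S(m)$ within a suitable energy sublevel; your interpolation supplies neither.
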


\begin{proof}
Assume for contradiction that there exists $v\in V(m)$ such that $E(v)<\gamma(m)$. Defining the map $J_v: \mathbb{R}\rightarrow S(m)$ with
$J_v(\theta)=\kappa(v, \theta),$
from Lemma \ref{lemma1}, we find there exists $\theta_0$ such that $J_v(-\theta_0)\in A$ ($A$ is defined in Lemma \ref{AB}) and $E(J_v(-\theta_0))<0$. Now let $\tilde{J}_v: [0, 1]\rightarrow S(m)$ be the path defined by
$$\tilde{J}_v(\theta)=\kappa(v, (2\theta-1)\theta_0).$$
Clearly, $\tilde{J}_v(0)=J_v(-\theta_0)$ and $\tilde{J}_v(1)=J_v(\theta)$. Moreover by Lemma \ref{unique}
$$\gamma(m)\leq\max_{\theta\in[0, 1]}E(\tilde{J}_v(\theta))=E(\tilde{J}_v(\frac12))=E(v),$$
and thus
$$\gamma(m)\leq\inf_{u\in V(m)}E(u).$$
On the other hand, thanks to Lemma \ref{lemma1} any path in $\Gamma(m)$ passes through the set $V(m)$ and hence
$$\max_{t\in[0, 1]}E(g(t))\geq\inf_{u\in V(m)}E(u).$$
\end{proof}

\renewcommand{\theequation}
{\thesection.\arabic{equation}}
\setcounter{equation}{0}
\section{Compactness of the Palais-Smale sequence} \noindent

In this section, we will prove the strong convergence of the Palais-Smale sequence. First, we show that there exists a Palais-Smale sequence for $E$ restricted to $S(m)$ at the level $\gamma(m)$, which is bounded in $H^1(\mathbb{R}^3)$.

For $y\in\mathbb{R}$, we define $|y|_\mathbb{R}=|y|$. Let $H=H^1(\mathbb{R}^3)\times\mathbb{R}$ be equipped with the scalar product $\langle\cdot, \cdot\rangle_H=\langle\cdot, \cdot\rangle_{H^1(\mathbb{R}^3)}+\langle\cdot, \cdot\rangle_{\mathbb{R}}$ and corresponding norm $||\cdot||^2_H=||\cdot||^2_{H^1}+|\cdot|^2_\mathbb{R}$. In addition, we denote by $H^{*}$ the dual space of $H$. Then, we give a useful result, which was proved by Ekeland's variational principle (see in \cite{Jeanjean1997}).

\begin{lemma}[\cite{Jeanjean1997}]\label{useful-result}
Let $\varepsilon>0 $, Suppose that $\tilde{g}_0\in\tilde{\Gamma}(m)$ satisfies
$$\max_{t\in[0, 1]}\tilde{E}(\tilde{g}_0(t))\leq\tilde{\gamma}(m)+\varepsilon.$$
Then there exists a pair of $(u_0, \theta_0)\in S(m)\times\mathbb{R}$ such that
\begin{itemize}
  \item[(1)] $\tilde{E}(u_0, \theta_0)\in[\tilde{\gamma}(m)-\varepsilon, \tilde{\gamma}(m)+\varepsilon]$;
  \item[(2)] $\min\limits_{t\in[0, 1]}||(u_0, \theta_0)-\tilde{g}_0(t)||_{H}\leq\sqrt{\varepsilon}$;
  \item[(3)] $\left\|\tilde{E}'|_{S(m)\times\mathbb{R}}(u_0, \theta_0)\right\|_{H^{*}}\leq 2\sqrt{\varepsilon}$, i.e.,
      $$\left|\langle\tilde{E}'(u_0, \theta_0), z\rangle_{H^{*}\times H}\right|\leq 2\sqrt{\varepsilon}||z||_{H},$$
      holds for all $z\in\tilde{T}_{(u_0, \theta_0)}:=\left\{(z_1, z_2)\in H: \langle u_0, z_1\rangle_{L^2}=0\right\}$.
\end{itemize}
\end{lemma}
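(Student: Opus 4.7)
\emph{Proof plan.} The approach is a min--max variant of Ekeland's variational principle, applied to the functional $\Phi(\tilde g):=\max_{t\in[0,1]}\tilde E(\tilde g(t))$ on the path space $\tilde\Gamma(m)$. Working with the augmented functional $\tilde E$ on the product manifold $S(m)\times\mathbb{R}$ rather than with $E$ on $S(m)$ is the key device: the additional $\mathbb{R}$-variable encodes the scaling $\kappa(\cdot,\theta)$, and it is precisely this extra direction that later produces the Pohozaev-type information needed for the boundedness of the resulting Palais--Smale sequence.

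The first step is to equip $\tilde\Gamma(m)$ with the uniform metric induced by $\|\cdot\|_H$, verify that it is then a complete metric space, and observe that $\Phi$ is lower semicontinuous and bounded below by $\tilde\gamma(m)$. Since $\Phi(\tilde g_0)\le\tilde\gamma(m)+\varepsilon$ by hypothesis, applying the standard Ekeland variational principle on $(\tilde\Gamma(m),d_\infty)$ with parameter $\sqrt\varepsilon$ yields a path $\tilde g^*\in\tilde\Gamma(m)$ satisfying
\[
\Phi(\tilde g^*)\le\Phi(\tilde g_0),\qquad d_\infty(\tilde g^*,\tilde g_0)\le\sqrt\varepsilon,\qquad \Phi(\tilde g^*)\le\Phi(\tilde g)+\sqrt\varepsilon\,d_\infty(\tilde g,\tilde g^*)\ \ \text{for all}\ \tilde g\in\tilde\Gamma(m).
\]
Choosing any $t_0\in[0,1]$ at which $\tilde E\circ\tilde g^*$ attains its maximum and setting $(u_0,\theta_0):=\tilde g^*(t_0)$ yields property (1), because $\tilde\gamma(m)\le\Phi(\tilde g^*)\le\tilde\gamma(m)+\varepsilon$, and property (2), because $(u_0,\theta_0)=\tilde g^*(t_0)$ lies within sup-distance $\sqrt\varepsilon$ of $\tilde g_0$.

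Property (3) is the heart of the argument and I would prove it by contradiction. Suppose that at \emph{every} point $(u,\theta)$ of the compact maximum set $K:=\{\tilde g^*(t):\tilde E(\tilde g^*(t))=\Phi(\tilde g^*)\}$ one had $\|\tilde E'|_{S(m)\times\mathbb{R}}(u,\theta)\|_{H^*}>2\sqrt\varepsilon$. A standard pseudo-gradient construction on the Hilbert manifold $S(m)\times\mathbb{R}$, together with a partition of unity near $K$, then produces a continuous tangent field $V$ defined in a neighborhood of $K$ with $\|V\|_H\le 1$ and $\langle\tilde E',V\rangle<-2\sqrt\varepsilon$ along $K$. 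Multiplying $V$ by a smooth cut-off $\chi(t)$ equal to $1$ on a neighborhood of $\{t:\tilde g^*(t)\in K\}$ and vanishing near the endpoints $t=0,1$ (possible since $\tilde E(\tilde g^*(0))=E(u_1)<\Phi(\tilde g^*)$ and the analogous strict inequality holds at $t=1$), and then flowing $\tilde g^*$ for small time $s>0$ along $\chi V$ via a smooth retraction onto $S(m)\times\mathbb{R}$, produces a competitor $\tilde g_s\in\tilde\Gamma(m)$ with $d_\infty(\tilde g_s,\tilde g^*)\le s+o(s)$ and, by a first-order Taylor expansion, $\Phi(\tilde g_s)\le\Phi(\tilde g^*)-2\sqrt\varepsilon\,s+o(s)$. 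For $s$ small this contradicts the third Ekeland inequality.

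The main technical obstacle is this last deformation step, which has to satisfy three constraints simultaneously: (i) the push-forward must remain on the product manifold $S(m)\times\mathbb{R}$, requiring either the exponential map or a concrete $L^2$-normalization after the move; (ii) the endpoints $\tilde g^*(0)=(u_1,0)$ and $\tilde g^*(1)=(u_2,0)$ must be preserved, so the cut-off has to be supported away from $\{0,1\}$, which relies on the strict gap between the endpoint energies and $\Phi(\tilde g^*)$; and (iii) the maximum set $K$ may be a non-trivial compact set rather than a single point, so the tangent field has to be assembled consistently through pseudo-gradients and a partition of unity. Once the deformation is in place, the quantitative decrease estimate, together with Ekeland's almost-minimizing inequality, gives the contradiction and hence (3).
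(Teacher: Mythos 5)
Your plan follows exactly the route the paper relies on: the paper does not prove this lemma itself but imports it from Jeanjean's 1997 paper (see the remark preceding the statement), where it is established precisely by applying Ekeland's variational principle to $\Phi(\tilde g):=\max_{t\in[0,1]}\tilde E(\tilde g(t))$ on the complete metric space $(\tilde\Gamma(m),d_\infty)$ and then running a pseudo-gradient deformation on $S(m)\times\mathbb{R}$ to locate an almost-critical point on the maximum set of the almost-minimizing path. Your sketch reproduces that argument, including the correct treatment of the endpoints (which uses the mountain-pass gap $\max\{E(u_1),E(u_2)\}<\tilde\gamma(m)$) and of the tangency/renormalization issues on the constraint, so it is essentially the same proof.
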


\begin{lemma}\label{useful-result2}
Let $p\in(\frac{10}3, 6)$. Then there exists $m_0>0$ and a sequence $\{v_n\}_{n=1}^{\infty}\subset S(m)$ such that for any $m\in(0, m_0)$, when $n\rightarrow\infty$, we have
\begin{equation}\label{properties}
\left\{
\begin{aligned}
&E(v_n)\rightarrow\gamma(m),\\
&\left\|E'|_{S(m)}(v_n)\right\|_{H^{-1}}\rightarrow 0,\\
&P(v_n)\rightarrow 0.
\end{aligned}
\right.
\end{equation}
Here $H^{-1}$ denotes the dual space of $H^1$.
\end{lemma}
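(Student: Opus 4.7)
The plan is to adapt Jeanjean's augmented-functional technique: rather than look for a Palais-Smale sequence for $E$ restricted to $S(m)$ directly, I would produce one by applying Ekeland's principle (Lemma \ref{useful-result}) to the enlarged functional $\tilde E$ on $S(m)\times\mathbb{R}$. The point of the augmentation is that the scaling $\kappa(u,\theta)(x)=e^{3\theta/2}u(e^{\theta}x)$ preserves the $L^2$-norm, and (as already computed in the proof of Lemma \ref{unique}) satisfies $\partial_\theta \tilde E(u,\theta)=P(\kappa(u,\theta))$. Thus an approximate critical point of $\tilde E|_{S(m)\times\mathbb{R}}$ automatically comes with the extra Pohozaev information $P\to 0$.

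First I would set $m_0:=\bar m$ with $\bar m$ from Lemma \ref{AB}, so that for every $m\in(0,m_0)$ the mountain-pass geometry of Section 3 is in force and $\gamma(m)=\tilde\gamma(m)>0$. Choose a sequence of paths $g_n\in\Gamma(m)$ with $\max_{t\in[0,1]}E(g_n(t))\to\gamma(m)$ and lift it to $\tilde g_n(t):=(g_n(t),0)\in\tilde\Gamma(m)$; the identity $\tilde E(g_n(t),0)=E(g_n(t))$ makes $\tilde g_n$ a minimizing sequence for $\tilde\gamma(m)$. Picking $\varepsilon_n\downarrow 0$ with $\max_t\tilde E(\tilde g_n(t))\le\tilde\gamma(m)+\varepsilon_n$ and applying Lemma \ref{useful-result} yields $(u_n,\theta_n)\in S(m)\times\mathbb{R}$ satisfying the three conclusions (i)--(iii) of that lemma. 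I then set $v_n:=\kappa(u_n,\theta_n)$; since $\kappa$ is an $L^2$-isometry, $v_n\in S(m)$ and $E(v_n)=\tilde E(u_n,\theta_n)\to\gamma(m)$, giving the first property.

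Next I would extract the remaining two properties. Because $\tilde g_n$ has identically zero $\mathbb{R}$-component, conclusion (ii) forces $|\theta_n|\le\sqrt{\varepsilon_n}\to 0$, so the rescaling factor $e^{\theta_n}$ tends to $1$ uniformly. Testing (iii) with $z=(0,1)\in\tilde T_{(u_n,\theta_n)}$ and using $\partial_\theta\tilde E=P\circ\kappa$ gives $|P(v_n)|\le 2\sqrt{\varepsilon_n}\to 0$. For the constrained derivative, fix $w\in T_{v_n}S(m)$ and define the pulled-back variation $\tilde w(x):=e^{-3\theta_n/2}w(e^{-\theta_n}x)$; a change of variables gives $\langle u_n,\tilde w\rangle_{L^2}=\langle v_n,w\rangle_{L^2}=0$, so $(\tilde w,0)\in\tilde T_{(u_n,\theta_n)}$, while the chain rule along the scaling yields $\partial_u\tilde E(u_n,\theta_n)[\tilde w]=E'(v_n)[w]$. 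Inserting into (iii),
\begin{equation*}
|E'(v_n)[w]|\le 2\sqrt{\varepsilon_n}\,\|\tilde w\|_{H^1}\le C\sqrt{\varepsilon_n}\,\|w\|_{H^1},
\end{equation*}
with $C$ controlled by $\max\{e^{|\theta_n|},e^{2|\theta_n|}\}$, which is bounded since $\theta_n\to 0$. Taking the supremum over unit $w$ delivers the second property.

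The main obstacle is transferring the derivative estimate from the product space back to $S(m)$: this requires uniform control on $\theta_n$, as otherwise the change of variables between $T_{v_n}S(m)$ and $\tilde T_{(u_n,\theta_n)}$ introduces a blowing factor $e^{c|\theta_n|}$ that would invalidate the Palais-Smale conclusion. The clever choice of lifted paths with identically zero $\mathbb{R}$-component is precisely what makes condition (ii) of Lemma \ref{useful-result} pin down $\theta_n\to 0$. Verifying $\partial_\theta\tilde E=P\circ\kappa$ is a routine chain-rule computation, but one that must be carried out carefully on the Bopp-Podolsky convolution, where the $\theta$-dependence enters nontrivially through the kernel $(1-e^{-|x-y|/(ae^\theta)})/|x-y|$ and contributes the two nonlocal terms visible in (\ref{Pohozaev-identity}).
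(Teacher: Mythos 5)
Your proposal is correct and follows essentially the same route as the paper: lift the minimizing paths to $\tilde\Gamma(m)$ with zero $\mathbb{R}$-component, apply Lemma \ref{useful-result}, set $v_n=\kappa(u_n,\theta_n)$, read off $P(v_n)\to0$ from the $(0,1)$-direction via $\partial_\theta\tilde E=P\circ\kappa$, and transfer the derivative estimate back to $T_{v_n}$ using $|\theta_n|\le\sqrt{\varepsilon_n}$ to control the pull-back of tangent vectors. All the key points, including the identification $(\tilde w,0)\in\tilde T_{(u_n,\theta_n)}$ and the boundedness of $e^{-2\theta_n}$, match the paper's argument.
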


\begin{proof}
From the definition of $\gamma(m)$, we know that for each $n\in\mathbb{N}^+$, there exists a sequence $\{g_n\}_{n=1}^{\infty}\subset\Gamma(m)$ such that
$$\max_{t\in[0,1]}E(g_n(t))\leq\gamma(m)+\frac1n.$$
Since $\tilde{\gamma}(m)=\gamma(m)$, then for each $n\in\mathbb{N}^+$, $\tilde{g}_n:=(g_n, 0)\in\tilde{\Gamma}(m)$ satisfies
$$\max_{t\in[0,1]}\tilde{E}(\tilde{g}_n(t))\leq\tilde{\gamma}(m)+\frac1n.$$
Thus applying Lemma \ref{useful-result}, we obtain a sequence $\{(u_n, \theta_n)\}_{n=1}^{\infty}\subset S(m)\times\mathbb{R}$ such that
\begin{itemize}
 \item[(1)] $\tilde{E}(u_n, \theta_n)\in[\tilde{\gamma}(m)-\frac1n, \tilde{\gamma}(m)+\frac1n]$;
 \item[(2)] $\min\limits_{t\in[0, 1]}||(u_n, \theta_n)-(g_n(t), 0)||_{H}\leq\frac1{\sqrt{n}}$;
 \item[(3)] $\left\|\tilde{E}'|_{S(m)\times\mathbb{R}}(u_n, \theta_n)\right\|_{H^{*}}\leq \frac2{\sqrt{n}}$, i.e.,
     $$\left|\langle\tilde{E}'(u_n, \theta_n), z\rangle_{H^{*}\times H}\right|\leq \frac2{\sqrt{n}}||z||_{H},$$
     holds for all $z\in\tilde{T}_{(u_n, \theta_n)}:=\left\{(z_1, z_2)\in H: \langle u_n, z_1\rangle_{L^2}=0\right\}$.
\end{itemize}
For each $n\in\mathbb{N}^+$, let $v_n=\kappa(u_n, \theta_n)$. Next, we will prove that $\{v_n\}_{n=1}^{\infty}\subset S(m)$ satisfies (\ref{properties}). Firstly, from (1) we have that $E(v_n)\rightarrow\gamma(m)$ as $n\rightarrow\infty$, since $E(v_n)=E(\kappa(u_n, \theta_n))=\tilde{E}(u_n, \theta_n)$. Secondly, let $\partial_{\theta_n}\tilde{E}(u_n, \theta_n)=\langle\tilde{E}'(u_n, \theta_n), (0, 1)\rangle_{H^*\times H}$. By simple calculation, we have that
\begin{align*}
P(v_n)=&||\nabla v_n||^2_{L^2}+\frac14\int\phi_{v_n}|v_n|^2dx-\frac1{4a}\iint e^{-\frac{|x-y|}a}|v_n(x)|^2|v_n(y)|^2dxdy-\frac{3(p-2)}{2p}||v_n||^p_{L^p}\\
=&e^{2\theta_n}||\nabla u_n||^2_{L^2}+\frac{e^{\theta_n}}4\iint\frac{1-e^{-\frac{|x-y|}{ae^{\theta_n}}}}{|x-y|}|u_n(x)|^2|u_n(y)|^2dxdy\\
&-\frac1{4a}\iint e^{-\frac{|x-y|}{ae^{\theta_n}}}|u_n(x)|^2|u_n(y)|^2dxdy-\frac{3(p-2)}{2p}e^{\frac{3(p-2)}2\theta_n}||u_n||^p_{L^p}\\
=&\langle\tilde{E}'(u_n, \theta_n), (0, 1)\rangle_{H^*\times H},
\end{align*}
and $(0, 1)\in\tilde{T}_{(u_n, \theta_n)}$. Thus (3) yields that $P(v_n)\rightarrow0$ as $n\rightarrow\infty$. Finally, we prove that
$$E'|_{S(m)}(v_n)\rightarrow0 \qquad\hbox{as}~~ n\rightarrow\infty.$$
We claim that for $n\in\mathbb{N}^+$ sufficiently large,
\begin{align}\label{E-estimate}
|\langle E'(v_n), \eta\rangle_{H^{-1}\times H^1}|\leq\frac{2\sqrt{2}}{\sqrt{n}}||\eta||_{H^1},\quad \forall\eta\in T_{v_n},
\end{align}
where $T_{v_n}=\{\eta\in H^1(\mathbb{R}^3): \langle v_n, \eta\rangle_{L^2}=0\}$. To this end, we note that, for each $\eta\in T_{v_n}$, setting $\tilde{\eta}=\kappa(\eta, -\theta_n)$, one has by direct calculations that
$$\langle E'(v_n), \eta\rangle_{H^{-1}\times H^1}=\langle\tilde{E}'(u_n, \theta_n), (\tilde{\eta}, 0)\rangle_{H^*\times H}.$$
Now, we will show that $(\tilde{\eta}, 0)\in\tilde{T}_{(u_n, \theta_n)}$. Indeed
\begin{align*}
(\tilde{\eta}, 0)\in\tilde{T}_{(u_n, \theta_n)}&\Leftrightarrow \langle u_n, \tilde{\eta}\rangle_{L^2}=0\\
&\Leftrightarrow\int u_n(x)e^{-\frac32\theta_n}\eta(e^{-\theta_n}x)dx=0\\
&\Leftrightarrow\int e^{\frac32\theta_n}u_n(e^{\theta_n}x)\eta(x)dx=0\\
&\Leftrightarrow\langle v_n, \eta\rangle_{L^2}=0\\
&\Leftrightarrow\eta\in T_{v_n}.
\end{align*}
From (2), we have
$$|\theta_n|=|\theta_n-0|\leq\min_{t\in[0, 1]}||(u_n, \theta_n)-(g_n(t), 0)||_{H}\leq\frac{1}{\sqrt{n}}.$$
It follows that, for $n$ large enough,
\begin{align*}
||(\tilde{\eta}, 0)||^2_{H}&=||\tilde{\eta}||^2_{H^1}\\
&=\int|\tilde{\eta}(x)|^2dx+\int|\nabla\tilde{\eta}(x)|^2dx\\
&=\int|\eta(x)|^2dx+e^{-2\theta_n}\int|\nabla\eta(x)|^2dx\\
&\leq2||\eta||^2_{H^1},
\end{align*}
if $e^{-2\theta_n}\leq2$. Thus, by (3) we have
$$|\langle E'(v_n), \eta\rangle_{H^{-1}\times H^1}|=|\langle\tilde{E}'(u_n, \theta_n), (\tilde{\eta}, 0)\rangle_{H^*\times H}|\leq\frac2{\sqrt{n}}||(\tilde{\eta}, 0)||_{H}\leq\frac{2\sqrt{2}}{\sqrt{n}}||\eta||_{H^1},\quad \forall\eta\in T_{v_n}.$$
Consequently,
$$\left\|E'|_{S(m)}(v_n)\right\|_{H^{-1}}=\sup_{\eta\in T_{v_n}, ||\eta||_{H^1}\leq 1}|\langle E'(v_n), \eta\rangle_{H^{-1}\times H^1}|\leq\frac{2\sqrt{2}}{\sqrt{n}}\rightarrow 0\qquad\hbox{as} ~~n\rightarrow\infty.$$
\end{proof}

Analogously to Lemma 2.3 and Lemma 2.4 in \cite{Jeanjean1997}, one can conclude that $\{v_n\}_{n=1}^{\infty}$ is
a bounded sequence. For reader's convenience, we sketch the proof in the following.

\begin{lemma}\label{bounded-s}
The Palais-Smale sequence $\{v_n\}_{n=1}^{\infty}$ obtained in Lemma \ref{useful-result2} is bounded in $H^1(\mathbb{R}^3)$.
\end{lemma}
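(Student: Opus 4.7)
The plan is to exploit the algebraic identity already recorded in the proof of Lemma \ref{lemma1}, which expresses the combination $E(u)-\frac{2}{3(p-2)}P(u)$ as a sum of manifestly nonnegative terms with a positive coefficient in front of $\|\nabla u\|_{L^2}^2$. Since the $L^2$–norm of $v_n$ is fixed equal to $m$, it is enough to control the gradient part.

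First, I would apply formula (\ref{EP}) to each $v_n$, so that
\begin{align*}
E(v_n)-\frac{2}{3(p-2)}P(v_n)
=\;& \frac{3p-10}{6(p-2)}\|\nabla v_n\|_{L^2}^2
+\left(\frac14-\frac1{6(p-2)}\right)\!\iint\frac{1-e^{-|x-y|/a}}{|x-y|}|v_n(x)|^2|v_n(y)|^2\,dx\,dy \\
& +\frac{1}{6a(p-2)}\iint e^{-|x-y|/a}|v_n(x)|^2|v_n(y)|^2\,dx\,dy.
\end{align*}
The hypothesis $p\in(\tfrac{10}{3},6)$ guarantees that all three coefficients on the right–hand side are strictly positive, and the two double integrals are nonnegative. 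Dropping them yields the one–sided estimate
$$\frac{3p-10}{6(p-2)}\|\nabla v_n\|_{L^2}^2 \;\le\; E(v_n)-\frac{2}{3(p-2)}P(v_n).$$

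Second, by Lemma \ref{useful-result2} the right–hand side converges to $\gamma(m)$ as $n\to\infty$, hence it is bounded in $n$. Therefore $\|\nabla v_n\|_{L^2}$ is bounded, and since $\|v_n\|_{L^2}=m$ for all $n$, the sequence $\{v_n\}$ is bounded in $H^1(\mathbb{R}^3)$.

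There is no real obstacle here: the only structural point to check is the positivity of the coefficients, which is precisely why the mass–supercritical range $p>\tfrac{10}{3}$ (giving $3p-10>0$ and $\tfrac14-\tfrac{1}{6(p-2)}>0$) together with $a>0$ is used. The interplay between the defocusing nonlocal term and the focusing $L^p$ term, which was the subtle issue for compactness, does not enter at this stage because both nonlocal contributions carry the favorable sign in the identity (\ref{EP}).
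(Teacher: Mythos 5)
Your proof is correct, and it takes a genuinely different and in fact more economical route than the paper's. The paper follows the Jeanjean-style bootstrapping: it forms the combination $3\tilde{E}(u_n,\theta_n)+\partial_{\theta_n}\tilde{E}(u_n,\theta_n)=3E(v_n)+P(v_n)$, drops the negative exponential term to get a lower bound involving $\tfrac52\|\nabla v_n\|_{L^2}^2+\int\phi_{v_n}|v_n|^2-\tfrac32\|v_n\|_{L^p}^p$, pairs it with the upper bound coming from the boundedness of $E(v_n)$, and uses $\tfrac5p-\tfrac32<0$ to first bound $\|v_n\|_{L^p}^p$ and $\int\phi_{v_n}|v_n|^2$, and only then $\|\nabla v_n\|_{L^2}^2$. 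You instead take the combination $E(v_n)-\tfrac{2}{3(p-2)}P(v_n)$, which by the identity (\ref{EP}) cancels the $L^p$ term entirely and leaves a sum of nonnegative quantities with the coefficient $\tfrac{3p-10}{6(p-2)}>0$ in front of $\|\nabla v_n\|_{L^2}^2$; since $E(v_n)\rightarrow\gamma(m)$ and $P(v_n)\rightarrow0$ are both supplied by Lemma \ref{useful-result2}, coercivity in the gradient norm is immediate. Your argument is shorter, avoids the intermediate bounds on $\|v_n\|_{L^p}^p$ and the nonlocal term, and reuses an identity the paper has already established and exploits elsewhere (e.g., in Lemmas \ref{non-vanishing} and \ref{limit-behavior}); the paper's version has the mild advantage of also recording explicitly that $\|v_n\|_{L^p}^p$ and $\int\phi_{v_n}|v_n|^2$ are bounded, though these of course follow afterwards from $H^1$-boundedness anyway.
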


\begin{proof}
Since $\partial_{\theta_n}\tilde{E}(u_n, \theta_n)=\langle\tilde{E}'(u_n, \theta_n), (0, 1)\rangle_{H^*\times H}$, from Lemma \ref{useful-result2}, we see that
$$\partial_{\theta_n}\tilde{E}(u_n, \theta_n)\rightarrow0 \qquad\hbox{as}~~ n\rightarrow\infty,$$
that is
\begin{align*}
\partial_{\theta_n}\tilde{E}(u_n, \theta_n)=&||\nabla \kappa(u_n, \theta_n)||^2_{L^2}+\frac14\int\phi_{\kappa(u_n, \theta_n)}|\kappa(u_n, \theta_n)|^2dx-\frac{3(p-2)}{2p}||\kappa(u_n, \theta_n)||^p_{L^p}\\
&-\frac1{4a}\iint e^{-\frac{|x-y|}a}|\kappa(u_n, \theta_n)(x)|^2|\kappa(u_n, \theta_n)(y)|^2 dxdy
\rightarrow0 \qquad\hbox{as}~~ n\rightarrow\infty.
\end{align*}
Denoting $v_n=\kappa(u_n, \theta_n)$,
arguing as in (\cite[Lemma 2.3]{Jeanjean1997}), we can get that
$$\tilde{E}(u_n, \theta_n)=\frac12||\nabla v_n||^2_{L^2}+\frac14\int\phi_{v_n}|v_n|^2dx-\frac1p||v_n||^p_{L^p}$$
is bounded. Then, there exists $C>0$ independent of $n$ such that
$$|3\tilde{E}(u_n, \theta_n)+\partial_{\theta_n}\tilde{E}(u_n, \theta_n)|\leq C.$$
Since
\begin{align*}
3\tilde{E}(u_n, \theta_n)+\partial_{\theta_n}\tilde{E}(u_n, \theta_n)=&\frac52||\nabla v_n||^2_{L^2}+\int\phi_{v_n}|v_n|^2dx\\
&-\frac1{4a}\iint e^{-\frac{|x-y|}a}|v_n(x)|^2|v_n(y)|^2 dxdy-\frac32||v_n||^p_{L^p}\\
\leq&\frac52||\nabla v_n||^2_{L^2}+\int\phi_{v_n}|v_n|^2dx-\frac32||v_n||^p_{L^p},
\end{align*}
we know
\begin{align}\label{bounded1}
\frac52||\nabla v_n||^2_{L^2}+\int\phi_{v_n}|v_n|^2dx-\frac32||v_n||^p_{L^p}\geq-C.
\end{align}
Moreover, from the boundedness of $\tilde{E}(u_n, \theta_n)$, it follows that
\begin{align}\label{bounded2}
\frac12||\nabla v_n||^2_{L^2}+\frac14\int\phi_{v_n}|v_n|^2dx\leq C+\frac1p||v_n||^p_{L^p}.
\end{align}
Combining (\ref{bounded1}) with (\ref{bounded2}), we deduce that
$$\left(\frac5p-\frac32\right)||v_n||^p_{L^p}-\frac14\int\phi_{v_n}|v_n|^2dx\geq -C.$$
Due to $\frac5p-\frac32<0$, we can conclude that $||v_n||^p_{L^p}$ and $\int\phi_{v_n}|v_n|^2dx$ are bounded, hence $||\nabla v_n||^2_{L^2}$ is also bounded. Note that $\{v_n\}_{n=1}^{\infty}\subset S(m)$, thus $\{v_n\}_{n=1}^{\infty}$ is bounded in $H^1(\mathbb{R}^3)$.
\end{proof}

\begin{proposition}\label{compact-PS}
Let $\{v_n\}_{n=1}^{\infty}\subset S(m)$ be a bounded Palais-Smale sequence for $E(u)$ restricted to $S(m)$ such that $E(u)\rightarrow\gamma(m)$. Then there exists a $v\in H^1(\mathbb{R}^3)$, a sequence $\{\omega_n\}_{n=1}^{\infty}\subset\mathbb{R}$ and an $\omega_m\in\mathbb{R}$ such that, up to a subsequence, as $n\rightarrow\infty$
\begin{itemize}
 \item[(1)] $v_n\rightharpoonup v$ ~~\hbox{weakly in}~~ $H^1(\mathbb{R}^3)$;
 \item[(2)] $\omega_n\rightarrow\omega_m ~~\hbox{in}~~ \mathbb{R}$;
 \item[(3)] $E'(v_n)+\omega_n v_n\rightarrow0 ~~\hbox{in}~~ H^{-1}(\mathbb{R}^3)$;
 \item[(4)] $E'(v_n)+\omega_m v_n\rightarrow0 ~~\hbox{in}~~ H^{-1}(\mathbb{R}^3)$;
 \item[(5)] $E'(v)+\omega_m v=0 ~~\hbox{in}~~ H^{-1}(\mathbb{R}^3)$.
\end{itemize}
\end{proposition}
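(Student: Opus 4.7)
The plan is a standard Lagrange-multiplier extraction followed by a weak-limit passage, with the only delicate point being the defocusing nonlocal integrand. Since $\{v_n\}_{n=1}^\infty$ is bounded in $H^1(\mathbb{R}^3)$, reflexivity gives a subsequence with $v_n \rightharpoonup v$ weakly in $H^1$, proving (1); Rellich-Kondrachov plus a diagonal extraction yield $v_n \to v$ in $L^r_{\mathrm{loc}}(\mathbb{R}^3)$ for every $r \in [1, 6)$ and a.e.\ on $\mathbb{R}^3$.

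Next I would define
$$\omega_n := -\frac{1}{m^2}\langle E'(v_n), v_n\rangle_{H^{-1}\times H^1}$$
and verify (3) by a direct decomposition: for $\varphi \in H^1(\mathbb{R}^3)$, write $\varphi = \eta_\varphi + \lambda v_n$ with $\lambda = m^{-2}\langle \varphi, v_n\rangle_{L^2}$ and $\eta_\varphi \in T_{v_n}$; then
$$\langle E'(v_n) + \omega_n v_n, \varphi\rangle = \langle E'(v_n), \eta_\varphi\rangle,$$
and $||\eta_\varphi||_{H^1} \leq C ||\varphi||_{H^1}$ uniformly in $n$ by the $H^1$-bound on $\{v_n\}$. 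The Palais-Smale condition $||E'|_{S(m)}(v_n)||_{H^{-1}} \to 0$ therefore gives (3). Boundedness of $\{\omega_n\}$ follows from
$$\langle E'(v_n), v_n\rangle = ||\nabla v_n||_{L^2}^2 + \int \phi_{v_n}|v_n|^2 \, dx - ||v_n||_{L^p}^p,$$
each term being uniformly bounded by the $H^1$-bound, the Gagliardo-Nirenberg inequality (\ref{G-N-ineq}) for $p \in (\tfrac{10}{3}, 6)$, and the Hardy-Littlewood-Sobolev-type estimate following Lemma \ref{HLS}. A further subsequence gives $\omega_n \to \omega_m$ in $\mathbb{R}$, proving (2). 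Claim (4) is then immediate, since $||(\omega_n - \omega_m)v_n||_{H^{-1}} \leq |\omega_n - \omega_m| \cdot ||v_n||_{L^2} \to 0$.

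To establish (5), I would test (4) against arbitrary $\varphi \in C_c^\infty(\mathbb{R}^3)$ and let $n \to \infty$. The Dirichlet integral $\int \nabla v_n \cdot \nabla \varphi\,dx$ and the mass term $\omega_m \int v_n \varphi\,dx$ pass to the limit by weak $H^1$-convergence. For the local nonlinearity $\int |v_n|^{p-2} v_n \varphi\, dx$, pointwise a.e.\ convergence together with the uniform $L^p(\mathrm{supp}\,\varphi)$-bound and Vitali's theorem give convergence to $\int |v|^{p-2} v \varphi\, dx$. The delicate piece is the nonlocal term $\int \phi_{v_n} v_n \varphi \, dx$; here I would invoke Lemma \ref{property2}, which gives $\phi_{v_n} \rightharpoonup \phi_v$ in $\mathcal{D}$ and therefore weakly in $L^6(\mathbb{R}^3)$. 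Paired with strong convergence $v_n \varphi \to v \varphi$ in $L^{6/5}(\mathbb{R}^3)$ (Rellich, using the compact support of $\varphi$), the weak-strong product passes to $\int \phi_v v \varphi \, dx$, which finishes (5).

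The main obstacle is precisely this limit of the nonlocal term: the product $(\mathcal{K}\ast|v_n|^2)v_n$ is not known to converge in any useful Lebesgue norm on the whole of $\mathbb{R}^3$, and the defocusing sign prevents Br\'{e}zis-Lieb-type identities from yielding global strong convergence. The resolution is that $\varphi$ has compact support, so the limit reduces to a weak-strong pairing in $L^6 \times L^{6/5}$ that is afforded by Lemma \ref{property2} together with local Rellich compactness; everything else is standard Lagrange-multiplier bookkeeping and does not use the mountain-pass level $\gamma(m)$ in an essential way.
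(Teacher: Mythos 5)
Your proof is correct and follows essentially the same route as the paper, which does not write the argument out but defers to Proposition 4.1 of Bellazzini--Jeanjean--Luo \cite{Bellazzini2013}: the multiplier $\omega_n=-m^{-2}\langle E'(v_n),v_n\rangle$, the tangential decomposition $\varphi=\eta_\varphi+\lambda v_n$, and the passage to the limit in the nonlocal term via $\phi_{v_n}\rightharpoonup\phi_v$ in $L^6$ (Lemma \ref{property2}) paired with $v_n\varphi\rightarrow v\varphi$ in $L^{6/5}$ are exactly the standard steps of that reference. No gaps.
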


\begin{proof}
The proof of Proposition \ref{compact-PS} is standard, and we refer to Proposition 4.1 in \cite{Bellazzini2013} for a proof in a similar context.
\end{proof}

\begin{lemma}\label{non-vanishing}
Let $p\in(\frac{10}3, 6)$ and $\{v_n\}_{n=1}^{\infty}\subset S(m)$ be a bounded sequence such that
$$P(v_n)\rightarrow0 \quad\hbox{and}\quad E(v_n)\rightarrow\gamma(m) ~~\hbox{with}~~ \gamma(m)>0,$$
then, up to a subsequence and up to translation $v_n\rightharpoonup\tilde{v}\neq0$, as $n\rightarrow\infty$.
\end{lemma}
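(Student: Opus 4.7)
The plan is to argue by contradiction, invoking the Lions concentration-compactness alternative. Assume the conclusion fails, so that for every subsequence and every sequence of translations $\{z_n\}\subset\mathbb{R}^3$, the weak $H^1$-limit of $v_n(\cdot+z_n)$ is zero. A standard argument (using the local compactness of $H^1\hookrightarrow L^2_{loc}$) then forces the Lions vanishing condition
$$\limsup_{n\to\infty}\sup_{y\in\mathbb{R}^3}\int_{B_R(y)}|v_n(x)|^2\,dx=0\quad\text{for every }R>0,$$
because otherwise one could extract a subsequence and translations $z_n$ along which $\int_{B_R(z_n)}|v_n|^2\,dx\geq\delta>0$, yielding a nontrivial weak limit. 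Lemma \ref{vanishing} then delivers $v_n\to 0$ in $L^r(\mathbb{R}^3)$ for every $r\in(2,6)$.

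Next I would use this $L^r$-decay to kill every term in $P(v_n)$ except $\|\nabla v_n\|_{L^2}^2$. Since $p\in(\tfrac{10}{3},6)\subset(2,6)$, the local term satisfies $\|v_n\|_{L^p}^p\to 0$. The corollary to the Hardy-Littlewood-Sobolev inequality gives
$$\int\phi_{v_n}|v_n|^2\,dx\le C\|v_n\|_{L^{12/5}}^4\to 0,$$
and Young's convolution inequality combined with $e^{-|\cdot|/a}\in L^1(\mathbb{R}^3)$ yields
$$\iint e^{-|x-y|/a}|v_n(x)|^2|v_n(y)|^2\,dxdy\le\|e^{-|\cdot|/a}\|_{L^1}\|v_n\|_{L^{4}}^4\to 0,$$
since $4\in(2,6)$.

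Plugging these four decay statements into the definition \eqref{Pohozaev-identity} of $P$ and using $P(v_n)\to 0$ forces $\|\nabla v_n\|_{L^2}^2\to 0$. But then each summand of $E(v_n)$ tends to zero, so $E(v_n)\to 0$, contradicting $E(v_n)\to\gamma(m)>0$. The only delicate point I anticipate is the reduction to Lions vanishing: one must ensure that assuming every translated weak limit is zero really rules out uniform lower bounds on $L^2$-mass in balls; once that is secured, the remaining work is a careful but routine bookkeeping of the nonlocal Bopp-Podolsky contributions via Hardy-Littlewood-Sobolev and Young.
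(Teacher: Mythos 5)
Your proposal is correct and follows essentially the same route as the paper: assume vanishing, use Lemma \ref{vanishing} to kill the $L^p$ term, combine with $P(v_n)\to 0$ to force $\|\nabla v_n\|_{L^2}\to 0$, and conclude $E(v_n)\to 0$, contradicting $\gamma(m)>0$. The only cosmetic difference is that you estimate the two nonlocal terms separately (Hardy--Littlewood--Sobolev for $\phi_{v_n}$ and Young for the exponential kernel), whereas the paper exploits the pointwise inequality $\frac{1-e^{-|x-y|/a}}{|x-y|}-\frac{1}{a}e^{-|x-y|/a}\geq 0$ to handle them as a single nonnegative combination; both work, and your explicit reduction of the contradiction hypothesis to the Lions vanishing condition is a detail the paper leaves implicit.
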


\begin{proof}
Assume for contradiction that $\{v_n\}_{n=1}^{\infty}\subset S(m)$ is vanishing. Thus, $||v_n||^p_{L^p}\rightarrow0$ (see \cite[Lemma I.1]{Lions1984-2}). Since $P(v_n)\rightarrow0$, we get
$$||\nabla v_n||^2_{L^2}+\frac14\iint\left(\frac{1-e^{-\frac{|x-y|}a}}{|x-y|}-\frac1ae^{-\frac{|x-y|}a}\right)
|v_n(x)|^2|v_n(y)|^2dxdy\rightarrow0, \qquad\hbox{as}~~n\rightarrow\infty.$$
It follows from $\frac{1-e^{-\frac{|x-y|}a}}{|x-y|}-\frac1a e^{-\frac{|x-y|}a}\geq0$ that
$$||\nabla v_n||^2_{L^2}\rightarrow0 \quad \hbox{and} \quad \iint\left(\frac{1-e^{-\frac{|x-y|}a}}{|x-y|}-\frac1ae^{-\frac{|x-y|}a}\right)
|v_n(x)|^2|v_n(y)|^2 dxdy\rightarrow0, \qquad\hbox{as}~~n\rightarrow\infty.$$
Now from (\ref{EP}) and Lemma \ref{HLS}, we immediately deduce that $E(v_n)\rightarrow0$ and this contradicts the assumption that $E(v_n)\rightarrow\gamma(m)>0$.
\end{proof}

\begin{lemma}\label{null-function}
Let $p\in(\frac{10}3, 6)$, $\omega\in\mathbb{R}$. If $v\in H^1(\mathbb{R}^3)$ is a weak solution of
\begin{equation}\label{main4}
-\Delta v+\omega v+\phi_v v=|v|^{p-2}v,
\end{equation}
then $P(v)=0$. Moreover, if $\omega\leq0$, then there exists a constant $m_0$ independent of $\omega\in\mathbb{R}$ such that the only solution of (\ref{main4}) fufilling $||v||_{L^2}\leq m_0$ is null function.
\end{lemma}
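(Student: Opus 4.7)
The plan is to treat the two assertions separately. For $P(v)=0$, I would invoke the scaling $\kappa(v,\theta)(x)=e^{3\theta/2}v(e^{\theta}x)$ already used in Section~3. Since $\|\kappa(v,\theta)\|_{L^2}$ is independent of $\theta$, the tangent vector $w:=\frac{d}{d\theta}\kappa(v,\theta)\big|_{\theta=0}=\tfrac{3}{2}v+x\cdot\nabla v$ is $L^2$-orthogonal to $v$. A weak solution of (\ref{main4}) satisfies $E'(v)+\omega v=0$ in $H^{-1}$, so $\langle E'(v),w\rangle=-\omega\langle v,w\rangle_{L^2}=0$. The derivative $\frac{d}{d\theta}E(\kappa(v,\theta))\big|_{\theta=0}$ has already been computed in the excerpt (it is exactly the functional defining $P$), so $\langle E'(v),w\rangle=P(v)$ and hence $P(v)=0$. (Equivalently, one can derive it by combining the Nehari identity, obtained by testing with $v$, and the Pohozaev identity, obtained by testing with $x\cdot\nabla v$; the symmetry trick $(x-y)\cdot\nabla\mathcal{K}(x-y)=-\mathcal{K}(x-y)+\frac{1}{a}e^{-|x-y|/a}$ handles the nonlocal term.)

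For the second assertion, assume $\omega\le 0$ and combine three ingredients. First, the pointwise bound $\mathcal{K}(z)\geq \frac{1}{a}e^{-|z|/a}$ (already exploited in Section~3) gives $\frac{1}{4a}\iint e^{-|x-y|/a}|v|^2|v|^2\le \frac{1}{4}\int\phi_v|v|^2$, so $P(v)=0$ yields
\[
\|\nabla v\|_{L^2}^{2}\ \leq\ \frac{3(p-2)}{2p}\,\|v\|_{L^{p}}^{p}.
\]
Second, testing (\ref{main4}) with $v$ (Nehari) gives $\|v\|_{L^{p}}^{p}=\|\nabla v\|_{L^2}^2+\omega\|v\|_{L^2}^2+\int\phi_v|v|^2\le \|\nabla v\|_{L^2}^2+\int\phi_v|v|^2$ because $\omega\le 0$. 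Substituting and using $1-\frac{3(p-2)}{2p}=\frac{6-p}{2p}>0$ for $p<6$ produces the clean estimate $\|\nabla v\|_{L^2}^2\le \frac{3(p-2)}{6-p}\int \phi_v|v|^2\,dx$.

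Third, chain the Coulomb term through HLS, interpolation and Sobolev:
\[
\int\phi_v|v|^2\,dx\ \leq\ C\|v\|_{L^{12/5}}^{4}\ \leq\ C\|v\|_{L^2}^{3}\|v\|_{L^{6}}\ \leq\ C\|v\|_{L^2}^{3}\|\nabla v\|_{L^2},
\]
which yields the upper bound $\|\nabla v\|_{L^2}\leq C\|v\|_{L^2}^{3}$. On the other hand, the sharp Gagliardo-Nirenberg inequality (\ref{G-N-ineq}) together with $\|\nabla v\|_{L^2}^{2}\leq\frac{3(p-2)}{2p}\|v\|_{L^{p}}^{p}$ forces, for any nonzero $v$, the lower bound $\|\nabla v\|_{L^2}^{(3p-10)/2}\ \geq\ c\,\|v\|_{L^2}^{-(6-p)/2}$, where the positivity of the exponent $(3p-10)/2$ is precisely what $p>\frac{10}{3}$ buys us. Comparing the two bounds gives $\|v\|_{L^2}^{8(p-3)/(3p-10)}\geq C_{0}$ for a constant $C_{0}>0$ depending only on $p$ and $a$ (crucially not on $\omega$). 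Choosing $m_{0}$ smaller than the corresponding threshold therefore forces $v\equiv 0$.

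The main obstacles are the nonlocal Pohozaev computation (absorbed into the scaling argument) and tracking carefully that every constant produced is independent of $\omega$; the rest is a matching of upper and lower bounds on $\|\nabla v\|_{L^2}$ that becomes incompatible for small mass, which is the hallmark of mass-subcritical-type energy estimates applied through the Pohozaev identity.
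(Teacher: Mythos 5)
Your proposal is correct and follows essentially the same route as the paper: the identity $P(v)=0$ comes from combining the Nehari identity with the Pohozaev identity (your scaling-derivative formulation is just a repackaging of this, and you rightly note the equivalence — though one should be aware that pairing $E'(v)$ with $\tfrac32 v + x\cdot\nabla v$ requires the usual regularity/cutoff justification, which the paper avoids by quoting the Pohozaev identity from d'Avenia–Siciliano), and the nonexistence for $\omega\le 0$ and small mass is obtained, exactly as in the paper, by playing the $\omega$-independent upper bound $\|\nabla v\|_{L^2}\le C\|v\|_{L^2}^3$ (from the sign of $\omega$ plus the Hardy--Littlewood--Sobolev/Gagliardo--Nirenberg control of $\int\phi_v|v|^2$) against the lower bound $\|\nabla v\|_{L^2}^{(3p-10)/2}\ge c\|v\|_{L^2}^{-(6-p)/2}$ forced by $P(v)=0$, which is incompatible for small $\|v\|_{L^2}$ precisely because $p>\tfrac{10}{3}$. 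The only cosmetic difference is that the paper organizes the first bound as a sign condition on the combined Nehari--Pohozaev relation (its identity (4.4)) rather than as an explicit inequality $\|\nabla v\|_{L^2}^2\le \tfrac{3(p-2)}{6-p}\int\phi_v|v|^2\,dx$, but the content is identical.
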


\begin{proof}
The following Pohozaev-type identity holds for $v\in H^1(\mathbb{R}^3)$, which is the weak solution of (\ref{main4}) (see in \cite{dAvenia2019}),
$$\frac12||\nabla v||^2_{L^2}+\frac{3\omega}2||v||^2_{L^2}+\frac54\int\phi_v |v|^2dx+\frac1{4a}\iint e^{-\frac{|x-y|}a}|v(x)|^2|v(y)|^2dxdy=\frac3p||v||^p_{L^p}.$$
By multiplying (\ref{main4}) by $v$ and integrating, we derive a second identity
$$||\nabla v||^2_{L^2}+\omega||v||^2_{L^2}+\int\phi_v |v|^2dx=||v||^p_{L^p}.$$
With simple calculus, we obtain the following relations
\begin{align}\label{Pohozaev}
||\nabla v||^2_{L^2}+\frac14\int\phi_v |v|^2dx-\frac1{4a}\iint
e^{-\frac{|x-y|}a}|v(x)|^2|v(y)|^2dxdy-\frac{3(p-2)}{2p}||v||^p_{L^p}=0,
\end{align}
\begin{align}\nonumber
-\omega||v||^2_{L^2}=&\frac{p-6}{3(p-2)}||\nabla v||^2_{L^2}+\int\phi_v |v|^2dx\\\label{relation}
&-\frac{p}{6(p-2)}\iint\left(\frac{1-e^{-\frac{|x-y|}a}}{|x-y|}-\frac1a e^{-\frac{|x-y|}a}\right)|v(x)|^2 |v(y)|^2 dxdy.
\end{align}
The identity (\ref{Pohozaev}) is $P(v)=0$. This identity together with (\ref{G-N-ineq}) ensures the existence of a constant $C(p)$ such that
\begin{align*}
||\nabla v||^2_{L^2}-C(p)||\nabla v||^{\frac{3(p-2)}2}_{L^2}||v||^{\frac{6-p}2}_{L^2}&\leq||\nabla v||^2_{L^2}-\frac{3(p-2)}{2p}||v||^p_{L^P}\\
&=-\frac14\iint\left(\frac{1-e^{-\frac{|x-y|}a}}{|x-y|}-\frac1a e^{-\frac{|x-y|}a}\right)|v(x)|^2 |v(y)|^2 dxdy\\
&\leq0,
\end{align*}
that is,
\begin{align}\label{relation1}
||\nabla v||^{\frac{10-3p}2}_{L^2}\leq C(p)||v||^{\frac{6-p}2}_{L^2}.
\end{align}
By the Hardy-Littlehood-Sobolev inequality and the Gagliardo-Nirenberg inequality, we have
 \begin{align}\label{inq}
\int\phi_v |v|^2dx\leq C||\nabla v||_{L^2}||v||^3_{L^2}.
\end{align}
Then, from the identity (\ref{relation}), we obtain
\begin{align}\label{relation2}
-\omega||v||^2_{L^2}\leq\frac{p-6}{3(p-2)}||\nabla v||^2_{L^2}+C||\nabla v||_{L^2}||v||^3_{L^2}.
\end{align}
Note that (\ref{relation1}) shows that, for any solution $v$ of (\ref{main4}) with small $L^2$-norm, $||\nabla v||_{L^2}$ must be large. This implies that the left-hand side of (\ref{relation2}) cannot be nonnegative with $||v||_{L^2}$
sufficient small.
\end{proof}

\begin{lemma}\label{convergence}
Let $p\in(\frac{10}3, 6)$. Suppose that the bounded Palais-Smale sequence $\{v_n\}_{n=1}^{\infty}\subset S(m)$ given by Lemma \ref{useful-result2} converges weakly, up to translations, to the nonzero function $\tilde{v}$. Moreover, assume that
\begin{align}\label{assume}
\forall m_1\in(0, m), \quad \gamma(m_1)>\gamma(m),
\end{align}
then $v_n\rightarrow\tilde{v}$ in $H^1(\mathbb{R}^3)$. In particular, it follows that $\tilde{v}\in S(m)$ and $E(\tilde{v})=\gamma(m)$.
\end{lemma}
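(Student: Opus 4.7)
\emph{Plan of proof.} The strategy is to upgrade the weak convergence $v_n\rightharpoonup\tilde v$ to strong $H^1$ convergence via a Br\'ezis-Lieb type decomposition of $E$ and $P$, combined with the strict monotonicity hypothesis (\ref{assume}). First, I would apply Proposition \ref{compact-PS} to obtain a Lagrange multiplier $\omega_m\in\mathbb R$ such that $\tilde v$ is a weak solution of $-\Delta\tilde v+\omega_m\tilde v+\phi_{\tilde v}\tilde v=|\tilde v|^{p-2}\tilde v$. Lemma \ref{null-function} then gives $P(\tilde v)=0$; setting $m_0:=\|\tilde v\|_{L^2}\in(0,m]$, this means $\tilde v\in V(m_0)$, and Lemma \ref{Pohozaev-manifold1} yields $E(\tilde v)\ge\gamma(m_0)$.

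Setting $w_n:=v_n-\tilde v$, weak $H^1$-convergence together with the classical Br\'ezis-Lieb lemma (for $\|\cdot\|_{L^2}^2$, $\|\nabla\cdot\|_{L^2}^2$ and $\|\cdot\|_{L^p}^p$), Lemma \ref{BL} (for $\Psi$), and an analogous splitting for the bounded-kernel integral $\iint e^{-|x-y|/a}|u(x)|^2|u(y)|^2\,dx\,dy$ yield
\begin{align*}
\|v_n\|_{L^2}^2&=\|\tilde v\|_{L^2}^2+\|w_n\|_{L^2}^2+o_n(1),\\
E(v_n)&=E(\tilde v)+E(w_n)+o_n(1),\\
P(v_n)&=P(\tilde v)+P(w_n)+o_n(1)=P(w_n)+o_n(1).
\end{align*}
In particular $P(w_n)\to 0$. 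Applying identity (\ref{EP}) to $w_n$ and observing that for $p\in(\tfrac{10}{3},6)$ each coefficient $\tfrac{3p-10}{6(p-2)}$, $\tfrac14-\tfrac{1}{6(p-2)}$ and $\tfrac{1}{6a(p-2)}$ on its right-hand side is strictly positive, one obtains $E(w_n)\ge\tfrac{2}{3(p-2)}P(w_n)=o_n(1)$, so $\liminf_{n\to\infty}E(w_n)\ge 0$.

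Suppose for contradiction $m_0<m$. Hypothesis (\ref{assume}) gives $\gamma(m_0)>\gamma(m)$, hence
$$
\gamma(m)=\lim_{n\to\infty}E(v_n)\ge E(\tilde v)+\liminf_{n\to\infty}E(w_n)\ge\gamma(m_0)>\gamma(m),
$$
a contradiction. Therefore $m_0=m$ and $\|w_n\|_{L^2}\to 0$. Since $\{w_n\}$ is $H^1$-bounded, Gagliardo-Nirenberg gives $\|w_n\|_{L^q}\to 0$ for every $q\in(2,6)$; in particular $\|w_n\|_{L^p}^p\to 0$, Lemma \ref{HLS} (or the corollary following it) handles $\iint\tfrac{1-e^{-|x-y|/a}}{|x-y|}|w_n|^2|w_n|^2$ via the vanishing $L^{12/5}$-norm, and $e^{-|x-y|/a}\le 1$ gives $\iint e^{-|x-y|/a}|w_n|^2|w_n|^2\le\|w_n\|_{L^2}^4\to 0$. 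Substituting into $P(w_n)=o_n(1)$ forces $\|\nabla w_n\|_{L^2}\to 0$, so $v_n\to\tilde v$ in $H^1(\mathbb R^3)$, and continuity of $E$ gives $\tilde v\in S(m)$ and $E(\tilde v)=\gamma(m)$.

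The main technical point is the Br\'ezis-Lieb splitting for the exponential-kernel integral, which is not contained in Lemma \ref{BL} but follows by the same dominated-convergence argument because $e^{-|x-y|/a}$ is bounded and decaying while $\{|v_n|^2\}$ is bounded in $L^1\cap L^3$. A secondary check is that $E(w_n)$ stays essentially nonnegative modulo $o_n(1)$; this is exactly the sign information provided by identity (\ref{EP}) in the mass-supercritical regime $p>\tfrac{10}{3}$, and it is this sign that lets the monotonicity hypothesis (\ref{assume}) close the contradiction argument.
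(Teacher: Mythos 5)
Your proof is correct and follows the same overall strategy as the paper's: a Br\'ezis--Lieb decomposition of $E$ and $P$ along $v_n=\tilde v+w_n$, the sign identity (\ref{EP}) to obtain $\liminf_n E(w_n)\geq 0$, and the strict monotonicity hypothesis (\ref{assume}) to force $\|\tilde v\|_{L^2}=m$. Two implementation details differ, both in your favor or at worst neutral. First, you invoke a full splitting $A(v_n)=A(w_n)+A(\tilde v)+o_n(1)$ for the exponential-kernel integral; this is not contained in Lemma \ref{BL} and does require the separate (easy) verification you sketch -- the cross terms vanish because $\mathcal{K}$-type convolutions of $|\tilde v|^2$ lie in $C_0$ while $w_n\to 0$ in $L^2_{loc}$. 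The paper sidesteps this entirely: in (\ref{BL6})--(\ref{BL7}) it keeps the unsplit difference $A(v_n)-A(\tilde v)$ and only uses that this difference is $\geq o_n(1)$, which follows from weak lower semicontinuity (Fatou). Second, your recovery of $L^2$-convergence is cleaner: once $\|\tilde v\|_{L^2}=m$ you read $\|w_n\|_{L^2}\to 0$ directly off the mass splitting, whereas the paper tests the Euler--Lagrange relation against $v_n-\tilde v$ and concludes $\omega\int(v_n-\tilde v)^2\,dx\to 0$, an argument that tacitly needs $\omega\neq 0$; your route avoids that extra requirement.
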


\begin{proof}
We define $B(v):=\int\phi_v |v|^2dx$, $A(v):=\iint e^{-\frac{|x-y|}a}|v(x)|^2|v(y)|^2dxdy$ and $D(v):=\frac14B(v)-\frac1p||v||^p_{L^p}$. Obviously,
$$E(v):=\frac12||\nabla v||^2_{L^2}+D(v).$$
From Lemma \ref{BL}, we know that the nonlinear term $D$ fulfills the following splitting properties of Br\'{e}zis-Lieb type
\begin{align}\label{BL1}
D(v_n-\tilde{v})+D(\tilde{v})=D(v_n)+o_n(1).
\end{align}
Now, we argue by contradiction. Assume that $m_1=||\tilde{v}||_{L^2}<m$. By Proposition \ref{compact-PS}(5) and Lemma \ref{null-function}, we have $P(\tilde{v})=0$ and thus $\tilde{v}\in V(m_1)$. Since $v_n-\tilde{v}\rightharpoonup0$ in $H^1(\mathbb{R}^3)$, we have
\begin{align}\label{BL2}
||\nabla(v_n-\tilde{v})||^2_{L^2}+||\nabla\tilde{v}||^2_{L^2}=||\nabla v_n||^2_{L^2}+o_n(1).
\end{align}
Also since $\{v_n\}_{n=1}^{\infty}\subset S(m)$ is a sequence at the level $\gamma(m)$, we get
\begin{align}\label{BL3}
E(v_n)=\frac12||\nabla v_n||^2_{L^2}+D(v_n)=\gamma(m)+o_n(1).
\end{align}
Combining (\ref{BL1})-(\ref{BL3}), we deduce that
\begin{align}\label{BL4}
\frac12||\nabla(v_n-\tilde{v})||^2_{L^2}+\frac12||\nabla\tilde{v}||^2_{L^2}+D(v_n-\tilde{v})+D(\tilde{v})=\gamma(m)+o_n(1).
\end{align}
Thus, using that $\tilde{v}\in V(m_1)$ and Lemma \ref{Pohozaev-manifold1} we get from (\ref{BL4}) that
\begin{align}\label{BL5}
E(v_n-\tilde{v})+\gamma(m_1)\leq\gamma(m)+o_n(1).
\end{align}
On the other hand,
\begin{align}\nonumber
&P(v_n)-P(\tilde{v})\\ \nonumber
=&||\nabla(v_n-\tilde{v})||^2_{L^2}+\frac14B(v_n-\tilde{v})-\frac1{4a}(A(v_n)-A(\tilde{v}))-\frac{3(p-2)}{2p}||v_n-\tilde{v}||^p_{L^p}\\
\label{BL6}=&o_n(1)
\end{align}
and
\begin{align}\nonumber
&E(v_n-\tilde{v})-\frac2{3(p-2)}\left(P(v_n)-P(\tilde{v})\right)\\ \label{BL7}
=&\frac{3p-10}{6(p-2)}||\nabla(v_n-\tilde{v})||^2_{L^2}+\frac{3p-8}{12(p-2)}B(v_n-\tilde{v})+\frac1{6a(p-2)}\left(A(v_n)-A(\tilde{v})\right).
\end{align}
From (\ref{EP}) and (\ref{BL7}), we deduce that $E(v_n-\tilde{v})\geq o_n(1)~(n\rightarrow\infty)$. But from (\ref{BL5}) we obtain a contradiction with (\ref{assume}). This contradiction proves that $||\tilde{v}||_{L^2}=m$ and $E(\tilde{v})\geq\gamma(m)$. Now still by (\ref{BL5}), we get $E(v_n-\tilde{v})\leq o_n(1)~(n\rightarrow\infty)$ and thanks to (\ref{BL6}) and (\ref{BL7}) $||\nabla(v_n-\tilde{v})||_{L^2}=o_n(1)~(n\rightarrow\infty)$. To prove that $v_n\rightarrow\tilde{v}$ in $H^1(\mathbb{R}^3)$, it remains only to prove that $v_n\rightarrow\tilde{v}$ in $L^2(\mathbb{R}^3)$. By Proposition \ref{compact-PS}, it can be deduced that
\begin{align*}
&\int|\nabla(v_n-\tilde{v})|^2dx+\int(\omega_n v_n-\omega\tilde{v})(v_n-\tilde{v})+\int(\phi_{v_n}v_n-\phi_{\tilde{v}}\tilde{v})(v_n-\tilde{v})dx\\
=&\int(|v_n|^{p-2}v_n-|\tilde{v}|^{p-2}\tilde{v})(v_n-\tilde{v})dx+o_n(1).
\end{align*}
Thus, as $n\rightarrow\infty$ in preceding equality, we know
$$0=\lim_{n\rightarrow\infty}\int(\omega_n v_n-\omega\tilde{v})(v_n-\tilde{v})dx=\lim_{n\rightarrow\infty}\omega\int(v_n-\tilde{v})^2dx.$$
The proof is completed.
\end{proof}

Before we show that the solution of Eq.(\ref{main5}) is positive, let us introduce the following action functional and corresponding Nehari manifold
$$I_{\omega}(u):=\frac12\int|\nabla u|^2dx+\frac{\omega}2\int|u|^2dx+\frac14\int\phi_u |u|^2dx-\frac1{p}\int|u|^pdx,$$
and
$$\mathcal{N}_{\omega}:=\left\{u\in S(m):~ u\neq0, \langle I'_{\omega}(u), u\rangle=0\right\}.$$
\begin{lemma}\label{positive-solution0}
Assume $(u, \omega)\in H^1(\mathbb{R}^3)\times\mathbb{R}^+$ is a couple of weak solution to Eq.(\ref{main5}) with $p\in(\frac{10}3, 6)$, then $u>0$ for all $x\in\mathbb{R}^3$.
\end{lemma}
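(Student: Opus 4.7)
The plan is to argue in three short steps: first reduce to the case $u\geq 0$, then upgrade the regularity of $u$ far enough to invoke the strong maximum principle, and finally conclude strict positivity from $u\not\equiv 0$.

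\medskip

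\textbf{Step 1 (Reduction to $u\geq 0$).} The couple $(u,\omega)$ in question is the mountain-pass ground state produced in the previous results, so $u\in V(m)$ and $E(u)=\gamma(m)$. I would replace $u$ by its modulus: taking $u$ real-valued (the functional $E$ is invariant under a global complex phase), one has $||\nabla|u|\,||_{L^2}=||\nabla u||_{L^2}$, and moreover $\phi_{|u|}=\mathcal{K}\ast|u|^{2}=\phi_u$, $|||u|\,||_{L^p}=||u||_{L^p}$, and $|||u|\,||_{L^2}=||u||_{L^2}=m$. Hence $E(|u|)=E(u)=\gamma(m)$ and $P(|u|)=P(u)=0$, so $|u|\in V(m)$ attains the infimum in Lemma~\ref{Pohozaev-manifold1}. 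By the Lagrange multiplier rule together with Lemma~\ref{null-function}, $|u|$ satisfies Eq.(\ref{main5}) with some multiplier $\widetilde\omega\in\mathbb{R}^+$, so I may assume $u\geq 0$ from the outset.

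\medskip

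\textbf{Step 2 (Regularity).} Writing Eq.(\ref{main5}) as
\[
-\Delta u=-\omega u-\phi_u u+u^{p-1},
\]
I would run a standard elliptic bootstrap. Lemma~\ref{property2}(3) (or Lemmas~\ref{continuously-embedded} and \ref{property2}(5)) yields $\phi_u\in L^\infty(\mathbb{R}^3)$, while Sobolev embedding gives $u\in L^6(\mathbb{R}^3)$. For $p\in(\tfrac{10}{3},6)$ the exponent $6/(p-2)$ exceeds $3/2$, so a Br\'ezis--Kato / Moser iteration upgrades $u$ to $L^q(\mathbb{R}^3)$ for every $q<\infty$, and then to $W^{2,q}_{\mathrm{loc}}\cap C^{1,\alpha}_{\mathrm{loc}}$ by standard Calder\'on--Zygmund theory.

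\medskip

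\textbf{Step 3 (Strong maximum principle and conclusion).} I rewrite the equation in the linear form
\[
-\Delta u+c(x)\,u=0, \qquad c(x):=\omega+\phi_u(x)-u(x)^{p-2}.
\]
By Step~2 and $\omega>0$, $c\in L^\infty_{\mathrm{loc}}(\mathbb{R}^3)$. The classical strong maximum principle (see, e.g., Gilbarg--Trudinger, Theorem~3.5) applied to the non-negative $C^{1,\alpha}$ solution $u$ forces either $u\equiv 0$ or $u>0$ pointwise. Since $||u||_{L^2}=m>0$, the first alternative is ruled out, giving $u(x)>0$ for all $x\in\mathbb{R}^3$.

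\medskip

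The only non-cosmetic point is Step~1: one must check that replacing $u$ by $|u|$ preserves membership in $V(m)$ and yields again a critical point of $E|_{S(m)}$ with an admissible positive Lagrange multiplier. This is taken care of by the scaling invariance of every summand in $E$ and $P$, together with Lemma~\ref{Pohozaev-manifold1} and Lemma~\ref{null-function}. After that, the bootstrap in Step~2 is standard thanks to the $L^\infty$-bound on $\phi_u$ supplied by Lemma~\ref{property2}, and Step~3 is the textbook strong maximum principle.
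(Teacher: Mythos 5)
Your argument is sound in substance, but it takes a genuinely different route from the paper at the one step that carries real content, namely showing that the ground state does not change sign. (Both you and the paper read the lemma as a statement about the minimizer at level $\gamma(m)$, not about an arbitrary solution with $\omega>0$; that reading is the one the paper actually uses.) The paper argues through the action functional $I_{\omega}$: it splits $u=u^{+}-u^{-}$, compares the fibers $\theta\mapsto I_{\omega}(\kappa(u^{\pm},\theta))$ with the fiber of $u$ itself, and uses the min--max characterization of the level to force $u^{+}\equiv0$ or $u^{-}\equiv0$; crucially, this never requires knowing that $u^{\pm}$ or $|u|$ solves any equation. You instead use that every summand of $E$ and $P$ is invariant under $u\mapsto|u|$ (for real $u$, via $|\nabla|u||=|\nabla u|$ a.e.\ and $\phi_{|u|}=\phi_{u}$), so that $|u|\in V(m)$ is again a minimizer; this is shorter, but it shifts the burden onto showing that a minimizer of $E$ on $V(m)$ is a genuine critical point of $E|_{S(m)}$, i.e.\ that the multiplier attached to the constraint $P=0$ vanishes. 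The two lemmas you cite do not supply this: Lemma \ref{Pohozaev-manifold1} only identifies $\gamma(m)$ with $\inf_{V(m)}E$, and Lemma \ref{null-function} presupposes that one already has a weak solution of Eq.(\ref{main5}). The missing ingredient is precisely the paper's Lemma \ref{critical-points2} (the natural-constraint property of $V(m)$); once you invoke it, $|u|$ solves Eq.(\ref{main5}) with some multiplier $\widetilde{\omega}$, and testing the equation against the solution shows $\widetilde{\omega}m^{2}=\|u\|_{L^{p}}^{p}-\|\nabla u\|_{L^{2}}^{2}-\int\phi_{u}|u|^{2}dx=\omega m^{2}$, so $\widetilde{\omega}=\omega>0$. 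Your Steps 2 and 3 (Br\'ezis--Kato bootstrap using $\phi_{u}\in L^{\infty}$ from Lemma \ref{property2}, then the strong maximum principle) are essentially what the paper does via the regularity results of Dibenedetto and Li and Harnack's inequality. One last small point: proving $|u|>0$ gives that the original $u$ is nowhere zero; to conclude that $u$ itself has constant sign you still need the continuity of $u$, which your Step 2 applied to $u$ provides --- this is the same ``up to a change of sign'' convention the paper adopts.
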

\begin{proof}
We firstly prove that $u$ does not change sign. For any $\theta\in\mathbb{R}$, we define the functions $h, h^+, h^-$ as follows:
\begin{align*}
h(\theta)=I_{\omega}(\kappa(u, \theta))=&\frac{e^{2\theta}}{2}\int|\nabla u|^2dx+\frac{\omega}2\int|u|^2dx-\frac{e^{\frac{3(p-2)}2\theta}}{p}\int|u|^pdx\\
&+\frac{e^{\theta}}4\iint\frac{1-e^{-\frac{|x-y|}{ae^{\theta}}}}{|x-y|}|u(x)|^2|u(y)|^2dxdy,
\end{align*}
\begin{align*}
h^+(\theta)=I_{\omega}(\kappa(u^+, \theta))=&\frac{e^{2\theta}}{2}\int|\nabla (u^+)|^2dx+\frac{\omega}2\int|u^+|^2dx-\frac{e^{\frac{3(p-2)}2\theta}}{p}\int|u^+|^pdx\\
&+\frac{e^{\theta}}4\iint\frac{1-e^{-\frac{|x-y|}{ae^{\theta}}}}{|x-y|}(u^+(x))^2(u^+(y))^2dxdy,
\end{align*}
\begin{align*}
h^-(\theta)=I_{\omega}(\kappa(u^-, \theta))=&\frac{e^{2\theta}}{2}\int|\nabla (u^-)|^2dx+\frac{\omega}2\int|u^-|^2dx-\frac{e^{\frac{3(p-2)}2\theta}}{p}\int|u^-|^pdx\\
&+\frac{e^{\theta}}4\iint\frac{1-e^{-\frac{|x-y|}{ae^{\theta}}}}{|x-y|}(u^-(x))^2(u^-(y))^2dxdy,
\end{align*}
where $u^+ (u^-)$ denotes the positive (negative) part of $u$, that is, $u^+=\max\{u, 0\}$, $u^-=\max\{-u, 0\}$. It is obvious that $h(\theta)=h^+(\theta)+h^-(\theta)$. Here we claim that
$$\max\limits_{\theta\in\mathbb{R}}h(\theta)=h(0).$$
Indeed, for any $u\in H^1(\mathbb{R}^3)\setminus\{0\}$, from a direct computation, there exists a unique $\theta=\theta(u)$ such that
\begin{align*}
I_{\omega}(\kappa(u, \theta(u)))=\max\limits_{s\in\mathbb{R}}I_{\omega}(\kappa(u, s)), \quad \kappa(u, \theta(u))\in\mathcal{N}_{\omega},
\end{align*}
that is
\begin{align}\label{positive-solution1}
e^{2\theta}||\nabla u||^2_{L^2}+\omega||u||^2_{L^2}+e^{\theta}\iint\frac{1-e^{-\frac{|x-y|}{ae^{\theta}}}}{|x-y|}|u(x)|^2|u(y)|^2dxdy=e^{\frac{3(p-2)}2\theta}||u||^p_{L^p}.
\end{align}
Note that $\langle I'_{\omega}(u), u\rangle=0$, we know
\begin{align}\label{positive-solution2}
||\nabla u||^2_{L^2}+\omega||u||^2_{L^2}+\iint\frac{1-e^{-\frac{|x-y|}a}}{|x-y|}|u(x)|^2|u(y)|^2dxdy=||u||^p_{L^p}.
\end{align}
Combining (\ref{positive-solution1}) with (\ref{positive-solution2}), we have
\begin{align*}
\left(e^{\frac{(3p-10)}2\theta}-1\right)||u||^p_{L^p}=&\omega\left(e^{-2\theta}-1\right)||u||^2_{L^2}+e^{-\theta}\iint\frac{1-e^{-\frac{|x-y|}{ae^{\theta}}}}{|x-y|}|u(x)|^2|u(y)|^2dxdy\\
&-\iint\frac{1-e^{-\frac{|x-y|}a}}{|x-y|}|u(x)|^2|u(y)|^2dxdy.
\end{align*}
When $\theta<0$, the left side of the above equality less than zero while the right side more
than zero, that is impossible. Analogously, we can deduce a contradiction when $\theta>0$. Therefore, $\theta=0$, we get the claim. Hence,
$$\max\limits_{\theta\in\mathbb{R}}h(\theta)=h(0)=I_{\omega}(u)=\gamma(m).$$
Denote by $\theta_1$, $\theta_2$ the maxima point of functions $h^+$, $h^-$ respectively. We suppose $\theta_1\leq\theta_2$, then $h^-(\theta_1)\geq0$. Therefore,
\begin{align*}
\max\limits_{\theta\in\mathbb{R}}h^+(\theta)=h^+(\theta_1)\leq h^+(\theta_1)+h^-(\theta_1)=h(\theta_1)\leq\max\limits_{\theta\in\mathbb{R}}h(\theta)=\gamma(m).
\end{align*}
By the definition of $\gamma(m)$, we have
\begin{align*}
\max\limits_{\theta\in\mathbb{R}}h^+(\theta)=h^+(\theta_1)+h^-(\theta_1).
\end{align*}
It follows that $h^-(\theta_1)=0$, that is $u^-=0$. When $\theta_1>\theta_2$, we can easily get that $u^+=0$ by using the same argument as above. Thus, we may assume that $u\geq0$ regarding to the change of sign.

Applying the standard arguments as in \cite{Dibenedetto1983, Li1989}, we can conclude that $u\in L^{\infty}(\mathbb{R}^3)$ and $u\in C^{1, \alpha_0}_{loc}(\mathbb{R}^3)$, where $0<\alpha_0<1$. Furthermore, from Harnack's inequality in \cite{Trudinger1967}, we have $u>0$ for all $x\in\mathbb{R}^3$.
\end{proof}

\begin{lemma}\label{exponential-decay}
Let $p\in(\frac{10}3, 6)$ and $\omega>0$. If $u\in H^1(\mathbb{R}^3)$ is a solution to Eq.(\ref{main5}), then $u\in C^2(\mathbb{R}^3\setminus\{0\})$. Moreover, there exists constants $C_1, C_2>0$ and $R>0$ such that
\begin{align}\label{decay-est0}
|u(x)|\leq C_1|x|^{-\frac34}e^{-C_2\sqrt{|x|}}, \quad \forall~ |x|\geq R.
\end{align}
\end{lemma}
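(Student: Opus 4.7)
The plan is to establish the two conclusions separately: the interior $C^2$ regularity by a standard elliptic bootstrap, and the pointwise decay bound via a maximum-principle comparison with an explicit radial supersolution of exactly the claimed form.

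For the regularity, I would start from Lemma \ref{positive-solution0}, which already provides $u \in L^{\infty}(\mathbb{R}^3) \cap C^{1,\alpha_0}_{\mathrm{loc}}(\mathbb{R}^3)$. By Lemma \ref{property2}(3)--(6), the convolution $\phi_u = \mathcal{K} \ast |u|^2$ lies in $L^{\infty}(\mathbb{R}^3) \cap C_0(\mathbb{R}^3)$, and its gradient enjoys the same summability away from the origin since $\mathcal{K}$ is smooth on $\mathbb{R}^3 \setminus \{0\}$. Consequently the right-hand side $-\omega u - \phi_u u + |u|^{p-2}u$ of Eq.(\ref{main5}) is Hölder continuous on every compact subset of $\mathbb{R}^3 \setminus \{0\}$; Schauder estimates then upgrade $u$ to $C^{2,\alpha}_{\mathrm{loc}}(\mathbb{R}^3 \setminus \{0\})$.

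For the decay, the first step is to reduce Eq.(\ref{main5}) at infinity to a linear differential inequality. A standard interior Moser iteration, together with $u \in H^1(\mathbb{R}^3)$, gives $\|u\|_{L^{\infty}(B_1(x))} \leq C\|u\|_{L^2(B_2(x))} \to 0$ as $|x| \to \infty$, so there exists $R_0$ with $|u(x)|^{p-2} \leq \omega/2$ for $|x| \geq R_0$. Using $\phi_u \geq 0$ from Lemma \ref{property2}(2), the equation rewrites as
\begin{align*}
-\Delta u + \tfrac{\omega}{2} u \;=\; |u|^{p-2} u - \tfrac{\omega}{2} u - \phi_u u \;\leq\; 0 \qquad \text{for } |x| \geq R_0,
\end{align*}
so $u$ is a subsolution for the operator $L := -\Delta + \omega/2$ outside a large ball. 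Next I would construct an explicit radial supersolution of $L$ of the required shape: for $w(x) = A |x|^{-3/4} e^{-C_2\sqrt{|x|}}$ with $A, C_2 > 0$, a direct spherical-coordinate computation gives
\begin{align*}
\frac{-\Delta w}{w} \;=\; \frac{3}{16\,|x|^2} \;-\; \frac{C_2^2}{4\,|x|},
\end{align*}
where the key $O(|x|^{-3/2})$ cross-term between $w''/w$ and $(2/|x|)(w'/w)$ cancels \emph{exactly because} the prefactor exponent equals $-3/4$. Hence
\begin{align*}
L w \;=\; w\left[\,\tfrac{\omega}{2} + \tfrac{3}{16\,|x|^2} - \tfrac{C_2^2}{4\,|x|}\,\right] \;\geq\; 0 \qquad \text{for } |x| \geq R,
\end{align*}
once $R$ is taken so large that $C_2^2/(4R) < \omega/2$.

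Fixing such $C_2$ and $R \geq R_0$, and then choosing $A$ sufficiently large so that $u \leq w$ on $\partial B_R$ (possible since $u \in L^{\infty}$ and $w$ is bounded below on the compact sphere), the difference $v := u - w$ obeys $Lv \leq 0$ in $\{|x| > R\}$, $v \leq 0$ on $\partial B_R$, and $v(x) \to 0$ as $|x| \to \infty$ (the latter from the vanishing of both $u$ and $w$ at infinity). The weak maximum principle for $L$ on an exterior domain, valid because its zeroth-order coefficient $\omega/2$ is strictly positive and $v$ decays at infinity, forces $v \leq 0$ on $\{|x| \geq R\}$, which is precisely (\ref{decay-est0}) with constants $C_1 = A$ and the chosen $C_2$.

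The principal obstacle is the barrier computation: the exponent $-3/4$ is not an accident but is exactly what annihilates the $O(|x|^{-3/2})$ cross-term in the radial Laplacian of $|x|^{-3/4} e^{-C_2\sqrt{|x|}}$, leaving a clean expression in which the strictly positive constant $\omega/2$ dominates the only negative correction $-C_2^2/(4|x|)$ once $|x|$ is large. A secondary technical point is the uniform decay $u(x)\to 0$ needed to close the comparison; this is standard once one checks that the coefficients $\omega$, $\phi_u$, $|u|^{p-2}$ all lie in $L^{\infty}(\mathbb{R}^3)$, as recorded above.
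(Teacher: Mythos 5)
Your argument is essentially correct, and your barrier computation checks out: for $w=A|x|^{-3/4}e^{-C_2\sqrt{|x|}}$ one indeed gets $-\Delta w/w=\tfrac{3}{16}|x|^{-2}-\tfrac{C_2^2}{4}|x|^{-1}$, so $w$ is a supersolution of $-\Delta+\tfrac{\omega}{2}$ outside a large ball. But your route is genuinely different from the paper's. The paper does not use the constant potential $\omega/2$ at all in the comparison step: it first proves a Coulomb-type lower bound $\phi_u\geq C_0/|x|$ for $|x|\geq 1$ (by a maximum-principle argument on $\phi_u-C_0/|x|$ using the fourth-order equation for $\mathcal{K}$), then a second comparison showing $u^+\leq\tilde C\phi_u$ at infinity, and combines these to obtain $-\Delta u^++\tfrac{\sigma}{|x|}u^+\leq 0$ with $\sigma\in(0,C_0)$; the bound (\ref{decay-est0}) then comes from the known decay of the radial solution of $-\Delta w+\tfrac{\sigma}{|x|}w=0$, cited from \cite{Ambrosetti2006}. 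Your version is shorter and self-contained (no auxiliary barriers for $\phi_u$, no external citation), but it leans entirely on the hypothesis $\omega>0$; the paper's version explains where the $|x|^{-3/4}e^{-c\sqrt{|x|}}$ rate really comes from (the Coulomb tail of the nonlocal term) and would survive $\omega=0$. Note also that against the constant potential $\omega/2$ the exact cancellation of the $|x|^{-3/2}$ cross-term at exponent $-3/4$ is not actually needed — any prefactor $|x|^{-\alpha}$, or none, would still give $Lw/w\to\omega/2>0$ — and in fact with $\omega>0$ a standard barrier would yield the much stronger rate $e^{-c|x|}$; the cancellation is only decisive for the Coulomb operator, where the leading $|x|^{-1}$ terms must balance.

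One technical point you should patch: the inequality $-\Delta u+\tfrac{\omega}{2}u=u\bigl(|u|^{p-2}-\tfrac{\omega}{2}-\phi_u\bigr)\leq 0$ holds only where $u\geq 0$, so for a sign-changing solution the difference $v=u-w$ does not satisfy $Lv\leq 0$ on the whole exterior domain. The paper handles this by applying Kato's inequality to get the differential inequality for $u^+$, and then repeats the argument for $u^-=(-u)^+$ using that $-u$ solves the same equation; you need the same device (or a restriction of the maximum principle to the open set $\{u>w\}$, where $u>0$) to conclude the bound for $|u|$ rather than just for $u$.
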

\begin{proof}
It uses arguments from \cite{Bellazzini2013, Cazenave2003}. We divide the proof into two steps.\\
\textbf{Step 1.} Regularity and vanishing.

Let $\zeta$ be a smooth function satisfying
\begin{equation} \nonumber
\zeta(x)=\left\{
\begin{aligned}
&0, &|x|\leq1,\\
&1, &|x|\geq2.
\end{aligned}
\right.
\end{equation}
Given $R>0$, we denote $\zeta_R(x)=\zeta(\frac{x}{R})$. A direct computation gives
\begin{align*}
-\Delta(\zeta_R u)+\omega\zeta_R u=\left(-\Delta\zeta_R-\zeta_R\phi_u+\zeta_R|u|^{p-2}\right)u-2\nabla\zeta_R\cdot\nabla u.
\end{align*}
Since $\zeta_R$ is a smooth bounded function supported away from the origin and $\nabla\zeta_R$ is a smooth compactly supported function, it follows from the standard argument (see \cite[Theorem 8.1.1]{Cazenave2003}) that $\zeta_R u\in W^{3, q}(\mathbb{R}^3)$ for all $q\in [2, \infty)$. In particular, by Sobolev's embedding, $\zeta_R u\in C^{2, \nu}(\mathbb{R}^3)$ for all $0<\nu<1$. Since $\zeta_R u=u$ on $\mathbb{R}^3\setminus B(0, R)$ for any $R>0$, it follows that $u\in C^2(\mathbb{R}^3\setminus\{0\})$ and $|\partial^{\beta}u(x)|\rightarrow 0$ as $|x|\rightarrow\infty$ for all $|\beta|\leq 2$.\\
\textbf{Step 2.} Exponential decay estimate.

First, we show that there exists a constant $C_0>0$ such that
\begin{align}\label{decay-est1}
\phi_u\geq\frac{C_0}{|x|}, \quad \forall~|x|\geq1.
\end{align}
From \cite[Appendix A.1]{dAvenia2019}, we have $\phi_u\in C^{2, \lambda}$, $\forall \lambda\in(0, \frac12]$. In partical, $C_0=\min\limits_{\partial B_1}\phi_u(x)>0$, where $B_R=\{x\in\mathbb{R}^3:~ |x|\leq1\}$. Indeed, if $\phi_u(x_0)=0$ at some $x_0\in\mathbb{R}^3$ with $|x_0|=1$, then $u(x)=0$ a.e. in $\mathbb{R}^3$, which is a contradiction.

Now, for any $R_0\geq1$, let $v_1=\phi_u-\frac{C_0}{|x|}$. Then,
\begin{equation} \nonumber
\left\{
\begin{aligned}
&-\Delta v_1=4\pi|u|^2-a^2\Delta^2\phi_u\geq0,~~&\hbox{in}~~B_{R_0}\setminus B_1,\\
&v_1\geq0,~~&\hbox{on}~~\partial B_1,\\
&v_1\geq-\frac{C_0}{R_0},~~&\hbox{on}~~\partial B_{R_0},
\end{aligned}
\right.
\end{equation}
where we have used the fact that $\Delta\mathcal{K}=-\frac{e^{-\frac{|x|}a}}{a^2|x|}$ and $\Delta^2\mathcal{K}=\frac1{a^2}\left(\Delta\mathcal{K}+\hbox{div}\frac{x}{|x|^3}\right)$ (see in \cite{dAvenia2019}). By the maximum principle, we have
$$v_1\geq-\frac{C_0}{R_0},~~\hbox{in}~~B_{R_0}\setminus B_1.$$
Letting $R_0\rightarrow+\infty$, it follows that $v_1\geq0$ in $\mathbb{R}^3\setminus B_1$ and thus (\ref{decay-est1}) holds.

Next, we denote by $u^+ (u^-)$ the positive (negative) part of $u$. By Kato's inequality, we get that $\Delta u^+\geq\chi_{[u\geq0]}\Delta u$ (see in \cite{Brezis2004}). Therefore,
\begin{align}\label{decay-est2}
-\Delta u^+ +\omega u^++\phi_u u^+\leq|u^+|^{p-2}u^+,\quad\hbox{in}~\mathbb{R}^3.
\end{align}

Now, let us show that there exist constants $\tilde{C}>0$ and $R_1>0$ such that
\begin{align}\label{decay-est3}
u^+(x)\leq\tilde{C}\phi_u (x), \quad\forall~|x|\geq R_1.
\end{align}
To demonstrate this, we consider $v_2=u^+-\phi_u-\frac{C}{|x|}$, for a constant $C>0$. Then (\ref{decay-est2}) and $\omega>0$ imply that
\begin{align*}
-\Delta v_2&=-\Delta u^++\Delta\phi_u\\
&\leq|u^+|^{p-2}u^+-4\pi|u|^2+a^2\Delta^2\phi_u\\
&\leq|u^+|^{p-2}u^+-4\pi|u|^2, \quad\hbox{for}~~|x|\geq1.
\end{align*}
Since $u(x)\rightarrow0$ as $|x|\rightarrow\infty$ and $p>\frac{10}3$, then $|u^+|^{p-2}u^+-4\pi|u|^2\leq0$ holds in $|x|\geq R_1$ for some $R_1$ large enough. Thus, for any $R\geq R_1$ and taking $C>0$ large enough, we have
\begin{equation} \nonumber
\left\{
\begin{aligned}
&-\Delta v_2\leq0,~~&\hbox{in}~~B_{R}\setminus B_{R_1},\\
&v_2\leq0,~~&\hbox{on}~~\partial B_{R_1},\\
&v_2\leq\max\limits_{\partial B_R}u^+-\frac{C}{R},~~&\hbox{on}~~\partial B_R.
\end{aligned}
\right.
\end{equation}
Then, form the maximum principle, we obtain $v_2\leq\max\limits_{\partial B_R}u^+-\frac{C}{R}$ in $B_R\setminus B_{R_1}$. Letting $R\rightarrow+\infty$, we conclude that $v_2\leq0$ in $\mathbb{R}^3\setminus B_{R_1}$. This, together with (\ref{decay-est1}), implies that (\ref{decay-est3}) holds.

By (\ref{decay-est2}), we have for any $\sigma>0$ and since $\omega>0$,
\begin{align}\nonumber
-\Delta u^++\frac{\sigma}{|x|}u^+&\leq\frac{\sigma}{|x|}u^+-\omega u^+-\phi_u u^++|u^+|^{p-2}u^+\\
\label{decay-est4}
&\leq\left(\frac{\sigma}{|x|}-\phi_u+|u^+|^{p-2}\right)u^+.
\end{align}
Using (\ref{decay-est1}) and (\ref{decay-est3}), for any $|x|\geq R_1>1$, by choosing $\sigma\in(0, C_0)$, we have
\begin{align*}
\frac{\sigma}{|x|}-\phi_u+|u^+|^{p-2}&\leq\frac{\sigma}{C_0}\phi_u-\phi_u+|u^+|^{p-2}\\
&\leq-\frac{\left(1-\frac{\sigma}{C_0}\right)}{\tilde{C}}u^++|u^+|^{p-2}\\
&=\left(-\frac{\left(1-\frac{\sigma}{C_0}\right)}{\tilde{C}}+|u^+|^{p-3}\right)u^+,
\end{align*}
where $\frac{\left(1-\frac{\sigma}{C_0}\right)}{\tilde{C}}>0$. Since $u(x)\rightarrow0$ as $|x|\rightarrow\infty$ and $p>\frac{10}3$, for $R_1>1$ large enough, we get that $-\frac{\left(1-\frac{\sigma}{C_0}\right)}{\tilde{C}}+|u^+|^{p-3}\leq0$ for $|x|\geq R_1$. Hence, it follows from (\ref{decay-est4}) that $-\Delta u^++\frac{\sigma}{|x|}u^+\leq0$ in $\mathbb{R}^3\setminus B_{R_1}$. If we denote by $\bar{C}_1=\max\limits_{\partial B_{R_1}}u^+$, applying the maximum principle, we have
\begin{align*}
u^+\leq\bar{C}_1w,\quad\hbox{in}~ \mathbb{R}^3\setminus B_{R_1},
\end{align*}
where $w$ is the radial solution of
\begin{equation} \nonumber
\left\{
\begin{aligned}
&-\Delta w+\frac{\sigma}{|x|}=0,~~&\hbox{if}~~|x|>R_1,\\
&w(x)=1,~~&\hbox{if}~~|x|=R_1,\\
&w(x)\rightarrow0,~~&\hbox{if}~~|x|\rightarrow\infty.
\end{aligned}
\right.
\end{equation}
Then, there exist constants $C'>0$ and $R'>0$ such that $w$ satisfies (see in \cite{Ambrosetti2006})
\begin{align}\label{decay-est5}
w(x)\leq\frac{C'}{|x|^{\frac34}}e^{-2\sqrt{\sigma|x|}}, \quad \forall~ |x|>R'.
\end{align}

Finally, we observe that if $u$ is a solution of Eq.(\ref{main5}), then $-u$ is also a solution. Thus, since $u^-=(-u)^+$, following the same arguments, we obtain that there exists a constant $\bar{C}_2>0$ such that
\begin{align*}
u^-\leq\bar{C}_2w,\quad\hbox{in}~ \mathbb{R}^3\setminus B_{R_1}.
\end{align*}
Therefore, $|u|=u^++u^-\leq(\bar{C}_1+\bar{C}_2)w$, in $\mathbb{R}^3\setminus B_{R_1}$ for $R_1$ large enough. At this point, we see from (\ref{decay-est5}) that $u$ satisfies the exponential decay (\ref{decay-est0}) and thus completes the proof.
\end{proof}

Admitting for the moment that $\gamma(m)$ is nonincreasing (we shall prove it in the next section), we can now complete the proof of Theorem \ref{critical-point}.

\begin{proof}[Proof of Theorem \ref{critical-point}]
By Lemmas \ref{bounded-s} and \ref{non-vanishing}, there exists a bounded Palais-Smale
sequence $\{u_n\}_{n=1}^{\infty}\subset S(m)$ such that, up to translation, $u_n\rightharpoonup u_m\neq0$. Thus, by Proposition \ref{compact-PS}, there exists a $\omega_m\in\mathbb{R}$ such that $(u_m, \omega_m)\in H^1(\mathbb{R}^3)\setminus\{0\}\times\mathbb{R}$ solves Eq.(\ref{main5}). Now, by Lemma \ref{null-function}, there exists a $m_0>0$ such that $\omega_m>0$ if $m\in(0, m_0)$. Also we know from Proposition \ref{function-properties} (i) that (\ref{assume}) holds. At this point, the proof follows from Lemma \ref{convergence}. Finally, according to Lemmas \ref{positive-solution0} and \ref{exponential-decay}, we know that $u_m$ is positive and has exponential decay at infinity.
\end{proof}

\renewcommand{\theequation}
{\thesection.\arabic{equation}}
\setcounter{equation}{0}
\section{The behaviour of the function $m\mapsto\gamma(m)$} \noindent

In this section, we will solve the remaining question in proving the existence, that is to show that the hypotheses on $\gamma(m)$ in Lemma \ref{convergence} is true. Therefore, we study the behaviour of the function $\gamma(m)$ and summarize its properties in the following.

\begin{proposition}\label{function-properties}
Let $p\in(\frac{10}3, 6)$ and any $m>0$. Then the function $\gamma(m)$ satisfies the following properties:
\begin{itemize}
 \item[(i)] $\gamma(m)$ is continuous;
 \item[(ii)] $\gamma(m)$ is nonincreasing;
 \item[(iii)] there exists $m_0>0$ such that in $(0, m_0)$, $\gamma(m)$ is strictly decreasing;
 \item[(iv)] $\lim\limits_{m\rightarrow0^+}\gamma(m)=+\infty$.
\end{itemize}
\end{proposition}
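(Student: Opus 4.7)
My plan is to establish the four items in the logical order (iv), (ii), (i), (iii).

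\emph{Item (iv).} On $V(m)$ the identity (\ref{EP}), combined with the pointwise bound $\tfrac{1-e^{-r/a}}{r}-\tfrac{1}{a}e^{-r/a}\ge 0$, yields $E(u)\ge\tfrac{3p-10}{6(p-2)}\|\nabla u\|_{L^2}^{2}$. At the same time $P(u)=0$ together with the same pointwise bound gives $\|\nabla u\|_{L^2}^{2}\le\tfrac{3(p-2)}{2p}\|u\|_{L^p}^{p}$, so substituting the sharp Gagliardo--Nirenberg inequality (\ref{G-N-ineq}) and exploiting $3(p-2)/2>2$ extracts a lower bound $\|\nabla u\|_{L^2}^{2}\ge C\, m^{-2(6-p)/(3p-10)}$. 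Since $p<6$, this diverges as $m\to 0^+$, proving $\gamma(m)\to+\infty$.

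\emph{Item (ii).} Fix $0<m_1<m_2$ and $\varepsilon>0$, and choose $u\in V(m_1)$ with $E(u)\le\gamma(m_1)+\varepsilon$. I will pick $\varphi\in C_c^\infty(\mathbb{R}^3)$ with $\|\varphi\|_{L^2}^{2}=m_2^{2}-m_1^{2}$ and set
\[
w_n(x)=n^{-3/2}\varphi\!\left(\tfrac{x-x_n}{n}\right),\qquad |x_n|/n\to\infty,
\]
so that the supports of $u$ and $w_n$ are eventually disjoint and far apart. A direct scaling count (using $p>2$) shows $\|\nabla w_n\|_{L^2},\|w_n\|_{L^p},\int\phi_{w_n}|w_n|^{2}\to 0$, while cross terms such as $\iint\mathcal{K}(x-y)u^{2}(x)w_n^{2}(y)\,dxdy$ vanish because $\mathcal{K}(x-y)\lesssim |x_n|^{-1}$ on the relevant region. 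Hence $v_n:=u+w_n$ satisfies $\|v_n\|_{L^2}^{2}\to m_2^{2}$, $E(v_n)\to E(u)$, and $P(v_n)\to 0$. After a mild renormalization to force $\|v_n\|_{L^2}=m_2$ exactly (cost $o(1)$ in the energy), Lemma \ref{unique} produces the unique $\theta_n$ with $\kappa(v_n,\theta_n)\in V(m_2)$; since $P(v_n)\to 0$ the scaling parameter satisfies $\theta_n\to 0$ and hence $E(\kappa(v_n,\theta_n))\to E(u)$. Therefore $\gamma(m_2)\le E(u)\le\gamma(m_1)+\varepsilon$, and letting $\varepsilon\to 0$ proves (ii).

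\emph{Item (i).} Continuity is obtained by combining (ii) with matching one-sided bounds. For $m_n\uparrow m$, (ii) gives $\gamma(m_n)\ge\gamma(m)$, and the upper bound follows by taking $u\in V(m)$ with $E(u)\le\gamma(m)+\varepsilon$, setting $\tilde u_n:=(m_n/m)u\in S(m_n)$, noting $P(\tilde u_n)\to 0$, and projecting via Lemma \ref{unique} to obtain $\gamma(m_n)\le E(\kappa(\tilde u_n,\theta_n))\to E(u)\le\gamma(m)+\varepsilon$. For $m_n\downarrow m$, (ii) gives $\gamma(m_n)\le\gamma(m)$, and the lower bound follows by choosing near-minimizers $u_n\in V(m_n)$ (which are bounded in $H^{1}$ by the argument in (iv)), rescaling $\tilde u_n:=(m/m_n)u_n\in S(m)$, and projecting back to $V(m)$ via Lemma \ref{unique}, yielding $\gamma(m)\le E(u_n)+o(1)\le\gamma(m_n)+\varepsilon+o(1)$. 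In both cases $\gamma(m_n)\to\gamma(m)$.

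\emph{Item (iii).} For $m\in(0,m_0)$, Theorem \ref{critical-point} provides a minimizer $u_m\in V(m)$ of $\gamma(m)$ satisfying the Euler--Lagrange equation with $\omega_m>0$. For small $h>0$ set $v_h:=(1+h)u_m\in S((1+h)m)$ and $F(h,\theta):=E(\kappa(v_h,\theta))$; Lemma \ref{unique} provides the unique maximizer $\theta_h$ with $\kappa(v_h,\theta_h)\in V((1+h)m)$. A direct computation using $\frac{3(p-2)}{2}>2$ and $P(u_m)=0$ shows $f''_{u_m}(0)=\partial_\theta^2 F(0,0)<0$, so the implicit function theorem gives $\theta_h\in C^1$ near $h=0$, and the envelope identity yields
\[
\frac{d}{dh}F(h,\theta_h)\bigg|_{h=0^+}=\partial_h F(0,0)=\langle E'(u_m),u_m\rangle=-\omega_m m^{2}<0.
\]
Hence $\gamma((1+h)m)\le F(h,\theta_h)<\gamma(m)$ for all sufficiently small $h>0$, establishing local strict decrease at every point of $(0,m_0)$. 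If $\gamma(m_1)=\gamma(m_2)$ held for some $m_1<m_2$ in $(0,m_0)$, then (ii) would force $\gamma$ to be constant on $[m_1,m_2]$, contradicting local strict decrease at $m_1$; hence $\gamma$ is strictly decreasing on $(0,m_0)$. The most delicate steps throughout will be the quantitative decay estimates needed to kill the nonlocal cross terms in (ii), and the nondegeneracy $f''_{u_m}(0)<0$ in (iii), which requires an explicit computation combining the Pohozaev identity with the signed structure of the kernel $\mathcal{K}$.
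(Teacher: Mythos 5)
Your proposal is correct in substance and follows the same overall architecture as the paper's proof (which is assembled from Lemmas \ref{continuous1}, \ref{nonincreasing}, \ref{strictly-decreasing} and \ref{limit-behavior}), but several sub-arguments are genuinely different and a few steps are sketched where the paper works harder. For (iv) your argument is actually cleaner: you bound $E$ from below on all of $V(m)$ via (\ref{EP}) and the constraint $P(u)=0$ plus (\ref{G-N-ineq}), whereas the paper's Lemma \ref{limit-behavior} runs the same computation on an actual minimizer and therefore has to invoke Theorem \ref{critical-point}; your version removes that dependence. For (ii) the mechanism (attach a small, spread-out, far-away bump and project back to the Pohozaev manifold) is the paper's, but two of your shortcuts need care: the supports of $u$ and $w_n$ are not disjoint unless you first truncate $u$ as the paper does (so the mass and the $L^p$/gradient norms do not split exactly, only up to errors you must control), and the claim ``$P(v_n)\to0$ hence $\theta_n\to0$'' does not follow from Lemma \ref{map}, since $v_n$ converges to $u$ only weakly; the paper sidesteps this by comparing $\max_\theta E(\kappa(\cdot,\theta))$ directly through the continuity of the auxiliary function in Lemma \ref{continuous}, and you would need an analogous locally-uniform-in-$\theta$ convergence of $f_{v_n}$ together with uniform coercivity to recover $\theta_n\to0$. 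For (i) your reduction via monotonicity to one-sided bounds is a nice economy, but the remaining hard direction still requires showing that the projection parameters $\theta(\tilde u_n)$ stay bounded, which in the paper is the content of Claims 2--3 of Lemma \ref{continuous1} (non-vanishing of the rescaled near-minimizers, resting on Lemma \ref{bounded0}); this cannot be waved through. For (iii) the paper avoids the implicit function theorem and the nondegeneracy $f''_{u_m}(0)<0$ altogether by applying the mean value theorem to $\partial_s\alpha(s,\theta)$ on a small rectangle $(1,1+\delta]\times[-\delta,\delta]$ and using the continuity of $s\mapsto\theta(su)$ from Lemma \ref{map}; your envelope-theorem route does work, since a short computation using $P(u_m)=0$, $\tfrac{3(p-2)}{2}>2$, and the nonnegativity of the kernel $\tfrac{1-e^{-r/a}}{r}-\tfrac1a e^{-r/a}$ indeed gives $f''_{u_m}(0)<0$, but it is heavier machinery for the same conclusion.
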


\begin{remark}\label{remark1}
In fact, in this current study we show that $\gamma(m)$ is nonincreasing in $m>0$ and satisfies a property similar to Lemma \ref{strictly-decreasing}. And it also deduce that the Lagrange multiplier, which concerned with the mass concentration behavior, is positive.
\end{remark}

Let us denote
\begin{align}\label{gamma1}
\gamma_1(m)=\inf_{u\in S(m)}\max_{\theta\in\mathbb{R}}E(\kappa(u, \theta))
\end{align}
and
\begin{align}\label{gamma2}
\gamma_2(m)=\inf_{u\in V(m)}E(u).
\end{align}

\begin{lemma}\label{gamma3}
For $p\in(\frac{10}3, 6)$, we have
$$\gamma(m)=\gamma_1(m)=\gamma_2(m).$$
\end{lemma}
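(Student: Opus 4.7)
The plan is to exploit Lemma \ref{unique}, which gives a bijective correspondence between elements of $S(m)$ (via the scaling $\kappa(\cdot,\theta)$) and elements of the Pohozaev manifold $V(m)$, and to combine it with the identity $\gamma(m)=\inf_{u\in V(m)}E(u)$ already proved in Lemma \ref{Pohozaev-manifold1}. Since that identity states $\gamma(m)=\gamma_2(m)$, the remaining task is to establish $\gamma_1(m)=\gamma_2(m)$, which I would do by proving two inequalities.

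For $\gamma_1(m)\leq\gamma_2(m)$: take any $v\in V(m)$. By Lemma \ref{unique}(1), the unique maximum of the map $\theta\mapsto E(\kappa(v,\theta))$ is attained at $\theta(v)=0$ (because $v=\kappa(v,0)\in V(m)$ and $\theta(v)$ is unique). Hence $\max_{\theta\in\mathbb{R}}E(\kappa(v,\theta))=E(v)$, and taking the infimum over $v\in V(m)$ on the right gives $\gamma_1(m)\leq\gamma_2(m)$. For the reverse inequality $\gamma_1(m)\geq\gamma_2(m)$: take any $u\in S(m)$. By Lemma \ref{unique}(1) again, there exists $\theta(u)\in\mathbb{R}$ with $\kappa(u,\theta(u))\in V(m)$ and $\max_{\theta\in\mathbb{R}}E(\kappa(u,\theta))=E(\kappa(u,\theta(u)))\geq\inf_{w\in V(m)}E(w)=\gamma_2(m)$. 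Passing to the infimum over $u\in S(m)$ yields $\gamma_1(m)\geq\gamma_2(m)$.

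Combining these two inequalities with $\gamma(m)=\gamma_2(m)$ from Lemma \ref{Pohozaev-manifold1} completes the proof. There is essentially no analytic obstacle here: the content is packaged entirely inside the uniqueness-of-fiber-maximum result (Lemma \ref{unique}) and the mountain-pass/Pohozaev identification (Lemma \ref{Pohozaev-manifold1}). The mildly delicate point to double-check is simply that the scaling orbit of each $u\in S(m)$ intersects $V(m)$ exactly once, so that the projection $u\mapsto\kappa(u,\theta(u))$ maps $S(m)$ onto $V(m)$ and preserves the fibered max-value $E(\kappa(u,\theta(u)))$; this is precisely what Lemma \ref{unique} guarantees. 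No compactness, no Palais--Smale argument, and no use of the Gagliardo--Nirenberg inequality is required at this step.
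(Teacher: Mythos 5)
Your proposal is correct and follows essentially the same route as the paper: both deduce $\gamma(m)=\gamma_2(m)$ from Lemma \ref{Pohozaev-manifold1} and then obtain $\gamma_1(m)=\gamma_2(m)$ by the two inequalities coming from the unique fiber-maximum property in Lemma \ref{unique}. No discrepancies to report.
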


\begin{proof}
When $p\in(\frac{10}3, 6)$, from Lemma \ref{Pohozaev-manifold1}, we know that $\gamma(m)=\gamma_2(m)$. It follows from (\ref{EP}) that $\gamma_2(m)$ is well-defined. In addition, by Lemma \ref{unique}, it is clear that for any $u\in S(m)$, there exists a unique $\theta_0\in\mathbb{R}$, such that $\kappa(u, \theta_0)\in V(m)$ and $\max\limits_{\theta\in\mathbb{R}}E(\kappa(u, \theta))=E(\kappa(u, \theta_0))\geq\inf\limits_{u\in V(m)}E(u)$, thus we get $\gamma_1(m)\geq\gamma_2(m)$. Meanwhile, for any $u\in V(m)$, $E(u)=\max\limits_{\theta\in\mathbb{R}}E(\kappa(u, \theta))$ and this readily implies that $\gamma_2(m)\geq\gamma_1(m)$.  Thus, we conclude that $\gamma_1(m)=\gamma_2(m)$.
\end{proof}

\begin{lemma}\label{E-estimate0}
Let $p\in(\frac{10}3, 6)$. Then, for any $m>0$, there exists $\delta=\delta(m)>0$ small enough such that
\begin{align}\label{E-estimate01}
E(u)\geq\frac14||\nabla u||^2_{L^2}
\end{align}
for all $u\in S(m)$ satisfying $||\nabla u||_{L^2}\leq\delta$.
\end{lemma}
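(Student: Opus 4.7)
The plan is to bound $E(u)$ from below by dropping the nonnegative nonlocal term and then controlling the focusing $L^p$-term via the Gagliardo-Nirenberg inequality. Since $p > \tfrac{10}{3}$, the exponent on $\|\nabla u\|_{L^2}$ coming from Gagliardo-Nirenberg will exceed $2$, so the $L^p$-term is genuinely of higher order than the kinetic term when $\|\nabla u\|_{L^2}$ is small, and the Gagliardo-Nirenberg constant can be absorbed by restricting the gradient.

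First I would use the fact that $\phi_u = \mathcal{K}\ast|u|^2 \geq 0$ (Lemma \ref{property2}(2)) to discard the nonlocal contribution, yielding
\begin{equation*}
E(u) \;\geq\; \tfrac{1}{2}\|\nabla u\|_{L^2}^2 - \tfrac{1}{p}\|u\|_{L^p}^p.
\end{equation*}
Then I would apply the sharp Gagliardo-Nirenberg inequality (\ref{G-N-ineq}) with the constraint $\|u\|_{L^2} = m$ to obtain
\begin{equation*}
\tfrac{1}{p}\|u\|_{L^p}^p \;\leq\; \tfrac{1}{2\|Q\|_{L^2}^{p-2}}\, m^{\frac{6-p}{2}}\, \|\nabla u\|_{L^2}^{\frac{3(p-2)}{2}} \;=:\; C(m)\,\|\nabla u\|_{L^2}^{\frac{3(p-2)}{2}}.
\end{equation*}

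Combining these two estimates yields
\begin{equation*}
E(u) \;\geq\; \|\nabla u\|_{L^2}^2 \left(\tfrac{1}{2} - C(m)\,\|\nabla u\|_{L^2}^{\frac{3(p-2)}{2}-2}\right).
\end{equation*}
The key observation, which drives the result, is that $p > \tfrac{10}{3}$ implies $\tfrac{3(p-2)}{2}-2 = \tfrac{3p-10}{2} > 0$, so the bracketed factor tends to $\tfrac{1}{2}$ as $\|\nabla u\|_{L^2} \to 0$. Hence one may choose
\begin{equation*}
\delta = \delta(m) := \left(\tfrac{1}{4C(m)}\right)^{\frac{2}{3p-10}},
\end{equation*}
so that whenever $\|\nabla u\|_{L^2} \leq \delta$, the bracketed factor is at least $\tfrac{1}{4}$, giving (\ref{E-estimate01}).

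There is no substantive obstacle here: the only point to be careful about is that the mass $m$ enters $C(m)$, so $\delta$ genuinely depends on $m$ (which is allowed by the statement), and that the supercriticality $p > \tfrac{10}{3}$ is exactly what makes the exponent $\tfrac{3p-10}{2}$ positive so that a small enough $\delta$ exists. Everything else is a direct application of the Gagliardo-Nirenberg inequality and positivity of $\phi_u$.
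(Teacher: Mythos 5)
Your proposal is correct and follows essentially the same route as the paper: discard the nonnegative nonlocal term, apply the Gagliardo--Nirenberg inequality (\ref{G-N-ineq}) with $\|u\|_{L^2}=m$, and choose $\delta$ so that the factor $\tfrac12 - C(m)\|\nabla u\|_{L^2}^{(3p-10)/2}$ is at least $\tfrac14$. The choice of $\delta(m)$ matches the paper's up to the explicit form of the Gagliardo--Nirenberg constant.
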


\begin{proof}
Using (\ref{G-N-ineq}), we get
\begin{align*}
E(u)&\geq\frac12||\nabla u||^2_{L^2}-\frac1p||u||^p_{L^p}\\
&\geq\frac12||\nabla u||^2_{L^2}-\frac{C_{GN}}p||u||^{\frac{6-p}2}_{L^2}||\nabla u||^{\frac{3(p-2)}2}_{L^2}\\
&=\left(\frac12-\frac{C_{GN}}pm^{\frac{6-p}2}||\nabla u||^{\frac{3p-10}2}_{L^2}\right)||\nabla u||^2_{L^2}.
\end{align*}
Clearly, we obtain (\ref{E-estimate01}) by setting $\delta=\delta(m):=\left(\frac p{4C_{GN}m^{\frac{6-p}2}}\right)^{\frac2{3p-10}}>0$.
\end{proof}

\begin{lemma}\label{map}
 Let $p\in(\frac{10}3, 6)$, $u\in S(m)$, and for $\theta(u)\in\mathbb{R}$, denote $\kappa(u, \theta(u))(x)=e^{\frac32\theta(u)}u(e^{\theta(u)}x)$. Then we have
\begin{itemize}
 \item[(1)] the mapping $u\mapsto\theta(u)$ is continuous in $u\in H^1(\mathbb{R}^3)\setminus\{0\}$;
 \item[(2)] $\theta(u(\cdot+y))=\theta(u)$ for any $y\in\mathbb{R}^3$.
\end{itemize}
\end{lemma}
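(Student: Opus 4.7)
The plan is to handle (2) first as it is essentially algebraic, and then tackle (1) by the standard scheme: boundedness of $\{\theta(u_n)\}$, extraction of a subsequential limit, and identification of the limit via the uniqueness statement in Lemma \ref{unique}.

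For part (2), I would observe that every term in $E$ is invariant under the translation $u \mapsto u(\cdot+y)$: the $L^2$, $L^p$, and gradient norms are obviously translation invariant, and the nonlocal term $\int\phi_u|u|^2dx$ is invariant since $\mathcal{K}$ is a convolution kernel (see Lemma \ref{property2}(1)). Because the scaling $\kappa(\cdot,\theta)$ commutes in the right way with the translation, one gets $E(\kappa(u(\cdot+y),\theta))=E(\kappa(u,\theta))$ for every $\theta\in\mathbb{R}$. In other words, $f_{u(\cdot+y)}\equiv f_u$, so both functions are maximized at exactly the same point, and Lemma \ref{unique} forces $\theta(u(\cdot+y))=\theta(u)$.

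For part (1), fix $u_n\to u$ in $H^1(\mathbb{R}^3)\setminus\{0\}$ and set $\theta_n:=\theta(u_n)$. The key preliminary observation is that the map $(v,\theta)\mapsto E(\kappa(v,\theta))$ is continuous on $(H^1(\mathbb{R}^3)\setminus\{0\})\times\mathbb{R}$ and, on sets where $\|v\|_{H^1}$ is bounded, the convergence is even uniform on compact subsets of $\theta$. In particular,
\begin{equation*}
E(\kappa(u_n,\theta(u)))\to E(\kappa(u,\theta(u)))=:M>0,
\end{equation*}
where positivity of $M$ is part of Lemma \ref{unique}(2). Thus for $n$ large, $E(\kappa(u_n,\theta_n))\geq M/2>0$. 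I claim $\{\theta_n\}$ is bounded: if $\theta_n\to-\infty$ along a subsequence, then the explicit expression of $f_{u_n}(\theta_n)$ together with the uniform bound on $\|u_n\|_{H^1}$ shows (exactly as in Lemma \ref{lemma1}(1)) that $E(\kappa(u_n,\theta_n))\to 0^+$, contradicting $E(\kappa(u_n,\theta_n))\geq M/2$. If $\theta_n\to+\infty$, then since $\frac{3(p-2)}{2}>2$ for $p>\frac{10}{3}$, the $L^p$-term in $E(\kappa(u_n,\theta_n))$ dominates the other two positive contributions (which grow at most like $e^{2\theta_n}$ after bounding the nonlocal term by $\frac{1}{4a}\|u_n\|_{L^2}^4$), giving $E(\kappa(u_n,\theta_n))\to -\infty$, again a contradiction.

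Boundedness of $\{\theta_n\}$ then produces a convergent subsequence $\theta_{n_k}\to\theta_\ast\in\mathbb{R}$. For any $\theta\in\mathbb{R}$ the optimality of $\theta_{n_k}$ gives $E(\kappa(u_{n_k},\theta))\leq E(\kappa(u_{n_k},\theta_{n_k}))$; passing to the limit (using the joint continuity recalled above) yields $E(\kappa(u,\theta))\leq E(\kappa(u,\theta_\ast))$ for all $\theta$. Hence $\theta_\ast$ is a maximizer of $f_u$, and the uniqueness part of Lemma \ref{unique} forces $\theta_\ast=\theta(u)$. Since every subsequence of $\{\theta_n\}$ has a further subsequence converging to the same limit $\theta(u)$, the whole sequence satisfies $\theta(u_n)\to\theta(u)$, proving continuity. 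The only step requiring genuine care is the exclusion of $\theta_n\to\pm\infty$; but this is exactly where the mass supercritical exponent $p>\frac{10}{3}$ and the defocusing sign of the nonlocal term cooperate to make $f_{u_n}$ behave uniformly like $f_u$ at the two ends of $\mathbb{R}$.
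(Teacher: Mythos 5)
Your proof is correct and follows essentially the same scheme as the paper's: translation invariance of $E\circ\kappa(\cdot,\theta)$ for part (2), and for part (1) boundedness of $\{\theta_n\}$ via the behavior of $f_{u_n}$ at $\pm\infty$ (using $\liminf_n E(\kappa(u_n,\theta_n))\geq E(\kappa(u,\theta(u)))>0$ from optimality), followed by identification of the subsequential limit through the uniqueness in Lemma \ref{unique}. The only cosmetic difference is that you identify $\theta_*$ as the maximizer of $f_u$ by passing to the limit in the inequality $E(\kappa(u_{n_k},\theta))\leq E(\kappa(u_{n_k},\theta_{n_k}))$, whereas the paper passes to the limit in $P(\kappa(u_{n_k},\theta_{n_k}))=0$; these are equivalent by Lemma \ref{unique}.
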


\begin{proof}
(1) By Lemma \ref{unique} (1), we know that the mapping $u\mapsto\theta(u)$ is well-defined. Let $u\in H^1(\mathbb{R}^3)\setminus\{0\}$ and $\{u_n\}_{n=1}^{\infty}\subset H^1(\mathbb{R}^3)\setminus\{0\}$ be any sequence such that $u_n\rightarrow u$ in $H^1(\mathbb{R}^3)$. Setting $\theta_n:=\theta(u_n)$ for any $n\geq 1$, we only need to prove that up to a subsequence $\theta_n\rightarrow\theta(u)$ as $n\rightarrow\infty$.

We first show that $\{\theta_n\}_{n=1}^{\infty}$ is bounded. Indeed, if up to a subsequence $\theta_n\rightarrow+\infty$, by Fatou's lemma and the fact that $u_n\rightarrow u\neq0$ almost everywhere in $\mathbb{R}^3$, we have
$$\lim_{n\rightarrow\infty}||\kappa(u_n, \theta_n)||^p_{L^p}=\lim_{n\rightarrow\infty}e^{\frac{3(p-2)}2\theta_n}||u_n||^p_{L^p}=+\infty.$$
In view of Lemma \ref{unique} and Lemma \ref{E-estimate}, we obtain
\begin{align}\nonumber
0\leq E(\kappa(u_n, \theta_n))=&\frac{e^{2\theta_n}}2||\nabla u_n||^2_{L^2}-\frac{e^{\frac{3(p-2)}2\theta_n}}p||u_n||^p_{L^p}\\ \label{E-estimate1}
&+\frac{e^{\theta_n}}4\iint\frac{1-e^{-\frac{|x-y|}{ae^{\theta_n}}}}{|x-y|}u^2_n(x)u^2_n(y)dxdy\rightarrow-\infty,\quad\hbox{as}~n\rightarrow\infty,
\end{align}
which is a contradiction. Therefore, the sequence $\{\theta_n\}_{n=1}^{\infty}$ is bounded from above. On the other hand, from Lemma \ref{unique} (2), one has
$$E(\kappa(u_n, \theta_n))\geq E(\kappa(u_n, \theta(u))) \quad \hbox{for any} ~~n\geq1.$$
Since $\kappa(u_n, \theta(u))\rightarrow\kappa(u, \theta(u))$ in $H^1(\mathbb{R}^3)$, it follows that
$$E(\kappa(u_n, \theta(u)))=E(\kappa(u, \theta(u)))+o_n(1),$$
and thus
\begin{align}\label{E-estimate2}
\liminf_{n\rightarrow\infty}E(\kappa(u_n, \theta_n))\geq E(\kappa(u, \theta(u)))>0.
\end{align}
As $\kappa(u_n, \theta_n)\in S(m)$ for $m>0$, in view of Lemma \ref{E-estimate0} and the fact that
$$||\nabla\kappa(u_n, \theta_n)||^2_{L^2}=e^{2\theta_n}||\nabla u_n||^2_{L^2},$$
we deduce from (\ref{E-estimate2}) that $\{\theta_n\}_{n=1}^{\infty}$ is bounded also from below.

Without loss of generality, we can now assume that
$$\theta_n\rightarrow\theta_* \quad \hbox{for some} ~~\theta_*\in\mathbb{R}.$$
Recalling that $u_n\rightarrow u$ in $H^1(\mathbb{R}^3)$, one then has $\kappa(u_n, \theta_n)\rightarrow\kappa(u, \theta_*)$ in $H^1(\mathbb{R}^3)$. From Lemma \ref{unique}, $P(\kappa(u_n, \theta_n))=0$ for any $n\geq1$, it follows that
$$P(\kappa(u, \theta_*))=0.$$
Again from Lemma \ref{unique}, we see that $\theta_*=\theta(u)$ and thus (1) is proved.

(2) For any $y\in\mathbb{R}^3$, by changing variables in the integrals, we have
$$P(\kappa(u(\cdot+y), \theta(u)))=P(\kappa(u, \theta(u)))=0,$$
and thus $\theta(u(\cdot+y))=\theta(u)$ via Lemma \ref{unique} (1).
\end{proof}

\begin{lemma}\label{continuous}
We denote
$$f(a, b, c)=\max_{\theta\in\mathbb{R}}\{ae^{2\theta}+be^{\theta}-ce^{\frac{3(p-2)}2\theta}\},$$
where $p\in(\frac{10}3, 6)$ and $a>0$, $b\geq0$, $c>0$ are totally independent of $\theta$. Then the function: $(a, b, c)\mapsto f(a, b, c)$ is continuous in $\mathbb{R}^+\times\mathbb{R}^c_-\times\mathbb{R}^+$ (here, $\mathbb{R}^c_-$ denotes the nonnegative real number set).
\end{lemma}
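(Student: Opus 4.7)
The plan is to establish continuity by standard upper and lower semicontinuity arguments, exploiting the distinct exponential growth rates. Let $g(\theta;a,b,c):=ae^{2\theta}+be^{\theta}-ce^{\frac{3(p-2)}{2}\theta}$, so $f(a,b,c)=\sup_{\theta\in\mathbb{R}}g(\theta;a,b,c)$. Since $p\in(\frac{10}{3},6)$, the three exponents satisfy $\frac{3(p-2)}{2}>2>1>0$ and are all distinct; the coefficient $-c<0$ multiplies the fastest-growing term. Hence for any $(a,b,c)\in\mathbb{R}^{+}\times\mathbb{R}^{c}_{-}\times\mathbb{R}^{+}$, one has $g(\theta)\to 0$ as $\theta\to-\infty$, $g(\theta)\to-\infty$ as $\theta\to+\infty$. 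Moreover, for $\theta$ sufficiently negative the dominant terms are the positive $ae^{2\theta}$ and $be^{\theta}$, so $g(\theta)>0$ there. Consequently $f(a,b,c)>0$, and the supremum is attained at some interior $\theta^{*}=\theta^{*}(a,b,c)\in\mathbb{R}$.

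Fix now $(a_0,b_0,c_0)$ in the domain and a sequence $(a_n,b_n,c_n)\to(a_0,b_0,c_0)$. For the lower semicontinuity, I would observe that for every fixed $\theta\in\mathbb{R}$, joint continuity of $g$ in its four arguments gives $g(\theta;a_n,b_n,c_n)\to g(\theta;a_0,b_0,c_0)$. Hence $\liminf_{n\to\infty}f(a_n,b_n,c_n)\ge g(\theta;a_0,b_0,c_0)$; taking the supremum over $\theta$ on the right yields $\liminf_{n\to\infty}f(a_n,b_n,c_n)\ge f(a_0,b_0,c_0)$.

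The harder direction is upper semicontinuity, which reduces to showing that any choice of maximizers $\theta_n^{*}\in\arg\max g(\cdot;a_n,b_n,c_n)$ stays in a bounded interval for $n$ large. For an upper bound I would use $c_0>0$: once $c_n\ge c_0/2$ and $a_n,b_n\le M$ for some $M$, one has $g(\theta;a_n,b_n,c_n)\le Me^{2\theta}+Me^{\theta}-(c_0/2)e^{\frac{3(p-2)}{2}\theta}$, which is bounded above by, say, $-1$ for all $\theta\ge\theta_{+}$ with $\theta_{+}$ depending only on $M$ and $c_0$; hence $\theta_n^{*}\le\theta_{+}$. For a lower bound I would argue by contradiction: if $\theta_n^{*}\to-\infty$ along a subsequence, then using $e^{\frac{3(p-2)}{2}\theta}<e^{2\theta}<e^{\theta}<1$ for $\theta<0$ gives the uniform bound $|g(\theta;a_n,b_n,c_n)|\le(a_n+b_n+c_n)e^{\theta}$, forcing $f(a_n,b_n,c_n)=g(\theta_n^{*};a_n,b_n,c_n)\to 0$. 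On the other hand, by fixing any $\tilde\theta$ with $g(\tilde\theta;a_0,b_0,c_0)>0$ (which exists by the first paragraph, crucially using $a_0>0$), pointwise continuity gives $f(a_n,b_n,c_n)\ge g(\tilde\theta;a_n,b_n,c_n)\to g(\tilde\theta;a_0,b_0,c_0)>0$, a contradiction.

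With $\{\theta_n^{*}\}$ bounded, I extract a subsequence $\theta_n^{*}\to\theta_\infty\in\mathbb{R}$. Joint continuity of $g$ then yields
\[
f(a_n,b_n,c_n)=g(\theta_n^{*};a_n,b_n,c_n)\longrightarrow g(\theta_\infty;a_0,b_0,c_0)\le f(a_0,b_0,c_0),
\]
so $\limsup_{n\to\infty}f(a_n,b_n,c_n)\le f(a_0,b_0,c_0)$. Combined with the lower semicontinuity step, this yields $f(a_n,b_n,c_n)\to f(a_0,b_0,c_0)$ and completes the proof. The main obstacle is the noncompactness of the maximization domain $\theta\in\mathbb{R}$; the two hypotheses $a_0>0$ (which gives a positive lower bound on $f$ and prevents maximizers from escaping to $-\infty$) and $c_0>0$ (which produces the coercivity at $+\infty$) are precisely what makes the maximizers locally uniformly bounded, hence the Berge-type reduction valid.
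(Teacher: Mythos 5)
Your proof is correct. Where the paper simply substitutes $t=e^{\theta}$ and defers to Lemma 5.2 of Bellazzini--Jeanjean--Luo \cite{Bellazzini2013}, you supply a complete, self-contained argument of the same Berge-maximum-theorem type: lower semicontinuity of the supremum from pointwise convergence of $g(\theta;\cdot)$, and upper semicontinuity from local uniform boundedness of the maximizers. The two ingredients you isolate are exactly the right ones — $c_0>0$ gives coercivity at $\theta\to+\infty$ (via the fact that $\tfrac{3(p-2)}{2}>2$ when $p>\tfrac{10}{3}$), and $a_0>0$ forces $f(a_0,b_0,c_0)>0$, which rules out maximizers escaping to $-\infty$ since $|g|\le(a+b+c)e^{\theta}\to0$ there. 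The only cosmetic point is that the final subsequence extraction should formally be run along a subsequence realizing $\limsup_n f(a_n,b_n,c_n)$, but this is routine and does not affect the argument. Your version has the advantage of being checkable without consulting the reference; the paper's has the advantage of brevity.
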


\begin{proof}
Set $t:=e^{\theta}$, then $f(a, b, c)=\max\limits_{t>0}\{at^2+bt-ct^{\frac{3(p-2)}2}\}$. The rest of the argument is as in the proof of Lemma 5.2 in \cite{Bellazzini2013}.
\end{proof}

\begin{lemma}\label{bounded0}
Let $p\in(\frac{10}3, 6)$. Then
\begin{itemize}
 \item[(1)] $\inf\limits_{u\in V(m)}E(u)>0$;
 \item[(2)]  $\{u_n\}_{n=1}^{\infty}$ is bounded in $H^1(\mathbb{R}^3)$ for any sequence $\{u_n\}_{n=1}^{\infty}\subset V(m)$ and $\sup\limits_{n\geq1}E(u_n)<+\infty.$
\end{itemize}
\end{lemma}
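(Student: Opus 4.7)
The plan is to reduce both parts to the single identity displayed in (\ref{EP}) in the proof of Lemma \ref{lemma1}, which for $p>10/3$ expresses $E(u)-\frac{2}{3(p-2)}P(u)$ as a sum of three manifestly nonnegative terms:
\begin{align*}
E(u)-\frac{2}{3(p-2)}P(u)=\;&\frac{3p-10}{6(p-2)}\|\nabla u\|_{L^2}^2
+\left(\frac14-\frac1{6(p-2)}\right)\iint\frac{1-e^{-|x-y|/a}}{|x-y|}|u(x)|^2|u(y)|^2\,dxdy\\
&+\frac{1}{6a(p-2)}\iint e^{-|x-y|/a}|u(x)|^2|u(y)|^2\,dxdy.
\end{align*}
All three coefficients are strictly positive precisely because $p\in(\tfrac{10}{3},6)$. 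Specializing to $u\in V(m)$, where $P(u)=0$, all the nonlocal terms can be discarded to obtain the crucial one-sided bound
\begin{equation*}
E(u)\ \geq\ \frac{3p-10}{6(p-2)}\|\nabla u\|_{L^2}^2,\qquad\forall\,u\in V(m).
\end{equation*}

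For part (1), I would combine this bound with Lemma \ref{lemma1}(3), which asserts the existence of a constant $C_0>0$ such that $P(u)>0$ whenever $\|\nabla u\|_{L^2}\leq C_0$. The contrapositive forces $\|\nabla u\|_{L^2}>C_0$ for every $u\in V(m)$, so that
\begin{equation*}
\inf_{u\in V(m)}E(u)\ \geq\ \frac{3p-10}{6(p-2)}\,C_0^2\ >\ 0,
\end{equation*}
which is exactly the assertion in (1).

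For part (2), I interpret the statement as: any sequence $\{u_n\}\subset V(m)$ with $\sup_{n\geq 1}E(u_n)<+\infty$ is bounded in $H^1(\mathbb{R}^3)$ (the finiteness of the supremum appearing there as a standing hypothesis, satisfied for minimizing sequences for $\gamma_2(m)$). Rearranging the same inequality yields
\begin{equation*}
\|\nabla u_n\|_{L^2}^2\ \leq\ \frac{6(p-2)}{3p-10}\,E(u_n)\ \leq\ \frac{6(p-2)}{3p-10}\sup_{n\geq 1}E(u_n)<+\infty,
\end{equation*}
and since $\|u_n\|_{L^2}=m$ by definition of $V(m)$, we conclude that $\{u_n\}$ is bounded in $H^1(\mathbb{R}^3)$.

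There is no real analytic obstacle; the real work was done upstream in establishing (\ref{EP}) and Lemma \ref{lemma1}(3). The only point requiring care is that the coefficients in (\ref{EP}) be \emph{strictly} positive, which is precisely where the mass-supercritical restriction $p>\tfrac{10}{3}$ enters; without it the kinetic coefficient $\frac{3p-10}{6(p-2)}$ would vanish or become negative and no positive lower bound on $E|_{V(m)}$ could be extracted in this way.
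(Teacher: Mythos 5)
Your proof is correct, and it is genuinely simpler than the one in the paper. For part (1) the paper argues differently: it invokes Lemma \ref{unique} to write $E(u)=\max_{\theta}E(\kappa(u,\theta))$ for $u\in V(m)$, rescales so that $\|\nabla\kappa(u,\theta)\|_{L^2}=\delta$ with $\delta$ the constant of Lemma \ref{E-estimate0}, and concludes $E(u)\geq\tfrac14\delta^2$; you instead combine the coercivity identity (\ref{EP}) on $\{P=0\}$ with the lower bound $\|\nabla u\|_{L^2}>C_0$ forced by Lemma \ref{lemma1}(3), obtaining $E(u)\geq\tfrac{3p-10}{6(p-2)}C_0^2$. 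Both yield explicit $m$-dependent positive lower bounds and both are valid. The difference is more pronounced in part (2): the paper runs a contradiction argument, normalizing $v_n=\kappa(u_n,-\theta_n)$ with $\|\nabla v_n\|_{L^2}=1$ and splitting into vanishing/non-vanishing cases via Lions' lemma, whereas your one-line estimate $\|\nabla u_n\|_{L^2}^2\leq\tfrac{6(p-2)}{3p-10}E(u_n)$ (valid since $P(u_n)=0$) together with $\|u_n\|_{L^2}=m$ settles it directly, with no concentration-compactness at all. This shortcut is in fact used by the authors themselves elsewhere (e.g.\ in the proofs of Lemma \ref{limits2} and Theorem \ref{concentration-behavior}), so it is fully consistent with the paper's toolkit. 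One cosmetic remark: only the kinetic coefficient $\tfrac{3p-10}{6(p-2)}$ in (\ref{EP}) actually requires $p>\tfrac{10}{3}$; the other two are positive already for $p>\tfrac83$ and $p>2$ respectively. As you note at the end, that kinetic coefficient is the one that matters.
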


\begin{proof}
(1) For any $u\in V(m)$, by Lemma \ref{unique}, we have
$$E(u)=E(\kappa(u, 0))\geq E(\kappa(u, \theta)) \quad \hbox{for all}~~ \theta\in\mathbb{R}.$$
Let $\delta>0$ be the number given by Lemma \ref{E-estimate0} and $
\theta:=\ln\left(\frac{\delta}{||\nabla u||_{L^2}}\right)$. Since $||\nabla\kappa(u, \theta)||_{L^2}=\delta$, from Lemma \ref{E-estimate0}, we deduce that
$$E(u)\geq E(\kappa(u, \theta))\geq\frac14||\nabla\kappa(u, \theta)||^2_{L^2}=\frac14\delta^2$$
and thus item (1) holds.

(2) By contradiction, we assume that there exists $\{u_n\}_{n=1}^{\infty}\subset V(m)$ such that $||u_n||_{H^1(\mathbb{R}^3)}\rightarrow\infty (n\rightarrow\infty)$ but $\sup\limits_{n\geq1}E(u_n)\leq C$ for some $C\in(0, +\infty)$. For any $n\geq1$, set
$$\theta_n:=\ln(||\nabla u_n||_{L^2}) \quad \hbox{and} \quad v_n:=\kappa(u_n, -\theta_n).$$
Clearly, $\theta_n\rightarrow+\infty (n\rightarrow\infty)$, $\{v_n\}_{n=1}^{\infty}\subset S(m)$ and $||\nabla v_n||_{L^2}=1$ for any $n\geq1$. Let
$$\rho:=\limsup_{n\rightarrow\infty}\left(\sup_{y\in\mathbb{R}^3}\int_{B(y, 1)}|v_n|^2dx\right).$$
To derive a contradiction, we distinguish the two cases: non-vanishing and vanishing.

$\bullet$ Non-vanishing (i.e., $\rho>0$). Up to a subsequence, there exists $\{y_n\}_{n=1}^{\infty}\in\mathbb{R}^3$ and $w\in H^1(\mathbb{R}^3)\setminus\{0\}$ such that
$$w_n:=v_n(\cdot+y_n)\rightharpoonup w \quad\hbox{in}~~ H^1(\mathbb{R}^3) \quad\hbox{and}\quad w_n\rightarrow w \quad\hbox{a.e. in}~~ \mathbb{R}^3.$$
Since $\theta_n\rightarrow+\infty$, it follows that
\begin{align*}
\lim_{n\rightarrow\infty}||u_n||^p_{L^p}=&\lim_{n\rightarrow\infty}||\kappa(v_n, \theta_n)||^p_{L^p}=\lim_{n\rightarrow\infty}e^{\frac{3(p-2)}2\theta_n}||v_n||^p_{L^p}\\
=&\lim_{n\rightarrow\infty}e^{\frac{3(p-2)}2\theta_n}||w_n||^p_{L^p}=+\infty.
\end{align*}
In view of item (i), we have
\begin{align*}
0\leq E(u_n)=&E(\kappa(v_n, \theta_n))\\
=&\frac{e^{2\theta_n}}2||\nabla v_n||^2_{L^2}+\frac{e^{\theta_n}}4\iint\frac{1-e^{-\frac{|x-y|}{ae^{\theta_n}}}}{|x-y|}v^2_n(x)v^2_n(y)dxdy
-\frac{e^{\frac{3(p-2)}2\theta_n}}p||v_n||^p_{L^p}\\
=&\frac{e^{2\theta_n}}2||\nabla w_n||^2_{L^2}+\frac{e^{\theta_n}}4\iint\frac{1-e^{-\frac{|x-y|}{ae^{\theta_n}}}}{|x-y|}w^2_n(x)w^2_n(y)dxdy\\
&-\frac{e^{\frac{3(p-2)}2\theta_n}}p||w_n||^p_{L^p}\rightarrow-\infty,\quad\hbox{as}~n\rightarrow\infty,
\end{align*}
which is a contradiction.

$\bullet$ Vanishing (i.e., $\rho=0$). By Lemma \ref{vanishing}, we have
$$||v_n||^p_{L^p}\rightarrow0 \quad\hbox{and}\quad \iint\frac{1-e^{-\frac{|x-y|}a}}{|x-y|}v^2_n(x)v^2_n(y)dxdy\rightarrow0, \quad\hbox{as}~n\rightarrow\infty.$$
Thus,
$$\lim_{n\rightarrow\infty}e^{-3\theta}||e^{\frac32\theta}v_n||^p_{L^p}=0 \quad\hbox{and}\quad \lim_{n\rightarrow\infty}e^{-5\theta}\iint\frac{1-e^{-\frac{|x-y|}{ae^{\theta}}}}{|x-y|}|e^{\frac32\theta}v_n(x)|^2|e^{\frac32\theta}v_n(y)|^2dxdy=0.$$
Since $P(\kappa(v_n, \theta_n))=P(u_n)=0$, by Lemma \ref{unique}, it follows that, for any $\theta\in\mathbb{R}$,
\begin{align*}
C\geq E(u_n)&=E(\kappa(v_n, \theta_n))\geq E(\kappa(v_n, \theta))\\
&=\frac12e^{2\theta}+\frac{e^{-5\theta}}4\iint\frac{1-e^{-\frac{|x-y|}{ae^{\theta}}}}{|x-y|}
|e^{\frac32\theta}v_n(x)|^2|e^{\frac32\theta}v_n(y)|^2dxdy-\frac{e^{-3\theta}}p||e^{\frac32\theta}v_n||^p_{L^p}\\
&=\frac12e^{2\theta}+o_n(1).
\end{align*}
Clearly, this leads a contradiction for $\theta>\frac{\ln(2C)}2$. Therefore, $\{u_n\}_{n=1}^{\infty}$ is bounded in $H^1(\mathbb{R}^3)$.
\end{proof}

\begin{lemma}\label{continuous1}
When $p\in(\frac{10}3, 6)$, the function $m\mapsto\gamma(m)$ is continuous at each $m>0$.
\end{lemma}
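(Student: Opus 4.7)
The plan is to fix $m>0$, take an arbitrary sequence $m_n\to m$, and prove $\gamma(m_n)\to\gamma(m)$ by separately establishing $\limsup_n\gamma(m_n)\leq\gamma(m)$ and $\liminf_n\gamma(m_n)\geq\gamma(m)$. I exploit the two equivalent descriptions $\gamma(m)=\inf_{u\in V(m)}E(u)=\inf_{u\in S(m)}\max_{\theta\in\mathbb{R}}E(\kappa(u,\theta))$ from Lemma \ref{gamma3}, together with the continuity (Lemma \ref{map}(1)) of the map $u\mapsto\theta(u)$ defined by $P(\kappa(u,\theta(u)))=0$, noting that $\theta(u)=0$ if and only if $P(u)=0$.

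For the upper bound, given $\varepsilon>0$ I pick $u_\varepsilon\in V(m)$ with $E(u_\varepsilon)\leq\gamma(m)+\varepsilon$ and set $u_n^\varepsilon:=(m_n/m)\,u_\varepsilon\in S(m_n)$, so $u_n^\varepsilon\to u_\varepsilon$ in $H^1(\mathbb{R}^3)$. Since $\theta(u_\varepsilon)=0$, Lemma \ref{map}(1) gives $\theta(u_n^\varepsilon)\to 0$, hence $\kappa(u_n^\varepsilon,\theta(u_n^\varepsilon))\to u_\varepsilon$ in $H^1(\mathbb{R}^3)$ and $E(\kappa(u_n^\varepsilon,\theta(u_n^\varepsilon)))\to E(u_\varepsilon)$. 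Therefore $\gamma(m_n)\leq E(\kappa(u_n^\varepsilon,\theta(u_n^\varepsilon)))\to E(u_\varepsilon)\leq\gamma(m)+\varepsilon$, and letting $\varepsilon\to 0^+$ yields $\limsup_n\gamma(m_n)\leq\gamma(m)$.

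For the lower bound, after passing to a subsequence I may assume $\gamma(m_n)\to L:=\liminf_n\gamma(m_n)$ with $L<+\infty$ (otherwise the bound is trivial), and pick $u_n\in V(m_n)$ with $E(u_n)\leq\gamma(m_n)+1/n$. Repeating the non-vanishing/vanishing dichotomy in the proof of Lemma \ref{bounded0}(2) with the uniformly bounded masses $\{m_n\}$ (rather than a single $m$) shows that $\{u_n\}$ is bounded in $H^1(\mathbb{R}^3)$. Setting $c_n:=m/m_n\to 1$ and $\tilde{u}_n:=c_n u_n\in S(m)$, we have $\tilde{u}_n-u_n\to 0$ in $H^1(\mathbb{R}^3)$, and Lemma \ref{unique} supplies a unique $\theta_n\in\mathbb{R}$ with $\kappa(\tilde{u}_n,\theta_n)\in V(m)$, so $\gamma(m)\leq E(\kappa(\tilde{u}_n,\theta_n))$. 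The crux is to show this upper bound differs from $E(u_n)$ only by $o(1)$.

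To that end I first prove $\{\theta_n\}$ is bounded. Dominating $f_{\tilde{u}_n}(\theta):=E(\kappa(\tilde{u}_n,\theta))$ from above by the elementary function of Lemma \ref{continuous}, with uniformly bounded parameters $(\tfrac{1}{2}\|\nabla\tilde{u}_n\|_{L^2}^2,\tfrac{C}{4}\|\tilde{u}_n\|_{L^{12/5}}^4,\tfrac{1}{p}\|\tilde{u}_n\|_{L^p}^p)$ (via Hardy--Littlewood--Sobolev and $H^1$-boundedness), shows $\max_\theta f_{\tilde{u}_n}(\theta)=E(\kappa(\tilde{u}_n,\theta_n))$ is uniformly bounded. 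Combining $P(\kappa(\tilde{u}_n,\theta_n))=0$ with (\ref{EP}) yields $\|\nabla\kappa(\tilde{u}_n,\theta_n)\|_{L^2}^2\leq\tfrac{6(p-2)}{3p-10}E(\kappa(\tilde{u}_n,\theta_n))$; together with the Gagliardo--Nirenberg lower bound on $\|\nabla\tilde{u}_n\|_{L^2}$ inherited from $u_n\in V(m_n)$, the identity $e^{2\theta_n}\|\nabla\tilde{u}_n\|_{L^2}^2=\|\nabla\kappa(\tilde{u}_n,\theta_n)\|_{L^2}^2$ gives an upper bound on $\theta_n$, while Lemma \ref{lemma1}(3) (i.e., $\|\kappa(\tilde{u}_n,\theta_n)\|_{L^p}^p\geq C_0$) together with the upper bound on $\|\tilde{u}_n\|_{L^p}$ gives a lower bound. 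With $\theta_n\in[-M,M]$ secured, writing $\kappa(\tilde{u}_n,\theta)=c_n\kappa(u_n,\theta)$ gives
\[
E(\kappa(\tilde{u}_n,\theta))-E(\kappa(u_n,\theta))=\tfrac{c_n^2-1}{2}\|\nabla\kappa(u_n,\theta)\|_{L^2}^2+\tfrac{c_n^4-1}{4}\int\phi_{\kappa(u_n,\theta)}\kappa(u_n,\theta)^2\,dx-\tfrac{c_n^p-1}{p}\|\kappa(u_n,\theta)\|_{L^p}^p,
\]
whose right-hand side vanishes uniformly for $\theta\in[-M,M]$, since $c_n\to 1$ and each of the three quantities is uniformly bounded in $(n,\theta)$. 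Because $u_n\in V(m_n)$ forces $E(\kappa(u_n,\theta_n))\leq\max_\theta E(\kappa(u_n,\theta))=E(u_n)$, we obtain $\gamma(m)\leq E(\kappa(\tilde{u}_n,\theta_n))=E(\kappa(u_n,\theta_n))+o(1)\leq E(u_n)+o(1)\leq\gamma(m_n)+1/n+o(1)$, so $\gamma(m)\leq\liminf_n\gamma(m_n)$. The main obstacle is the uniform control of $\theta_n$, which must avoid the apparent circularity between bounding $E(\kappa(\tilde{u}_n,\theta_n))$ and bounding $\theta_n$; this is exactly why Lemma \ref{continuous} (furnishing an $a$ priori upper envelope) and the Pohozaev identity (\ref{EP}) are both needed.
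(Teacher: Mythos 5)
Your proof is correct, and its overall architecture --- the two one-sided inequalities, renormalizing the mass and then projecting onto the Pohozaev set via the map $u\mapsto\theta(u)$, and the chain $\gamma(m)\leq E(\kappa(\tilde u_n,\theta_n))\leq E(u_n)+o(1)$ --- is the same as the paper's; the upper bound is essentially identical. Where you genuinely diverge is the key technical step of the lower bound: the uniform control of the fiber parameter $\theta_n=\theta(\tilde u_n)$. The paper rescales by a dilation $\tilde v_k=v_k(\cdot/t_k)$ and only needs $\limsup_k\theta(\tilde v_k)<+\infty$ (its error terms carry factors $e^{2\theta},e^{\theta},e^{\frac{3(p-2)}2\theta}$ that stay bounded as $\theta\to-\infty$), which it obtains by a concentration--compactness route: rule out vanishing, extract a nonzero a.e.\ limit of translates, and derive a Fatou-type contradiction from $\theta(\tilde v_k)\to+\infty$ (Claims 2 and 3). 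You rescale multiplicatively, $\tilde u_n=c_nu_n$, and bound $\theta_n$ from both sides by explicit estimates: the envelope of Lemma \ref{continuous} controls $E(\kappa(\tilde u_n,\theta_n))=\max_\theta E(\kappa(\tilde u_n,\theta))$, identity (\ref{EP}) converts this into a bound on $\|\nabla\kappa(\tilde u_n,\theta_n)\|_{L^2}^2=e^{2\theta_n}\|\nabla\tilde u_n\|_{L^2}^2$, the Gagliardo--Nirenberg lower bound on $\|\nabla u_n\|_{L^2}$ forced by $P(u_n)=0$ caps $\theta_n$ from above, and Lemma \ref{lemma1}(3) caps it from below. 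This bypasses concentration--compactness entirely and is arguably more elementary and more quantitative. One sentence you should add: for the envelope $\max_{t>0}\{a_nt^2+b_nt-c_nt^{\frac{3(p-2)}2}\}$ to be uniformly bounded it is not enough that $a_n,b_n,c_n$ be bounded above; you also need $c_n=\tfrac1p\|\tilde u_n\|_{L^p}^p$ bounded away from zero, since the maximum blows up as $c\to0^+$. This does follow from $P(u_n)=0$ via Lemma \ref{lemma1}(3), with constants uniform because $m_n\to m>0$ --- the same facts you already invoke for the lower bound on $\theta_n$ --- so it is an omitted sentence rather than an omitted idea.
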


\begin{proof}
It is equivalent to prove that for a given $m>0$ and any positive sequence $\{m_k\}_{k=1}^{\infty}$ such that $m_k\rightarrow m$ as $k\rightarrow\infty$, one has $\lim\limits_{k\rightarrow\infty}\gamma(m_k)=\gamma(m)$. We first show that
\begin{align}\label{gamma-estimate}
\limsup_{k\rightarrow\infty}\gamma(m_k)\leq\gamma(m).
\end{align}
For any $u\in V(m)$, we define
$$u_k:=\frac{m_k}m u\in S(m), \quad k\in\mathbb{N}^+.$$
Since $u_k\rightarrow u$ in $H^1(\mathbb{R}^3)$, by Lemma \ref{map} (1), we have $\lim\limits_{k\rightarrow\infty}\theta(u_k)=\theta(u)=0$, and thus
$$\kappa(u_k, \theta(u_k))\rightarrow\kappa(u, \theta(u))=u \quad\hbox{in}~~ H^1(\mathbb{R}^3)~~\hbox{as}~~ k\rightarrow\infty.$$
As a consequence,
$$\limsup_{k\rightarrow\infty}\gamma(m_k)\leq\limsup_{k\rightarrow\infty}E(\kappa(u_k, \theta(u_k)))=E(u).$$
Noting that $u\in V(m)$ is arbitrary, we deduce that (\ref{gamma-estimate}) holds.

To complete the proof, it remains to show that
\begin{align}\label{gamma-estimate2}
\liminf_{k\rightarrow\infty}\gamma(m_k)\geq\gamma(m).
\end{align}
For each $k\in\mathbb{N}^+$, there exists $v_k\in V(m_k)$ such that
\begin{align}\label{gamma-estimate3}
E(v_k)\leq\gamma(m_k)+\frac1k.
\end{align}
Setting
$$t_k:=\left(\frac{m}{m_k}\right)^{\frac23} \quad \hbox{and} \quad \tilde{v}_k:=v_k\left(\frac{\cdot}{t_k}\right)\in S(m),$$
by Lemma \ref{unique} (2) and (\ref{gamma-estimate3}), we have
\begin{align*}
\gamma(m)\leq E(\kappa(\tilde{v}_k, \theta(\tilde{v}_k)))&\leq E(\kappa(v_k, \theta(\tilde{v}_k)))+\left|E(\kappa(\tilde{v}_k, \theta(\tilde{v}_k)))-E(\kappa(v_k, \theta(\tilde{v}_k)))\right|\\
&\leq E(v_k)+\left|E(\kappa(\tilde{v}_k, \theta(\tilde{v}_k)))-E(\kappa(v_k, \theta(\tilde{v}_k)))\right|\\
&\leq\gamma(m_k)+\frac1k+\left|E(\kappa(\tilde{v}_k, \theta(\tilde{v}_k)))-E(\kappa(v_k, \theta(\tilde{v}_k)))\right|\\
&=:\gamma(m_k)+\frac1k+L(k).
\end{align*}
It is clear that (\ref{gamma-estimate2}) holds if
\begin{align}\label{Lk}
\lim_{k\rightarrow\infty}L(k)=0.
\end{align}
Noting that,
\begin{align*}
L(k)=&\left|\frac12(t_k-1)\int|\nabla\kappa(v_k, \theta(\tilde{v}_k))|^2dx-\frac1p(t^3_k-1)\int|\kappa(v_k, \theta(\tilde{v}_k))|^pdx\right.\\
&\left.+\frac14(t^5_k-1)\iint\frac{1-e^{-\frac{|x-y|}{at^{-1}_k}}}{|x-y|}|\kappa(v_k, \theta(\tilde{v}_k))|^2(x)|\kappa(v_k, \theta(\tilde{v}_k))|^2(y)dxdy\right|\\
\leq&\frac12|t_k-1|\cdot ||\nabla\kappa(v_k, \theta(\tilde{v}_k))||^2_{L^2}
+\frac14|t^5_k-1|\cdot ||\kappa(v_k, \theta(\tilde{v}_k))||^4_{L^{\frac{12}5}}
+\frac1p|t^3_k-1|\cdot ||\kappa(v_k, \theta(\tilde{v}_k))||^p_{L^p}\\
=&:\frac12|t_k-1| \cdot L_1(k)+\frac14|t^5_k-1| \cdot L_2(k)+\frac1p|t^3_k-1| \cdot L_3(k).
\end{align*}
Since $t_k\rightarrow1$, the proof of (\ref{Lk}) and thus of (\ref{gamma-estimate2}) is reduced to showing that
\begin{align}\label{L-estimate}
\limsup_{k\rightarrow\infty}L_j(k)<+\infty, \quad j=1, 2, 3.
\end{align}
To justify (\ref{L-estimate}), we prove below three claims in turn.

\textbf{Claim 1.} The sequence $\{v_k\}_{k=1}^{\infty}$ is bounded in $H^1(\mathbb{R}^3)$.

Indeed, by (\ref{gamma-estimate}) and (\ref{gamma-estimate3}),
$$\limsup_{k\rightarrow\infty}E(v_k)\leq\gamma(m).$$
Since $v_k\in V(m_k)$ and $m_k\rightarrow m (k\rightarrow\infty)$, we deduce Lemma \ref{bounded0} that Claim 1 holds.

\textbf{Claim 2.} The sequence $\{\tilde{v}_k\}_{k=1}^{\infty}$ is bounded in $H^1(\mathbb{R}^3)$, and there exists $\{y_k\}_{k=1}^{\infty}\subset\mathbb{R}^3$ and $v\in H^1(\mathbb{R}^3)$ such that up to a subsequence $\tilde{v}_k(\cdot+y_k)\rightarrow v\neq0$ almost everywhere in $\mathbb{R}^3$.

Indeed, since $t_k\rightarrow1 (k\rightarrow\infty)$, it follows from Claim 1 that $\{\tilde{v}_k\}_{k=1}^{\infty}$ is bounded in $H^1(\mathbb{R}^3)$. Set
$$\rho:=\limsup_{k\rightarrow\infty}\left(\sup_{y\in\mathbb{R}^3}\int_{B(y, 1)}|\tilde{v}_k|^2dx\right).$$
We now only need to rule out the case $\rho=0$. If $\rho=0$, then $\tilde{v}_k\rightarrow0 (k\rightarrow\infty)$ in $L^r(\mathbb{R}^3)$ ($2<r<6$) by Lemma \ref{vanishing}. As a consequence,
$$\int|v_k|^rdx=\int|\tilde{v}_k(t_k\cdot)|^rdx=t^{-3}_k\int|\tilde{v}_k|^rdx\rightarrow0,\quad\hbox{as}~k\rightarrow\infty,$$
and so we have
$$||v_k||^p_{L^p}\rightarrow0 \quad\hbox{and}\quad \int\phi_{v_k}v^2_kdx\rightarrow0,\quad\hbox{as}~k\rightarrow\infty.$$
Combining $P(v_k)=0$ and the fact $\frac{1-e^{-\frac{|x-y|}a}}{|x-y|}-\frac1ae^{-\frac{|x-y|}a}\geq0$, we have
$$\iint e^{-\frac{|x-y|}a}v^2_k(x)v^2_k(y)dxdy\rightarrow0,\quad\hbox{as}~k\rightarrow\infty.$$
Therefore,
$$||\nabla v_k||^2_{L^2}\rightarrow0, \quad\hbox{as}~k\rightarrow\infty.$$
In view of Lemma \ref{E-estimate0}, we immediately deduce that
$$0<\frac14||\nabla v_k||^2_{L^2}\leq E(v_k)\rightarrow0, \quad\hbox{for $k$ large enough,}$$
which is a contradiction. The proof of Claim 2 is completed.

\textbf{Claim 3.} $\limsup\limits_{k\rightarrow\infty}\theta(\tilde{v}_k)<+\infty$.

Indeed, if Claim 3 does not hold, then up to a subsequence
\begin{align}\label{subsequence}
\theta(\tilde{v}_k)\rightarrow+\infty \quad\hbox{as}~~k\rightarrow\infty.
\end{align}
On the one hand, by Claim 2, we see that up to a subsequence
\begin{align}\label{subsequence1}
\tilde{v}_k(\cdot+y_k)\rightarrow v\neq0 \quad \hbox{a.e. in}~~ \mathbb{R}^3.
\end{align}
On the other hand, Lemma \ref{map} (2) and (\ref{subsequence}) imply
\begin{align}\label{subsequence2}
\theta(\tilde{v}_k(\cdot+y_k))=\theta(\tilde{v}_k)\rightarrow+\infty,\quad\hbox{as}~k\rightarrow\infty,
\end{align}
and Lemma \ref{unique} (2) gives us that
\begin{align}\label{subsequence3}
E\left(\kappa(\tilde{v}_k(\cdot+y_k), \theta(\tilde{v}_k(\cdot+y_k)))\right)\geq0.
\end{align}
Now, using (\ref{subsequence1}), (\ref{subsequence2}) and (\ref{subsequence3}), we clearly obtain a contradiction in the same way as the derivation of (\ref{E-estimate1}). The proof of Claim 3 is completed.

Now, by Claims 1 and 3, we have
$$\limsup_{k\rightarrow\infty}||\kappa(v_k, \theta(\tilde{v}_k))||_{H^1}<+\infty.$$
Since $H^1(\mathbb{R}^3)\hookrightarrow L^q(\mathbb{R}^3)$ with $2\leq q\leq6$, it is clear that (\ref{L-estimate}) holds and the lemma is proved.
\end{proof}

\begin{lemma}\label{nonincreasing}
When $p\in(\frac{10}3, 6)$, the function $m\mapsto\gamma(m)$ is nonincreasing on $(0, \infty)$.
\end{lemma}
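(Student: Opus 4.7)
The plan is to use the characterization $\gamma(m) = \inf_{u \in S(m)} \max_{\theta \in \mathbb{R}} E(\kappa(u, \theta))$ from Lemma \ref{gamma3} and, given $0 < m_1 < m_2$ and $\varepsilon > 0$, to construct a competitor $\bar w \in S(m_2)$ with $\max_\theta E(\kappa(\bar w, \theta)) \le \gamma(m_1) + 2\varepsilon$. The construction mirrors the standard ``mass from infinity'' splitting: first I would pick a near-extremizer $u \in V(m_1)$, so that $E(u) = \max_\theta E(\kappa(u, \theta)) < \gamma(m_1) + \varepsilon$ by Lemma \ref{unique}; then attach a far-away, spread-out bump carrying the missing $L^2$-mass $\sqrt{m_2^2 - m_1^2}$ and renormalize.

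Concretely, fix $\eta \in C_c^\infty(\mathbb{R}^3)$ with $\|\eta\|_{L^2}^2 = m_2^2 - m_1^2$, and for small $\sigma > 0$ and large $R > 0$ set $\eta_{\sigma, R}(x) := \sigma^{3/2}\eta(\sigma(x - Re_1))$ and $w_{R, \sigma} := u + \eta_{\sigma, R}$. Since $u \in H^1(\mathbb{R}^3)$ decays at infinity and $\eta_{\sigma, R}$ has support near $Re_1$, translation to infinity decouples the supports, so $\|w_{R, \sigma}\|_{L^2} \to m_2$; together with $\mathcal{K}(x-y) \to 0$ as $|x - y| \to \infty$ this yields the asymptotic splitting
\[
E(\kappa(w_{R, \sigma}, \theta)) = E(\kappa(u, \theta)) + E(\kappa(\eta_{\sigma, R}, \theta)) + o_R(1)
\]
uniformly on any bounded $\theta$-interval $[-T, T]$. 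I would then renormalize by $\bar w_{R, \sigma} := (m_2/\|w_{R, \sigma}\|_{L^2}) w_{R, \sigma} \in S(m_2)$ and absorb the $o_R(1)$ rescaling into the error. For the bump piece, a direct scaling computation gives $\|\nabla \kappa(\eta_{\sigma, R}, \theta)\|_{L^2} = O(e^\theta \sigma)$, $\|\kappa(\eta_{\sigma, R}, \theta)\|_{L^p} = O(e^{3(p-2)\theta/(2p)}\sigma^{3(p-2)/(2p)})$ on $|\theta| \le T$, and an HLS estimate (the Corollary after Lemma \ref{HLS}) handles the nonlocal piece; together they give $\sup_{|\theta|\le T}|E(\kappa(\eta_{\sigma, R}, \theta))| = o_\sigma(1)$. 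Hence on $|\theta| \le T$ one has $E(\kappa(\bar w_{R, \sigma}, \theta)) \to E(\kappa(u, \theta))$ uniformly, whose max is $E(u)$. So $\gamma(m_2) \le \gamma(m_1) + \varepsilon + o(1)$ and the arbitrariness of $\varepsilon$ concludes.

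The hard part will be showing that the $\theta$-maximum of $F_{\sigma, R}(\theta) := E(\kappa(\bar w_{R, \sigma}, \theta))$ is actually attained on a bounded window $|\theta| \le T$, uniformly in $\sigma \to 0^+$ and $R \to \infty$; otherwise the uniform convergence of the previous step controls only $\max_{|\theta| \le T}$, not the global $\max_\theta$. The argument I would use: as $\theta \to -\infty$ both pieces satisfy $E(\kappa(\cdot, \theta)) \to 0$ by Lemma \ref{lemma1}(1), whereas as $\theta \to +\infty$ the negative $L^p$ term
\[
-\tfrac{1}{p}e^{3(p-2)\theta/2}\bigl(\|u\|_{L^p}^p + \sigma^{3(p-2)/2}\|\eta\|_{L^p}^p\bigr)
\]
has coefficient bounded below by $\tfrac{1}{p}\|u\|_{L^p}^p > 0$ uniformly in $\sigma$, and since $p > 10/3$ the exponent $3(p-2)/2 > 2$ makes this term dominate the positive $e^{2\theta}$ gradient term, forcing $F_{\sigma, R} \to -\infty$; a direct computation of the unique critical point of the limiting ``three-exponential'' function shows the maximizer of $F_{\sigma, R}$ converges to $\theta(u) = 0$ as $\sigma \to 0$ and $R \to \infty$, and in particular lies in a fixed compact interval, which validates the splitting argument.
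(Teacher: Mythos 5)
Your proposal is correct and follows essentially the same route as the paper: both exploit the characterization $\gamma(m)=\inf_{u\in S(m)}\max_{\theta\in\mathbb{R}}E(\kappa(u,\theta))$ from Lemma \ref{gamma3} and attach a far-away, flattened bump carrying the missing mass, whose energy contribution vanishes in the limit. The paper differs only in technical packaging --- it first truncates $u$ to compact support so that the two pieces have exactly disjoint supports and then invokes Lemma \ref{continuous} to control the $\theta$-maximum, where you renormalize the $L^2$-norm and argue directly that the maximizing $\theta$ stays in a fixed compact window --- but these are the same underlying ideas.
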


\begin{proof}
We only need to show that for any $0<m_1<m_2$ and any arbitrary $\varepsilon>0$ one has
\begin{align}\label{nonincreasing1}
\gamma(m_2)<\gamma(m_1)+\varepsilon.
\end{align}
By definition of $\gamma_2(m_1)$, there exists $u\in V(m_1)$ such that
$$E(u)\leq\gamma_2(m_1)+\frac{\varepsilon}2.$$
Thus, by Lemma \ref{gamma3}, we have
\begin{align}\label{nonincreasing2}
E(u)\leq\gamma(m_1)+\frac{\varepsilon}2,
\end{align}
and alao
\begin{align}\label{nonincreasing3}
E(u)=\max_{\theta\in\mathbb{R}}E(\kappa(u, \theta)).
\end{align}
We truncate $u$ into a function with compact support $u_{\delta}$ as follows. Let $\chi\in C^{\infty}_0$ be radial and such that
\begin{equation}
\chi(x)=\left\{
\begin{aligned}
&1, &|x|\leq1,\\
&\in[0, 1], &1<|x|<2,\\
&0,  &|x|\geq2.
\end{aligned}
\right.
\end{equation}
For any small $\delta>0$, we define $u_{\delta}(x)=u(x)\cdot\chi(\delta x)\in H^1(\mathbb{R}^3)\setminus\{0\}$. It is standard to show that $u_{\delta}\rightarrow u$ in $H^1(\mathbb{R}^3)$ as $\delta\rightarrow0^+$. Then, by continuity, we have, as $\delta\rightarrow0^+$,
$$||\nabla u_{\delta}||^2_{L^2}\rightarrow||\nabla u||^2_{L^2}, \quad \int\phi_{u_{\delta}}|u_{\delta}|^2dx\rightarrow\int\phi_u|u|^2dx \quad\hbox{and}\quad ||u_{\delta}||^p_{L^p}\rightarrow||u||^p_{L^p}.$$
At this point applying Lemma \ref{continuous}, we deduce that there exists $\delta>0$ small enough, such that
\begin{align}\nonumber
\max_{\theta\in\mathbb{R}}E(\kappa(u_{\delta}, \theta))&=\max_{\theta\in\mathbb{R}}\left\{\frac{e^{2\theta}}2||\nabla u_{\delta}||^2_{L^2}+\frac{e^{\theta}}4\iint\frac{1-e^{-\frac{|x-y|}{ae^{\theta}}}}{|x-y|}|u_{\delta}(x)|^2|u_{\delta}(y)|^2dxdy
-\frac{e^{\frac{3(p-2)}2\theta}}p||u_{\delta}||^p_{L^p}\right\}\\ \nonumber
&\leq\max_{\theta\in\mathbb{R}}\left\{\frac{e^{2\theta}}2||\nabla u||^2_{L^2}+\frac{e^{\theta}}4\iint\frac{1-e^{-\frac{|x-y|}{ae^{\theta}}}}{|x-y|}|u(x)|^2|u(y)|^2dxdy
-\frac{e^{\frac{3(p-2)}2\theta}}p||u||^p_{L^p}\right\}+\frac{\varepsilon}4\\ \nonumber
&=\max_{\theta\in\mathbb{R}}E(\kappa(u, \theta))+\frac{\varepsilon}4\\\label{nonincreasing4}
&=E(u)+\frac{\varepsilon}4.
\end{align}
Now take $v\in C^{\infty}_0(\mathbb{R}^3)$ be radial and such that supp$(v)\subset B(0, 1+\frac4{\delta})\setminus B(0, \frac4{\delta})$ and set
$$v_0=\frac{m^2_2-||u_\delta||^2_{L^2}}{||v||^2_{L^2}}v,$$
for which we have $||v_0||^2_{L^2}=m^2_2-||u_{\delta}||^2_{L^2}$.

For any $\lambda<0$, we define $w_{\lambda}=u_{\delta}+\kappa(v_0, \lambda)$. We observe that
\begin{align}\label{nonincreasing5}
\hbox{dist}\left\{\hbox{supp}(u_{\delta}), \hbox{supp}(\kappa(v_0, \lambda))\right\}\geq\frac2{\delta}\left(\frac2{e^{\lambda}}-1\right).
\end{align}
Thus, $||w_{\lambda}||^2_{L^2}=||u_{\delta}||^2_{L^2}+||\kappa(v_0, \lambda)||^2_{L^2}$ and $w_{\lambda}\in S(m)$. Also
\begin{align}\label{nonincreasing6}
||\nabla w_{\lambda}||^2_{L^2}=||\nabla u_{\delta}||^2_{L^2}+||\nabla\kappa(v_0, \lambda)||^2_{L^2} \quad\hbox{and}\quad ||w_{\lambda}||^p_{L^p}=||u_{\delta}||^p_{L^p}+||\kappa(v_0, \lambda)||^p_{L^p}.
\end{align}
We claim that, for any $\lambda<0$,
\begin{align}\label{nonincreasing7}
\left|\int\phi_{w_{\lambda}}|w_{\lambda}|^2dx-\int\phi_{u_{\delta}}|u_{\delta}|^2dx-\int\phi_{\kappa(v_0, \lambda)}|\kappa(v_0, \lambda)|^2dx\right|\leq e^{\lambda}||u_{\delta}||^2_{L^2}||\kappa(v_0, \lambda)
||^2_{L^2}.
\end{align}
Indeed, from (\ref{nonincreasing5}),
$$|u_{\delta}(x)+\kappa(v_0, \lambda)(x)|^2=|u_{\delta}(x)|^2+|\kappa(v_0, \lambda)(x)|^2, \quad |u_{\delta}(y)+\kappa(v_0, \lambda)(y)|^2=|u_{\delta}(y)|^2+|\kappa(v_0, \lambda)(y)|^2.$$
Thus,
\begin{align*}
\int\phi_{w_{\lambda}}|w_{\lambda}|^2dx=&\iint\frac{1-e^{-\frac{|x-y|}a}}{|x-y|}|u_{\delta}(x)+\kappa(v_0, \lambda)(x)|^2|u_{\delta}(y)+\kappa(v_0, \lambda)(y)|^2dxdy\\
=&\iint\frac{1-e^{-\frac{|x-y|}a}}{|x-y|}|u_{\delta}(x)|^2|u_{\delta}(y)|^2dxdy+2\iint\frac{1-e^{-\frac{|x-y|}a}}{|x-y|}|u_{\delta}(x)|^2|\kappa(v_0, \lambda)(y)|^2dxdy\\
&+\iint\frac{1-e^{-\frac{|x-y|}a}}{|x-y|}|\kappa(v_0, \lambda)(x)|^2|\kappa(v_0, \lambda)(y)|^2dxdy\\
=&\int\phi_{u_{\delta}}|u_{\delta}|^2dx+\int\phi_{\kappa(v_0, \lambda)}|\kappa(v_0, \lambda)|^2dx+2\iint\frac{1-e^{-\frac{|x-y|}a}}{|x-y|}|u_{\delta}(x)|^2|\kappa(v_0, \lambda)(y)|^2dxdy,
\end{align*}
with
\begin{align*}
&\iint\frac{1-e^{-\frac{|x-y|}a}}{|x-y|}|u_{\delta}(x)|^2|\kappa(v_0, \lambda)(y)|^2dxdy\\
&\leq\iint\frac{|u_{\delta}(x)|^2|\kappa(v_0, \lambda)(y)|^2}{|x-y|}dxdy\\
&=\int_{\hbox{supp}(u_{\delta})}\int_{\hbox{supp}(\kappa(v_0, \lambda))}\frac{|u_{\delta}(x)|^2|\kappa(v_0, \lambda)(y)|^2}{|x-y|}dxdy\\
&\leq\frac{\delta e^{\lambda}}{2(2-e^{\lambda})}\int_{\hbox{supp}(u_{\delta})}\int_{\hbox{supp}(\kappa(v_0, \lambda))}|u_{\delta}(x)|^2|\kappa(v_0, \lambda)(y)|^2dxdy\\
&\leq\frac{\delta e^{\lambda}}{2(2-e^{\lambda})}||u_{\delta}||^2_{L^2}||\kappa(v_0, \lambda)||^2_{L^2}\\
&\leq\frac{e^{\lambda}}2||u_{\delta}||^2_{L^2}||\kappa(v_0, \lambda)||^2_{L^2},
\end{align*}
and then (\ref{nonincreasing7}) holds. Now from (\ref{nonincreasing6}) and (\ref{nonincreasing7}), we see that
$$||\nabla w_{\lambda}||^2_{L^2}\rightarrow||\nabla u_{\delta}||^2_{L^2}, \quad \int\phi_{w_{\lambda}}|w_{\lambda}|^2dx\rightarrow\int\phi_{u_{\delta}}|u_{\delta}|^2dx \quad\hbox{and}\quad ||w_{\lambda}||^p_{L^p}\rightarrow||u_{\delta}||^p_{L^p}, \quad\hbox{as}~~\lambda\rightarrow-\infty.$$
Thus, from Lemma \ref{continuous}, we have that, fixing $\lambda<0$ such that $|\lambda|$ large enough,
\begin{align}\label{nonincreasing8}
\max_{\theta\in\mathbb{R}}E(\kappa(w_{\lambda}, \theta))\leq\max_{\theta\in\mathbb{R}}E(\kappa(u_{\delta}, \theta))+\frac{\varepsilon}4.
\end{align}
Now, using Lemma \ref{gamma3}, (\ref{nonincreasing2}), (\ref{nonincreasing4}) and (\ref{nonincreasing8}), we get
\begin{align*}
\gamma(m_2)\leq\max_{\theta\in\mathbb{R}}E(\kappa(w_{\lambda}, \theta))\leq\max_{\theta\in\mathbb{R}}E(\kappa(u_{\delta}, \theta))+\frac{\varepsilon}4\leq E(u)+\frac{\varepsilon}2\leq\gamma(m_1)+\varepsilon,
\end{align*}
This concludes the proof of the lemma.
\end{proof}

\begin{lemma}\label{strictly-decreasing}
For $p\in(\frac{10}3, 6)$, if that there exists $u\in S(m)$ and $\omega\in\mathbb{R}$ such that
$$-\Delta u+\omega u+\phi_u u=|u|^{p-2}u$$
and $E(u)=\gamma(m)$, then $\gamma(m)>\gamma(m')$ for any $m'>m$ close enough to $m$ if $\omega>0$ and for each $m'<m$ near enough to $m$ if $\omega<0$.
\end{lemma}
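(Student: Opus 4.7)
The plan is to perturb the given minimizer $u$ along the one-parameter family $u_s := su \in S(sm)$ for $s > 0$, and to read off the sign of the derivative of the associated min-max value at $s = 1$ from the Lagrange multiplier $\omega$. By Lemma \ref{null-function}, any $H^1$-solution of Eq.(\ref{main5}) satisfies $P(u) = 0$, so $u \in V(m)$; Lemma \ref{unique} then guarantees that $\theta \mapsto E(\kappa(u, \theta))$ attains its unique global maximum at $\theta = 0$, whence $E(u) = \max_{\theta \in \mathbb{R}} E(\kappa(u, \theta))$. Combined with the hypothesis $E(u) = \gamma(m)$ and Lemma \ref{gamma3}, this identifies $u$ with a minimizer of the $\gamma_1$-formulation, so that $G(1) = \gamma(m)$ for the function $G$ defined below.

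Set $\Phi(s, \theta) := E(\kappa(u_s, \theta))$ and $G(s) := \max_{\theta \in \mathbb{R}} \Phi(s, \theta)$. Since $u_s \in S(sm)$, Lemma \ref{gamma3} yields $\gamma(sm) \le G(s)$ for every $s > 0$. Lemma \ref{unique} provides, for each $s$, a unique maximizer $\theta(s)$, and Lemma \ref{map}(1) shows that $s \mapsto \theta(s)$ is continuous at $s = 1$ with $\theta(1) = 0$. From the explicit formula
\[
\Phi(s, \theta) = \frac{s^2 e^{2\theta}}{2}\|\nabla u\|_{L^2}^2 + \frac{s^4 e^\theta}{4}\iint \frac{1 - e^{-|x-y|/(ae^\theta)}}{|x - y|} |u(x)|^2 |u(y)|^2 \, dx\, dy - \frac{s^p e^{\frac{3(p-2)}{2}\theta}}{p}\|u\|_{L^p}^p,
\]
one computes $\partial_s \Phi(1, 0) = \|\nabla u\|_{L^2}^2 + \int \phi_u |u|^2 \, dx - \|u\|_{L^p}^p$, which by testing Eq.(\ref{main5}) against $u$ equals $-\omega \|u\|_{L^2}^2 = -\omega m^2$.

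The key estimate comes from the one-dimensional fundamental theorem of calculus along the path that fixes $\theta = \theta(s)$ and slides $s$:
\[
G(s) = \Phi(s, \theta(s)) = \Phi(1, \theta(s)) + \int_1^s \partial_\sigma \Phi(\sigma, \theta(s)) \, d\sigma.
\]
Since $\theta = 0$ is the global maximum of $\Phi(1, \cdot)$, the first term is at most $\Phi(1, 0) = \gamma(m)$. Joint continuity of $\partial_s \Phi$ and $\theta(s) \to 0$ imply $\partial_\sigma \Phi(\sigma, \theta(s)) = -\omega m^2 + o(1)$ uniformly for $\sigma$ between $1$ and $s$ as $s \to 1$, hence
\[
\gamma(sm) \le G(s) \le \gamma(m) + \bigl(-\omega m^2 + o(1)\bigr)(s - 1).
\]
If $\omega > 0$, the right-hand side is strictly less than $\gamma(m)$ for $s > 1$ sufficiently close to $1$; setting $m' := sm$ gives $\gamma(m') < \gamma(m)$. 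If $\omega < 0$, the same reasoning with $s < 1$ close to $1$ yields the symmetric conclusion. The only delicate point is the upper bound $G(s) \le \Phi(1, \theta(s)) + \int_1^s \partial_\sigma \Phi(\sigma, \theta(s))\, d\sigma$; this trick bypasses any implicit-function or envelope-type regularity statement on $\theta(s)$ by exploiting that $\theta = 0$ is the global maximum of $\Phi(1, \cdot)$, so only continuity of $\theta(s)$ at $s = 1$ (from Lemma \ref{map}) and smoothness of $\Phi$ in $(s, \theta)$ are needed.
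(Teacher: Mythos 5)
Your proposal is correct and follows essentially the same route as the paper: both perturb along $s\mapsto su\in S(sm)$, compute $\partial_s E(\kappa(su,\theta))\big|_{(1,0)}=-\omega m^2$ from the equation tested against $u$, invoke $P(u)=0$ (Lemma \ref{null-function}) together with Lemma \ref{unique} to place the fiber maximum at $\theta=0$, use the continuity of $\theta(\cdot)$ from Lemma \ref{map}, and bound $\gamma(sm)$ via the $\gamma=\gamma_1$ characterization. The only cosmetic difference is that you integrate $\partial_\sigma\Phi$ where the paper applies the mean value theorem on the rectangle $(1,1+\delta]\times[-\delta,\delta]$; the content is identical.
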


\begin{proof}
For any $s>0$ and $\theta\in\mathbb{R}$, we denote $u_{s, \theta}:=\kappa(su, \theta)\in S(ms)$. Since
\begin{align*}
\alpha(s, \theta):=E(u_{s, \theta})=\frac12s^2e^{2\theta}||\nabla u||^2_{L^2}+\frac14s^4e^{\theta}\iint\frac{1-e^{-\frac{|x-y|}{ae^{\theta}}}}{|x-y|}|u(x)|^2|u(y)|^2dxdy
-\frac1ps^pe^{\frac{3(p-2)}2\theta}||u||^p_{L^p},
\end{align*}
it is clear that
\begin{align*}
\frac{\partial}{\partial s}\alpha(s, \theta)=se^{2\theta}||\nabla u||^2_{L^2}+s^3e^{\theta}\iint\frac{1-e^{-\frac{|x-y|}{ae^{\theta}}}}{|x-y|}|u(x)|^2|u(y)|^2dxdy-s^{p-1}e^{\frac{3(p-2)}2\theta}||u||^p_{L^p}.
\end{align*}
When $\omega>0$, we have
$$\frac{\partial}{\partial s}\alpha(s, \theta)|_{(1, 0)}=-\omega||u||^2_{L^2}=-\omega m^2<0.$$
One can fix a $\delta>0$ small enough such that
$$\frac{\partial}{\partial s}\alpha(s, \theta)<0 \quad\hbox{for any}~~(s, \theta)\in(1, 1+\delta]\times[-\delta, \delta].$$
From the mean value theorem, we then obtain
\begin{align}\label{sd1}
\alpha(s, \theta)=\alpha(1, \theta)+(s-1)\cdot\frac{\partial}{\partial s}\alpha(\beta, \theta)<\alpha(1, \theta),
\end{align}
where $1<\beta<s\leq1+\delta$ and $|\theta|\leq\delta$. Note that $\theta(su)\rightarrow\theta(u)$ as $s\rightarrow1^+$ by Lemma \ref{map}(1). For any $m'>m$ close enough to $m$, we have
$$s:=\sqrt{\frac{m'}m}\in(1, 1+\delta] \quad\hbox{and}\quad \theta:=\theta(su)\in[-\delta, \delta],$$
and thus, using (\ref{sd1}) and Lemma \ref{unique} (2),
$$\gamma(m')\leq\alpha(s, \theta(su))<\alpha(1, \theta(su))=E(\kappa(u, \theta(su)))\leq E(u)=\gamma(m).$$
The case $\omega<0$ can be proved similarly.
\end{proof}

At this point, from Lemma \ref{nonincreasing} and Lemma \ref{strictly-decreasing}, we directly obtain the following lemma.

\begin{lemma}\label{strictly-decreasing1}
When $p\in(\frac{10}3, 6)$, if there exists $u\in S(m)$ and $\omega\in\mathbb{R}$ such that
$$-\Delta u+\omega u+\phi_u u=|u|^{p-2}u$$
and $E(u)=\gamma(m)$, then $\omega\geq0$. If in addition $\omega>0$, then $\gamma(m)>\gamma(m')$ for any $m'>m$.
\end{lemma}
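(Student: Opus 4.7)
The plan is to obtain Lemma \ref{strictly-decreasing1} as a direct combination of the global monotonicity of $\gamma$ established in Lemma \ref{nonincreasing} with the one-sided local strict monotonicity proved in Lemma \ref{strictly-decreasing}. Because the two previous lemmas do almost all of the work, I expect the argument to be short: the role of the proof is simply to convert the dichotomy in Lemma \ref{strictly-decreasing} (strict decrease to the right for $\omega>0$, to the left for $\omega<0$) into a one-sided sign constraint on $\omega$, and then to upgrade the local strict decrease for $\omega>0$ to a global one using monotonicity.

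First I would rule out $\omega<0$ by contradiction. Assume $(u,\omega)$ satisfies the equation with $u\in S(m)$, $E(u)=\gamma(m)$, and $\omega<0$. Lemma \ref{strictly-decreasing} then produces some $m'<m$ arbitrarily close to $m$ with
\[
\gamma(m)>\gamma(m').
\]
On the other hand, since $m'<m$, Lemma \ref{nonincreasing} gives $\gamma(m')\geq\gamma(m)$, a direct contradiction. Therefore $\omega\geq0$, which establishes the first claim.

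Next I would prove the strict monotonicity to the right when $\omega>0$. Fix any $m'>m$. By Lemma \ref{strictly-decreasing} there exists $\delta>0$ such that
\[
\gamma(m)>\gamma(\tilde m)\qquad \text{for every }\tilde m\in(m,m+\delta].
\]
Set $\tilde m:=\min\{m+\delta,\,m'\}\in(m,m']$. If $\tilde m=m'$ the conclusion is immediate. Otherwise $\tilde m<m'$, and Lemma \ref{nonincreasing} yields $\gamma(\tilde m)\geq\gamma(m')$, so that
\[
\gamma(m)>\gamma(\tilde m)\geq\gamma(m'),
\]
which is the desired strict inequality. Since $m'>m$ was arbitrary, this completes the proof.

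The main (and only real) subtlety is conceptual rather than technical: Lemma \ref{strictly-decreasing} delivers strict monotonicity only locally near $m$, and one must be slightly careful to propagate it to every $m'>m$ via the nonincreasing property, rather than expecting a direct global argument. No additional estimates, compactness, or variational work are needed beyond what has already been proved in Lemmas \ref{nonincreasing} and \ref{strictly-decreasing}.
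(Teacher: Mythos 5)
Your proposal is correct and follows exactly the route the paper intends: the paper states this lemma as a direct consequence of Lemma \ref{nonincreasing} and Lemma \ref{strictly-decreasing} without writing out the details, and your argument (ruling out $\omega<0$ by contradiction with the nonincreasing property, then propagating the local strict decrease for $\omega>0$ to all $m'>m$ via monotonicity) is precisely the omitted verification.
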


As the end of this section, we study the limit behavior of $\gamma(m)$ when $m>0$ tends to zero.

\begin{lemma}\label{limit-behavior}
Let $p\in(\frac{10}3, 6)$, then $\gamma(m)\rightarrow+\infty$ as $m\rightarrow0^+$.
\end{lemma}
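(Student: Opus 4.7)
The plan is to exhibit a lower bound on $E(u)$ for $u \in V(m)$ that blows up as $m \to 0^+$, by exploiting the fact that membership in $V(m)$ forces $\|\nabla u\|_{L^2}$ to be large when $m$ is small.

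First, I would fix $u \in V(m)$ and rewrite the constraint $P(u)=0$. Using the pointwise inequality
$\frac{1-e^{-|x-y|/a}}{|x-y|} - \frac{1}{a}e^{-|x-y|/a}\geq 0$
(which drives the ``defocusing'' part of $P$ to be nonnegative), the identity $P(u)=0$ yields
\begin{equation*}
\|\nabla u\|_{L^2}^{2}\;\leq\;\tfrac{3(p-2)}{2p}\,\|u\|_{L^p}^{p}.
\end{equation*}
Applying the sharp Gagliardo--Nirenberg inequality \eqref{G-N-ineq} to the right-hand side gives
\begin{equation*}
\|\nabla u\|_{L^2}^{2}\;\leq\;\tfrac{3(p-2)}{4\,\|Q\|_{L^2}^{p-2}}\,m^{(6-p)/2}\,\|\nabla u\|_{L^2}^{3(p-2)/2}.
\end{equation*}
Since $p>\tfrac{10}{3}$ implies $\tfrac{3(p-2)}{2}>2$, I can divide by $\|\nabla u\|_{L^2}^{2}$ and rearrange to obtain
\begin{equation*}
\|\nabla u\|_{L^2}^{2}\;\geq\;C(p)\,\|Q\|_{L^2}^{4(p-2)/(3p-10)}\,m^{-2(6-p)/(3p-10)},
\end{equation*}
for an explicit constant $C(p)>0$. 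Crucially, the exponent on $m$ is negative, so this lower bound blows up as $m\to 0^+$.

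Next, I would convert this gradient blow-up into an energy blow-up. The identity \eqref{EP} already isolated in the paper gives, for any $u \in H^1(\mathbb{R}^3)$,
\begin{equation*}
E(u)-\tfrac{2}{3(p-2)}P(u)\;\geq\;\tfrac{3p-10}{6(p-2)}\,\|\nabla u\|_{L^2}^{2},
\end{equation*}
since the nonlocal remainder terms in \eqref{EP} have favorable signs when $p\in(\tfrac{10}{3},6)$. Specializing to $u\in V(m)$, where $P(u)=0$, I get
\begin{equation*}
E(u)\;\geq\;\tfrac{3p-10}{6(p-2)}\,\|\nabla u\|_{L^2}^{2}\;\geq\;C'(p)\,\|Q\|_{L^2}^{4(p-2)/(3p-10)}\,m^{-2(6-p)/(3p-10)}.
\end{equation*}
Taking the infimum over $u\in V(m)$ and invoking the identity $\gamma(m)=\inf_{u\in V(m)}E(u)$ established in Lemma~\ref{Pohozaev-manifold1}, the right-hand side is independent of $u$ and therefore bounds $\gamma(m)$ from below. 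Letting $m\to 0^+$ and using $p<6$ (so $-2(6-p)/(3p-10)<0$) yields $\gamma(m)\to+\infty$, as claimed.

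There is no real obstacle here: the argument is essentially bookkeeping. The only subtle point is ensuring that the nonlocal contributions in $P(u)$ and in \eqref{EP} are bounded in the correct direction; this is exactly what the nonnegativity $\frac{1-e^{-r/a}}{r}-\frac{1}{a}e^{-r/a}\geq 0$ and the coefficient conditions $\frac{3p-10}{6(p-2)}>0$, $\frac{1}{4}-\frac{1}{6(p-2)}>0$ (already used in the proof of Lemma~\ref{lemma1}(4)) provide.
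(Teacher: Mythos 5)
Your proof is correct and uses the same two key estimates as the paper: the constraint $P(u)=0$ combined with the sign of $\frac{1-e^{-r/a}}{r}-\frac1a e^{-r/a}$ and the Gagliardo--Nirenberg inequality to force $\|\nabla u\|_{L^2}^2\gtrsim m^{-2(6-p)/(3p-10)}$, and then the decomposition \eqref{EP} to convert this into a lower bound on the energy. The one structural difference is that the paper first invokes Theorem~\ref{critical-point} to produce an actual minimizer $u_m\in V(m)$ with $E(u_m)=\gamma(m)$ and runs the computation on $u_m$, whereas you run it on an arbitrary $u\in V(m)$ and then pass to the infimum via Lemma~\ref{Pohozaev-manifold1}. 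Your version is slightly cleaner in that it does not rely on the existence result at all (and in particular needs no smallness restriction on $m$ to produce the test element), while yielding the same quantitative lower bound $\gamma(m)\geq C(p)\,m^{-2(6-p)/(3p-10)}$; either way the conclusion follows since $\tfrac{10}{3}<p<6$ makes the exponent negative.
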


\begin{proof}
By Theoreom \ref{critical-point}, we know that for any $m>0$ sufficiently small there exists a couple of weak solution $(u_m, \omega_m)\in H^1(\mathbb{R}^3)\times\mathbb{R}^+$ to Eq.(\ref{main5}) with $||u_m||_{L^2}=m$ and $E(u_m)=\gamma(m)$. In addition, by Lemma \ref{null-function}, we have $P(u_m)=0$. Thus, $u_m\in H^1(\mathbb{R}^3)$ fulfills
\begin{equation}\label{lb1}
0=||\nabla u_m||^2_{L^2}+\frac14\iint\left(\frac{1-e^{-\frac{|x-y|}a}}{|x-y|}-\frac1ae^{-\frac{|x-y|}a}\right)|u_m(x)|^2|u_m(y)|^2dxdy
-\frac{3(p-2)}{2p}||u_m||^p_{L^p},
\end{equation}
\begin{align}\label{lb2}
\gamma(m)=E(u_m)=\frac12||\nabla u_m||^2_{L^2}+\frac14\iint\frac{1-e^{-\frac{|x-y|}a}}{|x-y|}|u_m(x)|^2|u_m(y)|^2dxdy-\frac1p||u_m||^p_{L^p}.
\end{align}
Since $\frac{1-e^{-\frac{|x-y|}a}}{|x-y|}-\frac1ae^{-\frac{|x-y|}a}\geq0$, we deduce from (\ref{lb1}) that $||\nabla u_m||^2_{L^2}\leq\frac{3(p-2)}{2p}||u_m||^p_{L^p}$ and thus it follows from Gagliardo-Nirenberg inequality that
$$||\nabla u_m||^2_{L^2}\leq C(p)||\nabla u_m||^{\frac{3(p-2)}2}_{L^2}||u_m||^{\frac{6-p}2}_{L^2},$$
that is,
\begin{align}\label{lb3}
1\leq C(p)m^{\frac{6-p}2}||\nabla u_m||^{\frac{3p-10}2}_{L^2}.
\end{align}
Since $p\in(\frac{10}3, 6)$, we obtain that
\begin{align}\label{lb4}
||\nabla u_m||^2_{L^2}\rightarrow+\infty, \quad\hbox{as}\quad m\rightarrow0^+.
\end{align}
Now, from (\ref{lb1}) and (\ref{lb2}), we deduce that
\begin{align*}
\gamma(m)=E(u_m)=&\frac{3p-10}{6(p-2)}||\nabla u_m||^2_{L^2}+\frac{3p-8}{12(p-2)}\iint\frac{1-e^{-\frac{|x-y|}a}}{|x-y|}|u_m(x)|^2|u_m(y)|^2dxdy\\
&+\frac1{6a(p-2)}\iint e^{-\frac{|x-y|}a}|u_m(x)|^2|u_m(y)|^2dxdy,
\end{align*}
and thus from (\ref{lb4}), we get immediately that $\gamma(m)\rightarrow+\infty$ as $m\rightarrow0^+$.
\end{proof}

\begin{proof}[Proof of Proposition \ref{function-properties}]
Obviously, (i), (ii), (iii) of Proposition \ref{function-properties} follow directly
from Lemmas \ref{continuous1}, \ref{nonincreasing} and \ref{strictly-decreasing}. And Lemma \ref{limit-behavior} conclude (iv).
\end{proof}

\renewcommand{\theequation}
{\thesection.\arabic{equation}}
\setcounter{equation}{0}
\section{Concentration behavior of normalized solutions} \noindent

In this section, we consider the concentration behavior of minimizers for $\mathcal{E}(m)$ or $\gamma(m)$, which are given in (\ref{min-pro1}) and (\ref{Pohozaev-manifold}). Before that, we would give accurate descriptions of the minimizers for $\widetilde{\mathcal{E}}(m)$ and $\beta(m)$ given in (\ref{min-pro2}) and (\ref{min-pro3}).

For any $u\in H^1(\mathbb{R}^3)$ and any $s>0$, in what follows we denote
\begin{align}\label{L-scaling}
u^s(x):=s^{\frac32}u(sx).
\end{align}
Then $u^s\in S(m)$ if $u\in S(m)$.

In the following lemma, we will state the exact relation between the minimizers of $\widetilde{\mathcal{E}}(m)$ and $\beta(m)$ and the solution $Q$ of Eq.(\ref{elliptic}), which was given by Ye and Luo in \cite{Ye2018}.
\begin{lemma}[\cite{Ye2018}]\label{relationship}
Let $m>0$, suppose that $v_m\in S(m)$ is either a minimizer of $\widetilde{\mathcal{E}}(m)$ when $p\in(2, \frac{10}3)$ or a minimizer of $\beta(m)$ when $p\in(\frac{10}3, 6)$. Then,
up to translations, $v_m(x)=\frac{m}{||Q||_{L^2}}s_m^{\frac32}Q(s_mx)$ and
\begin{equation}
\left\{
\begin{aligned}\nonumber
&\widetilde{\mathcal{E}}(m)=-\frac{10-3p}{6(p-2)}m^2s^2_m<0, &p\in(2, \frac{10}3),\\
&\beta(m)=\frac{3p-10}{6(p-2)}m^2s^2_m>0, &p\in(\frac{10}3, 6).
\end{aligned}
\right.
\end{equation}
Here $s_m=\left[\frac{3(p-2)}4\left(\frac{m}{||Q||_{L^2}}\right)^{p-2}\right]^{\frac2{10-3p}}$.
\end{lemma}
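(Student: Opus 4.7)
The plan is to identify $v_m$ via its Euler--Lagrange equation and then use Kwong's uniqueness theorem for ground states of Eq.\eqref{elliptic} to pin it down as a scaling of $Q$. First, I would argue that, up to translation, $v_m$ is positive and radially symmetric. For $p\in(2,\frac{10}{3})$ this follows from Schwarz symmetric decreasing rearrangement, since $I$ is rearrangement--decreasing and the $L^2$ constraint is rearrangement--invariant. For $p\in(\frac{10}{3},6)$ the minimization is on $\widetilde{V}(m)$ rather than the whole sphere, so one replaces $v_m$ by $|v_m|^{\ast}$ and then rescales so as to lie back on $\widetilde{V}(m)$; the net effect still decreases $I$. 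In both cases $v_m$ then solves
\begin{equation*}
-\Delta v_m+\omega_m v_m=|v_m|^{p-2}v_m
\end{equation*}
for some Lagrange multiplier $\omega_m$, and combining this equation with its Pohozaev identity and the constraint $\|\nabla Q\|_{L^2}^2=\|Q\|_{L^2}^2$ forces $\omega_m>0$.

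Second, I would write $v_m(x)=A\,Q(Bx)$ with $A,B>0$, which is legitimate by Kwong's uniqueness. Matching this ansatz, after the change of variables $y=Bx$, with Eq.\eqref{elliptic} yields the two algebraic relations
\begin{equation*}
\omega_m=\frac{6-p}{3(p-2)}B^2,\qquad A^{p-2}=\frac{4B^2}{3(p-2)},
\end{equation*}
while $\|v_m\|_{L^2}=m$ gives $A^2=B^3 m^2/\|Q\|_{L^2}^2$. Eliminating $A$ in the second relation leads to $B^{(3p-10)/2}=\tfrac{4}{3(p-2)}\bigl(\|Q\|_{L^2}/m\bigr)^{p-2}$; solving produces exactly $B=s_m$ (the exponent $2/(10-3p)$ changes sign as $p$ crosses $\frac{10}{3}$, but the resulting value of $B$ is the same) and $A=(m/\|Q\|_{L^2})s_m^{3/2}$, which is the asserted formula for $v_m$.

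Third, the energy is obtained by direct substitution using the Pohozaev identities \eqref{elliptic1} for $Q$. The scaling gives
\begin{equation*}
\|\nabla v_m\|_{L^2}^2=A^2 B^{-1}\|\nabla Q\|_{L^2}^2=m^2 s_m^2,\qquad \|v_m\|_{L^p}^p=\frac{2p}{3(p-2)}\,m^2 s_m^2,
\end{equation*}
so that
\begin{equation*}
I(v_m)=\tfrac12\|\nabla v_m\|_{L^2}^2-\tfrac1p\|v_m\|_{L^p}^p=\frac{3p-10}{6(p-2)}\,m^2 s_m^2.
\end{equation*}
This quantity is negative when $p<\frac{10}{3}$ and positive when $p>\frac{10}{3}$, which yields both closed--form expressions for $\widetilde{\mathcal{E}}(m)$ and $\beta(m)$.

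The step I expect to be the main obstacle is the radiality and positivity of $v_m$ in the mass supercritical regime. There the minimization is constrained to the Pohozaev--type set $\widetilde{V}(m)$, so the Schwarz rearrangement must be coupled with a dilation $u\mapsto u^{s}$ (as in \eqref{L-scaling}) chosen to restore $G(u)=0$; one then needs to verify monotonicity of $I$ along that one--parameter family, which is delicate because the kinetic and $L^p$ parts scale with different powers of $s$. Once the minimizer is known to be a positive radial solution of the Euler--Lagrange equation, the remainder of the argument is algebraic bookkeeping via Kwong's uniqueness.
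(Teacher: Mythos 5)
The paper gives no proof of this lemma; it is quoted verbatim from Ye--Luo \cite{Ye2018}, so there is no in-paper argument to compare against. Your reconstruction is correct and is essentially the standard one: reduce to a positive $H^1$ solution of $-\Delta v+\omega v=|v|^{p-2}v$ with $\omega>0$, invoke Kwong's uniqueness (together with Gidas--Ni--Nirenberg, which in fact makes the preliminary rearrangement step optional once positivity and $\omega>0$ are known), and then the scaling relations $A^{p-2}=\frac{4B^2}{3(p-2)}$, $\omega=\frac{6-p}{3(p-2)}B^2$, $A^2=B^3m^2/\|Q\|_{L^2}^2$ give $B=s_m$ and $I(v_m)=\frac{3p-10}{6(p-2)}m^2s_m^2$ exactly as you computed; the exponents and signs check out in both regimes, and the two displayed formulas for $\widetilde{\mathcal{E}}(m)$ and $\beta(m)$ are just this one expression with the sign made explicit.

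Two remarks on the step you single out as the main obstacle. First, it is less delicate than you fear: on $\widetilde V(m)$ one has $I(u)=I(u)-\frac{2}{3(p-2)}G(u)=\frac{3p-10}{6(p-2)}\|\nabla u\|_{L^2}^2$, and since Schwarz rearrangement does not increase $\|\nabla u\|_{L^2}$ while preserving $\|u\|_{L^p}$ and $\|u\|_{L^2}$, one gets $G(|u|^{*})\le 0$, so the restoring dilation has parameter $s\le1$ and $\|\nabla (|u|^{*})^{s}\|_{L^2}^2=s^2\|\nabla |u|^{*}\|_{L^2}^2\le\|\nabla u\|_{L^2}^2$; the claimed monotonicity is then immediate, with no competition between the kinetic and $L^p$ scalings to track. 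Second, a point you do not address: a minimizer on $\widetilde V(m)$ a priori carries two Lagrange multipliers (one for the sphere, one for $G=0$), and one must show the latter vanishes, i.e.\ that $\widetilde V(m)$ is a natural constraint --- this is the analogue of Lemma \ref{critical-points2} of the paper and is where the genuinely nontrivial (if standard) work lies. Finally, your justification of $\omega_m>0$ should come from combining the Nehari and Pohozaev identities of the Euler--Lagrange equation itself, which give $\omega_m\|v_m\|_{L^2}^2=\frac{6-p}{2p}\|v_m\|_{L^p}^p>0$ for $p<6$, rather than from the normalization $\|\nabla Q\|_{L^2}=\|Q\|_{L^2}$ of $Q$, which concerns $Q$ and is not available before the identification is made.
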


\begin{lemma}\label{limit-relation}
Let $p\in(2, \frac{10}3)\cup(\frac{10}3, 6)$. Then
\begin{equation}
\left\{
\begin{aligned}\nonumber
&\lim\limits_{m\rightarrow0^+}\widetilde{\mathcal{E}}(m)=0, &p\in(2, \frac{10}3),\\
&\lim\limits_{m\rightarrow+\infty}\widetilde{\mathcal{E}}(m)=-\infty, &p\in(2, \frac{10}3),\\
&\lim\limits_{m\rightarrow0^+}\beta(m)=+\infty, &p\in(\frac{10}3, 6),
\end{aligned}
\right.
\end{equation}
and
\begin{equation}
\left\{
\begin{aligned}\nonumber
&\lim\limits_{m\rightarrow0^+}\frac{s_m}{m^2}=+\infty, &p\in(2, 3)\cup(\frac{10}3, 6),\\
&\lim\limits_{m\rightarrow+\infty}\frac{s_m}{m^2}=+\infty, &p\in(3, \frac{10}3).
\end{aligned}
\right.
\end{equation}
\end{lemma}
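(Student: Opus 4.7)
The plan is to reduce every statement in the lemma to an explicit monomial in $m$ via the formula for $s_m$ given in Lemma \ref{relationship}, and then just read off the sign of the exponent in each range of $p$.

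First I would insert
$$s_m^2=\left[\frac{3(p-2)}{4}\right]^{\frac{4}{10-3p}}\|Q\|_{L^2}^{-\frac{4(p-2)}{10-3p}}\,m^{\frac{4(p-2)}{10-3p}}$$
into the two identities $\widetilde{\mathcal E}(m)=-\frac{10-3p}{6(p-2)}m^2s_m^2$ and $\beta(m)=\frac{3p-10}{6(p-2)}m^2s_m^2$. A short arithmetic simplification gives
$$m^2s_m^2=C(p)\,m^{\frac{2(6-p)}{10-3p}},$$
with $C(p)>0$ an explicit constant depending only on $p$ and $\|Q\|_{L^2}$. The exponent $\frac{2(6-p)}{10-3p}$ is strictly positive for $p\in(2,\tfrac{10}{3})$ and strictly negative for $p\in(\tfrac{10}{3},6)$. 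Hence $m^2s_m^2\to 0$ as $m\to 0^+$ in the subcritical range, which yields $\widetilde{\mathcal E}(m)\to 0$; $m^2s_m^2\to+\infty$ as $m\to+\infty$ in the subcritical range, which yields $\widetilde{\mathcal E}(m)\to-\infty$; and $m^2s_m^2\to+\infty$ as $m\to 0^+$ in the supercritical range, which yields $\beta(m)\to+\infty$.

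For the second group of limits I would compute
$$\frac{s_m}{m^2}=\tilde C(p)\,m^{\frac{8(p-3)}{10-3p}}$$
by the same direct manipulation. The sign of the exponent $\frac{8(p-3)}{10-3p}$ is negative on $(2,3)\cup(\tfrac{10}{3},6)$ and positive on $(3,\tfrac{10}{3})$. Therefore $s_m/m^2\to+\infty$ as $m\to 0^+$ in the first case and as $m\to+\infty$ in the second case, as claimed.

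There is really no obstacle here: the whole proof is an exercise in bookkeeping of the exponent $\frac{2}{10-3p}$ appearing in $s_m$, using that the threshold $p=\tfrac{10}{3}$ flips the sign of $10-3p$ and the thresholds $p=3$ and $p=6$ control the sign of $p-3$ and $6-p$. The only thing to be careful with is keeping track of signs when $p$ crosses the mass critical exponent $\tfrac{10}{3}$, so I would present the subcritical and supercritical regimes separately to avoid confusion.
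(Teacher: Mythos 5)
Your proposal is correct and is essentially the paper's own proof: the paper likewise substitutes the explicit formula for $s_m$ to obtain $m^2s_m^2=C(p)\,m^{\frac{2(6-p)}{10-3p}}$ and $\frac{s_m}{m^2}=\tilde C(p)\,m^{\frac{8(p-3)}{10-3p}}$, and then reads off the limits from the signs of the exponents. Your sign bookkeeping in each range of $p$ checks out.
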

\begin{proof}
By direct calculations, we have
$$\widetilde{\mathcal{E}}(m)=-\frac{10-3p}{6(p-2)}\left[\frac{3(p-2)}4\frac{m^{\frac{6-p}2}}{||Q||^{p-2}_{L^2}}
\right]^{\frac4{10-3p}},\quad \beta(m)=\frac{3p-10}{6(p-2)}\left[\frac4{3(p-2)}\frac{||Q||^{p-2}_{L^2}}{m^{\frac{6-p}2}}\right]^{\frac4{3p-10}}$$
and
$$\frac{s_m}{m^2}=\left[\frac{3(p-2)}{4||Q||^{p-2}_{L^2}}\right]^{\frac2{10-3p}}m^{\frac{8(p-3)}{10-3p}}.$$
Then, the lemma follows.
\end{proof}

\begin{lemma}\label{limits1}
\begin{itemize}
 \item[(1)] If $p\in(2, 3)$, then $\lim\limits_{m\rightarrow0^+}\frac{\mathcal{E}(m)}{\widetilde{\mathcal{E}}(m)}=1$.
 \item[(2)] If $p\in(3, \frac{10}3)$, then  $\lim\limits_{m\rightarrow+\infty}\frac{\mathcal{E}(m)}{\widetilde{\mathcal{E}}(m)}=1$.
 \item[(3)] If $p\in(\frac{10}3, 6)$, then $\lim\limits_{m\rightarrow0^+}\frac{\gamma(m)}{\beta(m)}=1$.
\end{itemize}
\end{lemma}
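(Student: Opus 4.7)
The three claims share a common structure: the ratio tends to $1$ because the nonlocal energy $\tfrac{1}{4}\int\phi_u|u|^2\,dx$ is negligible compared to the classical energy in the stated limit regime. My plan is to sandwich the Schr\"{o}dinger-Bopp-Podolsky minimization level between the classical level and the classical level plus a small remainder. The key ingredients are the explicit minimizer $v_m(x)=\tfrac{m}{||Q||_{L^2}}s_m^{3/2}Q(s_m x)$ from Lemma \ref{relationship}, the scaling behaviour of $s_m$ from Lemma \ref{limit-relation}, and the Hardy-Littlewood-Sobolev inequality (Lemma \ref{HLS}).

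For (1) and (2), since $\phi_u\geq0$, we have $E\geq I$ pointwise on $H^1(\mathbb{R}^3)$, hence $\mathcal{E}(m)\geq\widetilde{\mathcal{E}}(m)$. Using $v_m$ as a test function yields
$$\mathcal{E}(m)\leq E(v_m)=\widetilde{\mathcal{E}}(m)+\tfrac{1}{4}\int\phi_{v_m}|v_m|^2\,dx.$$
Bounding the nonlocal term by
$$\int\phi_{v_m}|v_m|^2\,dx\leq\int(|x|^{-1}\ast|v_m|^2)|v_m|^2\,dx\leq C||v_m||_{L^{12/5}}^4=C_Q\,m^4 s_m,$$
where the last equality follows from scaling in the explicit form of $v_m$, and comparing with $|\widetilde{\mathcal{E}}(m)|=\tfrac{10-3p}{6(p-2)}m^2 s_m^2$, the remainder is at most $C\,m^2/s_m$ times $|\widetilde{\mathcal{E}}(m)|$. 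By Lemma \ref{limit-relation}, $m^2/s_m\to 0$ as $m\to 0^+$ when $p\in(2,3)$ and as $m\to\infty$ when $p\in(3,\tfrac{10}{3})$. Since $\widetilde{\mathcal{E}}(m)\leq\mathcal{E}(m)\leq\widetilde{\mathcal{E}}(m)+o(|\widetilde{\mathcal{E}}(m)|)<0$ in each regime, dividing through by the negative quantity $\widetilde{\mathcal{E}}(m)$ gives the claim.

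For (3), the inequality $\gamma(m)\geq\beta(m)$ follows from the inf-max characterization $\gamma(m)=\inf_{u\in S(m)}\max_\theta E(\kappa(u,\theta))$ (Lemma \ref{gamma3}) and its classical analogue $\beta(m)=\inf_{u\in S(m)}\max_\theta I(\kappa(u,\theta))$, combined with $E\geq I$. For the upper bound, let $v_m$ minimize $\beta(m)$ (so $G(v_m)=0$) and let $\theta_m$ be the unique real from Lemma \ref{unique} such that $w_m:=\kappa(v_m,\theta_m)\in V(m)$; set $\tau_m:=e^{\theta_m}$. Because $\theta\mapsto I(\kappa(v_m,\theta))$ is maximized at $\theta=0$ with value $\beta(m)$,
$$\gamma(m)\leq E(w_m)\leq\beta(m)+\tfrac{\tau_m}{4}\iint\tfrac{1-e^{-|x-y|/(a\tau_m)}}{|x-y|}|v_m|^2|v_m|^2\,dxdy\leq\beta(m)+C\tau_m\,m^4 s_m.$$
The main obstacle is to control $\tau_m$. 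Using $P(w_m)=0$ together with $||v_m||_{L^p}^p=\tfrac{2p}{3(p-2)}||\nabla v_m||^2$ (from $G(v_m)=0$), I would rewrite $P(w_m)=0$ as
$$||\nabla v_m||^2\tau_m^2\bigl(1-\tau_m^{(3p-10)/2}\bigr)=\tfrac{1}{4a}\iint\Bigl[e^{-|x-y|/(a\tau_m)}-a\tau_m\tfrac{1-e^{-|x-y|/(a\tau_m)}}{|x-y|}\Bigr]|v_m|^2|v_m|^2\,dxdy.$$
The integrand on the right is nonpositive (by the inequality $\tfrac{1-e^{-r/b}}{r}\geq\tfrac{e^{-r/b}}{b}$ already exploited in Lemma \ref{lemma1}), which forces $\tau_m\geq 1$; its absolute value is dominated by $a\tau_m|x-y|^{-1}$, and Hardy-Littlewood-Sobolev then yields $||\nabla v_m||^2\tau_m^2(\tau_m^{(3p-10)/2}-1)\leq C\tau_m\,m^4 s_m$. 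Inserting $||\nabla v_m||^2=m^2 s_m^2$ gives $\tau_m^{(3p-10)/2}-1\leq C\,m^2/s_m\to 0$, hence $\tau_m\to 1$. Combining this with $\beta(m)=\tfrac{3p-10}{6(p-2)}m^2 s_m^2$ yields $(\gamma(m)-\beta(m))/\beta(m)\leq C\tau_m\,m^2/s_m\to 0$, completing the proof.
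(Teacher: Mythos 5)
Your proof is correct and follows the same overall strategy as the paper: for all three parts the upper bound comes from testing with (a rescaling of) the explicit classical minimizer $v_m=\frac{m}{\|Q\|_{L^2}}s_m^{3/2}Q(s_m\cdot)$, bounding the nonlocal term by Hardy--Littlewood--Sobolev as $Cm^4s_m$, and comparing with $|\widetilde{\mathcal{E}}(m)|,\ \beta(m)\sim m^2s_m^2$ so that the relative error is $O(m^2/s_m)\to 0$ by Lemma \ref{limit-relation}. Parts (1)--(2) are essentially identical to the paper's argument. In part (3) you deviate in two local ways, both legitimate. For the lower bound $\gamma(m)\geq\beta(m)$ you invoke the min-max characterizations of both levels together with $E\geq I$, whereas the paper argues directly on the Pohozaev manifolds: for $u\in V(m)$ it finds $s\in(0,1)$ with $u^s\in\widetilde V(m)$ and uses $E(u)\geq\frac{3p-10}{6(p-2)}\|\nabla u\|^2\geq\frac{3p-10}{6(p-2)}s^2\|\nabla u\|^2=I(u^s)\geq\beta(m)$; your route is cleaner but tacitly uses the (standard, but unproved in the paper) identity $\beta(m)=\inf_{u\in S(m)}\max_\theta I(\kappa(u,\theta))$. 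For the control of the rescaling parameter, the paper only shows $\eta_m$ is bounded (via the inequality $\eta_m^{(10-3p)/2}+C\eta_m^{(8-3p)/2}m^2/s_m\geq 1$), which is all that is needed; your sharper conclusion $\tau_m\geq 1$ and $\tau_m\to1$, obtained from $P(\kappa(v_m,\theta_m))=0$ and the kernel inequality $\frac{1-e^{-r/b}}{r}\geq\frac{e^{-r/b}}{b}$, is correct and a nice refinement, though stronger than necessary.
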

\begin{proof}
When $p\in(2, 3)\cup(3, \frac{10}3)$, we easily see that $\widetilde{\mathcal{E}}(m)\leq\mathcal{E}(m)$ for each $m>0$. Note that $\widetilde{\mathcal{E}}(m)<0$, then $\frac{\mathcal{E}(m)}{\widetilde{\mathcal{E}}(m)}\leq1$.
Since the minimizer $\frac{m}{||Q||_{L^2}}Q^{s_m}$ of $\widetilde{\mathcal{E}}(m)$ belongs to $S(m)$, then $\mathcal{E}(m)\leq E\left(\frac{m}{||Q||_{L^2}}Q^{s_m}\right)$. By Lemma \ref{relationship} and Lemma \ref{limit-relation}, we have
\begin{align*}
\frac{E\left(\frac{m}{||Q||_{L^2}}Q^{s_m}\right)}{\widetilde{\mathcal{E}}(m)}&=1+\frac{m^4s_m}{4\widetilde{\mathcal{E}}(m)||Q||^4_{L^2}}
\iint\frac{1-e^{-\frac{|x-y|}{as_m}}}{|x-y|}|Q(x)|^2 |Q(y)|^2dxdy\\
&=1-\frac{3(p-2)}{2(10-3p)||Q||^4_{L^2}}\iint\frac{1-e^{-\frac{|x-y|}{as_m}}}{|x-y|}|Q(x)|^2 |Q(y)|^2dxdy\frac{m^2}{s_m}\\
&\geq1-\frac{3(p-2)}{2(10-3p)||Q||^4_{L^2}}\iint\frac{|Q(x)|^2 |Q(y)|^2}{|x-y|}dxdy\frac{m^2}{s_m}\rightarrow1,
\end{align*}
either as $m\rightarrow0^+$ if $p\in(2, 3)$ or as $m\rightarrow+\infty$ if $p\in(3, \frac{10}3)$, which implies that $\lim\limits_{m\rightarrow0^+}\frac{\mathcal{E}(m)}{\widetilde{\mathcal{E}}(m)}\geq1$ if $p\in(2, 3)$ or $\lim\limits_{m\rightarrow+\infty}\frac{\mathcal{E}(m)}{\widetilde{\mathcal{E}}(m)}\geq1$
if $p\in(3, \frac{10}3)$. Therefore, (1) and (2) hold.

When $p\in(\frac{10}3, 6)$, for any $m>0$ small enough and any $u\in V(m)$, then $P(u)=0$. Since $\frac{1-e^{-\frac{|x-y|}a}}{|x-y|}-\frac1{a}e^{-\frac{|x-y|}a}\geq0$, $G(u)=P(u)-\frac14\iint\left(\frac{1-e^{-\frac{|x-y|}a}}{|x-y|}-\frac1{a} e^{-\frac{|x-y|}a}\right)|u(x)|^2|u(y)|^2dxdy<0$ (we are only concerned with the case where $G(u)<0$, since the result is obvious when $G(u)=0$) and there exists $s\in(0, 1)$ such that $u^s\in\widetilde{V}(m)$. Thus, $I(u^s)\geq\beta(m)$. Then, by (\ref{EP}), we get
\begin{align*}
E(u)=E(u)-\frac2{3(p-2)}P(u)&\geq\frac{3p-10}{6(p-2)}s^2||\nabla u||^2_{L^2}\\
&=I(u^s)-\frac2{3(p-2)}G(u^s)\\
&=I(u^s)\geq\beta(m),
\end{align*}
which implies that $\gamma(m)\geq\beta(m)$ since $u\in V(m)$ is arbitrary. Note that $\beta(m)>0$, so we have $\frac{\gamma(m)}{\beta(m)}\geq1$.
For each $m>0$ small enough and the corresponding minimizer $v_m=\frac{m}{||Q||_{L^2}}Q^{s_m}$ of $\beta(m)$, then $G(v_m)=0$ and there exists $\eta_m>0$ such that $v^{\eta_m}_m\in V(m)$. From (\ref{elliptic1}) and
\begin{align*}
0=P(v^{\eta_m}_m)\leq&\frac{m^2\eta^2_ms^2_m}{||Q||^2_{L^2}}||\nabla Q||^2_{L^2}+\frac{m^4\eta_ms_m}{4||Q||^4_{L^2}}\iint\frac{|Q(x)|^2|Q(y)|^2}{|x-y|}dxdy\\
&-\frac{3(p-2)}{2p}\frac{m^p\eta^{\frac{3(p-2)}2}_ms^{\frac{3(p-2)}2}_m}{||Q||^p_{L^2}}||Q||^p_{L^p},
\end{align*}
we see that
\begin{align*}
\eta^{\frac{10-3p}2}_m+\eta^{\frac{8-3p}2}_m\frac{m^2}{s_m}\frac1{4||Q||^4_{L^2}}\iint\frac{|Q(x)|^2|Q(y)|^2}{|x-y|}dxdy\geq1.
\end{align*}
Thus, it follows from Lemma \ref{limit-relation} that $\{\eta_m\}$ is bounded for $m>0$ small enough. Therefore,
\begin{align*}
\gamma(m)\leq E(v^{\eta_m}_m)&=\frac{\eta^2_m}2||\nabla v_m||^2_{L^2}+\frac{\eta_m}4\iint\frac{1-e^{-\frac{|x-y|}{a\eta_m}}}{|x-y|}|v_m(x)|^2|v_m(y)|^2dxdy-\frac{\eta_{m}^{\frac{3(p-2)}2}}{p}||v_m||^p_{L^p}\\
&\leq\max\limits_{\eta\geq0}I(v^{\eta}_m)+\frac{\eta_m}4\iint\frac{|v_m(x)|^2|v_m(y)|^2}{|x-y|}dxdy\\
&=\beta(m)+\frac{m^4\eta_m s_m}{4||Q||^4_{L^2}}\iint\frac{|Q(x)|^2|Q(y)|^2}{|x-y|}dxdy.
\end{align*}
From Lemma \ref{limit-relation}, we have
$$\frac{\gamma(m)}{\beta(m)}\leq1+\frac{m^2}{s_m}\frac{3(p-2)\eta_m}{2(3p-10)||Q||^4_{L^2}}\iint\frac{|Q(x)|^2|Q(y)|^2}{|x-y|}dxdy\rightarrow1. \quad\hbox{as}~m\rightarrow0^+,$$
Thus, $\limsup\limits_{m\rightarrow0^+}\frac{\gamma(m)}{\beta(m)}\leq1$. And hence, $\lim\limits_{m\rightarrow0^+}\frac{\gamma(m)}{\beta(m)}=1$.
\end{proof}

\begin{lemma}\label{limits2}
Suppose that $(m, u_m, \omega_m)\in\Lambda$, where $\Lambda$ is given in (\ref{solution-set}), then
\begin{itemize}
 \item[(1)]
\begin{equation}
\left\{
\begin{aligned}\nonumber
&\frac{||\nabla u_m||^2_{L^2}}{m^2s^2_m}\rightarrow1,\\
&\frac{\int\phi_{u_m}|u_m|^2dx}{m^2s^2_m}\rightarrow0,
\end{aligned}
\right.
\end{equation}
\end{itemize}
as $m\rightarrow0^+$ if $p\in(2, 3)\cup(\frac{10}3, 6)$ or $m\rightarrow+\infty$ if $p\in(3, \frac{10}3)$.
\begin{itemize}
 \item[(2)]
$$\frac{\omega_m}{s^2_m}\rightarrow\frac{6-p}{3(p-2)}$$
\end{itemize}
as $m\rightarrow0^+$ if $p\in(2, 3)\cup(\frac{10}3, 6)$ or $m\rightarrow+\infty$ if $p\in(3, \frac{10}3)$.
\end{lemma}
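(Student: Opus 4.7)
The plan is to reduce both parts to the asymptotic behaviour of the single rescaled quantity
$y_m:=\|\nabla u_m\|_{L^2}/(ms_m)$, exploiting the energy equivalences $\mathcal{E}(m)\sim\widetilde{\mathcal{E}}(m)$ and $\gamma(m)\sim\beta(m)$ from Lemma \ref{limits1} and the closed-form expressions in Lemma \ref{relationship}. The key preliminary observation is that after the substitution $x=ms_m\, y$, the Gagliardo--Nirenberg bound on $I(u)$ reduces to the $m$-independent profile
$$g(y)=\frac12 y^2-\frac{2}{3(p-2)}y^{\frac{3(p-2)}{2}},$$
whose critical point $y=1$ satisfies $g(1)=\widetilde{\mathcal{E}}(m)/(ms_m)^2=-(10-3p)/(6(p-2))$ for $p<10/3$ (strict global minimum, $g(y)\to\infty$ as $y\to\infty$) and $g(1)=\beta(m)/(ms_m)^2=(3p-10)/(6(p-2))$ for $p>10/3$ (strict local maximum).

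For part (1) I would treat the two regimes separately. In the supercritical case $p\in(\frac{10}{3},6)$, I exploit that $u_m\in V(m)$, i.e.\ $P(u_m)=0$. Combining this with the sign condition $\frac{1-e^{-r/a}}{r}-\frac{1}{a}e^{-r/a}\ge 0$ and the sharp Gagliardo--Nirenberg inequality gives $\|\nabla u_m\|^2_{L^2}\ge\frac{3(p-2)}{2p}\|u_m\|^p_{L^p}$... no, the other way: it yields $\|\nabla u_m\|_{L^2}^{(3p-10)/2}\ge(ms_m)^{(3p-10)/2}$, hence $\|\nabla u_m\|^2_{L^2}\ge(ms_m)^2$. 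The matching upper bound comes from injecting $P(u_m)=0$ into the decomposition (\ref{EP}): one obtains $E(u_m)\ge\frac{3p-10}{6(p-2)}\|\nabla u_m\|^2_{L^2}$, which rearranged reads $\|\nabla u_m\|^2_{L^2}/(ms_m)^2\le\gamma(m)/\beta(m)\to 1$ by Lemma \ref{limits1}. This pins down $y_m\to 1$; the remaining nonnegative pieces in (\ref{EP}) are then squeezed, forcing $\int\phi_{u_m}|u_m|^2\,dx/(ms_m)^2\to 0$. In the subcritical case $p\in(2,\frac{10}{3})\setminus\{3\}$, I use the sandwich $\widetilde{\mathcal{E}}(m)\le I(u_m)=\mathcal{E}(m)-\tfrac14\int\phi_{u_m}|u_m|^2\,dx\le\mathcal{E}(m)$. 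Dividing through by $\widetilde{\mathcal{E}}(m)<0$ and applying Lemma \ref{limits1} gives $I(u_m)/(ms_m)^2\to g(1)$. The sharp Gagliardo--Nirenberg inequality then yields $g(y_m)\le I(u_m)/(ms_m)^2\to g(1)=\min g$, and the shape of $g$ (uniqueness of the minimiser at $y=1$, together with $g(0)=0$ and $g(\infty)=+\infty$) forces $y_m\to 1$. The Coulomb-type term is controlled via $\tfrac14\int\phi_{u_m}|u_m|^2\,dx\le\mathcal{E}(m)-\widetilde{\mathcal{E}}(m)=\widetilde{\mathcal{E}}(m)\bigl(\mathcal{E}(m)/\widetilde{\mathcal{E}}(m)-1\bigr)$, and after division by $(ms_m)^2$ the right-hand side becomes $g(1)\cdot o(1)\to 0$.

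For part (2) I would invoke the Pohozaev identity $P(u_m)=0$, which is valid for every weak solution of Eq.(\ref{main5}) in the regularity class obtained in \cite{dAvenia2019} (and used already in the proof of Lemma \ref{null-function}). Solving it for $\|u_m\|^p_{L^p}$, dividing by $(ms_m)^2$, and using part (1) together with the pointwise domination $\tfrac{1}{a}e^{-r/a}\le\frac{1-e^{-r/a}}{r}$ (which bounds the exponential double integral by $\int\phi_{u_m}|u_m|^2\,dx$), I get $\|u_m\|^p_{L^p}/(ms_m)^2\to\frac{2p}{3(p-2)}$. Testing Eq.(\ref{main5}) against $u_m$ gives
$$\omega_m\, m^2=\|u_m\|^p_{L^p}-\|\nabla u_m\|^2_{L^2}-\int\phi_{u_m}|u_m|^2\,dx,$$
and dividing by $m^2 s_m^2$ and letting $m\to 0^+$ (resp.\ $m\to+\infty$) yields
$$\frac{\omega_m}{s_m^2}\longrightarrow\frac{2p}{3(p-2)}-1-0=\frac{6-p}{3(p-2)},$$
as required.

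The main obstacle is the mass-subcritical regime $p\in(2,\frac{10}{3})$: there, the energy $\mathcal{E}(m)$ is negative and no Pohozaev manifold constrains $\|\nabla u_m\|_{L^2}$ from below a priori, so one cannot bootstrap directly from $I(u_m)/\widetilde{\mathcal{E}}(m)\to 1$ to a quantitative control of $y_m$. The device that unlocks everything is the rescaled profile $g$: its $m$-independence and the strictness of its minimum at $y=1$ are what upgrade the scalar energy convergence into the gradient-norm convergence $y_m\to 1$ that drives both (1) and (2).
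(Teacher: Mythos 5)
Your argument is correct, and its engine is the same as the paper's: the energy ratios $\mathcal{E}(m)/\widetilde{\mathcal{E}}(m)\to1$, $\gamma(m)/\beta(m)\to1$ from Lemma \ref{limits1}, the sharp Gagliardo--Nirenberg inequality, and the fact that the rescaled profile (your $g$, which after normalisation is exactly the bracket $-\tfrac{3(p-2)}{10-3p}\bigl[t-\tfrac{4}{3(p-2)}t^{3(p-2)/4}\bigr]$ appearing in the paper's display) has a unique extremum at $y=1$ equal to the limiting energy ratio. The differences are in the execution, and in each case your route is at least as clean. For the vanishing of the nonlocal term when $p\in(2,3)\cup(3,\tfrac{10}{3})$ you use the one-line sandwich $\tfrac14\int\phi_{u_m}|u_m|^2\,dx=\mathcal{E}(m)-I(u_m)\le\mathcal{E}(m)-\widetilde{\mathcal{E}}(m)$, whereas the paper runs a small bookkeeping argument with the limits $F_1,F_2$ of the $L^p$ and Coulomb terms and solves a linear relation between them; your version avoids that. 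In the supercritical case you obtain the lower bound $\|\nabla u_m\|_{L^2}\ge ms_m$ pointwise from $P(u_m)=0$ and the sharp Gagliardo--Nirenberg constant (the same computation as (\ref{lb3})), whereas the paper only derives $\liminf\ge1$ by controlling the two nonlocal terms by $C\|u_m\|_{L^2}^4$ and invoking $m^2/s_m^2\to0$; both work, yours is more direct. For part (2) you compute $\|u_m\|^p_{L^p}/(ms_m)^2\to\tfrac{2p}{3(p-2)}$ from the Pohozaev identity (legitimate here, since Lemma \ref{null-function} gives $P(u_m)=0$ for every weak solution, as the paper itself uses in Lemma \ref{omega-e}) and then feeds this into the Nehari identity $\omega_m m^2=\|u_m\|^p_{L^p}-\|\nabla u_m\|^2_{L^2}-\int\phi_{u_m}|u_m|^2\,dx$; the paper instead rewrites $\omega_m m^2$ as $-p\mathcal{E}(m)+\tfrac{p-2}{2}\|\nabla u_m\|^2_{L^2}+\tfrac{p-4}{4}\int\phi_{u_m}|u_m|^2\,dx$ and uses the known limit of $\mathcal{E}(m)/\widetilde{\mathcal{E}}(m)$. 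The two are equivalent; your version has the mild advantage of treating all three ranges of $p$ uniformly, while the paper's avoids any explicit appeal to the Pohozaev identity in part (2).
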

\begin{proof}
(1) We first consider the case where $p\in(2, 3)\cup(3, \frac{10}3)$. Since $E(u_m)=\mathcal{E}(m)$, by (\ref{G-N-ineq}) and Lemma \ref{relationship} we have
\begin{align}\nonumber
\frac{\mathcal{E}(m)}{\widetilde{\mathcal{E}}(m)}&=-\frac{3(p-2)}{10-3p}\frac{||\nabla u_m||^2_{L^2}}{m^2s^2_m}-\frac{3(p-2)}{2(10-3p)}\frac{\int\phi_{u_m}|u_m|^2dx}{m^2s^2_m}-\frac1{p}\frac{||u_m||^p_{L^p}}{\widetilde{\mathcal{E}}(m)}\\
\label{bound1}&\leq-\frac{3(p-2)}{10-3p}\frac{||\nabla u_m||^2_{L^2}}{m^2s^2_m}-\frac{m^{\frac{6-p}2}}{2||Q||^{p-2}_{L^2}}\frac{||\nabla u_m||^{\frac{3(p-2)}2}_{L^2}}{\widetilde{\mathcal{E}}(m)}\\ \nonumber
&=-\frac{3(p-2)}{10-3p}\left[\frac{||\nabla u_m||^2_{L^2}}{m^2s^2_m}-\frac4{3(p-2)}\left(\frac{||\nabla u_m||^2_{L^2}}{m^2s^2_m}\right)^{\frac{3(p-2)}4}\right]\\ \nonumber
&\leq1.
\end{align}
Then, by Lemma \ref{limits1}, we obtain that
$$\lim\limits_{m\rightarrow0^+}\frac{||\nabla u_m||^2_{L^2}}{m^2s^2_m}=1,~~~\hbox{if}~p\in(2, 3) \quad\hbox{and}\quad \lim\limits_{m\rightarrow+\infty}\frac{||\nabla u_m||^2_{L^2}}{m^2s^2_m}=1,~~~\hbox{if}~p\in(3, \frac{10}3).$$

Set
\begin{equation}
F_1:=\left\{
\begin{aligned}\nonumber
&\lim\limits_{m\rightarrow0^+}\frac{\frac1{p}||u_m||^p_{L^p}}{\widetilde{\mathcal{E}}(m)}, &p\in(2, 3),\\
&\lim\limits_{m\rightarrow+\infty}\frac{\frac1{p}||u_m||^p_{L^p}}{\widetilde{\mathcal{E}}(m)}, &p\in(3, \frac{10}3),
\end{aligned}
\right.
\end{equation}
\begin{equation}
F_2:=\left\{
\begin{aligned}\nonumber
&\lim\limits_{m\rightarrow0^+}\frac{\int\phi_{u_m}|u_m|^2dx}{m^2s^2_m}, &p\in(2, 3),\\
&\lim\limits_{m\rightarrow+\infty}\frac{\int\phi_{u_m}|u_m|^2dx}{m^2s^2_m}, &p\in(3, \frac{10}3),
\end{aligned}
\right.
\end{equation}
then it follows from (\ref{G-N-ineq}) and (\ref{bound1}) that $F_1\in\left[-\frac4{10-3p}, 0\right]$ and $F_2\in[0, +\infty)$ and
$$\frac4{10-3p}+F_1=-\frac{3(p-2)}{2(10-3p)}F_2.$$
Therefore, we have $F_2=0$ and $F_1=-\frac4{10-3p}$.

We next prove the case where $p\in(\frac{10}3, 6)$. Since $E(u_m)=\gamma(m)$ and $u_m\in V(m)$, by Lemma \ref{relationship} we get
\begin{align}\nonumber
\frac{\gamma(m)}{\beta(m)}=&\frac{E(u_m)-\frac2{3(p-2)}P(u_m)}{\beta(m)}\\ \nonumber
=&\frac{3p-10}{6(p-2)}\frac{||\nabla u_m||^2_{L^2}}{\beta(m)}+\frac{3p-8}{12(p-2)}\frac{\int\phi_{u_m}|u_m|^2dx}{\beta(m)}+\frac1{6a(p-2)}\frac{\iint e^{-\frac{|x-y|}a}|u_m(x)|^2|u_m(y)|^2dxdy}{\beta(m)}\\ \label{bound2}
\geq&\frac{3p-10}{6(p-2)}\frac{||\nabla u_m||^2_{L^2}}{\beta(m)}\\ \nonumber
=&\frac{||\nabla u_m||^2_{L^2}}{m^2s^2_m}.
\end{align}
According to $\lim\limits_{m\rightarrow0^+}\frac{\gamma(m)}{\beta(m)}=1$, we see that $\left|\frac{||\nabla u_m||^2_{L^2}}{m^2s^2_m}\right|\leq2$ for $m>0$ small enough. By (\ref{inq}) and Lemma \ref{limit-relation}, we have
$$\frac{\int\phi_{u_m}|u_m|^2dx}{m^2s^2_m}\leq C\frac{m^3||\nabla u_m||_{L^2}}{m^2s^2_m}\leq2C\frac{m^2}{s_m}\rightarrow0,\quad\hbox{as}~m\rightarrow0^+.$$
Then it follows from (\ref{bound2}) that
\begin{align}\label{upper-bound1}
\lim\limits_{m\rightarrow0^+}\frac{||\nabla u_m||^2_{L^2}}{m^2s^2_m}\leq1.
\end{align}
Since $\iint e^{-\frac{|x-y|}a}|u_m(x)|^2|u_m(y)|^2dxdy\leq C||u_m||^4_{L^2}$ and $\int\phi_{u_m}|u_m|^2dx\leq C||u_m||^4_{L^2}$, there exist constants $C_1, C_2>0$ such that
\begin{align*}
\frac{\gamma(m)}{\beta(m)}&\leq\frac{3p-10}{6(p-2)}\frac{||\nabla u_m||^2_{L^2}}{\beta(m)}+C_1\frac{||u_m||^4_{L^2}}{\beta(m)}+C_2\frac{||u_m||^4_{L^2}}{\beta(m)}\\
&=\frac{||\nabla u_m||^2_{L^2}}{m^2s^2_m}+\frac{6(p-2)C_1}{3p-10}\frac{m^2}{s^2_m}+\frac{6(p-2)C_2}{3p-10}\frac{m^2}{s^2_m}.
\end{align*}
From Lemma \ref{limit-relation}, we have
$$\frac{\iint e^{-\frac{|x-y|}a}|u_m(x)|^2|u_m(y)|^2dxdy}{m^2s^2_m}\rightarrow0,\quad\hbox{as}~m\rightarrow0^+.$$
Combining with Lemma \ref{limits1}(3), we obtain
\begin{align}\label{lower-bound1}
\lim\limits_{m\rightarrow0^+}\frac{||\nabla u_m||^2_{L^2}}{m^2s^2_m}\geq1.
\end{align}
Thus, $\lim\limits_{m\rightarrow0^+}\frac{||\nabla u_m||^2_{L^2}}{m^2s^2_m}=1$.

(2) We just prove the case where $p\in(2, 3)$. The proof of the other two cases proceeds similarly.

According to $E(u_m)=\mathcal{E}(m)$, we see that
\begin{align*}
\omega_mm^2=-\langle E'(u_m), u_m\rangle=-p\mathcal{E}(m)+\frac{p-2}2||\nabla u_m||^2_{L^2}+\frac{p-4}4\int\phi_{u_m}|u_m|^2dx.
\end{align*}
So, by Lemma \ref{relationship} and Lemma \ref{limits1}, we have
\begin{align*}
\frac{\omega_m}{s^2_m}=&-p\frac{\mathcal{E}(m)}{m^2s^2_m}+\frac{p-2}2\frac{||\nabla u_m||^2_{L^2}}{m^2s^2_m}+\frac{p-4}4\frac{\int\phi_{u_m}|u_m|^2dx}{m^2s^2_m}\\
=&\frac{10p-3p^2}{6(p-2)}\frac{\mathcal{E}(m)}{\widetilde{\mathcal{E}}(m)}+\frac{p-2}2\frac{||\nabla u_m||^2_{L^2}}{m^2s^2_m}+\frac{p-4}4\frac{\int\phi_{u_m}|u_m|^2dx}{m^2s^2_m}\\
&\rightarrow\frac{10p-3p^2}{6(p-2)}+\frac{p-2}2=\frac{6-p}{3(p-2)},\quad\hbox{as}~m\rightarrow0^+.
\end{align*}
\end{proof}

\begin{lemma}\label{omega-e}
Let $p\in(3, \frac{10}3)$ and $(m, u_m, \omega_m)\in \Lambda$, then $\omega_m>0$.
\end{lemma}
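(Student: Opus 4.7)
The plan is to derive a single closed-form expression for $\omega_m m^2$ whose every summand is manifestly nonnegative for $p\in(3,10/3)$, combining the Nehari identity (from testing Eq.~(\ref{main5}) against $u_m$), the Pohozaev identity $P(u_m)=0$ from Lemma~\ref{null-function}, and the minimality $E(u_m)=\mathcal{E}(m)$.

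First I substitute the Pohozaev expression for $\|\nabla u_m\|_{L^2}^2$ into the Nehari identity to obtain
\[
\omega_m m^2 = \frac{6-p}{2p}\,\|u_m\|_{L^p}^p - \frac{3}{4}\,B(u_m) - \frac{1}{4a}\,A(u_m),
\]
where $B(u):=\int\phi_u|u|^2\,dx$ and $A(u):=\iint e^{-|x-y|/a}|u(x)|^2|u(y)|^2\,dx\,dy$. Making the same substitution in $E(u_m)=\mathcal{E}(m)$ yields
\[
\mathcal{E}(m) = \frac{3p-10}{4p}\,\|u_m\|_{L^p}^p + \frac{1}{8}\,B(u_m) + \frac{1}{8a}\,A(u_m).
\]
Since $p\in(3,10/3)$ forces $10-3p>0$, I can solve the second identity for $\|u_m\|_{L^p}^p$ and plug the result back into the first; after a careful but routine regrouping the outcome is
\[
\omega_m m^2 = \frac{2(6-p)}{10-3p}\bigl(-\mathcal{E}(m)\bigr) + \frac{2(p-3)}{10-3p}\,B(u_m) + \frac{p-2}{2a(10-3p)}\,A(u_m).
\]
This is the pivot of the argument: for $p\in(3,10/3)$ every one of the three coefficients $\tfrac{2(6-p)}{10-3p}$, $\tfrac{2(p-3)}{10-3p}$, $\tfrac{p-2}{2a(10-3p)}$ is strictly positive, while $B(u_m)\ge 0$ is trivial and $A(u_m)>0$ strictly since $u_m\not\equiv 0$.

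It therefore suffices to verify $\mathcal{E}(m)\le 0$, which I would do by a vanishing test-sequence. Fix $\psi\in H^1(\mathbb{R}^3)\setminus\{0\}$ and set $u_n(x):=c_n\psi(x/n)$ with $c_n:=m/(n^{3/2}\|\psi\|_{L^2})$, so that $u_n\in S(m)$. A direct change of variables gives $\|\nabla u_n\|_{L^2}^2=O(n^{-2})$ and $\|u_n\|_{L^p}^p=O(n^{3-3p/2})$, whose exponent is strictly negative since $p>2$. For the nonlocal term, the asymptotic $\mathcal{K}(nz)\sim 1/(n|z|)$ as $n\to\infty$ combined with a further change of variables gives $B(u_n)=O(n^{-1})$. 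Consequently $E(u_n)\to 0$, which forces $\mathcal{E}(m)\le\liminf_{n}E(u_n)=0$. Inserting this into the displayed identity yields $\omega_m m^2 \ge \tfrac{p-2}{2a(10-3p)}A(u_m)>0$, and the lemma follows.

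The main subtlety of the plan is the algebraic coincidence in the final identity: a priori the coefficients of $-\mathcal{E}(m)$, $B(u_m)$ and $A(u_m)$ could end up with mixed signs, and verifying that all three are simultaneously positive is precisely where the restriction $p\in(3,10/3)$ enters in an essential way, the factor $p-3$ in the $B$-coefficient also indicating why the range $p\in(2,3)$ has to be treated by a different method.
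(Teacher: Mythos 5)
Your proposal is correct and follows essentially the same route as the paper: after eliminating $\|\nabla u_m\|_{L^2}^2$ via $P(u_m)=0$ and $\|u_m\|_{L^p}^p$ via the energy, you arrive at exactly the identity the paper uses, namely $\omega_m m^2=-\tfrac{2(6-p)}{10-3p}\mathcal{E}(m)+\tfrac{2(p-3)}{10-3p}\int\phi_{u_m}|u_m|^2dx+\tfrac{p-2}{2a(10-3p)}\iint e^{-|x-y|/a}|u_m(x)|^2|u_m(y)|^2dxdy$, and conclude from the sign of each coefficient on $(3,\tfrac{10}{3})$. The only difference is that you explicitly verify $\mathcal{E}(m)\le 0$ by a dilation test sequence, whereas the paper simply asserts $\mathcal{E}(m)<0$; your weaker bound suffices because the $A(u_m)$-term is strictly positive.
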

\begin{proof}
For any $(m, u_m, \omega_m)\in \Lambda$, we see that $P(u_m)=0$ and $E(u_m)=\mathcal{E}(m)<0$. Then,
\begin{align*}
\omega_m m^2=&-||\nabla u_m||^2_{L^2}-\int\phi_{u_m}|u_m|^2dx+||u_m||^p_{L^p}-\frac{2(p-2)}{10-3p}P(u_m)\\
=&-\frac{6-p}{10-3p}||\nabla u_m||^2_{L^2}-\frac{18-5p}{2(10-3p)}\int\phi_{u_m}|u_m|^2dx+\frac{2(6-p)}{p(10-3p)}||u_m||^p_{L^p}\\
&+\frac{p-2}{2a(10-3p)}\iint e^{-\frac{|x-y|}a}|u_m(x)|^2|u_m(y)|^2dxdy\\
=&-\frac{2(6-p)}{10-3p}\mathcal{E}(m)+\frac{2(p-3)}{10-3p}\int\phi_{u_m}|u_m|^2dx\\
&+\frac{p-2}{2a(10-3p)}\iint e^{-\frac{|x-y|}a}|u_m(x)|^2|u_m(y)|^2dxdy>0,
\end{align*}
since $p\in(3, \frac{10}3)$.
\end{proof}

\begin{lemma}\label{positive-soution00}
Let $p\in(2,3)\cup(3, \frac{10}3)$ and $m>0$, suppose that $u_m\in S(m)$ is a minimizer of $\mathcal{E}(m)$, then $u_m$ is positive.
\end{lemma}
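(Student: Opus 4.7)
The plan is to reduce to a nonnegative minimizer by replacing $u_m$ with $|u_m|$, and then invoke the strong maximum principle. Since $|u_m|\in S(m)$ with $\|\nabla|u_m|\|_{L^2}\le\|\nabla u_m\|_{L^2}$ (Kato's inequality), while $\phi_u=\mathcal{K}\ast|u|^2$ and $\int|u|^p\,dx$ depend on $u$ only through $|u|$, we have $\phi_{|u_m|}=\phi_{u_m}$ and $\||u_m|\|_{L^p}=\|u_m\|_{L^p}$, so $E(|u_m|)\le E(u_m)=\mathcal{E}(m)$. Minimality then forces equality, so $|u_m|$ is itself a minimizer; in particular (up to a global phase, which leaves $E$ invariant) we may henceforth assume $u_m\ge 0$.

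By the Lagrange multiplier rule there exists $\omega_m\in\mathbb{R}$ such that $u_m$ satisfies Eq.~(\ref{main5}). From Lemma~\ref{property2}(3), $\phi_{u_m}\in L^\infty(\mathbb{R}^3)$, so a standard Moser iteration and elliptic bootstrap (as in \cite{Dibenedetto1983, Li1989} and the proof of Lemma~\ref{positive-solution0}) yields $u_m\in L^\infty(\mathbb{R}^3)\cap C^{1,\alpha}_{\mathrm{loc}}(\mathbb{R}^3)$ for some $\alpha\in(0,1)$. Rewriting the equation as $-\Delta u_m+c(x)u_m=0$ with
$$c(x):=\omega_m+\phi_{u_m}(x)-u_m(x)^{p-2}\in L^\infty_{\mathrm{loc}}(\mathbb{R}^3),$$
Harnack's inequality from \cite{Trudinger1967} forces $u_m>0$ on $\mathbb{R}^3$, since $u_m\ge 0$ and $u_m\not\equiv 0$.

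The only genuinely delicate point is the symmetrization step, which is essentially free here because all three terms of $E$ are non-increased under $u_m\to|u_m|$ (the nonlocal and $L^p$ terms being invariant, the kinetic term dropping by Kato). Regularity and the maximum-principle conclusion are direct applications of machinery already invoked in the proof of Lemma~\ref{positive-solution0}, so I do not expect the strict-positivity upgrade to add any new difficulty beyond what has already been developed in the paper.
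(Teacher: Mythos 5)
Your argument is correct, but it takes a different route from the paper for the key ``no sign change'' step. The paper proves this lemma by declaring it analogous to Lemma \ref{positive-solution0}, whose proof is a fibering argument: one compares the maxima of $\theta\mapsto I_\omega(\kappa(u^{\pm},\theta))$ and uses the variational characterization of the energy level to force $u^+=0$ or $u^-=0$. That machinery is designed for the mountain-pass/Pohozaev-manifold setting, where one cannot a priori pass from $u$ to $|u|$ inside the constraint set without further argument. In the present subcritical case the constraint is simply $S(m)$ and the infimum is global, so your symmetrization is available and strictly simpler: $\phi_{|u|}=\phi_u$ and $\||u|\|_{L^p}=\|u\|_{L^p}$ are immediate, and for real-valued $u\in H^1$ one even has $\|\nabla|u|\|_{L^2}=\|\nabla u\|_{L^2}$, so $E(|u_m|)=E(u_m)=\mathcal{E}(m)$ and $|u_m|$ is a minimizer. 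The regularity-plus-Harnack conclusion is then identical to the endgame of Lemma \ref{positive-solution0}. One loose end you should tie up explicitly: showing that $|u_m|$ is a positive minimizer does not by itself show that the \emph{given} $u_m$ has a sign. To close this, note that $|u_m|$, being a minimizer, satisfies the Euler--Lagrange equation and hence by your regularity and Harnack step is everywhere strictly positive; since $u_m$ also solves its own Euler--Lagrange equation it is continuous, and a continuous function that never vanishes on $\mathbb{R}^3$ has constant sign, so $u_m=\pm|u_m|$. With that one-line addition (and the usual convention that ``positive'' means up to replacing $u_m$ by $-u_m$, exactly as in the paper's Lemma \ref{positive-solution0}), your proof is complete and, in this setting, more elementary than the paper's.
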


\begin{proof}
As the proof follows very closely that of the analogous statement
in Lemma \ref{positive-solution0}, we prefer to omit the details here.
\end{proof}

Now, let us consider a scaling of $Q$, the positive, radially symmetric and unique solution of Eq.(\ref{elliptic}), as
\begin{align}\label{scaling-Q}
W(x):=\left(\frac{3(p-2)}4||Q||^{-\frac43}_{L^2(\mathbb{R}^3)}\right)^{\frac3{10-3p}}Q\left(\left(\frac{3(p-2)}4||Q||^{2-p}_{L^2(\mathbb{R}^3)}\right)^{\frac2{10-3p}}x\right).
\end{align}
Obviously, $W\in S(1)$ (i.e., $||W||_{L^2(\mathbb{R}^3)}=1$) and satisfies the following equation
\begin{align}\label{elliptic0}
-\Delta W+\omega_0 W=|W|^{p-2}W,\quad x\in\mathbb{R}^3,
\end{align}
where $\omega_0=\frac{6-p}4\left(\frac{3(p-2)}4||Q||^{-\frac43}_{L^2(\mathbb{R}^3)}\right)^{\frac{3(p-2)}{10-3p}}>0.$ For any $\varsigma>0$ and $u\in S(m)$, we consider the scaling that $u^{(\varsigma)}(x):=\varsigma^{\frac4{10-3p}}u(\varsigma^{\frac{2(p-2)}{10-3p}}x)$. Hence, $u^{(\varsigma)}\in S(\varsigma m)$ and
\begin{align*}
E(u^{(\varsigma)})&=\varsigma^{\frac{2(6-p)}{10-3p}}\left(\frac12||\nabla u||^2_{L^2}+\frac14\varsigma^{\alpha_1(p)}\iint\frac{1-e^{-\frac{|x-y|}{a\varsigma^{\alpha_2(p)}}}}{|x-y|}|u(x)|^2|u(y)|^2dxdy-\frac1{p}||u||^p_{L^p}\right)\\
&=:\varsigma^{\frac{2(6-p)}{10-3p}}E_{\varsigma}(u).
\end{align*}
Here $\alpha_1(p):=\frac{8(3-p)}{10-3p}$, $\alpha_2(p):=\frac{2(p-2)}{10-3p}$. It is clearly that $\mathcal{E}(m)=m^{\frac{2(6-p)}{10-3p}}J_m$, where $J_m$ is given by the minimization problem
\begin{align}\label{equivalent-1}
J_m:=\inf\limits_{u\in S(1)}E_m(u).
\end{align}
Similarly, the minimization problem (\ref{Pohozaev-manifold}) is equivalent to the following problem
\begin{align}\label{equivalent-2}
K_m:=\inf\limits_{u\in \widehat{V}_m}E_m(u),
\end{align}
where
$$\widehat{V}_m:=\{u\in S(1):~ P_{m}(u)=0\},$$
\begin{align}\nonumber
P_{m}(u):=&||\nabla u||^2_{L^2}+\frac14m^{\alpha_1(p)}\iint\frac{1-e^{-\frac{|x-y|}{am^{\alpha_2(p)}}}}{|x-y|}|u(x)|^2|u(y)|^2dxdy\\ \label{Pohozaev-m}
&-\frac1{4a}m^{\alpha_3(p)}\iint e^{-\frac{|x-y|}{am^{\alpha_2(p)}}}|u(x)|^2|u(y)|^2dxdy-\frac{3(p-2)}{2p}||u||^p_{L^p},
\end{align}
and $\alpha_3(p):=\frac{2(14-5p)}{10-3p}$. It is observed that for any $p\in(2, 3)\cup(3, \frac{10}3)\cup(\frac{10}3, 6)$ and $r\geq0$, $e^{-\frac{r}{am^{\alpha_2(p)}}}\leq 1$ holds. Moreover, if $p\in(2, 3)$, then $\alpha_1(p)>0$; if $p\in(3, \frac{10}3)$, then $\alpha_1(p)<0$; if $p\in(\frac{10}3, 6)$, then $\alpha_1(p)>0, \alpha_3(p)>0$. For $m>0$, we denote by $\mathcal{N}_{m},~ \widetilde{\mathcal{N}}_{m}$ the corresponding minimizers, respectively,
$$\mathcal{N}_{m}:=\{v\in S(1): ~E_{m}(v)=J_{m}\}, \quad \widetilde{\mathcal{N}}_{m}:=\{v\in S(1): ~E_{m}(v)=K_{m}\}.$$
Clearly, $\mathcal{N}_{m}\neq\emptyset$ and $\widetilde{\mathcal{N}}_{m}\neq\emptyset$. Define $E_{0}(u):=\frac12||\nabla u||^2_{L^2}-\frac1{p}||u||^p_{L^p}$, $\widehat{V}_0:=\{u\in S(1):~ P_{0}(u)=||\nabla u||^2_{L^2}-\frac{3(p-2)}{2p}||u||^p_{L^p}=0\}$, and
$$\mathcal{N}_{0}:=\{v\in S(1): ~E_{0}(v)=J_{0}:=\inf\limits_{u\in S(1)}E_{0}(u)\},$$
$$\widetilde{\mathcal{N}}_{0}:=\{v\in S(1): ~E_{0}(v)=K_{0}:=\inf\limits_{u\in \widehat{V}_0}E_{0}(u)\}.$$
Thus, we have $W\in\mathcal{N}_{0}$ and $W\in\widetilde{\mathcal{N}}_{0}$.

Next, we will show some properties of $\widehat{V}_m$. Note that $P_{m}(u)=\frac{\partial{E_{m}(\kappa(u, \theta))}}{\partial{\theta}}\mid_{\theta=0}$. According to Lemma \ref{unique}, it is easy to get the following lemma.
\begin{lemma}\label{unique00}
Let $p\in(\frac{10}3, 6)$, then for any $u\in S(1)$, there exists a unique $\theta^{*}_m(u)\in\mathbb{R}$ such that $\kappa(u, \theta^{*}_m)\in\widehat{V}_m$ for all $m>0$. Moreover, $E_{m}(\kappa(u, \theta))<E_{m}(\kappa(u, \theta^{*}_m(u)))$ for any $\theta\neq\theta^{*}_m(u)$.
\end{lemma}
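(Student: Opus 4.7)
The plan is to mimic the proof of Lemma \ref{unique} (together with its supporting Lemmas \ref{unique0} and \ref{unique1}) almost verbatim, substituting $(E_m, P_m)$ for $(E, P)$ and the effective parameter $A:=a m^{\alpha_2(p)}>0$ for the original $a$. The crucial structural identity is $f_u'(\theta)=P_m(\kappa(u,\theta))$, where $f_u(\theta):=E_m(\kappa(u,\theta))$; this follows from the semigroup property $\kappa(\kappa(u,\theta),s)=\kappa(u,\theta+s)$ together with the relation $P_m(v)=\partial_\theta E_m(\kappa(v,\theta))|_{\theta=0}$ noted in the paragraph preceding the lemma, and it converts ``$\kappa(u,\theta_m^\ast)\in\widehat V_m$'' into ``$\theta_m^\ast$ is a critical point of $f_u$''.

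For existence of $\theta_m^\ast(u)$, a direct calculation gives
\begin{align*}
f_u(\theta)=&\,\frac{e^{2\theta}}{2}\|\nabla u\|_{L^2}^2+\frac{m^{\alpha_1(p)}e^{\theta}}{4}\iint\frac{1-e^{-|x-y|/(A e^{\theta})}}{|x-y|}|u(x)|^2|u(y)|^2\,dxdy\\
&-\frac{e^{\frac{3(p-2)}{2}\theta}}{p}\|u\|_{L^p}^p,
\end{align*}
and because $\frac{3(p-2)}{2}>2$ for $p\in(\tfrac{10}{3},6)$, the negative term decays (resp.\ grows) strictly faster than the other two, so $f_u(\theta)\to 0^+$ as $\theta\to-\infty$ and $f_u(\theta)\to-\infty$ as $\theta\to+\infty$. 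By continuity, $f_u$ attains its global maximum at some $\theta_m^\ast(u)\in\mathbb{R}$, and at such a point $f_u'(\theta_m^\ast(u))=P_m(\kappa(u,\theta_m^\ast(u)))=0$, i.e., $\kappa(u,\theta_m^\ast(u))\in\widehat V_m$.

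For uniqueness and the strict monotonicity clause, I would transfer Lemmas \ref{unique0} and \ref{unique1} to the rescaled setting: replacing $a$ by $A>0$ and inserting the fixed prefactors $m^{\alpha_1(p)}, m^{\alpha_3(p)}$ in the appropriate nonlocal terms does not alter the sign analysis of $h_1(\theta)$ or of the analogue of $h_2(\theta,r)$, which uses only $p\in(\tfrac{10}{3},6)$, $A>0$ and $r>0$. This yields
\[
E_m(u)\geq E_m(\kappa(u,\theta))+\frac{2\bigl[1-e^{\frac{3(p-2)}{2}\theta}\bigr]}{3(p-2)}P_m(u)+\frac{h_1(\theta)}{6(p-2)}\|\nabla u\|_{L^2}^2
\]
for all $u\in H^1(\mathbb{R}^3)$ and $\theta\in\mathbb{R}$. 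If $\theta_1\neq\theta_2$ both satisfied $\kappa(u,\theta_i)\in\widehat V_m$, applying this inequality twice (once to $\kappa(u,\theta_1)$ with shift $\theta_2-\theta_1$ and once with the roles reversed), using $P_m(\kappa(u,\theta_i))=0$ and $h_1(\theta_2-\theta_1)>0$, produces the contradictory cycle $E_m(\kappa(u,\theta_1))>E_m(\kappa(u,\theta_2))>E_m(\kappa(u,\theta_1))$. The strict inequality $E_m(\kappa(u,\theta))<E_m(\kappa(u,\theta_m^\ast(u)))$ for $\theta\neq\theta_m^\ast(u)$ then follows by applying the same estimate at $\kappa(u,\theta_m^\ast(u))\in\widehat V_m$.

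I expect no genuine obstacle: the proof of Lemma \ref{unique} never exploits a specific value of $a$, only its positivity, and the positivity analysis for $h_1, h_2$ is parameter-free. The one care point is bookkeeping, namely verifying term by term that the factors $m^{\alpha_1(p)}, m^{\alpha_3(p)}$ propagate correctly through the derivative computation so that $f_u'(\theta)=P_m(\kappa(u,\theta))$ holds with $P_m$ as defined in \eqref{Pohozaev-m}.
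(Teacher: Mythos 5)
Your proposal is correct and follows essentially the same route as the paper, which simply observes that $P_{m}(u)=\partial_\theta E_{m}(\kappa(u,\theta))|_{\theta=0}$ and invokes the argument of Lemma \ref{unique}; your verification that $\alpha_3(p)=\alpha_1(p)-\alpha_2(p)$, so that the prefactors propagate correctly through the $\theta$-derivative, is exactly the bookkeeping the paper leaves implicit. The transfer of Lemmas \ref{unique0} and \ref{unique1} with $a$ replaced by $am^{\alpha_2(p)}$ and the common factor $m^{\alpha_1(p)}$ pulled out works as you describe, since the sign analysis there uses only the positivity of the parameter.
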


\begin{lemma}\label{converges0}
For $p\in(\frac{10}3, 6)$, let $\{m_n\}^{\infty}_{n=1}$ be a positive sequence and $\lim\limits_{n\rightarrow+\infty}m_n=0$, and let $\theta^{*}_{m_n}=\theta^{*}_{m_n}(W)$ be such that $\kappa(W, \theta^{*}_{m_n})\in\widehat{V}_{m_n}$. Then $\lim\limits_{n\rightarrow+\infty}\theta^{*}_{m_n}=0$.
\end{lemma}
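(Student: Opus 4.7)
The plan is to analyze the constraint $P_{m_n}(\kappa(W,\theta^*_{m_n}))=0$ directly, exploiting the Pohozaev-type identity $\|\nabla W\|_{L^2}^2=\frac{3(p-2)}{2p}\|W\|_{L^p}^p=:A$ which $W$ inherits from solving the purely local equation (\ref{elliptic0}). Using the scaling formulas for $\kappa(W,\theta)$ together with the change of variables $(x,y)\mapsto(e^{-\theta}x,e^{-\theta}y)$ in the convolution integrals, this constraint reduces to
\[
A\bigl(e^{2\theta^*_{m_n}}-e^{\frac{3(p-2)}{2}\theta^*_{m_n}}\bigr)+R(\theta^*_{m_n},m_n)=0,\qquad(\star)
\]
where the remainder $R(\theta,m)$ is the sum of two nonlocal contributions carrying the prefactors $m^{\alpha_1(p)}e^{\theta}$ and $m^{\alpha_3(p)}$ respectively. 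Since $p\in(\tfrac{10}{3},6)$ gives $\alpha_1(p),\alpha_3(p)>0$, and since the two integrals are bounded uniformly (the first by Hardy--Littlewood--Sobolev applied to $|W|^2\in L^{6/5}$, the second trivially by $\|W\|_{L^2}^4=1$), one has $|R(\theta,m)|\leq Cm^{\alpha_1(p)}e^{\theta}+Cm^{\alpha_3(p)}$, whence $R(\theta,m)\to 0$ as $m\to 0^+$ uniformly on sets of $\theta$ bounded above.

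To bound $\theta^*_{m_n}$ from above I would argue by contradiction: if $\theta^*_{m_n}\to+\infty$ along a subsequence, then $p>\tfrac{10}{3}$ makes $\tfrac{3(p-2)}{2}>2$, so dividing $(\star)$ by $e^{3(p-2)\theta^*_{m_n}/2}$ sends the left-hand side to $-A\neq 0$ (since $e^{(2-3(p-2)/2)\theta^*_{m_n}}\to 0$), while the right-hand side tends to $0$ (its two pieces become $m_n^{\alpha_1(p)}e^{(1-3(p-2)/2)\theta^*_{m_n}}$, which is bounded by $m_n^{\alpha_1(p)}$ since $p>8/3$, and $m_n^{\alpha_3(p)}e^{-3(p-2)\theta^*_{m_n}/2}$, manifestly vanishing). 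This contradiction yields $\theta^*_{m_n}\leq M$ for some $M>0$.

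The lower bound on $\theta^*_{m_n}$ is the main obstacle: a direct substitution into $(\star)$ with $\theta\to-\infty$ sends both sides to $0$, giving no contradiction. Instead, I would exploit the variational meaning of $\widehat{V}_{m_n}$. Using $\alpha_1-\alpha_2=\alpha_3$ and the pointwise inequality $\frac{1-e^{-r}}{r}-e^{-r}=\frac{1-(1+r)e^{-r}}{r}\geq 0$ (which follows since $r\mapsto 1-(1+r)e^{-r}$ vanishes at $r=0$ and has derivative $re^{-r}\geq 0$), the two nonlocal terms of $P_m(u)$ combine into a single non-negative contribution, so that $P_m(u)=0$ with $u\in S(1)$ forces $\|\nabla u\|_{L^2}^2\leq\tfrac{3(p-2)}{2p}\|u\|_{L^p}^p$; Gagliardo--Nirenberg then yields a uniform (in $m$) lower bound $\|\nabla u\|_{L^2}\geq c_1>0$ on $\widehat{V}_m$. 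Rewriting $E_m(u)=E_m(u)-\tfrac{2}{3(p-2)}P_m(u)$ and observing that all three coefficients appearing in that identity are positive when $p>\tfrac{10}{3}$, we deduce $K_{m_n}\geq\tfrac{3p-10}{6(p-2)}c_1^2=:c_2>0$ uniformly in $n$. Since $\kappa(W,\theta^*_{m_n})\in\widehat{V}_{m_n}$, this forces $E_{m_n}(\kappa(W,\theta^*_{m_n}))\geq c_2$; but if $\theta^*_{m_n}\to-\infty$ then each of the three terms of $E_{m_n}(\kappa(W,\theta^*_{m_n}))$ vanishes (uniformly in $m_n$), a contradiction.

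Finally, with $\{\theta^*_{m_n}\}$ bounded, any subsequential limit $\theta^*$ satisfies $A(e^{2\theta^*}-e^{3(p-2)\theta^*/2})=0$ by passing to the limit in $(\star)$; since $p\neq\tfrac{10}{3}$ this equation admits $\theta^*=0$ as its only root, so by uniqueness of the limit the whole sequence satisfies $\theta^*_{m_n}\to 0$.
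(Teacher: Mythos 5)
Your proof is correct, and its skeleton (uniform positive lower bound on the constrained level, boundedness of $\theta^*_{m_n}$ above and below, then passage to the limit in the Pohozaev constraint to force $\theta^*=0$) coincides with the paper's three-step argument; but two of the steps are executed by genuinely different means. For the upper bound the paper argues through the energy: if $\theta^*_{m_n}\to+\infty$ then $E_{m_n}(\kappa(W,\theta^*_{m_n}))\to-\infty$ because $\frac{3(p-2)}{2}>2$, contradicting $E_{m_n}\geq K_{m_n}>d$; you instead divide the constraint $(\star)$ itself by $e^{\frac{3(p-2)}{2}\theta}$ and read off the contradiction $-A=0$, which is more elementary and does not even use $m_n\to0$. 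More significantly, for the uniform bound $K_{m_n}\geq c_2>0$ the paper invokes the min-max characterization $K_m=\inf_{u\in S(1)}\max_{\theta}E_m(\kappa(u,\theta))$ ("similar to Lemma \ref{gamma3}") and a Gagliardo--Nirenberg estimate whose positivity is only implicit; you derive the bound directly on $\widehat{V}_m$ by combining the two nonlocal terms of $P_m$ (via $\alpha_1-\alpha_2=\alpha_3$ and $1-(1+s)e^{-s}\geq0$) to get $\|\nabla u\|_{L^2}\geq c_1$ from Gagliardo--Nirenberg, and then using the coercive identity $E_m(u)=E_m(u)-\frac{2}{3(p-2)}P_m(u)\geq\frac{3p-10}{6(p-2)}\|\nabla u\|_{L^2}^2$. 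This is more self-contained and actually patches the least rigorous point of the paper's Step 1. The ruling-out of $\theta^*_{m_n}\to-\infty$ (energy tends to $0$, below the uniform level) and the final limit passage in the constraint are the same in both arguments, including your observation that the remainder $R(\theta,m)$ vanishes uniformly for $\theta$ bounded above because $\alpha_1(p),\alpha_3(p)>0$.
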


\begin{proof}
The proof is divided into the following steps.\\
\textbf{Step 1:} there exists $\delta>0$ and $d>0$ such that $K_{m_n}>d$ for $0<m_n<\delta$.

Indeed, for any $\varepsilon>0$, there $\delta>0$, for $0<m<\delta$ and any $u\in S(1)$, by applying Gagliardo-Nirenberg inequality, we have
\begin{align*}
E_{m}(u)&\geq\frac12||\nabla u||^2_{L^2}+\frac14m^{\alpha_1(p)}\iint\frac{1-e^{-\frac{|x-y|}{am^{\alpha_2(p)}}}}{|x-y|}|u(x)|^2|u(y)|^2dxdy-\frac{C_{GN}}{p}||\nabla u||^{\frac{3(p-2)}2}_{L^2}\\
&\geq\frac12||\nabla u||^2_{L^2}-\frac{\varepsilon}4-\frac{C_{GN}}{p}||\nabla u||^{\frac{3(p-2)}2}_{L^2}.
\end{align*}
Similar to Lemma \ref{gamma3}, we have
\begin{align*}
K_{m}=\inf\limits_{u\in S(1)}\max\limits_{\theta\in\mathbb{R}}E_{m}(\kappa(u, \theta)).
\end{align*}
Therefore, there exists $d>0$ such that $K_{m_n}>d$ for $0<m_n<\delta$.\\
\textbf{Step 2:} $\{\theta^{*}_{m_n}\}^{\infty}_{n=1}$ is bounded.

We assume by contradiction $\theta^{*}_{m_n}\rightarrow+\infty$ up to a subsequence, we obtain
\begin{align}\nonumber
E_{m_n}(\kappa(u, \theta^{*}_{m_n}))=&\frac{e^{2\theta^{*}_{m_n}}}2||\nabla u||^2_{L^2}-\frac{e^{\frac{3(p-2)}{2}\theta^{*}_{m_n}}}{p}||u||^p_{L^p}\\ \label{converges-estimate}
&+\frac{e^{\theta^{*}_{m_n}}}4m_{n}^{\alpha_1(p)}\iint\frac{1-e^{-\frac{|x-y|}{am_{n}^{\alpha_2(p)}e^{\theta^{*}_{m_n}}}}}{|x-y|}|u(x)|^2|u(y)|^2dxdy.
\end{align}
Since $\frac{3(p-2)}2>2$, we have $E_{m_n}(\kappa(u, \theta^{*}_{m_n}))\rightarrow-\infty$ as $n\rightarrow+\infty$, which contradicts the Step 1. Thus $\{\theta^{*}_{m_n}\}^{\infty}_{n=1}$ is bounded.\\
\textbf{Step 3:} $\lim\limits_{n\rightarrow+\infty}\theta^{*}_{m_n}=0$.

Since $\{\theta^{*}_{m_n}\}^{\infty}_{n=1}$ is bounded, we argue by contradiction and assume that there exists $\theta^{*}_0\neq0$ satisfying $\lim\limits_{n\rightarrow+\infty}\theta^{*}_{m_n}\rightarrow\theta^{*}_0$ up to a subsequence. According to $\kappa(W, \theta^{*}_{m_n})\in\widehat{V}_{m_n}$, we have
\begin{align*}
P_{m}(\kappa(W, \theta^{*}_{m_n}))=&e^{2\theta^{*}_{m_n}}||\nabla W||^2_{L^2}+\frac{e^{\theta^{*}_{m_n}}}4m_{n}^{\alpha_1(p)}
\iint\frac{1-e^{-\frac{|x-y|}{am_{n}^{\alpha_2(p)}e^{\theta^{*}_{m_n}}}}}{|x-y|}|W(x)|^2|W(y)|^2dxdy\\
&-\frac{e^{\theta^{*}_{m_n}}}{4a}m_{n}^{\alpha_3(p)}\iint e^{-\frac{|x-y|}{am_{n}^{\alpha_2(p)}e^{\theta^{*}_{m_n}}}}|W(x)|^2|W(y)|^2dxdy-\frac{3(p-2)}{2p}e^{\frac{3(p-2)}{2}\theta^{*}_{m_n}}||W||^p_{L^p}\\
=&0.
\end{align*}
Since $\alpha_1(p)>0, \alpha_3(p)>0$ and $\lim\limits_{n\rightarrow+\infty}m_n=0$, we get
\begin{align*}
\lim\limits_{n\rightarrow+\infty}E_{m_n}(\kappa(W, \theta^{*}_{m_n}))=e^{2\theta^{*}_0}||\nabla W||^2_{L^2}-\frac{3(p-2)}{2p}e^{\frac{3(p-2)}{2}\theta^{*}_0}||W||^p_{L^p}=0.
\end{align*}
Due to $W\in\widehat{V}_0$, we have $\theta^{*}_0=0$ or $\theta^{*}_0=-\infty$. If $\theta^{*}_0=-\infty$, then by (\ref{converges-estimate}) we obtain
$$E_{m_n}(\kappa(W, \theta^{*}_{m_n}))\rightarrow0 ~\hbox{as}~ n\rightarrow+\infty,$$
which is a contradiction with the Step 1. Hence, $\theta^{*}_0=0$ and the proof is completed.
\end{proof}

We now are ready to establish Theorem \ref{concentration-behavior}.

\begin{proof}[Proof of Theorem \ref{concentration-behavior}]
As mentioned in (\ref{scaling-Q}), (\ref{converges-Q00}) is equivalent to the existence of a sequence $\{\widetilde{z}_n\}^{\infty}_{n=1}\subset\mathbb{R}^3$ under the condition (a) or (b) such that
\begin{align}\label{converges-W}
m_n^{-\frac4{10-3p}}u_{m_n}\left(m_n^{-\frac{2(p-2)}{10-3p}}x+\widetilde{z}_n\right)\rightarrow W(x)~~\hbox{in}~H^1(\mathbb{R}^3), ~~\hbox{as}~n\rightarrow+\infty.
\end{align}
Here $W$ is the ground state solution of Eq.(\ref{elliptic0}). We set $v_n(x):=m_n^{-\frac4{10-3p}}u_{m_n}\left(m_n^{-\frac{2(p-2)}{10-3p}}x\right)$, then $||v_n||_{L^2}=1$, namely, $v_n\in S(1)$. Therefore, to show that (\ref{converges-W}) holds, it suffices to demonstrate the following conclusion:
\begin{itemize}
 \item[(1)] Let $p\in(2, 3)$ and $v_n\in\mathcal{N}_{m_n}$, where $m_n\rightarrow0^+$, as $n\rightarrow+\infty$.
 \item[(2)] Let $p\in(3, \frac{10}3)$ and $v_n\in\mathcal{N}_{m_n}$, where $m_n\rightarrow+\infty$, as $n\rightarrow+\infty$.
 \item[(3)] Let $p\in(\frac{10}3, 6)$ and $v_n\in\widetilde{\mathcal{N}}_{m_n}$, where $m_n\rightarrow0^+$, as $n\rightarrow+\infty$.
\end{itemize}
Then, up to a subsequence there exists $\tau_n\in\mathbb{R}^3$ such that
$$v_n(x+\tau_n)\rightarrow W~~\hbox{in}~H^1(\mathbb{R}^3), ~~\hbox{as}~n\rightarrow+\infty.$$

We first prove $J_{m_n}\rightarrow J_{0}$ as $n\rightarrow+\infty$. Notice that $J_{0}\leq J_{m_n}$ due to the positivity of $m_n$. Hence it is sufficient to prove $\limsup\limits_{n\rightarrow+\infty}J_{m_n}\leq J_{0}$. This fact follows from
\begin{align*}
J_{m_n}&\leq E_{m_n}(W)=E_{0}(W)+\frac{m_{n}^{\alpha_1(p)}}{4}\iint\frac{1-e^{-\frac{|x-y|}{am_{n}^{\alpha_2(p)}}}}{|x-y|}
|W(x)|^2|W(y)|^2dxdy=J_{0}+o_n(1).
\end{align*}
Then, we prove $v_n$ converge to $W$ up to subsequence and translation. By the previous step we deduce that $\{v_n\}^{\infty}_{n=1}$ is a minimizing sequence for $J_{0}$. As a consequence of the results proved in \cite{Cazenave1982, Kwong1989} we deduce that $v_n$ converge strongly (up to translation) to $W(x)$ in $H^1(\mathbb{R}^3)$ and hence (1) is proved. Similarly, we can show that (2) holds.

We denote $\kappa(W, \theta^{*}_{m_n})\in\widehat{V}_{m_n}$, thus by Lemma \ref{converges0} we obtain $\theta^{*}_{m_n}\rightarrow0$ as $n\rightarrow+\infty$. Then
\begin{align*}
K_{m_n}&\leq E_{m_n}(\kappa(W, \theta^{*}_{m_n}))=E_{0}(\kappa(W, \theta^{*}_{m_n}))+\frac{e^{\theta^{*}_{m_n}}}4\alpha_1(p)
\iint\frac{1-e^{-\frac{|x-y|}{am_{n}^{\alpha_2(p)}e^{\theta^{*}_{m_n}}}}}{|x-y|}|W(x)|^2|W(y)|^2dxdy\\
&= K_{0}+o_n(1).
\end{align*}
Therefore, $\limsup\limits_{n\rightarrow+\infty}K_{m_n}\leq K_{0}$. Let $v_n\in\widetilde{\mathcal{N}}_{m_n}$, similar to Lemma \ref{unique1}, we have
\begin{align*}
K_{m_n}=E_{m_n}(v_n)\geq\frac2{3(p-2)}P_{m_n}(v_n)+\frac{3p-10}{6(p-2)}||\nabla v_n||^2_{L^2}=\frac{3p-10}{6(p-2)}||\nabla v_n||^2_{L^2}.
\end{align*}
It is important to note that $K_{m_n}$ is bounded, thus $||\nabla v_n||^2_{L^2}$ is also bounded.

We claim that there exists $d_1>0$ such that $||v_n||^p_{L^p}\geq d_1$. If up to a subsequence $||v_n||^p_{L^p}\rightarrow0$, by the fact that $P_{m_n}(v_n)=0$, we get
\begin{align*}
||\nabla v_n||&^2_{L^2}+\frac{m_{n}^{\alpha_1(p)}}4\iint\frac{1-e^{-\frac{|x-y|}{am_n^{\alpha_2(p)}}}}{|x-y|}|v_n(x)|^2|v_n(y)|^2dxdy\\
&=\frac{m_n^{\alpha_3(p)}}{4a}\iint e^{-\frac{|x-y|}{am_n^{\alpha_2(p)}}}|v_n(x)|^2|v_n(y)|^2dxdy+\frac{3(p-2)}{2p}||v_n||^p_{L^p}\\
&\leq C(a)m_n^{\alpha_3(p)}||v_n||^4_{L^2}+\frac{3(p-2)}{2p}||v_n||^p_{L^p}\rightarrow0, ~~\hbox{as}~n\rightarrow+\infty,
\end{align*}
where $C(a)$ is a positive constant. Therefore, $\liminf\limits_{n\rightarrow+\infty}E_{m_n}(v_n)\leq0$, which is impossible because of $v_n\in\widetilde{\mathcal{N}}_{m_n}$.
Assume $\kappa(v_n, \theta^{*}_0(v_n))\in\widetilde{V}_0$, that is, $||\nabla v_n||^2_{L^2}=\frac{3(p-2)}{2p}e^{\frac{3p-10}{2}\theta^{*}_0(v_n)}||v_n||^p_{L^p}$. Combining the fact that $||v_n||^p_{L^p}$ has a positive lower bound and that $||\nabla v_n||^2_{L^2}$ has a positive upper bound, $\{\theta^{*}_0(v_n)\}^{\infty}_{n=1}$ is bounded. Hence,
\begin{align*}
K_{m_n}=&E_{m_n}(v_n)\geq E_{m_n}(\kappa(v_n, \theta^{*}_0(v_n)))\\
&+\frac{e^{\theta^{*}_0(v_n)}}4m_n^{\alpha_1(p)}\iint\frac{1-e^{-\frac{|x-y|}{am_n^{\alpha_2(p)}e^{\theta^{*}_0(v_n)}}}}{|x-y|}|v_n(x)|^2|v_n(y)|^2dxdy\\
=&K_{0}+o_n(1).
\end{align*}
Then, $\liminf\limits_{n\rightarrow+\infty}K_{m_n}\geq K_{0}$. So $K_{m_n}\rightarrow K_{0}$ as $n\rightarrow+\infty$. Standard methods(see in \cite{Cazenave1982, Kwong1989}) complete the proof of (3).

Finally, we conclude from Lemma \ref{limits2}(2) and the definition of $s_{m_n}$ that
\begin{align*}
\lim\limits_{n\rightarrow+\infty}
\left(\frac{||Q||_{L^2}}{m_n}\right)^{\frac{4(p-2)}{10-3p}}\omega_{m_n}=\frac{6-p}{3(p-2)}\left(\frac{3(p-2)}4\right)^{\frac4{10-3p}}.
\end{align*}
Then, the proof of Theorem \ref{concentration-behavior} is completed.
\end{proof}

\renewcommand{\theequation}
{\thesection.\arabic{equation}}
\setcounter{equation}{0}
\section{The radial symmetry and uniqueness} \noindent

This section is devoted to discussing the radial symmetry and uniqueness of the normalized ground states for Eq.(\ref{main5}). At the beginning, let us fix some notations. We define the following linear operator $L: H^1(\mathbb{R}^3)\rightarrow H^{-1}(\mathbb{R}^3)$ by the linearization of Eq.(\ref{elliptic0}) at $W$
$$L:=-\Delta+\omega_0-(p-1)W^{p-2}.$$
It is well-known that
$$T_{W}:=\ker{L}=\hbox{span}\left\{\frac{\partial{W}}{\partial{x_1}}, \frac{\partial{W}}{\partial{x_2}}, \frac{\partial{W}}{\partial{x_3}}\right\}.$$
For any $\tau\in\mathbb{R}^3$, we set $W_{\tau}(x):=W(x+\tau)$, and denote the orthogonal space of $T_{W_\tau}$ in $H^1(\mathbb{R}^3)$ with respect to the $L^2(\mathbb{R}^3)$ scalar product by $T^{\bot}_{W_\tau}$. If $\mathcal{M}$ is a vector space, then we use $\pi_\mathcal{M}$ to denote the orthogonal projection on the vector space $\mathcal{M}$ with respect to the $L^2(\mathbb{R}^3)$ scalar product. Finally, $H^1_r(\mathbb{R}^3)$ denotes the functions in $H^1(\mathbb{R}^3)$ that are radially symmetric.

\begin{lemma}\label{converges-Q0}
\begin{itemize}
 \item[(1)] Let $p\in(2, 3)$, for every $\varepsilon_1>0$, there exists $m(\varepsilon_1)>0$ such that
     $\sup\limits_{\omega\in\mathcal{A}_m}|\omega-\omega_0|<\varepsilon_1, ~\forall 0<m<m(\varepsilon_1)$.
 \item[(2)] Let $p\in(3, \frac{10}3)$, for every $\varepsilon_2>0$, there exists $m(\varepsilon_2)>0$ such that $\sup\limits_{\omega\in\mathcal{A}_m}|\omega-\omega_0|<\varepsilon_2, ~\forall m>m(\varepsilon_2)$.
 \item[(3)] Let $p\in(\frac{10}3, 6)$, for every $\varepsilon_3>0$, there exists $m(\varepsilon_3)>0$ such that $\sup\limits_{\omega\in\mathcal{B}_m}|\omega-\omega_0|<\varepsilon_3, ~\forall 0<m<m(\varepsilon_3)$.
\end{itemize}
Here,
$$\mathcal{A}_m:=\left\{-\Delta v+\omega v+m^{\alpha_1(p)}\int\frac{1-e^{-\frac{|x-y|}{am^{\alpha_2(p)}}}}{|x-y|}|v(y)|^2dyv=|v|^{p-2}v, ~v\in\mathcal{N}_{m}\right\},$$
$$\mathcal{B}_m:=\left\{-\Delta v+\omega v+m^{\alpha_1(p)}\int\frac{1-e^{-\frac{|x-y|}{am^{\alpha_2(p)}}}}{|x-y|}|v(y)|^2dyv=|v|^{p-2}v, ~v\in\widetilde{\mathcal{N}}_{m}\right\}.$$
\end{lemma}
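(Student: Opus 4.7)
The plan is to argue by contradiction using the strong $H^1$ convergence delivered by Theorem \ref{concentration-behavior}. If (1) fails, there exist $\varepsilon_1>0$, a sequence $m_n\rightarrow 0^+$, minimizers $v_n\in\mathcal{N}_{m_n}$ and corresponding Lagrange multipliers $\omega_n\in\mathcal{A}_{m_n}$ with $|\omega_n-\omega_0|\geq\varepsilon_1$ for every $n$. I will produce a contradiction by showing that $\omega_n\rightarrow\omega_0$ along a subsequence. Cases (2) and (3) are handled by the identical argument, only replacing $m_n\rightarrow 0^+$ with $m_n\rightarrow+\infty$ in (2) and $\mathcal{N}_{m_n}$ with $\widetilde{\mathcal{N}}_{m_n}$ in (3); so I focus on (1).

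First I invoke the conclusion (\ref{converges-W}) obtained inside the proof of Theorem \ref{concentration-behavior}: after extracting a subsequence, there exists $\tau_n\in\mathbb{R}^3$ such that $\tilde v_n(\cdot):=v_n(\cdot+\tau_n)\rightarrow W$ strongly in $H^1(\mathbb{R}^3)$, where $W$ is the unique positive radial solution of $-\Delta W+\omega_0 W=|W|^{p-2}W$ with $||W||_{L^2}=1$. By translation invariance of the rescaled equation, $\tilde v_n$ still satisfies this equation with the same multiplier $\omega_n$; testing against $\tilde v_n$ and using $||\tilde v_n||^2_{L^2}=1$ produces
$$\omega_n=\int|\tilde v_n|^p dx-\int|\nabla\tilde v_n|^2 dx-m_n^{\alpha_1(p)}\int\tilde\phi_n|\tilde v_n|^2 dx,$$
where $\tilde\phi_n(x):=\int\frac{1-e^{-|x-y|/(am_n^{\alpha_2(p)})}}{|x-y|}|\tilde v_n(y)|^2 dy$.

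Now I pass to the limit term by term. Strong $H^1$ convergence together with $H^1(\mathbb{R}^3)\hookrightarrow L^q(\mathbb{R}^3)$ for $q\in[2,6]$ yields $\int|\tilde v_n|^p dx\rightarrow\int W^p dx$ and $\int|\nabla\tilde v_n|^2 dx\rightarrow\int|\nabla W|^2 dx$. For the nonlocal piece, the corollary following Lemma \ref{HLS} gives
$$0\leq\int\tilde\phi_n|\tilde v_n|^2 dx\leq\iint\frac{|\tilde v_n(x)|^2|\tilde v_n(y)|^2}{|x-y|}dxdy\leq C||\tilde v_n||^4_{L^{12/5}},$$
which is uniformly bounded in $n$. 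A direct inspection of $\alpha_1(p)=\frac{8(3-p)}{10-3p}$ shows $\alpha_1(p)>0$ in cases (1) and (3) (where $m_n\rightarrow 0^+$) and $\alpha_1(p)<0$ in case (2) (where $m_n\rightarrow+\infty$), so $m_n^{\alpha_1(p)}\rightarrow 0$ in every situation and the nonlocal contribution disappears. Testing $-\Delta W+\omega_0 W=|W|^{p-2}W$ against $W$ finally yields $\omega_0=\int W^p dx-\int|\nabla W|^2 dx$, hence $\omega_n\rightarrow\omega_0$, contradicting $|\omega_n-\omega_0|\geq\varepsilon_1$.

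The only delicate point is the appeal to \emph{strong} (not merely weak) $H^1$ convergence of $\tilde v_n$ to $W$; this is actually a byproduct of the proof of Theorem \ref{concentration-behavior}, which establishes $J_{m_n}\rightarrow J_0$ or $K_{m_n}\rightarrow K_0$ together with the uniqueness of $W$ as minimizer of the limiting problem, so the present lemma only needs to quote that conclusion. Once strong convergence is in hand, the vanishing of the nonlocal contribution follows purely from the uniform $L^{12/5}$ bound on $\{\tilde v_n\}$ combined with $m_n^{\alpha_1(p)}\rightarrow 0$, and no further compactness arguments are required.
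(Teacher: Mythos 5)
Your proposal is correct and follows essentially the same route as the paper: test the rescaled Euler--Lagrange equation against the minimizer to express $\omega$ as $\|v\|_{L^p}^p-\|\nabla v\|_{L^2}^2-m^{\alpha_1(p)}\!\int\tilde\phi\,|v|^2\,dx$ (using $\|v\|_{L^2}=1$), then pass to the limit via the strong $H^1$ convergence to $W$ furnished by Theorem \ref{concentration-behavior}. Your write-up is in fact more explicit than the paper's on the two points it glosses over — the reduction of the supremum statement to a sequential contradiction, and the vanishing of the nonlocal term via the uniform Hardy--Littlewood--Sobolev bound combined with $m^{\alpha_1(p)}\to 0$ in each of the three regimes — but these are elaborations of the same argument, not a different one.
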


\begin{proof}
We just prove the item (1), the other two items can be similarly proved. By looking at the equation satisfied by $v\in\mathcal{N}_{m}$, we deduce
\begin{align}\label{converges-o}
\omega=\frac{||v||^p_{L^p}-||\nabla v||^2_{L^2}-m^{\alpha_1(p)}\iint\frac{1-e^{-\frac{|x-y|}{am^{\alpha_2(p)}}}}{|x-y|}|u(x)|^2|u(y)|^2dxdy}{||v||^2_{L^2}}.
\end{align}
The proof can be concluded since by Theorem \ref{concentration-behavior} we get that as $m\rightarrow0^+$ (\ref{converges-o}) converges to
\begin{align*}
\frac{||W||^p_{L^p}-||\nabla W||^2_{L^2}}{||W||^2_{L^2}}=\omega_0.
\end{align*}
\end{proof}

\begin{proposition}\label{implicit-function0}
There exists a constant $\varepsilon_4>0$ such that, for any $u\in B_{\varepsilon_4}(W):=\{u\in H^1(\mathbb{R}^3):~ ||u-W||_{H^1}\leq\varepsilon_4\}$, there are a unique $\tau(u)\in\mathbb{R}^3$ and a unique $R(u)\in T^{\bot}_{W_{\tau(u)}}$ such that
$$u=W_{\tau(u)}+R(u).$$
In addition, $||\tau(u)||_{\mathbb{R}^3}<\varepsilon_4$ and $||R(u)||_{H^1}<\varepsilon_4$ with $\tau(W)=0$, $R(W)=0$. Here we define an operator $\mathcal{P}:~ B_{\varepsilon_4}(W)\rightarrow H^1(\mathbb{R}^3)$ by $\mathcal{P}(u):=W_{\tau(u)}$, and $\mathcal{P}\in C^1(B_{\varepsilon_4}(W), H^1(\mathbb{R}^3))$.
\end{proposition}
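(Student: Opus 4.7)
The plan is a standard application of the implicit function theorem in a neighborhood of $W$, parametrized by the translation variable $\tau \in \mathbb{R}^3$. Define the map
$$F : H^1(\mathbb{R}^3) \times \mathbb{R}^3 \longrightarrow \mathbb{R}^3, \qquad F(u, \tau) := \Bigl( \langle u - W_\tau, \partial_{x_i} W_\tau \rangle_{L^2} \Bigr)_{i=1,2,3}.$$
The point of this definition is that $F(u, \tau) = 0$ is precisely the condition that $u - W_\tau$ be $L^2$-orthogonal to $T_{W_\tau} = \operatorname{span}\{\partial_{x_1} W_\tau, \partial_{x_2} W_\tau, \partial_{x_3} W_\tau\}$, i.e.\ $u - W_\tau \in T^\bot_{W_\tau}$. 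Clearly $F$ is of class $C^1$ and $F(W, 0) = 0$.

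Next I would compute the partial Jacobian $D_\tau F(W, 0)$. Using $\partial_{\tau_j} W_\tau = \partial_{x_j} W_\tau$ and evaluating at $(u, \tau) = (W, 0)$, the cross term $\langle u - W_\tau, \partial_{\tau_j}\partial_{x_i} W_\tau\rangle_{L^2}$ vanishes, leaving
$$\bigl[ D_\tau F(W, 0) \bigr]_{ij} = -\langle \partial_{x_j} W, \partial_{x_i} W \rangle_{L^2}.$$
Since $W$ is radial (it is a scaling of $Q$, which is radial by \cite{Kwong1989}), the off-diagonal entries vanish and the diagonal entries all equal $-\tfrac{1}{3}\|\nabla W\|_{L^2}^2 < 0$. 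Hence $D_\tau F(W, 0) = -\tfrac{1}{3}\|\nabla W\|_{L^2}^2\, I_3$ is invertible.

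The implicit function theorem then yields $\varepsilon_4 > 0$ and a $C^1$ map $\tau : B_{\varepsilon_4}(W) \to \mathbb{R}^3$ with $\tau(W) = 0$, $\|\tau(u)\|_{\mathbb{R}^3} < \varepsilon_4$, and $F(u, \tau(u)) = 0$, uniquely determined in a neighborhood of $(W, 0)$. Setting $R(u) := u - W_{\tau(u)}$, the vanishing of $F(u, \tau(u))$ is exactly the statement $R(u) \in T^\bot_{W_{\tau(u)}}$. After possibly shrinking $\varepsilon_4$, the bound $\|R(u)\|_{H^1} \leq \|u - W\|_{H^1} + \|W - W_{\tau(u)}\|_{H^1} < \varepsilon_4$ follows from continuity of $\tau \mapsto W_\tau$ in $H^1$ and $\tau(W) = 0$. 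Finally, $\mathcal{P}(u) = W_{\tau(u)}$ is $C^1$ as the composition of the $C^1$ maps $u \mapsto \tau(u)$ and $\tau \mapsto W_\tau$.

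No step here is really a serious obstacle; the only subtlety is making sure the second term in $D_\tau F$ drops out at $(W, 0)$ (which it does because $u - W_\tau = 0$ there) and that the Gram matrix of $\{\partial_{x_i} W\}$ is nondegenerate, which follows from the radiality of $W$. The conclusion, including uniqueness of the decomposition inside the neighborhood, is then immediate from the uniqueness clause of the implicit function theorem.
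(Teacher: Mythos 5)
Your proof is correct, but it sets up the implicit function theorem differently from the paper. The paper works with the map $\Phi_1(W_\tau,h)=\bigl(W_\tau+h,\ \langle v_1(\cdot+\tau),h\rangle,\ \langle v_2(\cdot+\tau),h\rangle,\ \langle v_3(\cdot+\tau),h\rangle\bigr)$ from (the translation orbit of $W$) $\times\, H^1(\mathbb{R}^3)$ into $H^1(\mathbb{R}^3)\times\mathbb{R}^3$, and must verify that the differential $d\Phi_1(W,0)$ is a bijection between the infinite-dimensional spaces $T_W\times H^1(\mathbb{R}^3)$ and $H^1(\mathbb{R}^3)\times\mathbb{R}^3$ before invoking the (inverse) function theorem; the unknown there is the full pair $(\tau,h)$. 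You instead reduce to the finite-dimensional system $F(u,\tau)=\bigl(\langle u-W_\tau,\partial_{x_i}W_\tau\rangle_{L^2}\bigr)_{i=1,2,3}=0$, solve for $\tau=\tau(u)$ alone, and then simply define $R(u)=u-W_{\tau(u)}$; the only invertibility needed is that of the $3\times3$ Gram matrix of $\{\partial_{x_i}W\}$, which radiality of $W$ makes diagonal. This is the standard modulation-equation formulation and is somewhat more economical: it avoids the surjectivity/injectivity check in infinite dimensions and makes transparent exactly where nondegeneracy of the translation directions enters. The two routes deliver the same decomposition and the same local uniqueness (within $\|\tau\|,\|R\|<\varepsilon_4$, which is all the statement asserts). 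The only points you pass over quickly --- that $F$ and $\tau\mapsto W_\tau$ are $C^1$ into the relevant spaces, which uses $W\in H^2$ (true since $W$ is a smooth, exponentially decaying rescaling of $Q$), and that the second term in $D_\tau F$ vanishes at $(W,0)$ --- you do flag, and they are harmless.
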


\begin{proof}
First we define a map $\Phi_1:~ H^1(\mathbb{R}^3)\times H^1(\mathbb{R}^3)\rightarrow H^1(\mathbb{R}^3)\times\mathbb{R}^3$ by
\begin{align*}
\Phi_1(W_{\tau}, h):=\left(W_{\tau}+h, \int v_1(x+\tau)\bar{h}dx, \int v_2(x+\tau)\bar{h}dx, \int v_3(x+\tau)\bar{h}dx\right),
\end{align*}
where $v_1, v_2, v_3$ belong to $H^1(\mathbb{R}^3)$ such that $\hbox{span}\{v_1, v_2, v_3\}=T_{W}$. It is simple to find that $\Phi_1(W, 0)=(W, 0)$, and for any $h, k\in H^1(\mathbb{R}^3)$,
\begin{align}\label{implicit-function0-1}
d\Phi_1(W, 0)(h, k)=\left(h+k, \int v_1\bar{k}, \int v_2\bar{k}, \int v_3\bar{k}\right),
\end{align}
where $d\Phi_1$ denotes the differential of $\Phi_1$ at the point $x$. We shall prove that the linear operator $d\Phi_1(W, 0)\in\mathcal{L}(T_{W}\times H^1(\mathbb{R}^3), H^1(\mathbb{R}^3)\times\mathbb{R}^3)$ is invertible. The boundedness of the operator $d\Phi_1(W, 0)$ is obvious by the definition (\ref{implicit-function0-1}). Observe that
\begin{align*}
&\{d\Phi_1(W, 0)(-h, h):~ h\in T_{W}\}=\{0\}\times\mathbb{R}^3,\\
&\{d\Phi_1(W, 0)(0, k):~ k\in T^{\bot}_{W}\}=T^{\bot}_{W}\times\{0\},\\
&\{d\Phi_1(W, 0)(h, 0):~ h\in T_{W}\}=T_{W}\times\{0\}.
\end{align*}
This means that $d\Phi_1(W, 0)$ is surjective. Next we prove that $d\Phi_1(W, 0)$ is injective. Assume that $(h, k)\in T_{W}\times H^1(\mathbb{R}^3)$ such that $d\Phi_1(W, 0)(h, k)=(0, 0)\in H^1(\mathbb{R}^3)\times\mathbb{R}^3$, from (\ref{implicit-function0-1}), then $h+k=0$ and $k\in T^{\bot}_{W}$. Noting that $h\in T_{W}$, then we have $h=k=0$. Thus, $d\Phi_1(W, 0)$ is injective. By applying the implicit function theorem,
the conclusion can be derived.
\end{proof}

\begin{lemma}\label{converges-N}
We define $N:~H^1(\mathbb{R}^3)\rightarrow\mathbb{R}$ by $N(u):=\iint\frac{1-e^{-\frac{|x-y|}{am^{\alpha_2(p)}}}}{|x-y|}|u(x)|^2|u(y)|^2dxdy=:\int\phi_{u, m}|u|^2dx$, where $\phi_{u, m}:=\frac{1-e^{-\frac{|x|}{am^{\alpha_2(p)}}}}{|x|}\ast|u|^2=\int\frac{1-e^{-\frac{|x-y|}{am^{\alpha_2(p)}}}}{|x-y|}|u(y)|^2dy$. Then,
\begin{itemize}
  \item[(1)] $N':~ H^1(\mathbb{R}^3)\rightarrow H^{-1}(\mathbb{R}^3)$ is weakly sequentially continuous;
  \item[(2)] $N''(u)\in\mathcal{L}(H^1(\mathbb{R}^3), H^{-1}(\mathbb{R}^3))$ is compact for any $u\in H^1(\mathbb{R}^3)$.
\end{itemize}
\end{lemma}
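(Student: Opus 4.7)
The plan is to first compute explicit expressions for $N'(u)$ and $N''(u)$ as elements of $H^{-1}(\mathbb{R}^3)$, then prove (1) by combining the weak sequential continuity of $u\mapsto \phi_{u,m}$ in $\mathcal{D}$ (Lemma \ref{property2}) with local compactness of the Sobolev embedding, and prove (2) by splitting $N''(u)$ as the sum of two operators whose compactness follows respectively from the decay at infinity of $\phi_{u,m}$ and from a Hardy-Littlewood-Sobolev estimate for the kernel $K_m(x):=\frac{1-e^{-|x|/(am^{\alpha_2(p)})}}{|x|}$. A direct calculation gives, for $u,v,w\in H^1(\mathbb{R}^3)$,
\begin{align*}
\langle N'(u),v\rangle &= 4\int \phi_{u,m}\, u v\,dx,\\
\langle N''(u)v,w\rangle &= 4\int \phi_{u,m}\, v w\,dx + 8\int u w \bigl(K_m \ast (u v)\bigr)\,dx,
\end{align*}
with $0\le K_m(x)\le 1/|x|$ throughout.

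For item (1), I would take $u_n\rightharpoonup u$ in $H^1$, fix $v\in H^1$, and split
\[
\langle N'(u_n)-N'(u),v\rangle = 4\int \phi_{u_n,m}(u_n-u)v\,dx + 4\int (\phi_{u_n,m}-\phi_{u,m}) u v\,dx.
\]
Since Lemmas \ref{continuously-embedded} and \ref{property2} imply that $\{\phi_{u_n,m}\}$ is bounded in $\mathcal{D}\hookrightarrow L^\infty$, the first term is handled by a standard $B_R$-versus-$B_R^c$ split: the compact embedding $H^1\hookrightarrow L^2_{loc}$ kills the contribution on $B_R$, while the tail of $v\in L^2$ handles $B_R^c$. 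For the second term, the weak convergence $\phi_{u_n,m}\rightharpoonup \phi_{u,m}$ in $\mathcal{D}$ (Lemma \ref{property2}) together with the embedding $\mathcal{D}\subset H^2_{loc}\subset C^{0,\alpha}_{loc}$ yields, via Arzel\`a-Ascoli, local uniform convergence $\phi_{u_n,m}\to\phi_{u,m}$; the Dominated Convergence Theorem, with dominant $2\sup_n\|\phi_{u_n,m}\|_{L^\infty}|uv|\in L^1$, then closes the argument.

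For item (2), I would fix $u\in H^1$ and decompose $N''(u)=L_1+L_2$ with $L_1 v:=4\phi_{u,m} v$ and $L_2 v:=8u(K_m\ast (uv))$. Let $v_n\rightharpoonup 0$ in $H^1$. For $L_1$, using $\phi_{u,m}\in C_0(\mathbb{R}^3)$ from Lemma \ref{property2}(3), one chooses $R$ so that $|\phi_{u,m}|<\varepsilon$ on $B_R^c$; the contribution from $B_R^c$ is then $\le \varepsilon\|v_n\|_{L^2}\|w\|_{L^2}$, while that from $B_R$ is $\le \|\phi_{u,m}\|_{L^\infty}\|v_n\|_{L^2(B_R)}\|w\|_{H^1}\to 0$, proving $\|L_1 v_n\|_{H^{-1}}\to 0$. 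For $L_2$, the Hardy-Littlewood-Sobolev inequality combined with $K_m\le 1/|\cdot|$ gives $\|K_m\ast f\|_{L^6}\le C\|f\|_{L^{6/5}}$; the key observation is that $uv_n\to 0$ strongly in $L^{6/5}(\mathbb{R}^3)$, since compact embedding gives $uv_n\to 0$ in $L^{6/5}(B_R)$, while $\|uv_n\|_{L^{6/5}(B_R^c)}\le \|u\|_{L^3(B_R^c)}\|v_n\|_{L^2}$ is made uniformly small by taking $R$ large (using $u\in L^3$). Consequently $\|K_m\ast (uv_n)\|_{L^6}\to 0$, and the bound $|\langle L_2 v_n,w\rangle|\le 8\|uw\|_{L^{6/5}}\|K_m\ast(uv_n)\|_{L^6}\le C\|u\|_{H^1}\|w\|_{H^1}\|K_m\ast(uv_n)\|_{L^6}$, uniform for $\|w\|_{H^1}\le 1$, yields the required compactness.

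The main obstacle throughout is the lack of global compactness of $H^1(\mathbb{R}^3)\hookrightarrow L^q(\mathbb{R}^3)$; every step must pair the local compactness of the embedding with decay at infinity of either $\phi_{u,m}$, $K_m\ast(uv_n)$, or $u$ itself. The most delicate single ingredient I anticipate is the upgrade of the weak convergence $\phi_{u_n,m}\rightharpoonup \phi_{u,m}$ in $\mathcal{D}$ to locally uniform convergence in item (1), which depends on exploiting the full regularity $\Delta\phi\in L^2$ built into $\mathcal{D}$ rather than just its embedding in $\dot H^1$.
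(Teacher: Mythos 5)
Your proof is correct, and it is built on the same explicit formulas and the same two-term decompositions of $N'(u_n)-N'(u)$ and of $N''(u)$ that the paper uses; the difference lies entirely in how each limit is computed. The paper's mechanism is uniform throughout: it bounds each problematic product in some $L^q$ and reduces to pairing a weakly null sequence against a fixed $L^{q'}$ function --- for (1) via ``bounded in $L^2$ plus a.e.\ convergence to $0$ implies weakly null in $L^2$'' applied to $\phi_{u_n,m}(u_n-u)$, and for (2) via Minkowski's and H\"older's inequalities, which reduce the two terms to $\int|\widetilde v|^{1/2}uv_n\,dy$ with $|\widetilde v|^{1/2}u\in L^2$ and to $\int|\widetilde w_n|^{2/3}|u|^2\,dy$ with $|\widetilde w_n|^{2/3}\rightharpoonup 0$ in $L^6$. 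You instead systematically pair the local compactness of the Sobolev embedding with decay at infinity --- of $\phi_{u,m}\in C_0(\mathbb{R}^3)$, of $u\in L^3$, or of the tail of a fixed $L^2$ function --- and, for the cross term in (2), you first upgrade to the strong convergence $uv_n\to 0$ in $L^{6/5}$ before applying Hardy--Littlewood--Sobolev. Your route makes the uniformity in $\|w\|_{H^1}\le 1$ required for compactness in item (2) completely transparent, at the cost of slightly heavier machinery in item (1): for the term $\int(\phi_{u_n,m}-\phi_{u,m})u\varphi\,dx$ you invoke elliptic regularity ($\mathcal{D}\subset H^2_{loc}$) and Arzel\`a--Ascoli to obtain locally uniform convergence of $\phi_{u_n,m}$, which is valid but more than is needed, since a.e.\ convergence of $\phi_{u_n,m}$ (which your argument also yields, and which the paper's ``similarly'' implicitly relies on) combined with the uniform $L^6$ (or $L^\infty$) bound already closes that step.
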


\begin{proof}
(1) Let $\{u_n\}^{\infty}_{n=1}\subset H^1(\mathbb{R}^3)$ and $u_n\rightharpoonup u$ in $H^1(\mathbb{R}^3)$ as $n\rightarrow+\infty$, we need to prove that for any $\varphi\in H^1(\mathbb{R}^3)$, $\langle N'(u_n)-N'(u), \varphi\rangle_{L^2}\rightarrow0$.

In fact,
\begin{align*}
\langle N'(u_n)-N'(u), \varphi\rangle_{L^2}=4\int(\phi_{u_n, m}u_n \varphi-\phi_{u, m}u\varphi)dx=4\int\left(\phi_{u_n, m}(u_n-u)\varphi+(\phi_{u_n, m}-\phi_{u, m})u\varphi\right)dx.
\end{align*}
By Lemma \ref{property2}, we obtain
\begin{align*}
\int|\phi_{u_n, m}(u_n-u)|^2dx\leq||\phi_{u_n, m}||^2_{L^6}||u_n-u||^2_{L^3}\leq C||u_n||^4_{H^1}||u_n-u||^2_{H^1}\leq C,
\end{align*}
where $C$ is a positive constant. Since $u_n\rightarrow u$ a.e. in $\mathbb{R}^3$, we have $\phi_{u_n, m}(u_n-u)\rightharpoonup0$ in $L^2(\mathbb{R}^3)$ as $n\rightarrow+\infty$. Thus,
$$\int\phi_{u_n, m}(u_n-u)\varphi dx\rightarrow0,~ \hbox{as}~n\rightarrow+\infty.$$
Similarly, $\int(\phi_{u_n, m}-\phi_{u, m})u\varphi dx\rightarrow0,~ \hbox{as}~n\rightarrow+\infty$. Therefore, we have (1).

(2) Let $\{v_n\}^{\infty}_{n=1}\subset H^1(\mathbb{R}^3)$ be a bounded sequence. Without loss of generality, we can assume that up to a subsequence, $v_n\rightharpoonup0$, as $n\rightarrow+\infty$. Next, we need to show that for any $\varphi\in H^1(\mathbb{R}^3)$ and $||\varphi||_{H^1}\leq1$ such that when $n\rightarrow+\infty$,
\begin{align*}
\langle N''(u)v_n, \varphi\rangle_{L^2}\rightarrow0,~~\hbox{uniformly with respect to}~ \varphi.
\end{align*}
Indeed, let $G_m(x, y):=\frac{1-e^{-\frac{|x-y|}{am^{\alpha_2(p)}}}}{|x-y|}$, by Minkowski inequality, we have
\begin{align*}
\langle N''(u)v_n, \varphi\rangle_{L^2}=&\int\left(8u(y)v_n(y)G_m(x, y)u(x)\varphi(x)+4|u(y)|^2G_m(x, y)v_n(x)\varphi(x)\right)dx\\
\leq&8||\varphi||_{L^2}\left[\int\left(|u(y)v_n(y)G_m(x, y)u(x)|dy\right)^2dx\right]^{\frac12}\\
&+4||\varphi||_{L^3}\left[\int\left(||u(y)|^2G_m(x, y)v_n(x)|dy\right)^{\frac32}dx\right]^{\frac23}\\
\leq&8||\varphi||_{L^2}\int|\widetilde{v}(y)|^{\frac12}u(y)v_n(y)dy+4||\varphi||_{L^3}\int|\widetilde{w_n}(y)|^{\frac23}|u(y)|^2dy,
\end{align*}
where $\widetilde{v}(y)$ and $\widetilde{w_n}(y)$ are defined respectively by
$$\widetilde{v}(y):=\int|u(x)G_m(x, y)|^2dx, \quad \widetilde{w_n}(y):=\int|v_n(x)G_m(x, y)|^{\frac32}dx.$$
From Lemma \ref{HLS}, we can easily get $\widetilde{v}\in L^2(\mathbb{R}^3)$. Thus,
$$\int\left(|\widetilde{v}(y)|^{\frac12}u(y)\right)^2dy\leq||\widetilde{v}||_{L^2}||u||^2_{L^4}<+\infty,$$
which means
$$\int|\widetilde{v}(y)|^{\frac12}u(y)v_n(y)dy\rightarrow0, ~~\hbox{as}~n\rightarrow+\infty.$$
Again, by Lemma \ref{HLS}, we have $\widetilde{w_n}\in L^4(\mathbb{R}^3)$ and is bounded in $L^4(\mathbb{R}^3)$. For any $y\in\mathbb{R}^3$ fixed and $R>0$, since
\begin{align*}
\widetilde{w_n}(y)\leq&\left(\int_{|x-y|\leq R}|v_n|^{\frac92}dx\right)^{\frac13}\left(\int_{|x-y|\leq R}|G_m(x, y)|^{\frac94}dx\right)^{\frac23}\\
&+\left(\int_{|x-y|\geq R}|v_n|^2dx\right)^{\frac34}\left(\int_{|x-y|\geq R}|G_m(x, y)|^6dx\right)^{\frac19}.
\end{align*}
Letting $n\rightarrow+\infty$ and $R\rightarrow+\infty$, we get $\widetilde{w_n}(y)\rightarrow0$ a.e. in $\mathbb{R}^3$, as $n\rightarrow+\infty$.
Hence, $|\widetilde{w_n}|^{\frac23}\rightharpoonup0$ in $L^6(\mathbb{R}^3)$, as $n\rightarrow+\infty$, and from $|u|^2\in L^{\frac65(\mathbb{R}^3)}$, we have
$$\int|\widetilde{w_n}(y)|^{\frac23}|u(y)|^2dy\rightarrow0, ~~\hbox{as}~n\rightarrow+\infty.$$
Therefore,
\begin{align*}
\langle N''(u)v_n, \varphi\rangle_{L^2}\rightarrow0,~~\hbox{uniformly with respect to}~ \varphi.
\end{align*}
\end{proof}

\begin{lemma}\label{implicit-function1}
There exists constants $\varepsilon_5>0$ and $m_1, m_2, m_3>0$ such that the equation
\begin{equation*}
-\Delta w+\omega w+m^{\alpha_1(p)}\int\frac{1-e^{-\frac{|x-y|}{am^{\alpha_2(p)}}}}{|x-y|}|w(y)|^2dyw=|w|^{p-2}w
\end{equation*}
has a solution $w(m, \omega)\in H^1_{r}(\mathbb{R}^3)$ for any $\omega\in(\omega_0-\varepsilon_5, \omega_0+\varepsilon_5)$. Here $m$ satisfying one of the following conditions:
\begin{itemize}
  \item[(1)] if $p\in(2, 3)$, $m\in(0, m_1)$;
  \item[(2)] if $p\in(3, \frac{10}3)$, $m\in(m_2, +\infty)$;
  \item[(3)] if $p\in(\frac{10}3, 6)$, $m\in(0, m_3)$.
\end{itemize}
Moreover, when $p\in(2, 3)$ or $p\in(\frac{10}3, 6)$, $\lim\limits_{(m, \omega)\rightarrow(0, \omega_0)}w(m, \omega)=W$ in $H^1_{r}(\mathbb{R}^3)$; when $p\in(3, \frac{10}3)$, $\lim\limits_{(m, \omega)\rightarrow(+\infty, \omega_0)}w(m, \omega)=W$ in $H^1_{r}(\mathbb{R}^3)$
\end{lemma}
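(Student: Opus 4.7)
The plan is to apply the implicit function theorem in the radial class $H^1_r(\mathbb{R}^3)$, treating the nonlocal term as a vanishing perturbation of the limit equation \eqref{elliptic0}, whose radial positive ground state is $W$. The key observation is that in all three cases the prefactor $m^{\alpha_1(p)}$ of the nonlocal term tends to $0$ in the relevant limit: for $p\in(2,3)$ one has $\alpha_1(p)>0$ with $m\to 0^+$; for $p\in(3,\frac{10}3)$ one has $\alpha_1(p)<0$ with $m\to +\infty$; for $p\in(\frac{10}3,6)$ one has $\alpha_1(p)>0$ with $m\to 0^+$. Setting $\sigma:=m^{\alpha_1(p)}$, each case is reduced to a one-sided perturbation from $\sigma=0$.

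First, I would introduce the map
\[
F(\sigma,\omega,w) \;:=\; -\Delta w + \omega w + \sigma\,\phi_{w,m(\sigma)}\,w - |w|^{p-2}w,
\]
viewed as $F:[0,\sigma_*)\times(\omega_0-\varepsilon_5,\omega_0+\varepsilon_5)\times H^1_r(\mathbb{R}^3)\to H^{-1}_r(\mathbb{R}^3)$, with $m(\sigma):=\sigma^{1/\alpha_1(p)}$. The codomain is indeed $H^{-1}_r$ since $\mathcal{K}$ is radial and convolution with a radial kernel preserves radial symmetry. Clearly $F(0,\omega_0,W)=0$.

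Second, I would verify smoothness and compute the partial derivative
\[
\partial_w F(0,\omega_0,W) \;=\; L \;:=\; -\Delta + \omega_0 - (p-1)W^{p-2}\;\in\; \mathcal{L}(H^1_r(\mathbb{R}^3), H^{-1}_r(\mathbb{R}^3)),
\]
and show that $L$ is an isomorphism on the radial class. By the classical non-degeneracy result of Kwong \cite{Kwong1989}, $\ker L$ on $H^1(\mathbb{R}^3)$ is exactly $T_W=\mathrm{span}\{\partial_{x_i}W\}_{i=1}^{3}$, which contains no nonzero radial element. Since $L$ is a self-adjoint relatively compact perturbation of $-\Delta+\omega_0$ (the latter being an isomorphism $H^1\to H^{-1}$) and $W$ decays exponentially, $L|_{H^1_r}$ is Fredholm of index zero and injective, hence an isomorphism. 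Applying the implicit function theorem then yields $\varepsilon_5,\sigma_*>0$ and a $C^1$ family $(\sigma,\omega)\mapsto w(\sigma,\omega)\in H^1_r$ with $w(0,\omega_0)=W$; translating back to $m$ produces the three ranges announced in the lemma, and continuity delivers the claimed convergence $w(m,\omega)\to W$.

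The main obstacle is verifying that $F$ and $\partial_w F$ extend continuously to $\sigma=0$, since the convolution kernel defining $\phi_{w,m}$ depends on $m$ through $m^{\alpha_2(p)}$, which tends to either $0$ or $+\infty$ and offers no uniform bound as $\sigma\to 0$. I would control this through the pointwise inequality $0\le \mathcal{K}(x)\le 1/|x|$ together with the Hardy-Littlewood-Sobolev inequality (Lemma \ref{HLS}), yielding $\|\phi_{w,m}\|_{L^6}\le C\|w\|_{H^1}^2$ uniformly in $m>0$. Combined with the weak sequential continuity and compactness properties from Lemma \ref{converges-N}, this ensures $\sigma\,\phi_{w,m(\sigma)}\,w = O(\sigma)$ in $H^{-1}_r$ with $w$-derivative also $O(\sigma)$ in operator norm, both uniformly for $w$ in a neighborhood of $W$. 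This renders $F$ a genuine $C^1$ perturbation of $-\Delta w+\omega w-|w|^{p-2}w$ at $(0,\omega_0,W)$, legitimating the IFT application.
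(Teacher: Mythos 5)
Your proposal is correct and follows essentially the same route as the paper: both treat the nonlocal term as a vanishing perturbation of Eq.~(\ref{elliptic0}), linearize at $(0,\omega_0,W)$ in $H^1_r(\mathbb{R}^3)$, invoke the non-degeneracy of $W$ (whose kernel $T_W$ contains no nonzero radial element) to invert $L$ on the radial class, and conclude by the implicit function theorem, with Lemma~\ref{converges-N} supplying the continuity of the nonlocal term in the parameter. Your substitution $\sigma=m^{\alpha_1(p)}$ merely packages the three ranges of $m$ into a single one-sided perturbation, and your uniform bound $\|\phi_{w,m}\|_{L^6}\leq C\|w\|_{H^1}^2$ makes explicit a point the paper leaves implicit.
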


\begin{proof}
We just prove the case where $p\in(2,3)$, the other two cases can be similarly proved. To achieve this, we introduce a map $\Phi_2:~ \mathbb{R}^+\times\mathbb{R}^+\times H^1_{r}(\mathbb{R}^3)\rightarrow H^{-1}_{r}(\mathbb{R}^3)$ as
$$\Phi_2(m, \omega, w):=\nabla_{w}\Gamma^{m}(w),$$
where $\Gamma^m$ is defined by
\begin{align}\label{implicit-function1-1}
\Gamma^m(w):=E_{m}(w)+\frac{\omega}2||w||^2_{L^2}.
\end{align}
We derive that $\Phi_2(m, \omega, w)=(-\Delta+\omega)w+m^{\alpha_1(p)}\int\frac{1-e^{-\frac{|x-y|}{am^{\alpha_2(p)}}}}{|x-y|}|w(y)|^2dyw-|w|^{p-2}w$. By Lemma \ref{converges-N}, we can obtain
$\Phi_2(m, \omega, w)\rightarrow\Phi_2(0, \omega_0, w)$ in $H^{-1}_{r}(\mathbb{R}^3)$, as $m\rightarrow0$. We have $\Phi_2\in C(\mathbb{R}^+\times\mathbb{R}^+\times H^1_{r}(\mathbb{R}^3), H^{-1}_{r}(\mathbb{R}^3))$. Notice that $\Phi_2(0, \omega_0, W)=0$, and for any $h\in H^1_{r}(\mathbb{R}^3)$,
\begin{align*}
d\Phi_2(0, \omega_0, W)h=-\Delta h+\omega_0 h-(p-1)W^{p-2}h.
\end{align*}
We shall deduce that the linear operator $d\Phi_2(0, \omega_0, W)\in\mathcal{L}(H^1_{r}(\mathbb{R}^3), H^{-1}_r(\mathbb{R}^3))$ is invertible. Since the embedding $H^1_{r}(\mathbb{R}^3)\hookrightarrow L^q(\mathbb{R}^3)$ is compact for any $q\in(2, 6)$, and the function $W$ decays exponentially as $|x|$ goes to infinity, then it is easy to see that $d\Phi_2(0, \omega_0, W)$ is surjective, namely, for any $f\in H^{-1}_r(\mathbb{R}^3)$, there exists $h\in H^1_{r}(\mathbb{R}^3)$ solving the equation
$$-\Delta h+\omega_0 h-(p-1)W^{p-2}h=f.$$
Recall that $W$ is nondegenerate, then $\ker(d\Phi_2(0, \omega_0, W))=\{0\}$ (see in \cite{Ni1993, Weinstein1985}), which yields that $d\Phi_2(0, \omega_0, W)$ is injective. We complete the proof by the implicit function theorem.
\end{proof}

\begin{lemma}\label{implicit-function2}
Assuming that $m$ satisfies the conditions of Lemma \ref{implicit-function1}, there exist constants $\varepsilon_6, \varepsilon_7>0$ such that, for any $\omega\in(\omega_0-\varepsilon_6, \omega_0+\varepsilon_6)$, there exists a unique $u=u(m, \omega)\in B_{\varepsilon_7}(W)$ so that
$$\mathcal{P}(u)=W,\quad \pi_{T^{\bot}_{W}}(\nabla_{u}\Gamma^m(u))=0,$$
where the operator $\mathcal{P}$ is defined in Proposition \ref{implicit-function0} and $\Gamma^m(u)$ is defined in (\ref{implicit-function1-1}).
\end{lemma}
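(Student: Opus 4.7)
My plan is to invoke the implicit function theorem, treating $h := u - W \in T_W^\perp$ as the unknown and $(m,\omega)$ as parameters, about the base point where $m$ attains its relevant limit value: $m_* = 0^+$ when $p \in (2,3)\cup(\frac{10}{3},6)$ and $m_* = +\infty$ when $p \in (3,\frac{10}{3})$. In the latter case I would reparametrize via $\mu := m^{-1}$ so the limit point sits at $\mu = 0$. By Proposition~\ref{implicit-function0}, the constraint $\mathcal{P}(u) = W$ forces $\tau(u) = 0$, so any $u \in B_{\varepsilon_4}(W)$ with $\mathcal{P}(u) = W$ is of the form $u = W + h$ with $h \in T_W^\perp$. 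I would then define
\[
F(m,\omega,h) := \pi_{T_W^\perp}\!\Big[-\Delta(W+h)+\omega(W+h) + m^{\alpha_1(p)}\phi_{W+h,m}(W+h) - |W+h|^{p-2}(W+h)\Big],
\]
as a map from a neighborhood of $(m_*,\omega_0,0)$ in $[0,+\infty)\times\mathbb{R}\times(T_W^\perp\cap H^1)$ into $T_W^\perp\cap H^{-1}$, extended by continuity at $m_*$ since $m^{\alpha_1(p)}\to 0$ there in all three regimes of $p$. By equation~\eqref{elliptic0}, $F(m_*,\omega_0,0) = \pi_{T_W^\perp}(-\Delta W+\omega_0 W-W^{p-1}) = 0$.

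Next I would verify the two hypotheses required by the implicit function theorem. First, joint continuity of $F$ and $D_h F$ at the base point: the local terms are handled by standard Nemytskii estimates, while the nonlocal contribution is controlled via Lemma~\ref{converges-N} together with the bound $\|\phi_{u,m}\|_{L^6}\le C\|u\|_{H^1}^2$ from Lemma~\ref{property2}(6) and the fact that the kernel $\frac{1-e^{-|x-y|/(am^{\alpha_2(p)})}}{|x-y|}$ is uniformly majorized by $\frac{1}{|x-y|}$. Since $m^{\alpha_1(p)}\to 0$ in the relevant limit, both the nonlocal term and its $h$-derivative go to zero in $H^{-1}$, uniformly for $h$ in a bounded $T_W^\perp$-ball. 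Second, the invertibility of the partial derivative: a direct computation yields
\[
D_h F(m_*,\omega_0,0) = \pi_{T_W^\perp}\circ L\big|_{T_W^\perp\cap H^1},\qquad L := -\Delta+\omega_0-(p-1)W^{p-2}.
\]
By the non-degeneracy of the ground state $W$ (see \cite{Ni1993,Weinstein1985}), $L$ is a self-adjoint Fredholm operator of index zero with $\ker L = T_W$, so $L$ restricts to a topological isomorphism from $T_W^\perp\cap H^1$ onto $T_W^\perp\cap H^{-1}$.

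With these in place, the implicit function theorem yields a unique $h = h(m,\omega) \in T_W^\perp$ with $\|h\|_{H^1}<\varepsilon_7$ solving $F(m,\omega,h)=0$ for $(m,\omega)$ in a neighborhood of the base point intersected with the $m$-range prescribed by Lemma~\ref{implicit-function1}, and I would set $u(m,\omega) := W + h(m,\omega)$. Uniqueness in $B_{\varepsilon_7}(W)$ among those $u$ satisfying $\mathcal{P}(u)=W$ is given by the local uniqueness clause of the implicit function theorem. I expect the main obstacle to be the invertibility of $D_h F$, which rests critically on the non-degeneracy of $W$; once that is secured, the parametric dependence on $m$ follows routinely from the weak sequential continuity and compactness results of Lemma~\ref{converges-N} combined with the explicit vanishing rate $m^{\alpha_1(p)}\to 0$ at the limit.
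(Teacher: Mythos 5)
Your proposal is correct and follows essentially the same route as the paper: an implicit function theorem argument at the base point $(m_*,\omega_0,W)$, with invertibility of the linearization $\pi_{T_W^\perp}\circ L$ on $T_W^\perp$ coming from the non-degeneracy of $W$, and continuity in the parameter $m$ from the vanishing of $m^{\alpha_1(p)}$ together with the compactness properties of the nonlocal term. The only (cosmetic) difference is that you first use Proposition~\ref{implicit-function0} to reduce to the slice $u=W+h$, $h\in T_W^\perp$, and apply the IFT there, whereas the paper folds the constraint $\mathcal{P}(u)=W$ into the IFT map $\Phi_3(m,\omega,u)=\bigl(\pi_{T_W^\perp}(\nabla_u\Gamma^m(u)),\,\mathcal{P}(u)\bigr)$ and solves for $u$ in the full space.
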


\begin{proof}
We define a map $\Phi_3:~ \mathbb{R}^+\times\mathbb{R}^+\times B_{\varepsilon_4}(W)\rightarrow T^{\bot}_{W}\times H^1(\mathbb{R}^3)$ by
\begin{align*}
\Phi_3(m, \omega, u):=\left(\pi_{T^{\bot}_{W}}(\nabla_{u}\Gamma^m(u)), \mathcal{P}(u)\right),
\end{align*}
where the constant $\varepsilon_4>0$ is given in Proposition \ref{implicit-function0}. By the definition of the operator $\mathcal{P}$, we know that $\mathcal{P}(W)=W$ and $\mathcal{P}(W_{\tau})=W_{\tau}$, then it is immediate to see that $\Phi_3(0, \omega_0, W_{\tau})=(0, W_{\tau})$. Therefore, for any $h\in T_{W}$,
\begin{align}\label{implicit-function2-1}
d\Phi_3(0, \omega_0, W)h=(0, h).
\end{align}
We shall assert that $d\Phi_3(0, \omega_0, W)\in\mathcal{L}(H^1(\mathbb{R}^3), T^{\bot}_{W}\times T_{W})$ is invertible. By the definition
of the map $\Phi_3$, it is not difficult to verify that $d\Phi_3(0, \omega_0, W)\in\mathcal{L}(T^{\bot}_{W}, T^{\bot}_{W})$ is surjective. This together with (\ref{implicit-function2-1}) indicates that $d\Phi_3(0, \omega_0, W)\in\mathcal{L}(H^1(\mathbb{R}^3), T^{\bot}_{W}\times T_{W})$
is suejective. We next assume that there is $h\in H^1(\mathbb{R}^3)$ such that $d\Phi_3(0, \omega_0, W)h=(0, 0)$, which implies that $h\in T_{W}$. From (\ref{implicit-function2-1}), it follows that $h=0$. Consequently, $d\Phi_3(0, \omega_0, W)$ is invertible. Thus the lemma follows that by the implicit function theorem.
\end{proof}

\begin{proof}[Proof of Theorem \ref{unique-normalized-solution}]
According to Lemmas \ref{positive-solution0} and \ref{positive-soution00}, every minimizer to problem (\ref{min-pro1}) or (\ref{Pohozaev-manifold}) is positive. Therefore, it suffices for us to show the radial symmetry and uniqueness. Here, we just prove the item (i), the other two items can be proved similarly. The proof is divided into the following two steps.\\
\textbf{Step 1:} Prove that, for any $m>0$ small enough, every minimizer to (\ref{min-pro1}) is radially symmetric up to translations.

Indeed, it is equivalent to prove that, for any $m>0$ small enough, every minimizer to (\ref{equivalent-1}) is radially symmetric up to translations. Let $u\in\mathcal{N}_{m}$. According to Theorem \ref{concentration-behavior}, for every $\varepsilon>0$, there exists $m_1:=m(\varepsilon)>0$ such that (up to translations) $u\in B_{\varepsilon}(W)$ provided that $0<m<m_1$, where $\varepsilon\leq\varepsilon_i$ for $1\leq i\leq7$. Moreover, $u$ solves the following equation
\begin{align}\label{equivalent-equation}
-\Delta u+\omega u+m^{\alpha_1(p)}\int\frac{1-e^{-\frac{|x-y|}{am^{\alpha_2(p)}}}}{|x-y|}|u(y)|^2dyu=|u|^{p-2}u
\end{align}
or equivalently
$$\nabla_{u}\Gamma^m(u)=0,$$
for a suitable $\omega$ such that $\omega\in(\omega-\varepsilon, \omega+\varepsilon)$ provided that $0<m<m_1$ (see Lemma \ref{converges-Q0}).
Thus, it follows from Proposition \ref{implicit-function0} that there exist $\tau\in\mathbb{R}^3$ and $R(u)\in T^{\bot}_{W_{\tau}}$ with $||R(u)||_{H^1}<\varepsilon$ such that $u=W_{\tau}+R(u)$, that is, $u(x-\tau)=W+R(u(x-\tau))$. Hence,
\begin{align}\label{equivalent-equation-1}
\mathcal{P}(u(x-\tau))=W,\quad \pi_{T^{\bot}_{W}}(\nabla_{u(x-\tau)}\Gamma^m(u))=0,
\end{align}
where the second one is a consequence of the fact that the functional $\Gamma^m$ is
invariant under any translation in $\mathbb{R}^3$. Furthermore, one can check that the solution $w(m, \omega)\in H^1_{r}(\mathbb{R}^3)$ obtained in Lemma \ref{implicit-function1} satisfies (\ref{equivalent-equation-1}) as well. Consequently, by Lemma \ref{implicit-function2}, we know that $u(x-\tau)=w(m, \omega)$, which then suggests that $u$ is radially symmetric up to translations.\\
\textbf{Step 2:} Prove that, for any $m>0$ small enough, every minimizer to (\ref{min-pro1}) is unique up to translations.

In fact, it is equivalent to show that, for any $m>0$ small enough, every minimizer to (\ref{equivalent-1}) is unique up to translations. To this end, we assume that there exist $u_1, u_2\in\mathcal{N}_{m}$. Obviously, $u_j$ solves Eq.(\ref{equivalent-equation}), and $\nabla_{u_j}\Gamma^m(u_j)=0$ for $j=1, 2$. Therefore, for any $0<m<m_1$, one can deduce that there exist $\tau_j\in\mathbb{R}^3$ and $R(u_j)\in T^{\bot}_{W_{\tau_j}}$ with $||R(u_j)||_{H^1}<\varepsilon$ such that $u_j=W_{\tau_j}+R(u_j)$, where the constants $m_1$ and $\varepsilon$ are given in Step 1. This means that $u_j(x-\tau_j)=W+R(u_j(x-\tau_j))$, then
\begin{align*}
\mathcal{P}(u_j(x-\tau_j))=W,\quad \pi_{T^{\bot}_{W}}(\nabla_{u_j(x-\tau_j)}\Gamma^m(u_j))=0,~~j=1, 2.
\end{align*}
Hence, as a result of Step 1, we have that $u_1(x)=u_2(x-\tau_2+\tau_1)$, and the proof is completed.
\end{proof}

\renewcommand{\theequation}
{\thesection.\arabic{equation}}
\setcounter{equation}{0}
\section{Strong instability and global existence} \noindent

In this section, we will show the strong instability of standing waves and the global existence of Eq.(\ref{main6}). To this end, we consider the following minimization problem:
\begin{align}\label{minimization-problem1}
\bar{\gamma}(m)=\inf\{\bar{E}(u):~ u\in S(m), P(u)\leq0\},
\end{align}
where
\begin{align*}
\bar{E}(u):=&E(u)-\frac2{3(p-2)}P(u)\\
=&\frac{3p-10}{6(p-2)}||\nabla u||^2_{L^2}+\frac{3p-8}{12(p-2)}\int\phi_u|u|^2dx+\frac1{6a(p-2)}\iint e^{-\frac{|x-y|}a}|u(x)|^2|u(y)|^2dxdy.
\end{align*}
Then, we have
\begin{align}\nonumber
\bar{\gamma}(m)&=\inf\{\bar{E}(u):~ u\in S(m), P(u)\leq0\}\\ \label{mp2}
&=\inf\{E(u):~ u\in S(m), P(u)=0\}=\gamma(m).
\end{align}
Indeed, if $P(u)<0$, then we have
\begin{align*}
P(\kappa(u, \theta))=&e^{2\theta}||\nabla u||^2_{L^2}+\frac{e^{\theta}}4\iint\frac{1-e^{-\frac{|x-y|}{ae^{\theta}}}}{|x-y|}|u(x)|^2|u(y)|^2dxdy\\
&-\frac1{4a}\iint e^{-\frac{|x-y|}{ae^{\theta}}}|u(x)|^2|u(y)|^2dxdy-\frac{3(p-2)}{2p}e^{\frac{3(p-2)}2\theta}||u||^p_{L^p}>0
\end{align*}
for $\theta<0$ such that $|\theta|$ sufficiently large. Thus, there exists $\theta_0<0$ such that $P(\kappa(u, \theta_0))=0$. Moreover, it follows that
\begin{align*}
\bar{E}(\kappa(u, \theta_0))=&\frac{3p-10}{6(p-2)}e^{2\theta_0}||\nabla u||^2_{L^2}+\frac{3p-8}{12(p-2)}e^{\theta_0}\iint\frac{1-e^{-\frac{|x-y|}{ae^{\theta_0}}}}{|x-y|}|u(x)|^2|u(y)|^2dxdy\\
&+\frac1{6a(p-2)}\iint e^{-\frac{|x-y|}{ae^{\theta_0}}}|u(x)|^2|u(y)|^2dxdy\\
<&\frac{3p-10}{6(p-2)}||\nabla u||^2_{L^2}+\frac{3p-8}{12(p-2)}\iint\frac{1-e^{-\frac{|x-y|}a}}{|x-y|}|u(x)|^2|u(y)|^2dxdy\\
&+\frac1{6a(p-2)}\iint e^{-\frac{|x-y|}a}|u(x)|^2|u(y)|^2dxdy\\
=&\bar{E}(u).
\end{align*}
This implies that (\ref{mp2}) holds.

\begin{proposition}\label{mp3}
Let $p\in(\frac{10}3, 6)$. Then, there exists $u\in V(m)$ and $\bar{E}(u)=\bar{\gamma}(m)$.
\end{proposition}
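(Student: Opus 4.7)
The plan rests on two observations already in hand: the identity $\bar\gamma(m)=\gamma(m)$ established in (\ref{mp2}), and the mountain-pass existence result (Theorem \ref{critical-point}) combined with the characterization $\gamma(m)=\inf_{u\in V(m)}E(u)$ from Lemma \ref{Pohozaev-manifold1}. The short route is then immediate: given $m$ in the range where Theorem \ref{critical-point} applies, the ground state $u_m$ lies in $V(m)$, so $P(u_m)=0$ and $\bar E(u_m)=E(u_m)-\tfrac{2}{3(p-2)}P(u_m)=E(u_m)=\gamma(m)=\bar\gamma(m)$, making $u_m$ the required minimizer.

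However, the point of introducing the relaxed problem (\ref{minimization-problem1}) is to have a self-contained minimization characterization suitable for the instability and global-existence arguments that follow. So the more informative plan is to give a direct proof by the concentration-compactness method. First, take any minimizing sequence $\{u_n\}\subset S(m)$ with $P(u_n)\le 0$ and $\bar E(u_n)\to\bar\gamma(m)$. Since $p\in(\tfrac{10}{3},6)$, every term in $\bar E$ has a positive coefficient, so in particular $\tfrac{3p-10}{6(p-2)}\|\nabla u_n\|_{L^2}^2\le \bar E(u_n)$, yielding $H^1$-boundedness. Next, $P(u_n)\le 0$ together with the fact that $\tfrac{1-e^{-r/a}}{r}-\tfrac{1}{a}e^{-r/a}\ge 0$ gives
\[
\|\nabla u_n\|_{L^2}^2\le \tfrac{3(p-2)}{2p}\|u_n\|_{L^p}^p,
\]
which combined with Gagliardo-Nirenberg yields a uniform lower bound on $\|\nabla u_n\|_{L^2}$ and hence (via G-N again) on $\|u_n\|_{L^p}$, so non-vanishing holds in the sense of Lemma \ref{vanishing}. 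Translating $u_n$ to recover a nontrivial weak limit $\tilde u\in H^1(\mathbb{R}^3)\setminus\{0\}$, the weak lower semi-continuity of the $\dot H^1$-norm, of the nonlocal quadratic form $\Psi$ (Lemma \ref{BL}), and Fatou's lemma on the exponential integral term, together with the Brezis--Lieb splitting for $\|\cdot\|_{L^p}^p$, yield $P(\tilde u)\le 0$ and $\bar E(\tilde u)\le \liminf \bar E(u_n)=\bar\gamma(m)$.

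It then remains to show $\|\tilde u\|_{L^2}=m$ and $P(\tilde u)=0$. For the mass, suppose $\|\tilde u\|_{L^2}=m_1<m$; applying the scaling $\kappa(\tilde u,\theta)$ for suitable $\theta\le 0$ to bring $\tilde u$ onto $V(m_1)$ (which is possible by Lemma \ref{unique} since $P(\tilde u)\le 0$ forces $\theta(\tilde u)\le 0$, together with Lemma \ref{lemma1}), we would get $\bar\gamma(m_1)\le \bar E(\tilde u)\le\bar\gamma(m)$, contradicting the strict monotonicity behaviour of $\gamma=\bar\gamma$ established in Lemma \ref{strictly-decreasing}/Proposition \ref{function-properties}. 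So $\tilde u\in S(m)$. Finally, if $P(\tilde u)<0$, choosing the unique $\theta_0<0$ with $P(\kappa(\tilde u,\theta_0))=0$ (Lemma \ref{unique}) produces a competitor in $V(m)\subset\{P\le 0\}$ with strictly smaller $\bar E$ (by the $p\in(\tfrac{10}{3},6)$ sign of the coefficients of the exponential terms after rescaling), contradicting minimality; hence $P(\tilde u)=0$, i.e., $\tilde u\in V(m)$, and $\bar E(\tilde u)=\bar\gamma(m)$.

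The main obstacle is the mass-conservation step: ruling out that mass escapes to infinity in the weak-convergence step. This is precisely the subtlety one encounters in any normalized problem on $\mathbb{R}^3$, and here it is resolved by leveraging the monotonicity of $m\mapsto\gamma(m)=\bar\gamma(m)$ proved in Section 5 rather than a concentration-compactness dichotomy argument, which is the analogue in this setting of the strict sub-additivity condition. All remaining ingredients are of a standard weak-lower-semicontinuity / scaling nature and use only the positivity of the coefficients in $\bar E$ and the sign lemmas from Lemma \ref{lemma1} and Lemma \ref{unique0}.
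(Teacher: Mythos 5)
Your overall architecture matches the paper's: a direct minimization of $\bar{E}$ over $\{u\in S(m):\,P(u)\le 0\}$, boundedness from the positivity of the coefficients of $\bar{E}$, non-vanishing from the lower bound on $\|\nabla u_n\|_{L^2}$ forced by $P(u_n)\le 0$ together with Gagliardo--Nirenberg, exclusion of mass loss via the monotonicity of $\gamma=\bar{\gamma}$, and a final scaling step upgrading $P(\tilde u)\le 0$ to $P(\tilde u)=0$. (Note that your ``short route'' through Theorem \ref{critical-point} only covers $m\in(0,m_0)$, while the proposition is invoked for every $m>0$ in Theorem \ref{unstable}, so the direct argument is indeed the one that matters.)

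There is, however, a genuine gap at the step where you claim that weak lower semicontinuity of $\|\nabla\cdot\|_{L^2}^2$ and of $\Psi$, Fatou's lemma on the exponential integral, and Br\'ezis--Lieb for $\|\cdot\|_{L^p}^p$ ``yield $P(\tilde u)\le 0$.'' The functional $P$ contains the terms $-\frac1{4a}\iint e^{-|x-y|/a}|u(x)|^2|u(y)|^2dxdy$ and $-\frac{3(p-2)}{2p}\|u\|_{L^p}^p$ with negative signs; under weak convergence Fatou and Br\'ezis--Lieb only give $\liminf$-inequalities from below for the corresponding positive quantities, which is exactly the wrong direction. All four terms of $P$ can only increase (or stay equal) in passing to the weak limit, so $P(\tilde u)\le\liminf_n P(u_n)$ is false in general, and $P(\tilde u)>0$ cannot be excluded by semicontinuity. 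This is precisely where the paper's proof spends its effort: it rules out $P(u)>0$ by a dichotomy on $\|u\|_{L^2}$. In the subcase $P(u)>0$ with $\|u\|_{L^2}=m$ one first obtains strong $L^2$, hence $L^p$, convergence, shows $P(u_n-u)\to0$, and then the splitting $P(u_n)=P(u_n-u)+P(u)+o_n(1)$ forces $P(u_n)>0$ for large $n$, contradicting admissibility of the minimizing sequence; in the subcase $P(u)>0$ with $\|u\|_{L^2}<m$ one applies the strict inequality $\bar{\gamma}(m_1)>\bar{\gamma}(m)$ to the renormalized remainder $u_n-u$ and concludes $\bar{E}(u)\le0$, contradicting $u\neq0$. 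Without some version of this case analysis your argument does not close; the remaining steps of your plan (mass conservation via the monotonicity of $\gamma$, and $P=0$ via the scaling $\kappa(\tilde u,\theta_0)$) do agree with the paper.
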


\begin{proof}
We first show that $\bar{\gamma}(m)>0$. By $P(u)\leq0$, we have
$$||\nabla u||^2_{L^2}+\frac14\iint\left(\frac{1-e^{-\frac{|x-y|}a}}{|x-y|}-\frac1a e^{-\frac{|x-y|}a}\right)|u(x)|^2|u(y)|^2dxdy\leq\frac{3(p-2)}{2p}||u||^p_{L^p}.$$
Since $\frac{1-e^{-\frac{|x-y|}a}}{|x-y|}-\frac1a e^{-\frac{|x-y|}a}\geq0$, we have
$$||\nabla u||^2_{L^2}\leq C(p)||\nabla u||^{\frac{3(p-2)}2}_{L^2}||u||^{\frac{6-p}2}_{L^2},$$
which implies that
$$||\nabla u||^{\frac{3p-10}2}_{L^2}\geq\frac1{C(p)m^{\frac{6-p}2}}.$$
Taking the infimum over $u$, we get $\bar{\gamma}(m)>0$.

We now show that minimizing problem (\ref{minimization-problem1}) is attained. Let $\{v_n\}_{n=1}^{\infty}$ be a minimizing sequence for (\ref{minimization-problem1}), that is,  $\{v_n\}_{n=1}^{\infty}\subset S(m)$, $P(v_n)\leq0$, and $\bar{E}(v_n)\rightarrow\bar{\gamma}(m)$ as $n\rightarrow\infty$. Thus, there exists $C_0>0$ such that
$$\frac{3(p-2)}{2p}\liminf_{n\rightarrow\infty}||v_n||^p_{L^p}\geq\liminf_{n\rightarrow\infty}
||\nabla v_n||^2_{L^2}\geq C_0>0.$$
It is also clear that the sequence $\{v_n\}_{n=1}^{\infty}$ is bounded in $H^1(\mathbb{R}^3)$. Applying compact lemma (see \cite[Theorem 1.1]{Hmidi2005}), there exists a subsequence, still denoted by $\{v_n\}_{n=1}^{\infty}$ and $u\in H^1(\mathbb{R}^3)\setminus\{0\}$ such that
$$u_n:=v_n(\cdot+x_n)\rightharpoonup u\neq0 ~~\hbox{weakly in} ~~H^1(\mathbb{R}^3)$$
for some $\{x_n\}_{n=1}^{\infty}\subset\mathbb{R}^3$.
Moreover, we deduce from Br\'{e}zis-Lieb lemma (see in \cite{Brezis1983}) and Lemma \ref{BL} that
\begin{align}\label{BL11}
P(u_n)-P(u_n-u)-P(u)\rightarrow0,
\end{align}
\begin{align}\label{BL22}
\bar{E}(u_n)-\bar{E}(u_n-u)-\bar{E}(u)\rightarrow0,
\end{align}
\begin{align}\label{BL33}
||u_n||^2_{L^2}-||u_n-u||^2_{L^2}-||u||^2_{L^2}\rightarrow0.
\end{align}
Now, we show that $P(u)\leq0$ and $||u||_{L^2}=m$ by excluding the other possibilities:

$\bullet$ If $P(u)>0$ and $||u||_{L^2}<m$, it follows from (\ref{BL11}) and $P(u_n)\leq0$ that $P(u_n-u)\leq0$ for sufficiently large $n$. Setting $m_1:=\sqrt{m^2-||u||^2_{L^2}}$ and $w_n:=\frac{m_1(u_n-u)}{||u_n-u||_{L^2}}$, we have
$$||u_n-u||_{L^2}\rightarrow m_1,~ w_n\in S(m_1) ~~ \hbox{and}~~ P(w_n)\leq0.$$
Thus, by definition of $\bar{\gamma}(m_1)$, it follows that
$$\bar{E}(w_n)\geq\bar{\gamma}(m_1) \quad\hbox{and}\quad \bar{E}(u_n-u)\geq\bar{\gamma}(m_1).$$
Applying $\bar{\gamma}(m_1)=\gamma(m_1)>\gamma(m)$ and (\ref{BL22}), we can obtain
$$\bar{E}(u)=\frac{3p-10}{6(p-2)}||\nabla u||^2_{L^2}+\frac{3p-8}{12(p-2)}\int\phi_u|u|^2dx+\frac1{6a(p-2)}\iint e^{-\frac{|x-y|}a}|u(x)|^2|u(y)|^2dxdy\leq0,$$
which is in contradiction with $u\neq0$.

$\bullet$ If $P(u)>0$ and $||u||_{L^2}=m$, then $u_n\rightarrow u$ in $L^2(\mathbb{R}^3)$ as $n\rightarrow\infty$. this implies that $u_n\rightarrow u$ in $L^p(\mathbb{R}^3)$ as $n\rightarrow\infty$. On the other hand, we deduce from $P(u)>0$ that $P(u_n-u)\leq0$ for sufficiently large $n$, that is,
\begin{align*}
P(u_n-u)=&||\nabla(u_n-u)||^2_{L^2}+\frac14\iint\left(\frac{1-e^{-\frac{|x-y|}a}}{|x-y|}-\frac1a e^{-\frac{|x-y|}a}\right)|(u_n-u)(x)|^2|(u_n-u)(y)|^2dxdy\\
&-\frac{3(p-2)}{2p}||u_n-u||^p_{L^p}\leq0, \quad\hbox{as}\quad n\rightarrow\infty.
\end{align*}
Thus, we can obtain
$$||\nabla(u_n-u)||^2_{L^2}-\frac{3(p-2)}{2p}||u_n-u||^p_{L^p}\leq0,$$
this implies that $u_n\rightarrow u$ in $\dot{H}^1(\mathbb{R}^3)$ as $n\rightarrow\infty$. This yields that
\begin{align*}
o_n(1)=&||\nabla(u_n-u)||^2_{L^2}-\frac{3(p-2)}{2p}||u_n-u||^p_{L^p}\\
\leq&-\frac14\iint\left(\frac{1-e^{-\frac{|x-y|}a}}{|x-y|}-\frac1a e^{-\frac{|x-y|}a}\right)|(u_n-u)(x)|^2|(u_n-u)(y)|^2dxdy\\
\leq&C||u_n-u||^4_{L^2}\rightarrow0 \quad\hbox{as}\quad n\rightarrow\infty.
\end{align*}
Therefore, $P(u_n-u)\rightarrow0$ as $n\rightarrow\infty$. Thus, it follows form (\ref{BL11}) and $P(u)>0$ that $P(u_n)>0$ for sufficiently large $n$, which is in contradiction with $P(u_n)\leq0$.

$\bullet$ If $P(u)\leq0$ and $||u||_{L^2}<m$, then we conclude form (\ref{BL22}) and $\bar{\gamma}(||u||_{L^2})=\gamma(||u||_{L^2})>\gamma(m)=\bar{\gamma}(m)$ that
$$\bar{E}(u_n-u)=\frac{3p-10}{6(p-2)}||\nabla u||^2_{L^2}+\frac{3p-8}{12(p-2)}\int\phi_u|u|^2dx+\frac1{6a(p-2)}\iint e^{-\frac{|x-y|}a}|u(x)|^2|u(y)|^2dxdy<0,$$
which is a contradiction.\\
Therefore, we have $P(u)\leq0$ and $||u||_{L^2}=m$. It follows from the definition of
$\bar{\gamma}(m)$ and the weak lower semicontinuity of norm that
$$\bar{\gamma}(m)\leq\bar{E}(u)\leq\liminf_{n\rightarrow\infty}\bar{E}(u_n)=\bar{\gamma}(m).$$
This yields that
$$\bar{E}(u)=\bar{\gamma}(m).$$

Finally, we show that $P(u)=0$. Suppose that $P(u)<0$, and set
$$f(\theta):=P(\kappa(u, \theta)),$$
then, $f(\theta)>0$ for $\theta<0$ such that $|\theta|$ sufficiently large and $f(0)=P(u)<0$. Therefore, there exists $\theta_0<0$ such that $P(\kappa(u, \theta_0))=0$. Then, it follows that
$$\bar{E}(\kappa(u, \theta_0))<\bar{E}(u)=\bar{\gamma}(m),$$
which contradicts the definition of $\bar{\gamma}(m)$. Hence, we have $P(u)=0$.
\end{proof}

\begin{lemma}\label{critical-points2}
Let $p\in(\frac{10}3, 6)$, then each critical point of $E|_{V(m)}$ is a critical point of $E|_{S(m)}$.
\end{lemma}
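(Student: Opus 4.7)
The plan is to rule out the Pohozaev Lagrange multiplier, thereby reducing the Euler--Lagrange equation for $E|_{V(m)}$ to that for $E|_{S(m)}$. If $u$ is a critical point of $E|_{V(m)}$, the Lagrange multiplier theorem applied to the two $C^{1}$ constraints $||\cdot||_{L^{2}}^{2}=m^{2}$ and $P(\cdot)=0$ yields $\omega,\mu\in\mathbb{R}$ with
\[
E'(u)+\omega u+\mu P'(u)=0 \qquad \hbox{in}~~ H^{-1}(\mathbb{R}^{3}).
\]
Proving $\mu=0$ leaves exactly the equation characterizing a critical point of $E|_{S(m)}$.

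I would test this identity against the scaling direction $\dot v:=\frac{d}{d\theta}\kappa(u,\theta)|_{\theta=0}=\tfrac{3}{2}u+x\cdot\nabla u$. Two terms vanish for free: $\langle u,\dot v\rangle_{L^{2}}=0$ since $\kappa(\cdot,\theta)$ preserves the $L^{2}$-norm, and $\langle E'(u),\dot v\rangle=\frac{d}{d\theta}E(\kappa(u,\theta))|_{\theta=0}=P(u)=0$ because $u\in V(m)$. The identity thus collapses to
\[
\mu\,\langle P'(u),\dot v\rangle=\mu\,\frac{d^{2}}{d\theta^{2}}E(\kappa(u,\theta))\Big|_{\theta=0}=0,
\]
so it is enough to show this second derivative is strictly negative on $V(m)$.

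For that I would invoke Lemma~\ref{unique1}. With $P(u)=0$ it gives
\[
E(\kappa(u,\theta))\le E(u)-\frac{h_{1}(\theta)}{6(p-2)}||\nabla u||_{L^{2}}^{2} \qquad \forall\,\theta\in\mathbb{R},
\]
with equality at $\theta=0$ and, by $h_{1}(0)=h_{1}'(0)=0$ from Lemma~\ref{unique0}, vanishing first derivative of both sides there. Comparing second-order Taylor coefficients at $\theta=0$ yields
\[
\frac{d^{2}}{d\theta^{2}}E(\kappa(u,\theta))\Big|_{\theta=0}\le -\frac{h_{1}''(0)}{6(p-2)}||\nabla u||_{L^{2}}^{2},
\]
and differentiating the explicit $h_{1}$ gives $h_{1}''(0)=3(p-2)(3p-10)$, strictly positive precisely in the mass-supercritical regime $p\in(\tfrac{10}{3},6)$. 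Since $u\in S(m)$ with $m>0$ forces $||\nabla u||_{L^{2}}>0$, the bound is strictly negative and $\mu=0$ follows.

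The one delicate point is justifying the Lagrange multiplier step itself, which requires $u$ and $P'(u)$ to be linearly independent in $H^{-1}(\mathbb{R}^{3})$. This is however a byproduct of the same computation: if one had $P'(u)=cu$, pairing with $\dot v$ would give $\langle P'(u),\dot v\rangle=c\langle u,\dot v\rangle_{L^{2}}=0$, contradicting the strict negativity just established. Everything else---the identification of $P(\kappa(u,\theta))$ with $\frac{d}{d\theta}E(\kappa(u,\theta))$, the smoothness in $\theta$ of both sides of the inequality, and the invariance $||\kappa(u,\theta)||_{L^{2}}=||u||_{L^{2}}$---has already been established in Section~3.
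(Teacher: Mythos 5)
Your strategy is the right one, and it is essentially the argument the paper delegates to \cite[Theorem 1.4]{Bellazzini2013}: introduce two Lagrange multipliers and kill the Pohozaev multiplier $\mu$ by showing that the scaling direction sees $P'(u)$ with a definite sign. Your sign computation is correct and is the nicest part of the write-up: comparing second-order Taylor coefficients in Lemma \ref{unique1} at $\theta=0$, using $h_1(0)=h_1'(0)=0$ and $h_1''(0)=3(p-2)(3p-10)>0$, gives $\frac{d^{2}}{d\theta^{2}}E(\kappa(u,\theta))|_{\theta=0}\leq-\frac{3p-10}{2}||\nabla u||_{L^2}^{2}<0$ on $V(m)$, which also yields the transversality of the two constraints.

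The gap is the step where you pair the Euler--Lagrange identity with $\dot v=\frac32u+x\cdot\nabla u$. For a general $u\in H^1(\mathbb{R}^3)$ the function $x\cdot\nabla u$ need not belong to $H^1(\mathbb{R}^3)$ (nor even to $L^2(\mathbb{R}^3)$), so the pairing $\langle E'(u)+\omega u+\mu P'(u),\dot v\rangle$ is not a priori defined, and the chain-rule identifications $\langle E'(u),\dot v\rangle=\frac{d}{d\theta}E(\kappa(u,\theta))|_{\theta=0}$ and $\langle P'(u),\dot v\rangle=\frac{d}{d\theta}P(\kappa(u,\theta))|_{\theta=0}$ require the curve $\theta\mapsto\kappa(u,\theta)$ to be differentiable into $H^1(\mathbb{R}^3)$, which it is not for general $u$. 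The same objection hits your linear-independence argument, since it rests on the same pairing. The standard repair, and the route of \cite{Bellazzini2013}, is to read the multiplier identity as saying that $u$ is a weak solution of the modified elliptic equation
$$-(1+2\mu)\Delta u+\omega u+(1+\mu)\phi_u u-\frac{\mu}{a}\left(e^{-\frac{|\cdot|}{a}}\ast|u|^2\right)u=\Bigl(1+\tfrac{3(p-2)}{2}\mu\Bigr)|u|^{p-2}u,$$
to establish regularity and decay for $u$ from this equation, and then to derive its Pohozaev identity rigorously by truncation; that identity is exactly what your formal pairing produces, and combined with $P(u)=0$ it gives $\mu\,\frac{d^{2}}{d\theta^{2}}E(\kappa(u,\theta))|_{\theta=0}=0$. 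One must also exclude separately the degenerate case $1+2\mu=0$, where the equation loses ellipticity and the regularity argument does not apply. With these substitutions your sign argument closes the proof; as written, the testing step is only formal.
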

\begin{proof}
The proof of Lemma \ref{critical-points2} is similar to the proof of \cite[Theorem 1.4]{Bellazzini2013}, and it will be omitted.
\end{proof}

\begin{proof}[Proof of Theorem \ref{unstable}]
The proof of Theorem \ref{unstable} is standard and follows the original approach by Glassey \cite{Glassey1977} and Berestycki and Cazenave \cite{Berestycki1981}.

For any $m>0$, let $u_m\in\mathcal{M}_m$ and define the set
$$\mathcal{B}:=\left\{\psi\in S(m):~ E(\psi)<E(u_m), P(\psi)<0\right\}.$$
Note that the set $\mathcal{B}$ contains functions arbitrarily close to $u_m$ in $H^1(\mathbb{R}^3)$. Indeed, letting $\psi_0(x)=\kappa(u_m, \theta)(x)$ with $\theta>0$, we see from Lemma \ref{unique} that $\psi_0\in\mathcal{B}$ and $\psi_0\rightarrow u_m$ in $H^1(\mathbb{R}^3)$ as $\theta\rightarrow0$.

Suppose $\psi(t)$ be the maximal solution of Eq.(\ref{main6}) with initial datum $\psi(0)=\psi_0$. Let us show that $\psi(t)\in\mathcal{B}$ for all $t\in[0, T^*)$. From the conservation
laws
$$||\psi(t)||^2_{L^2}=||\psi_0||^2_{L^2}=||u_m||^2_{L^2}$$
and
$$E(\psi(t))=E(\psi_0)<E(u_m).$$
Thus, it is enough to verify $P(\psi(t))<0$, but $P(\psi(t))\neq0$ for any $t\in[0, T^*)$.
Otherwise, by the definition of $\gamma(m)$, we would get for a $t_0\in[0, T^*)$ that $E(\psi(t_0))\geq E(u_m)$ in contradiction with $E(\psi(t))<E(u_m)$. Now by continuity of $P$, we get that $P(\psi(t))<0$ and thus that $\psi(t)\in \mathcal{B}$ for all $t\in[0, T^*)$. Now, we claim that there exists $\delta>0$ such that
\begin{align}\label{blowup}
P(\psi(t))\leq-\delta \quad\hbox{for all~~ $t\in[0, T^*)$}.
\end{align}
Let $t\in[0, T^*)$ be arbitrary but fixed and set $\psi=\psi(t)$. Since $P(\psi)<0$, we deduce from Proposition \ref{mp3} that
$$\gamma(m)=\bar{\gamma}(m)\leq\bar{E}(\psi)=E(\psi)-\frac2{3(p-2)}P(\psi)<E(\psi_0)-\frac{P(\psi)}2.$$
This implies that
\begin{align}\label{key-e}
P(\psi)\leq 2(E(\psi_0)-\gamma(m))=2(E(\psi_0)-E(u_m)).
\end{align}
Then, letting $\delta=2(E(u_m)-E(\psi_0))>0$ the claim is established. Since $\psi_0(x)=\kappa(u_m, \theta)(x)$, we have that
$$\int|x|^2|\psi_0(x)|^2dx=\int|x|^2|\kappa(u_m, \theta)(x)|^2dx=e^{-2\theta}\int|y|^2|u_m(y)|^2dy.$$
Thus, from Lemma \ref{exponential-decay} and Lemma \ref{critical-points2}, we obtain that
\begin{align*}
\int|x|^2|\psi_0(x)|^2dx<\infty.
\end{align*}
By the virial identity, we have
$$\frac{d^2}{dt^2}||x\psi(t)||^2_{L^2}=8P(\psi).$$
Now, by (\ref{blowup}), we deduce that $\psi(t)$ must blow up in finite time. Recording that $\psi_0$ has been taken arbitrarily close to $u_m$, this ends the proof of the theorem.
\end{proof}

\begin{proof}[Proof of Theorem \ref{global}]
For any $m>0$, let us define the set
$$\mathcal{A}:=\{\psi\in S(m):~ E(\psi)<\gamma(m), P(\psi)>0\}.$$
First, we show that the set $\mathcal{A}$ is not empty. Indeed, for arbitrary but fix $\psi\in S(m)$, set $\psi_{\theta}(x)=\kappa(\psi, \theta)(x)$. Then, we have $\psi_{\theta}\in S(m)$ for all $\theta\in\mathbb{R}$; $E(\psi_{\theta})\rightarrow0$ as $\theta\rightarrow-\infty$ and $P(\psi_{\theta})>0$ for $\theta<0$ such that $|\theta|$ large enough. This proves that $\mathcal{A}$ is nonempty.

In the following, we will prove that $\mathcal{A}$ is a invariant manifold of Eq.(\ref{main6}). Let $\psi(t, x)$ be the solution of Eq.(\ref{main6}) with $\psi(0, x)=\psi_0$. Suppose $\psi_0\in \mathcal{A}$, by Proposition \ref{local}, we see that there exists a unique solution $\psi\in C([0, T^*); H^1(\mathbb{R}^3))$ with initial data $\psi_0$. We deduce from the conservation of energy that
\begin{align}\label{c-E}
E(\psi(t))=E(\psi_0)<\gamma(||\psi_0||_{L^2})
\end{align}
for any $t\in[0, T^*)$. In addition, by the continuity of the function $t\mapsto P(\psi(t))$ and Proposition \ref{mp3}, if there exists $t_1\in[0, T^*)$ such that $P(\psi(t_1))=0$, then $E(\psi(t_1))\geq\gamma(||\psi_0||_{L^2})$, which contradicts with (\ref{c-E}). Therefore, we have $P(\psi(t))>0$ for any $t\in[0, T^*)$. Next classically we either have
$$T^*=+\infty$$
or
\begin{align}\label{T-E}
T^*<+\infty \quad\hbox{and}\quad \lim_{t\rightarrow T^*}||\nabla \psi(t, x)||^2_{L^2}=\infty.
\end{align}
Since
\begin{align*}
E(\psi(t, x))-\frac2{3(p-2)}P(\psi(t, x))=&\frac{3p-10}{6(p-2)}||\nabla \psi(t, x)||^2_{L^2}+\frac{3p-8}{12(p-2)}\int\phi_{\psi(t, x)}|\psi(t, x)|^2dx\\
&+\frac1{6a(p-2)}\iint e^{-\frac{|x-y|}a}|\psi(t, x)|^2 |\psi(t, y)|^2 dxdy,
\end{align*}
and $E(\psi(t, x))=E(\psi_0)$ for all $t\in[0, T^*)$, if (\ref{T-E}) happens, then we have
$$\lim_{t\rightarrow T^*}P(\psi(t, x))=-\infty.$$
By continuity, these exists $t_2\in[0, T^*)$ such that $P(\psi(t_2, x))=0$ with $E(\psi(t_2,x))=E(u_0)<\gamma(m)$. This contradicts the definition $\gamma(m)=\inf\limits_{\psi\in V(m)}E(\psi)$.
\end{proof}

 \par

\end{document}